\numberwithin{equation}{section}
\newtheorem{theorem}{Theorem}[section]
\newtheorem{Ca}[theorem]{Corollary}
\newtheorem{Th}[theorem]{Theorem}
\newtheorem{Lm}[theorem]{Lemma}
\newtheorem{Prop}[theorem]{Proposition}
\newtheorem{Def}[theorem]{Definition}
\newtheorem{Example}[theorem]{Example}
\newtheorem{Remark}[theorem]{Remark}
\newtheorem{ThA}{Theorem}[section]
\title{Almost sharp descriptions of traces of Sobolev $W_{p}^{1}(\mathbb{R}^{n})$-spaces to arbitrary compact subsets of $\mathbb{R}^{n}$. The case $p \in (1,n]$}
\author{Alexander I. Tyulenev}
\address{Steklov Mathematical Institute of Russian academy of Sciences}
\email{tyulenev-math@yandex.ru,tyulenev@mi-ras.ru}
\begin{document}
\date{\today}
\allowdisplaybreaks
\keywords{Sobolev spaces, traces, extension operators, lower content regular sets, Frostman-type measures}
\subjclass[2010]{53C23, 46E35}
\begin{abstract}
Let $S \subset \mathbb{R}^{n}$ be an arbitrary nonempty compact set such that the $d$-Hausdorff content $\mathcal{H}^{d}_{\infty}(S) > 0$ for some $d \in (0,n]$.
For each $p \in (\max\{1,n-d\},n]$, an almost sharp intrinsic description
of the trace space $W_{p}^{1}(\mathbb{R}^{n})|_{S}$ of the Sobolev space $W_{p}^{1}(\mathbb{R}^{n})$ to the set $S$ is obtained. Furthermore, for
each $p \in (\max\{1,n-d\},n]$ and $\varepsilon \in (0, \min\{p-(n-d),p-1\})$,
new bounded linear extension operators from the trace space $W_{p}^{1}(\mathbb{R}^{n})|_{S}$ into the space
$W_{p-\varepsilon}^{1}(\mathbb{R}^{n})$ are constructed.
\end{abstract}
\maketitle
\tableofcontents
%

\markright{\uppercase{Almost sharp descriptions of traces}}

\section{Introduction}

The trace problem, i.e., the problem of the sharp intrinsic description of traces of the first-order Sobolev space $W_{p}^{1}(\mathbb{R}^{n})$,
$p \in [1,\infty]$, to different subsets $S \subset \mathbb{R}^{n}$ is a classical long-standing problem in the function space theory. There is an
extensive literature devoted to the subject. However, without any additional regularity assumptions on $S$ the
problem becomes extremely complicated and remains open in the case $p \in [1,n]$.
The purpose of the present paper is to pose correctly and solve a weakened version of this trace problem.
Namely, \textit{we obtain almost sharp descriptions of the traces to compact sets} $S \subset \mathbb{R}^{n}$ of functions in the first-order Sobolev spaces $W_{p}^{1}(\mathbb{R}^{n})$  \textit{in the case} $p \in (1,n]$ \textit{without any additional regularity assumptions on $S$}. The case $p=1$ is special and will not be considered in this paper.
To precisely pose the above problems we recall some terminology concerning Sobolev spaces.

As usual, for each $p \in [1,\infty]$, we let $W_{p}^{1}(\mathbb{R}^{n})$ denote the corresponding Sobolev
space of all equivalence classes of real valued functions $F \in L_{p}(\mathbb{R}^{n})$ whose distributional partial
derivatives $D^{\gamma}F$ on $\mathbb{R}^{n}$ of order $|\gamma| = 1$ belong to $L_{p}(\mathbb{R}^{n})$.
This space is normed by
\begin{equation}
\notag
\|F|W_{p}^{1}(\mathbb{R}^{n})\|:=\|F|L_{p}(\mathbb{R}^{n})\|+\sum\limits_{|\gamma| = 1}\|D^{\gamma}F|L_{p}(\mathbb{R}^{n})\|.
\end{equation}
We assume that the reader is familiar with the (Bessel) $C_{1,p}$-capacities (see e.g., Section 2.2 in \cite{A}), the $d$-Hausdorff measures $\mathcal{H}^{d}$ and the
$d$-Hausdorff contents $\mathcal{H}^{d}_{\infty}$ (see e.g., \cite{A}, Section 5.1).
Recall (see e.g., \cite{A}, Section 6.2) that given $p \in (1,n]$, for every element $F \in W_{p}^{1}(\mathbb{R}^{n})$ there is a representative $\overline{F}$  of $F$ such that $\overline{F}$ has Lebesgue points $C_{1,p}$-quasi everywhere, i.e., everywhere on $\mathbb{R}^{n}$ except a set $E_{F}$ with $C_{1,p}(E_{F})=0$. Furthermore, according to the Sobolev imbedding theorem (see e.g., Theorem 1.2.4 in \cite{A}), if $p > n$, then for every $F \in W_{p}^{1}(\mathbb{R}^{n})$ there is a
unique representative $\overline{F}$ of $F$ which is locally $(1-\frac{n}{p})$-H\"older continuous.
In the sequel, given a parameter $p \in (1,\infty)$, for each element $F \in W_{p}^{1}(\mathbb{R}^{n})$ we will call $\overline{F}$ \textit{an $(1,p)$-good representative
of} $F$.

Clearly, if $p \in (1,n]$, then, for each element $F\in W_{p}^{1}(\mathbb{R}^{n})$, there are infinitely many $(1,p)$-good representatives $\overline{F}$ of $F$.
However, any two $(1,p)$-good representatives $\overline{F}_{1}$, $\overline{F}_{2}$ of $F$ coincide everywhere except a set of $p$-capacity zero.
As a result, given $p \in (1,n]$ and a set $S \subset \mathbb{R}^{n}$ with $C_{1,p}(S) > 0$, we define \textit{the $p$-sharp trace} $F|_{S}$
of each element $F \in W_{p}^{1}(\mathbb{R}^{n})$ as the equivalence class (modulo coincidence everywhere on $S$ except a set of $p$-capacity zero) of the pointwise restriction
of any $(1,p)$-good representative $\overline{F}$ of $F$ to the set $S$.
Since $C_{1,p}(S) > 0$, \textit{the $p$-sharp trace} $F|_{S}$ of $F$ is well defined in this case.
If $p > n$ and $S$ is an arbitrary nonempty set in $\mathbb{R}^{n}$ we define \textit{the $p$-sharp trace} $F|_{S}$
of each element $F \in W_{p}^{1}(\mathbb{R}^{n})$ as the pointwise restriction of a unique continuous representative $\overline{F}$ of $F$ to the set $S$.
Given $p \in (1,\infty)$ and a set $S \subset \mathbb{R}^{n}$, we define \textit{the $p$-sharp trace space} by letting
\begin{equation}
\notag
W_{p}^{1}(\mathbb{R}^{n})|_{S}:=\{F|_{S}: F \in W_{p}^{1}(\mathbb{R}^{n})\}
\end{equation}
and equip this space with the usual quotient-space norm. By $\operatorname{Tr}|_{S}$ we denote the corresponding \textit{$p$-sharp trace operator}
which takes $F \in W_{p}^{1}(\mathbb{R}^{n})$ and returns $F|_{S}$.

Now the problem of
\textit{the sharp intrinsic description} of traces of first-order Sobolev $W_{p}^{1}(\mathbb{R}^{n})$-spaces can be formulated as the following three
intimately related questions.

\textbf{Problem A}. \textit{Let $p \in (1,\infty]$ and let $S\subset \mathbb{R}^{n}$ be a closed nonempty set
with $C_{1,p}(S) > 0$.}

{\rm (\textbf{Q}1)} \textit{Given a Borel function $f: S \to \mathbb{R}$,
find necessary and sufficient conditions for the existence of a Sobolev extension $F$ of $f$,
i.e., $F \in W_{p}^{1}(\mathbb{R}^{n})$ and $F|_{S}=f$.}

{\rm (\textbf{Q}2)} \textit{Using only geometry of the set $S$ and values of the function $f$, compute the trace norm $\|f|W_{p}^{1}(\mathbb{R}^{n})|_{S}\|$
up to some universal constants.}

{\rm (\textbf{Q}3)} \textit{Does there exist a bounded linear operator $\operatorname{Ext}_{S,p}: W_{p}^{1}(\mathbb{R}^{n})|_{S} \to W_{p}^{1}(\mathbb{R}^{n})$
such that $\operatorname{Tr}|_{S} \circ \operatorname{Ext}_{S,p} = \operatorname{Id}$ on $W_{p}^{1}(\mathbb{R}^{n})|_{S}$?}

Typically, if one can answer some of the above questions, then one has a key
to the other questions.
Informally speaking, the essence of questions (\textbf{Q1})--(\textbf{Q3})
can be formulated as follows: find an equivalent intrinsically defined
norm in the sharp trace space $W_{p}^{1}(\mathbb{R}^{n})|_{S}$ and find a linear procedure
of an almost optimal extension. As we have already mentioned, if $S$ is not assumed to have any additional regularity properties,
Problem A is very difficult and still unsolved in the range $p \in (1,n]$. Below we present a
brief historical overview of the results related to Problem A.

{\rm (\textbf{H}0)} \textit{In the case $p=\infty$}, the Sobolev space $W^{1}_{\infty}(\mathbb{R}^{n})$ can be identified with the space
$\operatorname{LIP}(\mathbb{R}^{n})$  of Lipschitz functions on $\mathbb{R}^{n}$. Moreover, it is known (see the McShane-Whitney extension lemma in Section 4.1 of \cite{Kos}) that for
any closed set $S \subset \mathbb{R}^{n}$ the restriction $\operatorname{LIP}(\mathbb{R}^{n})|_{S}$
coincides with the space $\operatorname{LIP}(S)$ of Lipschitz functions on $S$ and that, furthermore, the
classical Whitney extension operator linearly and continuously maps the space $\operatorname{LIP}(S)$
into the space  $\operatorname{LIP}(\mathbb{R}^{n})$  (see e.g., \cite{St}, Chapter 6).

{\rm (\textbf{H}1)} In the case $p \in (1,\infty)$ the pioneering investigations go back to Gagliardo \cite{Gal}, where for each $p \in (1,\infty)$,
the trace problem was solved when $S$ is a graph of a Lipschitz function.
Note that this work extended the earlier results by Aronszajn \cite{Ar} and Slobodetskii and Babich \cite{Ba} concerning the
case $p=2$. It should be mentioned that the case $S=\mathbb{R}^{d}$ with $d \in [1,n-1] \cap \mathbb{N}$
was covered by Besov in the fundamental paper \cite{Besov}. Furthermore, the trace problems for higher-order Sobolev spaces
were firstly considered in \cite{Besov}.

{\rm (\textbf{H}2)} Recall \cite{JW} (see Chapter 2 therein) that, given a parameter $d \in (0,n]$, \textit{a closed set} $S \subset \mathbb{R}^{n}$ is said to be  \textit{$d$-set} if there are constants $c_{S,1}, c_{S,2} > 0$ such that
\begin{equation}
\label{eq14}
c_{S,1}l^{d} \le \mathcal{H}^{d}(Q_{l}(x) \cap S) \le c_{S,2}l^{d} \quad \hbox{for all} \quad x \in S \quad \hbox{and all} \quad l \in (0,1],
\end{equation}
where $Q_{l}(x):=\prod_{i=1}^{n}[x_{i}-\frac{l}{2},x_{i}+\frac{l}{2}]$ is a closed cube centered in $x=(x_{1},...,x_{n})$ with side length $l$.
In the literature $d$-sets are also known as \textit{Ahlfors--David $d$-regular sets} (see, e.g., \cite{Ihn}); condition \eqref{eq14} is often called \textit{the Ahlfors--David $d$-regularity condition}.
The problems of characterization of
traces of the Besov spaces $B^{s}_{p,q}(\mathbb{R}^{n})$ \cite{Ihn,JW,Shv1}, the Bessel potential spaces $L^{s}_{p}(\mathbb{R}^{n})$ \cite{JW}, and the Lizorkin--Triebel spaces $F^{s}_{p,q}(\mathbb{R}^{n})$ \cite{Ihn,Shv1} on $d$-sets were considered. The detailed analysis of these results is beyond the scope of our
paper. Recall that $L^{1}_{p}(\mathbb{R}^{n})=F^{1}_{p,2}(\mathbb{R}^{n})=W_{p}^{1}(\mathbb{R}^{n})$ for any $p \in (1,\infty)$. Hence, given
a closed $d$-set $S \subset \mathbb{R}^{n}$ with $d \in (0,n]$ and $p \in (\max\{1,n-d\},\infty)$,
the results obtained in \cite{Ihn,JW,Shv1} give sharp descriptions of the traces to the set $S$ of functions $F \in W_{p}^{1}(\mathbb{R}^{n})$.

{\rm (\textbf{H}3)} \textit{In the case} $p \in (n,\infty)$, Shvartsman \cite{Shv2} completely solved Problem A.
More precisely, for each $p \in (n,\infty)$ he
found several equivalent intrinsic descriptions of the sharp trace space $W_{p}^{1}(\mathbb{R}^{n})|_{S}$
to arbitrary closed sets $S \subset \mathbb{R}^{n}$.
It is interesting to note that in that case the classical Whitney extension
operator gives an almost optimal Sobolev extension. Furthermore, the
criteria presented in Theorems 1.2 and 1.4 in \cite{Shv2} do not use (explicitly)
any geometrical properties of  $S$.

{\rm (\textbf{H}4)} Recall \cite{Ry} that given a parameter $d \in [0,n]$, a set $S \subset \mathbb{R}^{n}$ is said to be \textit{$d$-thick}
if there is a constant $c_{S,3} > 0$ such that
\begin{equation}
\notag
c_{S,3}l^{d} \le \mathcal{H}^{d}_{\infty}(Q_{l}(x) \cap S) \quad  \hbox{for all} \quad x \in S \quad \hbox{and all} \quad l \in (0,1].
\end{equation}
Recently some interesting geometric properties of $d$-thick sets were studied in the papers \cite{Az1}, \cite{Az2}, where
they were called \textit{$d$-lower content regular sets.}
It should be noted that the class of all $d$-sets \textit{is strictly contained in the class}
of all $d$-thick sets, but the latter \textit{is much wider.} One can find several interesting examples \cite{TV}
demonstrating the huge difference between the concepts of $d$-sets and $d$-thick sets, respectively.
For example, one can show \cite{TV} that every path-connected set in $\mathbb{R}^{n}$ containing  more than one point is $1$-thick.
Recently \cite{TV}, given a number $d \in [0,n]$ and a closed $d$-thick  set $S \subset \mathbb{R}^{n}$, Problem A
was solved for each $p \in (\max\{1,n-d\},\infty)$. Furthermore, a new linear extension operator
was constructed. Very recently \cite{T} the criterion obtained in \cite{TV}
was essentially simplified and clarified for the case when $S=\Gamma \subset \mathbb{R}^{2}$ is a planar
rectifiable curve of positive length and without self-intersections.
It should be noted that V. Rychkov considered in \cite{Ry} trace problems for
the Besov $B^{s}_{p,q}(\mathbb{R}^{n})$-spaces and the Lizorkin--Triebel $F^{s}_{p,q}(\mathbb{R}^{n})$-spaces on $d$-thick sets. However, some extra restrictions
on parameters $n,d,s,p,q$ were imposed. In particular, for $s \in \mathbb{N}$
it was assumed additionally that $d \in (n-1,n]$.
Hence, even the results described in \cite{T} do not fall into the scope of \cite{Ry}.
Note that using the same technics as in \cite{TV} one can obtain solutions to
similar problems for the case of weighted Sobolev spaces with Muckenhoupt weights \cite{TV2}.

{\rm (\textbf{H}5)} We should mention recent investigations concerned with problems of exact descriptions of traces
of Sobolev $W_{p}^{1}(\operatorname{X})$-spaces to closed subsets $S$ of
general metric measure spaces $\operatorname{X}$ \cite{BBG,LLW,Saks}. However,
some extra regularity constraints on $S$ were imposed.

As a result, Problem A was completely solved in the case $p \in (n,\infty]$ only.
The case $p \in (1,n]$ is much more difficult, the elegant machinery developed in \cite{Shv2}
does not work. Indeed, in this case the geometry of a given closed set $S \subset \mathbb{R}^{n}$ plays a crucial role
and has an influence not only on the
corresponding trace criterion but also to the constructions of the corresponding extension operators. In particular,
the classical
extension method of H. Whitney does not work. As far as we know, in the case $p \in (1,n]$
the most general results available so far were obtained in \cite{TV}.

In the present paper
we make a next relatively big step towards the solution of Problem A by
solving a weakened version of this problem.
First of all, we introduce a little
bit more rough definition of the trace of a given Sobolev function.
For this purpose, we recall that if $p \in (1,n]$ and $d \in (n-p,n]$ then for any given set $S \subset \mathbb{R}^{n}$ the condition $C_{1,p}(S)=0$
implies $\mathcal{H}^{d}(S)=0$. On the other hand, given $p \in (1,n]$ and $S \subset \mathbb{R}^{n}$, the condition $\mathcal{H}^{n-p}(S) < +\infty$ implies $C_{1,p}(S)=0$.
As a result, if $p \in (1,n]$, $d \in (n-p,n]$ and $S \subset \mathbb{R}^{n}$ is an arbitrary set
with $\mathcal{H}^{d}_{\infty}(S) > 0$ we define \textit{the $d$-trace of any element} $F \in W_{p}^{1}(\mathbb{R}^{n})$ to the set $S$
as \textit{the class of all Borel functions $f:S \to \mathbb{R}$} that coincide $\mathcal{H}^{d}$-a.e. on $S$ with \textit{the pointwise restriction to $S$
of any $(1,p)$-good representative} $\overline{F}$ of $F$; we denote it by $F|^{d}_{S}$. By $W_{p}^{1}(\mathbb{R}^{n})|^{d}_{S}$ we denote
the corresponding \textit{$d$-trace space}, i.e., the linear space consisting of $d$-traces $F|^{d}_{S}$
of all elements $F \in W_{p}^{1}(\mathbb{R}^{n})$ equipped  with the usual quotient-space
norm. Finally, $\operatorname{Tr}|^{d}_{S}: W_{p}^{1}(\mathbb{R}^{n}) \to W_{p}^{1}(\mathbb{R}^{n})|^{d}_{S}$ denotes the corresponding \textit{$d$-trace operator.}

In the present paper we obtain a solution to the following problem of \textit{an almost sharp intrinsic description} of traces of $W_{p}^{1}(\mathbb{R}^{n})$-spaces
\textit{to arbitrary compact sets.} In analogy with Problem A, we formulate it as the following three closely related questions.


\textbf{Problem B}.
\textit{Let $p \in (1,n]$, $d^{\ast} \in (n-p,n]$ and $\varepsilon^{\ast}:=\min\{p-(n-d^{\ast}),p-1\}$. Let
$S\subset \mathbb{R}^{n}$ be an arbitrary compact set with $\mathcal{H}^{d^{\ast}}_{\infty}(S) > 0$.}

{\rm (\textbf{q}1)} \textit{Given $\varepsilon \in (0,\varepsilon^{\ast})$, find a Banach space $\operatorname{X}_{\varepsilon}(S)=\operatorname{X}(S,p,d^{\ast},\varepsilon)$
of Borel functions $f: S \to \mathbb{R}$ equipped with an intrinsically defined norm such that
\begin{equation}
\label{eq11}
W_{p}^{1}(\mathbb{R}^{n})|^{d^{\ast}}_{S} \subset \operatorname{X}_{\varepsilon}(S) \subset W_{p-\varepsilon}^{1}(\mathbb{R}^{n})|^{d^{\ast}}_{S}
\end{equation}
and for some constant $C=C(p,n,d^{\ast},\varepsilon) > 0$, the following inequalities
\begin{equation}
\label{eq1.1''}
\frac{1}{C}\|f|W_{p-\varepsilon}^{1}(\mathbb{R}^{n})|^{d^{\ast}}_{S}\| \le \|f|\operatorname{X}_{\varepsilon}(S)\| \le C \|f|W_{p}^{1}(\mathbb{R}^{n})|^{d^{\ast}}_{S}\|
\end{equation}
hold for all $f \in W_{p}^{1}(\mathbb{R}^{n})|^{d^{\ast}}_{S}$.}

{\rm (\textbf{q}2)} \textit{Given $\varepsilon \in (0,\varepsilon^{\ast})$, does there exist a bounded linear operator
\begin{equation}
\label{eq1.2}
\operatorname{Ext}=\operatorname{Ext}(S,d^{\ast},\varepsilon):\operatorname{X}_{\varepsilon}(S) \to W_{p-\varepsilon}^{1}(\mathbb{R}^{n})
\end{equation}
such that
$\operatorname{Tr}|^{d^{\ast}}_{S} \circ \operatorname{Ext} = \operatorname{Id}$ on $\operatorname{X}_{\varepsilon}(S)$?}

We should make several \textit{important remarks} concerning the statement of the problem.

{\rm (\textbf{R}0)}
Using methods introduced in this paper one could attack an analog of Problem B posed
for arbitrary unbounded closed sets $S \subset \mathbb{R}^{n}$.
It would be ideologically similar but much more
technical;

{\rm (\textbf{R}1)} By \textit{an intrinsically defined norm} in the space $\operatorname{X}_{\varepsilon}(S)$ we mean
a norm whose expression contains a computationally explicit procedure \textit{exploiting only}
values of a given function $f:S \to \mathbb{R}^{n}$ and geometric properties of the set $S$;

{\rm (\textbf{R}2)} Since the set $S$ is compact it is easy to show that $W_{p}^{1}(\mathbb{R}^{n})|^{d^{\ast}}_{S}
\subset W_{p-\varepsilon}^{1}(\mathbb{R}^{n})|^{d^{\ast}}_{S}$. Hence, (q1) makes sense;

{\rm (\textbf{R}3)} Since $0 < \varepsilon < \varepsilon^{\ast} \le p$ we get $p-\varepsilon > n-d^{\ast}$. Hence, the operator $\operatorname{Tr}|^{d^{\ast}}_{S}$ is well defined
on the space $W^{1}_{p-\varepsilon}(\mathbb{R}^{n})$ and
the composition $\operatorname{Tr}|^{d^{\ast}}_{S} \circ \operatorname{Ext}(S,d^{\ast},\varepsilon)$ makes sense;

{\rm (\textbf{R}4)} By \eqref{eq11} and \eqref{eq1.1''} the trace space $W_{p}^{1}(\mathbb{R}^{n})|^{d^{\ast}}_{S}$
is continuously imbedded in the space $\operatorname{X}_{\varepsilon}(S)$. Hence, any bounded linear
operator $\operatorname{Ext}(S,d^{\ast},\varepsilon):\operatorname{X} _{\varepsilon}(S)\to W_{p-\varepsilon}^{1}(\mathbb{R}^{n})$
maps $W_{p}^{1}(\mathbb{R}^{n})|^{d^{\ast}}_{S}$ to $W_{p-\varepsilon}^{1}(\mathbb{R}^{n})$ linearly and continuously.

Note that the term ``almost sharp'' in the title of our paper can be informally justified as follows. There exists an arbitrary small $\varepsilon$-gap
between the $d^{\ast}$-trace space $W_{p}^{1}(\mathbb{R}^{n})|^{d^{\ast}}_{S}$ and the space
$\operatorname{X}_{\varepsilon}(S)$.
If we could formally put $\varepsilon = 0$ in Problem B, then we would
obtain in fact Problem A up to a slightly rougher definition of the trace.

To the best of our knowledge, the results of the present paper are the first to have been obtained
for the range $p \in (1,n]$ in such a high generality.
In \cite{HM} a similar problem was considered under the additional assumption
that the set $S \subset \mathbb{R}^{n}$ is Ahlfors--David $d^{\ast}$-regular.

In this paper we introduce several methods and techniques which
were never used before. Despite the fact that our machinery
does not allow to solve Problem A, we strongly believe that
our new ideas and tools will provide a fundament for further investigations. Moreover, they could
be useful in solving similar extension problems in the context of other spaces of smooth functions.

\textbf{Structure of the paper.} The paper is organized as follows.

\textit{Section 2} contains the necessary preliminaries concerning Hasudorff measures, Sobolev spaces, and
Frostman-type measures.

\textit{In Section 3} we recall basic results of our recent paper \cite{T2}. In particular, for any given set $S$ with $\mathcal{H}^{d}_{\infty}(S) > 0$,
those results allow one to built a specially ordered sequence of families of dyadic
cubes. Every such a family consists of ``thick with respect to $S$'' noneoverlapping dyadic cubes and
covers the set $S$ up to a set of  $\mathcal{H}^{d}_{\infty}$-zero content.
The sequence of families of cubes will play a role of a skeleton for an extension operator constructed in Section 5.
Furthermore, in Section 3 we introduce several new combinatorial concepts that can be of independent interest. Namely,
we introduce \textit{$(d,\lambda,c)$-covering cubes}, \textit{$(d,\lambda,c)$-shadows}, and \textit{$(d,\lambda,c)$-icebergs}. Furthermore, we introduce
\textit{hollow cubes} which will be natural substitutions for porous cubes.
Those concepts will be keystone tools
in proving a direct trace theorem in Section 7.

\textit{In Section 4} we introduce far-reaching generalizations of the Calder\'on-type maximal functions and establish some elementary properties of them.
They will be keystone tools in derivation of estimates for the gradients of extensions of functions from $S$ to $\mathbb{R}^{n}$. Furthermore, we introduce some function spaces
needed in solving Problem B.

\textit{In Section 5} we built a new extension operator.
We think that our construction is the most interesting part of the present paper.
It provides a far-reaching generalization of the classical extension operator introduced by H.~Whitney.
Roughly speaking, in contrast to the previously used extension methods, the new operator exploits only those values of the trace function
which are concentrated on the ``thick with respect to $S$'' dyadic cubes. Namely, the cubes from the families constructed
in Section 3 do the job.


\textit{Section 6} contains the so-called reverse trace theorem. The proof depends heavily
on estimates of Section 5 and the reflexivity of the classical Sobolev spaces
$W_{p}^{1}(\mathbb{R}^{n})$ for $p \in (1,\infty)$.

\textit{Section 7} is devoted to the so-called direct trace theorem with a detailed proof. The proof is based on the tools introduced in Section 3.
Furthermore, the section contains some lemmas of independent interest.

Finally, \textit{in Section 8} we
present a complete solution to Problem B.


\section{Preliminaries}

Throughout the paper, $C,C_{1},C_{2},...$ will be generic positive
constants. These constants can change even
in a single string of estimates. The dependence of a constant on certain parameters is expressed, for example, by
the notation $C=C(n,p,k)$. We write $A \approx B$ if there is a constant $C \geq 1$ such that $A/C \le B \le C A$.
For any $c \in \mathbb{R}$ we denote by $[c]$ the integer part of $c$, i.e.,
\begin{equation}
\notag
[c]:=\max\{k \in \mathbb{Z}:k \le c\}.
\end{equation}

We use notation $\mathbb{N}_{0}:=\mathbb{N}\cup\{0\}$. By $\mathbb{R}^{n}$ we denote the linear space of all strings $x=(x_{1},...,x_{n})$ of real numbers \textit{equipped  with the uniform  norm} $\|\cdot\|:=\|\cdot\|_{\infty}$, i.e., $\|x\|:=\|x\|_{\infty}:=\max\{|x_{1}|,...,|x_{n}|\}$. Given a set $E \subset \mathbb{R}^{n}$, we denote by $\operatorname{cl}E$, $\operatorname{int}E$ and $E^{c}$
the closure, the interior, and the complement (in $\mathbb{R}^{n}$) of $E$, respectively. Given a set $E \subset \mathbb{R}^{n}$, we denote by
$\chi_{E}$ the characteristic function of $E$ and by $\# E$ we denote \textit{the
cardinality} of $E$.

Given a family $\mathcal{G}$ of subsets of $\mathbb{R}^{n}$, by $M(\mathcal{G})$ we denote its covering multiplicity, i.e.,
the minimal $M' \in \mathbb{N}_{0}$ such that every point $x \in \mathbb{R}^{n}$ belongs to at most $M'$ sets from $\mathcal{G}$.
Let  $\mathcal{G}=\{G_{\alpha}\}_{\alpha \in \mathcal{I}}$ be a family of subsets of $\mathbb{R}^{n}$ and let $U \subset \mathbb{R}^{n}$ be a set.
We define
\textit{the restriction of the family} $\mathcal{G}$ to the set $U$ by letting
\begin{equation}
\label{restriction of the family}
\mathcal{G}|_{U}:=\{G \in \mathcal{G}: G \subset U\}.
\end{equation}
We say that a family $\mathcal{G}$ of subsets of $\mathbb{R}^{n}$ is \textit{nonoverlapping} if
different sets in $\mathcal{G}$ have disjoint interiors.

In what follows, by \textit{a measure} we will mean only a nonnegative Borel regular (outer) measure on $\mathbb{R}^{n}$. By $\mathcal{L}^{n}$ we denote the classical $n$-dimensional Lebesgue measure in
$\mathbb{R}^{n}$.  We say that a set $E \subset \mathbb{R}^{n}$ is \textit{measurable} if it is $\mathcal{L}^{n}$-measurable.
Given a measurable set $E \subset \mathbb{R}^{n}$, by $L_{0}(E)$ we denote the linear space of all equivalence classes of
measurable functions $f: E \to [-\infty,+\infty]$. Given a Borel set $E \subset \mathbb{R}^{n}$, by $\mathfrak{B}(E)$ we denote the set of all Borel
functions $f: E \to [-\infty,+\infty]$. If $\mathfrak{m}$ is a measure and $f \in \mathfrak{B}(\operatorname{supp}\mathfrak{m})$, then by the symbol
$[f]_{\mathfrak{m}}$ we will denote the $\mathfrak{m}$-equivalence class of $f$.
Given a measure $\mathfrak{m}$ and a nonempty Borel set $S \subset \mathbb{R}^{n}$, \textit{the restriction of} $\mathfrak{m}$ to $S$ is the
measure defined by
\begin{equation}
\notag
\mathfrak{m}\lfloor_S (G) := \mathfrak{m}(G \cap S) \quad \hbox{for any Borel set} \quad G \subset \mathbb{R}^{n}.
\end{equation}
Given a measure $\mathfrak{m}$  and $p \in [1,\infty)$, there is a natural isomorphism between the spaces $L_{p}(\mathbb{R}^{n},\mathfrak{m})$
and $L_{p}(\operatorname{supp}\mathfrak{m},\mathfrak{m})$ respectively. Keeping in mind this fact we will use the
symbol $L_{p}(\mathfrak{m})$ to denote any of that spaces. Similarly, we identify $L_{p}^{loc}(\mathbb{R}^{n},\mathfrak{m})$ and $L_{p}^{loc}(\mathfrak{m})$.

Given $f \in L^{\text{\rm loc}}_{1}(\mathfrak{m})$ and a Borel set $G \subset \mathbb{R}^{n}$ with $\mathfrak{m}(G) < +\infty$, we put
\begin{equation}
\label{eq.average}
f_{G,\mathfrak{m}}:=\fint\limits_{G}f(x)\,d\mathfrak{m}(x):=
\begin{cases}
\frac{1}{\mathfrak{m}(G)}\int\limits_{G}f(x)\,d\mathfrak{m}(x) \quad \hbox{if} \quad \mathfrak{m}(G) > 0;\\
0 \quad  \hbox{if} \quad \mathfrak{m}(G) = 0.
\end{cases}
\end{equation}
Let $\{\mathfrak{m}_{k}\}:=\{\mathfrak{m}_{k}\}_{k \in \mathbb{N}_{0}}$ be a sequence of Borel measures. We say that $\{\mathfrak{m}_{k}\}$ is \textit{an admissible sequence of measures}
if for each $k \in \mathbb{N}_{0}$ the measure $\mathfrak{m}_{k}$ is absolutely continuous with respect to $\mathfrak{m}_{j}$
for any $j \in \mathbb{N}_{0}$.
Given $p \in [1,\infty)$ and an admissible sequence of measures $\{\mathfrak{m}_{k}\}$, we put
\begin{equation}
\label{eqq.intersection_of_all_lp}
L_{p}(\{\mathfrak{m}_{k}\}):=\bigcap\limits_{k=0}^{\infty} L_{p}(\mathfrak{m}_{k}).
\end{equation}
Furthermore, given a set $S \supset \cap_{k=0}^{\infty}\operatorname{supp}\mathfrak{m}_{k}$, by $\mathfrak{B}(S) \cap L_{p}(\{\mathfrak{m}_{k}\})$ we mean a linear subspace of $\mathfrak{B}(S)$
composed of all Borel functions whose $\mathfrak{m}_{0}$-equivalence classes belong to $L_{p}(\{\mathfrak{m}_{k}\})$. As a result, given $f \in \mathfrak{B}(S) \cap L_{p}(\{\mathfrak{m}_{k}\})$,
we have $[f]_{\mathfrak{m}_{0}} \in L_{p}(\{\mathfrak{m}_{k}\})$.

Throughout this paper, the word ``cube'' will always mean \textit{a closed cube} in $\mathbb{R}^{n}$ whose sides are
parallel to the coordinate axes. We let $Q_{l}(x)$ denote the cube in $\mathbb{R}^{n}$ centered at $x$ with side
length $l$, i.e.,
$Q_{l}(x):=\prod_{i=1}^{n}[x_{i}-\frac{l}{2},x_{i}+\frac{l}{2}].$
Given $c > 0$ and a cube $Q$, we let $cQ$ denote the dilation of $Q$ with respect to its
center by a factor of $c$, i.e., $cQ_{l}(x):= Q_{cl}(x).$
Given a cube $Q$ we will denote by $l(Q)$ the diameter of $Q$ computed in the $\|\cdot\|_{\infty}$-norm, i.e., its side length.

By \textit{a dyadic cube} we mean an arbitrary \textit{closed cube} $Q_{k,m}:=\prod_{i=1}^{n} [\frac{m_{i}}{2^{k}}, \frac{m_{i}+1}{2^{k}}]$
with $k \in \mathbb{Z}$ and $m=(m_{1},...,m_{n}) \in \mathbb{Z}^{n}$. For each $k \in \mathbb{Z}$, by $\mathcal{D}_{k}$ we denote the family of all closed dyadic cubes
of side lengths $2^{-k}$. We set
\begin{equation}
\notag
\mathcal{D}:=\bigcup_{k \in \mathbb{Z}}\mathcal{D}_{k}, \quad \mathcal{D}_{+}:=\bigcup_{k \in \mathbb{N}_{0}}\mathcal{D}_{k}.
\end{equation}
For each $c \geq 1$ and $k \in \mathbb{Z}$, we set
\begin{equation}
\notag
c\mathcal{D}_{k}:=\{cQ_{k,m}: m \in \mathbb{Z}^{n}\}.
\end{equation}

Given $k \in \mathbb{Z}$ and $m \in \mathbb{Z}^{n}$, we define the set of \textit{$k$-neighboring indices for $m$} by letting
\begin{equation}
\notag
\operatorname{n}_{k}(m):=\{m' \in \mathbb{Z}^{n}:Q_{k,m'} \subset 3Q_{k,m}\}.
\end{equation}
Furthermore, given a cube $Q \in \mathcal{D}_{k}$, the family of all \textit{neighboring cubes for $Q$} is defined as
\begin{equation}
\notag
\operatorname{n}(Q):=\{Q' \in \mathcal{D}_{k}| Q' \cap Q \neq \emptyset\}.
\end{equation}


We will also need the following simple facts. We omit the elementary proofs.

\begin{Prop}
\label{Prop21''''}
Let $c \geq 1$ and $k \in \mathbb{Z}$. Let cubes $Q,Q' \in \mathcal{D}_{k}$ be such that $cQ \cap Q' \neq \emptyset$. Then
$[c]Q \cap Q' \neq \emptyset$.
\end{Prop}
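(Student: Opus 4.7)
The plan is to reduce the intersection conditions for $cQ\cap Q'$ and $[c]Q\cap Q'$ to a single scalar inequality on the sup-norm distance of the centers of $Q$ and $Q'$, and then exploit the fact that this distance is quantized on the dyadic grid.

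First I would write $l:=2^{-k}$ and denote the centers of $Q,Q' \in \mathcal{D}_k$ by $x_{Q}$ and $x_{Q'}$, so $Q=Q_{l}(x_{Q})$ and $Q'=Q_{l}(x_{Q'})$. Because the norm on $\mathbb{R}^{n}$ is the sup-norm, two axis-aligned cubes $Q_{a}(u)$ and $Q_{b}(v)$ intersect if and only if $\|u-v\|_{\infty} \le (a+b)/2$. Applying this to $cQ=Q_{cl}(x_{Q})$ and $Q'=Q_{l}(x_{Q'})$, the hypothesis $cQ\cap Q'\neq\emptyset$ is equivalent to
\begin{equation}
\notag
\|x_{Q}-x_{Q'}\|_{\infty} \le \tfrac{(c+1)l}{2}.
\end{equation}
Analogously, the desired conclusion $[c]Q\cap Q'\neq\emptyset$ amounts to $\|x_{Q}-x_{Q'}\|_{\infty}\le \tfrac{([c]+1)l}{2}$.

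The crucial observation is then that since $Q,Q'\in\mathcal{D}_{k}$, each coordinate of $x_{Q}-x_{Q'}$ is an integer multiple of $l$, so there is $j\in\mathbb{N}_{0}$ with $\|x_{Q}-x_{Q'}\|_{\infty}=jl$. The intersection hypothesis becomes $j\le (c+1)/2$, i.e. $2j-1\le c$. Since $2j-1$ is an integer and $c\ge 1$ gives $[c]\ge 1\ge 2\cdot 0-1$, we conclude $2j-1\le [c]$, hence $j\le ([c]+1)/2$, which rewrites as $\|x_{Q}-x_{Q'}\|_{\infty}\le ([c]+1)l/2$; thus $[c]Q\cap Q'\neq\emptyset$.

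There is no real obstacle here: the statement is a dyadic rigidity fact saying that on a grid of cubes at a common scale, the distance between centers takes only discrete values, and so the "extra half-cube" furnished by inflating by a non-integer factor $c$ instead of $[c]$ never actually allows a new pair of cubes to meet. The only point that needs a moment's care is that the coordinates of the centers are of the form $(m_{i}+1/2)\,2^{-k}$, so that their differences, and in particular $\|x_{Q}-x_{Q'}\|_{\infty}$, are integer multiples of $l=2^{-k}$; once this is noted, the rest is the one-line integer-rounding step above.
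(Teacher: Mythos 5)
Your proof is correct: the reduction of both intersection conditions to $\|x_{Q}-x_{Q'}\|_{\infty}\le\frac{(c+1)l}{2}$ (resp.\ $\frac{([c]+1)l}{2}$), together with the observation that center differences on $\mathcal{D}_{k}$ are integer multiples of $l=2^{-k}$ so that the integer $2j-1\le c$ forces $2j-1\le[c]$, is exactly the kind of elementary rounding argument the paper has in mind when it omits the proof. Nothing to add.
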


\begin{Prop}
\label{Prop21}
Let $c \geq 1$ and $k \in \mathbb{Z}$. Then $M(c\mathcal{D}_{k})  \le ([c]+2)^{n}.$
\end{Prop}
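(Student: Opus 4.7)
The plan is to prove the proposition by an entirely coordinatewise lattice-point counting argument. First I would fix an arbitrary point $x = (x_{1},\dots,x_{n}) \in \mathbb{R}^{n}$ and, using the definition of the covering multiplicity, reduce the claim to the estimate
\[
\#\{m \in \mathbb{Z}^{n} : x \in cQ_{k,m}\} \le ([c]+2)^{n}.
\]
Since distinct indices $m \in \mathbb{Z}^{n}$ give distinct cubes in $c\mathcal{D}_{k}$, the left-hand side is exactly the number of cubes of the family containing $x$, so this bound is equivalent to the desired bound on $M(c\mathcal{D}_{k})$.

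Next I would rewrite the membership $x \in cQ_{k,m}$ in a form that decouples the coordinates. The cube $cQ_{k,m}$ has center $\bigl(2^{-k}(m_{1}+\tfrac{1}{2}),\dots,2^{-k}(m_{n}+\tfrac{1}{2})\bigr)$ and side length $c\cdot 2^{-k}$, so $x \in cQ_{k,m}$ is equivalent to
\[
m_{i} \in I_{i}(x) := \bigl[\,2^{k}x_{i} - \tfrac{c+1}{2},\; 2^{k}x_{i} + \tfrac{c-1}{2}\,\bigr] \qquad \text{for every } i = 1,\dots,n.
\]
Each $I_{i}(x)$ is a closed interval of length exactly $c$, and the condition factors across the coordinates.

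Finally I would invoke the elementary fact that a closed interval of length $c \ge 1$ contains at most $[c]+1$ integers, which is trivially bounded by $[c]+2$ (the extra slack being convenient for degenerate boundary situations). Multiplying the per-coordinate counts gives
\[
\#\{m \in \mathbb{Z}^{n} : x \in cQ_{k,m}\} \;\le\; \prod_{i=1}^{n}\#\bigl(I_{i}(x) \cap \mathbb{Z}\bigr) \;\le\; ([c]+2)^{n},
\]
and since $x \in \mathbb{R}^{n}$ was arbitrary, this yields the proposition. There is no substantive obstacle to speak of: the entire argument is just an unpacking of the definitions of $cQ_{k,m}$ and $M(\mathcal{G})$ followed by a one-line count of lattice points in an interval, which is exactly why the authors omit the proof.
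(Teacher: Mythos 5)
Your argument is correct: the reduction of $M(c\mathcal{D}_{k})$ to counting indices $m$ with $x \in cQ_{k,m}$, the coordinatewise reformulation $m_{i} \in I_{i}(x)$ with $|I_{i}(x)| = c$, and the lattice-point count are all accurate, and this is exactly the elementary computation the paper omits (the paper's nearby Remark 3.8 uses the same kind of $([c]+2)^{n}$ counting). In fact your count even gives the slightly sharper bound $([c]+1)^{n} \le ([c]+2)^{n}$, so nothing is missing.
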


Given parameters $\sigma \in [1,\infty)$, $s \in [0,n]$ and a scale $R \in (0,+\infty]$, for any function $f \in L_{\sigma}^{\operatorname{loc}}(\mathbb{R}^{n})$ we define the restricted
fractional Hardy--Littlewood maximal function of $f$ by the formula
\begin{equation}
\label{eq2.2}
\mathcal{M}^{R}_{\sigma,s}[f](x):=\sup\limits_{l \in (0,R)}\Bigl(l^{s}\fint \limits_{Q_{l}(x)}|f(y)|^{\sigma}\,dy\Bigr)^{\frac{1}{\sigma}}, \quad x \in \mathbb{R}^{n}.
\end{equation}
In the case $s = 0$ we write $\mathcal{M}^{R}_{\sigma}[f]$ instead of $\mathcal{M}^{R}_{\sigma,0}[f]$. Furthermore, we put $\mathcal{M}_{\sigma}[f]:=\mathcal{M}^{+\infty}_{\sigma}[f]$.
We recall the classical fact, which is an immediate consequence of Theorem 1 from Chapter 1 in \cite{St}.

\begin{Prop}
\label{Prop.max.funct}
Let $1 \le \sigma < p < \infty$. Then there is a constant $C=C(n,\sigma,p) > 0$ such that, for any $R \in (0,+\infty]$,
\begin{equation}
\int\limits_{\mathbb{R}^{n}}\Bigl(\mathcal{M}^{R}_{\sigma}[f](x)\Bigr)^{p}\,dx \le
C \int\limits_{\mathbb{R}^{n}}|f(x)|^{p}\,dx \quad \hbox{for all} \quad f \in L_{p}(\mathbb{R}^{n}).
\end{equation}
\end{Prop}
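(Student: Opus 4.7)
The statement is a standard boundedness result for a fractional‐order variant of the Hardy--Littlewood maximal operator, and the plan is to reduce it in two steps to the classical case already quoted from Stein's book.

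\textbf{Step 1: Reduce to $R=+\infty$.} Directly from the definition \eqref{eq2.2}, the supremum defining $\mathcal{M}^{R}_{\sigma}[f](x)$ is taken over a subset of the scales used to define $\mathcal{M}_{\sigma}[f](x)=\mathcal{M}^{+\infty}_{\sigma}[f](x)$, so
\begin{equation*}
\mathcal{M}^{R}_{\sigma}[f](x) \le \mathcal{M}_{\sigma}[f](x) \qquad \text{for all } x\in\mathbb{R}^{n},\ R\in(0,+\infty].
\end{equation*}
Hence it suffices to establish the asserted $L_{p}$-bound with $\mathcal{M}^{R}_{\sigma}$ replaced by $\mathcal{M}_{\sigma}$, the constant $C$ then being independent of $R$.

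\textbf{Step 2: Reduce to $\sigma=1$.} For any $x\in\mathbb{R}^{n}$ the pointwise identity
\begin{equation*}
\mathcal{M}_{\sigma}[f](x)
=\Bigl(\sup_{l>0}\fint_{Q_{l}(x)}|f(y)|^{\sigma}\,dy\Bigr)^{\!1/\sigma}
=\bigl(\mathcal{M}_{1}[|f|^{\sigma}](x)\bigr)^{1/\sigma}
\end{equation*}
follows immediately from \eqref{eq2.2} with $s=0$. Raising to the $p$-th power gives $(\mathcal{M}_{\sigma}[f])^{p}=(\mathcal{M}_{1}[|f|^{\sigma}])^{p/\sigma}$.

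\textbf{Step 3: Apply the classical Hardy--Littlewood maximal theorem.} Since $\sigma<p$, we have $p/\sigma\in(1,\infty)$; and since $f\in L_{p}(\mathbb{R}^{n})$, the function $g:=|f|^{\sigma}$ belongs to $L_{p/\sigma}(\mathbb{R}^{n})$ with $\|g|L_{p/\sigma}(\mathbb{R}^{n})\|^{p/\sigma}=\|f|L_{p}(\mathbb{R}^{n})\|^{p}$. The classical strong-type $(q,q)$ bound for $\mathcal{M}_{1}$ with $q=p/\sigma$ (Theorem~1 of Chapter~1 in \cite{St}, adapted from balls to cubes, which only changes the constant) then yields
\begin{equation*}
\int_{\mathbb{R}^{n}}\bigl(\mathcal{M}_{1}[g](x)\bigr)^{p/\sigma}\,dx
\le C(n,p,\sigma)\int_{\mathbb{R}^{n}}|g(x)|^{p/\sigma}\,dx
= C(n,p,\sigma)\int_{\mathbb{R}^{n}}|f(x)|^{p}\,dx,
\end{equation*}
which, combined with Steps~1 and~2, is exactly the desired inequality.

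There is no genuine obstacle here; the only minor point is to make sure the cube-based definition in \eqref{eq2.2} is compatible with the ball-based formulation in \cite{St}, but since cubes and balls of comparable diameter are contained in one another with universal constants, this only affects $C(n,\sigma,p)$.
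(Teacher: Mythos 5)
Your proof is correct and is exactly the standard reduction (monotonicity in $R$, the identity $\mathcal{M}_{\sigma}[f]=(\mathcal{M}_{1}[|f|^{\sigma}])^{1/\sigma}$, and the classical strong $(p/\sigma,p/\sigma)$ bound) that the paper has in mind when it states the proposition as an immediate consequence of Theorem~1, Chapter~1 of \cite{St} without giving a proof. Nothing further is needed; the cube-versus-ball remark and the independence of the constant from $R$ are handled appropriately.
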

Sometimes it will be convenient to use maximal functions  to  estimate from above
the average values of functions.

\begin{Prop}
\label{Prop2.2}
Let $p,\sigma \in [1,\infty)$. Let $\underline{Q}=Q_{\underline{l}}(\underline{x})$ be a cube with $\underline{l} > 0$ and
$\Omega \subset \underline{Q}$ be a Borel set. Then
\begin{equation}
\mathcal{L}^{n}(\Omega) \Bigl(\fint\limits_{\underline{Q}}|f(x)|^{\sigma}\,dx\Bigr)^{\frac{p}{\sigma}} \le 2^{\frac{np}{\sigma}}\int\limits_{\Omega}
\Bigl(\mathcal{M}_{\sigma}[f](x)\Bigr)^{p}\,dx \quad \hbox{for all} \quad f \in L^{\operatorname{loc}}_{\sigma}(\mathbb{R}^{n}).
\end{equation}
\end{Prop}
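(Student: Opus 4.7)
The strategy is to bound the average $\bigl(\fint_{\underline{Q}}|f|^{\sigma}\bigr)^{1/\sigma}$ pointwise by $\mathcal{M}_{\sigma}[f](x)$ for every $x$ in $\Omega$, and then integrate over $\Omega$. The geometric observation that makes this work is that whenever $x$ lies in the cube $\underline{Q}=Q_{\underline{l}}(\underline{x})$, the dilated cube $Q_{2\underline{l}}(x)$ centered at $x$ contains $\underline{Q}$, simply because for any $y\in\underline{Q}$ we have $\|y-x\|_{\infty}\le\|y-\underline{x}\|_{\infty}+\|\underline{x}-x\|_{\infty}\le\underline{l}/2+\underline{l}/2=\underline{l}$, so $y\in Q_{2\underline{l}}(x)$.

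Given this inclusion, I would first estimate
\begin{equation*}
\fint\limits_{\underline{Q}}|f(y)|^{\sigma}\,dy=\frac{1}{\mathcal{L}^{n}(\underline{Q})}\int\limits_{\underline{Q}}|f(y)|^{\sigma}\,dy\le\frac{\mathcal{L}^{n}(Q_{2\underline{l}}(x))}{\mathcal{L}^{n}(\underline{Q})}\fint\limits_{Q_{2\underline{l}}(x)}|f(y)|^{\sigma}\,dy=2^{n}\fint\limits_{Q_{2\underline{l}}(x)}|f(y)|^{\sigma}\,dy,
\end{equation*}
valid for every $x\in\Omega\subset\underline{Q}$. Taking the $1/\sigma$-power and recalling the definition \eqref{eq2.2} of $\mathcal{M}_{\sigma}[f]=\mathcal{M}^{+\infty}_{\sigma,0}[f]$ yields
\begin{equation*}
\Bigl(\fint\limits_{\underline{Q}}|f(y)|^{\sigma}\,dy\Bigr)^{1/\sigma}\le 2^{n/\sigma}\Bigl(\fint\limits_{Q_{2\underline{l}}(x)}|f(y)|^{\sigma}\,dy\Bigr)^{1/\sigma}\le 2^{n/\sigma}\mathcal{M}_{\sigma}[f](x).
\end{equation*}

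Raising both sides to the power $p$ gives a pointwise inequality on $\Omega$ whose left-hand side is a constant independent of $x$. Integrating over the Borel set $\Omega$ therefore produces
\begin{equation*}
\mathcal{L}^{n}(\Omega)\Bigl(\fint\limits_{\underline{Q}}|f(x)|^{\sigma}\,dx\Bigr)^{p/\sigma}\le 2^{np/\sigma}\int\limits_{\Omega}\bigl(\mathcal{M}_{\sigma}[f](x)\bigr)^{p}\,dx,
\end{equation*}
which is the required estimate. There is no real obstacle here: the only subtle point is checking the cube inclusion $\underline{Q}\subset Q_{2\underline{l}}(x)$ using the $\ell^{\infty}$-norm convention fixed in Section~2, which is precisely what produces the sharp dilation factor $2$ and hence the constant $2^{np/\sigma}$.
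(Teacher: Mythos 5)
Your argument is correct and is essentially the paper's own proof: both rest on the inclusion $\underline{Q}\subset Q_{2\underline{l}}(x)$ for $x\in\Omega$, which gives the pointwise bound $\bigl(\fint_{\underline{Q}}|f|^{\sigma}\bigr)^{1/\sigma}\le 2^{n/\sigma}\mathcal{M}_{\sigma}[f](x)$, followed by raising to the power $p$ and integrating over $\Omega$. No issues.
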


\begin{proof}
Note that $\underline{Q} \subset Q_{2\underline{l}}(y)$ for every $y \in \Omega$. This and \eqref{eq2.2} gives
\begin{equation}
\notag
\Bigl(\fint\limits_{\underline{Q}}|f(x)|^{\sigma}\,dx\Bigr)^{\frac{1}{\sigma}} \le
2^{\frac{n}{\sigma}}\Bigl(\fint\limits_{Q_{2\underline{l}}(y)}|f(x)|^{\sigma}\,dx\Bigr)^{\frac{1}{\sigma}} \le 2^{\frac{n}{\sigma}}
\mathcal{M}_{\sigma}[f](y) \quad  \hbox{for all} \quad y \in \Omega.
\end{equation}
Hence, using this observation we obtain the required estimate
\begin{equation}
\notag
\mathcal{L}^{n}(\Omega) \Bigl(\fint\limits_{\underline{Q}}|f(x)|^{\sigma}\,dx\Bigr)^{\frac{p}{\sigma}}
\le 2^{\frac{np}{\sigma}}\mathcal{L}^{n}(\Omega)\inf\limits_{y \in \Omega}\Bigl(\mathcal{M}_{\sigma}[f](y)\Bigr)^{p} \le 2^{\frac{np}{\sigma}}\int\limits_{\Omega}
\Bigl(\mathcal{M}_{\sigma}[f](x)\Bigr)^{p}\,dx.
\end{equation}
\end{proof}

In proving the main results of this paper we will use a quite delicate fact, which in turn is a particular case of a remarkable result by Sawyer
\cite{Sawyer}.

\begin{ThA}
\label{Th.Sawyer}
Let $d \in [0, n]$, $s \in [0, n)$, $R \in (0,+\infty]$ and $q \in (1,\infty)$. Let $\mathfrak{m}$ be
a Radon measure on $\mathbb{R}^{n}$ such that, for some (universal) positive constant $C(\mathfrak{m},R) > 0$,
\begin{equation}
\notag
\mathfrak{m}(Q_{l}(x)) \le C(\mathfrak{m}) l^{d} \quad \text{for all} \quad x \in \mathbb{R}^{n}, l \in (0,R).
\end{equation}
If $qs \geq n-d$, then the operator $\mathcal{M}^{R}_{1,s}$ is bounded from $L_{q}(\mathbb{R}^{n})$  into $L_{q}(\mathbb{R}^{n},\mathfrak{m})$.
Furthermore, the operator norm depends only on $n,d,q,s$ and $C(\mathfrak{m},R)$.
\end{ThA}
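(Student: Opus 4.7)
The plan is to deduce the claimed boundedness from Sawyer's general two-weight testing characterization for fractional maximal operators, the only property of $\mathfrak{m}$ actually used being the $d$-growth bound $\mathfrak{m}(Q_{l}(x)) \le C(\mathfrak{m}) l^{d}$.

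First I recall Sawyer's criterion specialized to the present diagonal setting $p=q$ with Lebesgue measure on the source side: the operator $\mathcal{M}^{R}_{1,s}$ maps $L_{q}(\mathbb{R}^{n})$ boundedly into $L_{q}(\mathbb{R}^{n}, \mathfrak{m})$ if and only if the testing inequality
\[
\int_{Q} \bigl(\mathcal{M}^{R}_{1,s}[\chi_{Q}](x)\bigr)^{q}\, d\mathfrak{m}(x) \le C\, |Q|
\]
holds uniformly over all cubes $Q \subset \mathbb{R}^n$ with $l(Q) < R$, and moreover the operator norm depends only on the constant in this testing inequality. Hence, modulo citing Sawyer's theorem, the proof reduces to verifying the displayed inequality with a constant depending only on $n, d, q, s$ and $C(\mathfrak{m}, R)$.

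Next I would verify the testing condition by elementary means. The key pointwise estimate is $\mathcal{M}^{R}_{1,s}[\chi_{Q}](x) \le l(Q)^{s}$ for every $x \in \mathbb{R}^{n}$: for scales $l \le l(Q)$ one has $l^{s}\fint_{Q_{l}(x)}\chi_{Q}\, dy \le l^{s} \le l(Q)^{s}$, while for $l > l(Q)$ the estimate $|Q_{l}(x) \cap Q| \le l(Q)^{n}$ combined with $s < n$ again yields $l^{s-n} l(Q)^{n} \le l(Q)^{s}$. Together with $\mathfrak{m}(Q) \le C\, l(Q)^{d}$ this implies
\[
\int_{Q} \bigl(\mathcal{M}^{R}_{1,s}[\chi_{Q}]\bigr)^{q}\, d\mathfrak{m} \le l(Q)^{qs} \mathfrak{m}(Q) \le C\, l(Q)^{qs+d} = C\, l(Q)^{qs+d-n}\, |Q|,
\]
and since $qs + d - n \ge 0$ by hypothesis and $l(Q) < R$, the prefactor is bounded by $C\, R^{qs+d-n}$ (with the convention $R^{0} = 1$ at the critical endpoint $qs = n-d$). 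This is precisely Sawyer's testing condition.

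I expect the main difficulty to be conceptual rather than computational: at the critical endpoint $qs = n-d$ a naive weak-to-strong upgrade via Marcinkiewicz interpolation between weak-type bounds at exponents straddling $q$ breaks down, because any $q_{0} < q$ would violate the hypothesis $q_{0} s \ge n-d$ (for positive $s$). This is precisely the obstacle that Sawyer's testing characterization bypasses, since it treats all admissible exponents uniformly once the testing condition is established; this is why I would route the proof through his theorem rather than attempt a direct Marcinkiewicz argument.
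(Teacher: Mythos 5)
Your route is, in substance, the paper's own: the paper never proves this statement but imports it as a particular case of Sawyer's two-weight theorem, and what you add --- the verification that the growth hypothesis $\mathfrak{m}(Q_{l}(x))\le C(\mathfrak{m})l^{d}$ yields Sawyer's testing condition via the pointwise bound $\mathcal{M}^{R}_{1,s}[\chi_{Q}](x)\le (l(Q))^{s}$ (your case analysis $l\le l(Q)$, $l>l(Q)$ with $s<n$ is correct) --- is exactly the computation that makes the citation legitimate. So the proposal is correct in its core and follows the same path, only made explicit.

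Two caveats are worth recording. First, the version of Sawyer's theorem you invoke (a \emph{fractional} maximal operator, \emph{truncated} at scale $R$ with testing only over cubes of side length $<R$, and a general Radon measure rather than a weight on the target side) is not the verbatim statement of the cited 1982 paper; it is a standard variant (the sufficiency half, via linearization or shifted dyadic grids, uses only cubes of side $<R$ and only integrates the maximal function against $\mathfrak{m}$), but you should either cite it in this generality or include the short reduction. Second, your testing constant is of order $C(\mathfrak{m},R)\,R^{qs+d-n}$, so the operator norm genuinely depends on $R$ when $qs>n-d$, and for $R=+\infty$ your argument only reaches the endpoint $qs=n-d$. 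This is not a defect of your proof: for $R=+\infty$ and $qs>n-d$ the stated boundedness is in fact false --- take $\mathfrak{m}$ an Ahlfors--David $d$-regular measure on an unbounded $d$-regular set (e.g.\ $\mathcal{H}^{d}$ on a $d$-plane for integer $d$, or $\mathcal{L}^{n}$ if $d=n$) and $f=\chi_{Q_{L}(0)}$; then $\|\mathcal{M}^{\infty}_{1,s}[f]\|^{q}_{L_{q}(\mathfrak{m})}\gtrsim L^{qs+d}$ while $\|f\|^{q}_{L_{q}(\mathbb{R}^{n})}=L^{n}$, so the ratio blows up as $L\to\infty$. The theorem should therefore be read with $R<\infty$ (as in the paper's only application, where $R=1$) or at the critical exponent $qs=n-d$; within that reading your argument is complete.
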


The following fact is elementary. We omit the proof.

\begin{Prop}
\label{Prop2.4}
Let $\mathfrak{m}$ be a measure on $\mathbb{R}^{n}$ and $f \in L_{1}(\mathfrak{m})$.
Let $\{E_{\alpha}\}_{\alpha \in \mathcal{I}}$ be an at most countable family of subsets of $\mathbb{R}^{n}$ such that $M(\{E_{\alpha}\}_{\alpha \in \mathcal{I}}) < +\infty$.
Then
\begin{equation}
\sum\limits_{\alpha \in \mathcal{I}}\int\limits_{E_{\alpha}}f(x)\,d\mathfrak{m}(x) \le M(\{E_{\alpha}\}_{\alpha \in \mathcal{I}})\int\limits_{E}f(x)\,d\mathfrak{m}(x),
\end{equation}
where we set $E:=\cup_{\alpha \in \mathcal{I}}E_{\alpha}$.
\end{Prop}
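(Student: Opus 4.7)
The statement is a direct consequence of unravelling the definition of the covering multiplicity $M:=M(\{E_{\alpha}\}_{\alpha \in \mathcal{I}})$; the inequality as written is only meaningful for nonnegative integrands (or equivalently for $|f|$), so I would begin the proof by explicitly assuming $f\geq 0$ $\mathfrak{m}$-a.e.\ (or else replace $f$ by $|f|$ throughout without loss of generality). The whole argument then rests on two elementary observations combined via Tonelli's theorem.

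First, I would introduce the counting function
\begin{equation}
\notag
\phi(x):=\sum_{\alpha \in \mathcal{I}}\chi_{E_{\alpha}}(x), \qquad x \in \mathbb{R}^{n},
\end{equation}
and note two facts straight from the definitions: (i) $\phi(x)\le M$ for every $x\in\mathbb{R}^{n}$ by the very meaning of covering multiplicity, and (ii) $\phi(x)=0$ whenever $x\notin E=\bigcup_{\alpha}E_{\alpha}$, since $x$ does not lie in any of the summands' supports.

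Next, since $\mathcal{I}$ is at most countable and $f\chi_{E_{\alpha}}\geq 0$, the classical Tonelli theorem (applied to $\mathfrak{m}$ and counting measure on $\mathcal{I}$) permits the interchange
\begin{equation}
\notag
\sum_{\alpha \in \mathcal{I}}\int_{E_{\alpha}}f(x)\,d\mathfrak{m}(x)
= \sum_{\alpha \in \mathcal{I}}\int_{\mathbb{R}^{n}} f(x)\chi_{E_{\alpha}}(x)\,d\mathfrak{m}(x)
=\int_{\mathbb{R}^{n}} f(x)\phi(x)\,d\mathfrak{m}(x).
\end{equation}
Inserting the pointwise bound $\phi\le M\cdot\chi_{E}$ from the previous step gives
\begin{equation}
\notag
\int_{\mathbb{R}^{n}} f(x)\phi(x)\,d\mathfrak{m}(x)\le M\int_{E}f(x)\,d\mathfrak{m}(x),
\end{equation}
which is the desired conclusion.

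There is essentially no obstacle here; the only mild subtlety is justifying the interchange of sum and integral when $\mathcal{I}$ is infinite, but that is immediate from nonnegativity of the integrand and the Tonelli theorem. Should the statement be intended for signed $f\in L_{1}(\mathfrak{m})$, the same proof applies to $|f|$ and then the inequality is interpreted with $f$ replaced by $|f|$ (which is also how the proposition is used in estimates throughout the paper).
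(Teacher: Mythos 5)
Your proof is correct; the paper omits the proof of this proposition as elementary, and your argument (interchanging sum and integral by Tonelli and bounding the counting function $\sum_{\alpha}\chi_{E_{\alpha}}$ by $M\cdot\chi_{E}$) is exactly the standard intended one. Your remark that the inequality should be read for nonnegative $f$ (or $|f|$) is also apt, since that is how the proposition is applied throughout the paper.
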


\subsection{Coverings}

In what follows, we will use the following notation. Given a family of cubes $\{Q_{\alpha}\}_{\alpha \in \mathcal{I}}$ in $\mathbb{R}^{n}$,
we put
\begin{equation}
\notag
l_{\alpha}:=\operatorname{diam}Q_{\alpha}=l(Q_{\alpha}), \quad \alpha \in \mathcal{I}.
\end{equation}

Given two nonoverlapping  families $\mathcal{Q}:=\{Q_{\alpha}\}_{\alpha \in \mathcal{I}}$ and
$\mathcal{Q}':=\{Q_{\alpha'}\}_{\alpha \in \mathcal{I'}}$  of dyadic cubes (with $\mathcal{I}, \mathcal{I}' \subset \mathbb{Z} \times \mathbb{Z}^{n}$) we write
$\mathcal{Q} \succeq \mathcal{Q}'$ provided that for every $\alpha' \in \mathcal{I'}$
there exists a unique $\alpha \in \mathcal{I}$ such that  $Q_{\alpha} \supset Q_{\alpha'}$.
If, in addition, $l_{\alpha} > l_{\alpha'}$ for all such $\alpha$ and $\alpha'$, we write
$\mathcal{Q} \succ \mathcal{Q}'$.
We say that two families of dyadic nonoverlapping cubes $\mathcal{Q}:=\{Q_{\alpha}\}_{\alpha \in \mathcal{I}}$ and $\mathcal{Q}':=\{Q_{\alpha'}\}_{\alpha' \in \mathcal{I}'}$ \textit{are comparable}
provided that
\begin{equation}
\notag
\hbox{either} \quad \mathcal{Q} \succeq \mathcal{Q}' \quad \hbox{or} \quad
\mathcal{Q}' \succeq \mathcal{Q}.
\end{equation}
Otherwise we call the corresponding families \textit{incomparable.}

Given a set $E \subset \mathbb{R}^{n}$, \textit{by a covering of $E$} we mean any family $\{U_{\beta}\}_{\beta \in \mathcal{J}}$
of subsets of $\mathbb{R}^{n}$ such that $E \subset \cup_{\beta \in \mathcal{J}}U_{\beta}$.
Any nonoverlapping family $\mathcal{Q} \subset \mathcal{D}$ that covers $E$ will be called
\textit{a dyadic nonoverlapping covering of} $E$.

\subsection{Hausdorff contents and Hausdorff measures}

In this paper instead of the classical
Hausdorff measures and Hausdorff contents, it will be convenient to work with their corresponding dyadic analogs.

Given a nonempty set $E \subset \mathbb{R}^{n}$ and $d \in [0,n]$,  we set, for any $\delta \in (0,\infty]$,
\begin{equation}
\label{eq2.1}
\begin{split}
\mathcal{H}^{d}_{\delta}(E):=\inf \sum\limits_{\alpha \in \mathcal{I}} (l_{\alpha})^{d},
\end{split}
\end{equation}
where
the infimum is taken over all dyadic nonoverlapping coverings $\{Q_{\alpha}\}_{\alpha \in \mathcal{I}}$
of the set $E$ such that $l_{\alpha} < \delta$ for all $\alpha \in \mathcal{I}$.
The value $\mathcal{H}^{d}_{\infty}(E)$ is called
\textit{the $d$-Hausdorff  content} of the set $E$.
We define \textit{the $d$-Hausdorff measure of $E$}  by the formula $\mathcal{H}^{d}(E):=\lim_{\delta \to 0}\mathcal{H}^{d}_{\delta}(E).$

\begin{Remark}
\label{Rem2.1}
It is easy to show that the $d$-Hausdorff contents and the $d$-Hausdorff measures
defined above coincide, up to some universal constants,
with their classical predecessors.

Combining this observation with Lemma 4.6 in \cite{Mat} we get $\mathcal{H}^{d}(E)=0 \Longleftrightarrow \mathcal{H}^{d}_{\delta}(E)=0$ for every
$\delta \in (0,+\infty].$
\end{Remark}

In the sequel, we will deal not only with coverings but also with some families which, for a given set $E$,
cover $E$ with some small error. More precisely, we introduce the following concept.

\begin{Def}
\label{dyadic.cov}
Let $E \subset \mathbb{R}^{n}$ be an arbitrary set. Given $d \in [0,n]$, we say that a family $\{U_{\beta}\}_{\beta \in \mathcal{J}}$
is \textit{a $d$-almost covering of} $E$ if there exists a set $E' \subset E$ with $\mathcal{H}^{d}_{\infty}(E')=0$ such that
$\{U_{\beta}\}_{\beta \in \mathcal{J}}$ is a covering of $E \setminus E'$.
\end{Def}

Recall a simple fact (for an elementary proof see \cite{Evans}, Section 2.4.3).

\begin{Prop}
\label{Prop.2.5}
Let $d \in [0,n)$ and $F \in L_{1}^{\operatorname{loc}}(\mathbb{R}^{n})$. Then there exists a set $E_{F}$
with $\mathcal{H}^{d}(E_{F})=0$ such that
\begin{equation}
\notag
\lim\limits_{l \to 0}\frac{1}{l^{d}}\int\limits_{Q_{l}(x)}|F(y)|\,dy = 0, \quad \text{for every} \quad x \in \mathbb{R}^{n} \setminus E_{F}.
\end{equation}
\end{Prop}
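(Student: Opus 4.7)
The plan is to run the classical density-plus-covering argument that underlies the Hausdorff refinement of the Lebesgue differentiation theorem. By a standard localization (replacing $F$ by $F\chi_{Q_{N}(0)}$ for each $N \in \mathbb{N}$ and taking the union of the resulting exceptional sets), I may assume from the start that $F \in L_{1}(\mathbb{R}^{n})$; the case $F \in L_{1}^{\operatorname{loc}}$ follows because for $x$ in the interior of $Q_{N}(0)$ and $l$ small, the averages over $Q_{l}(x)$ depend only on $F\chi_{Q_{N}(0)}$.

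For each $\lambda > 0$, introduce the bad set
\begin{equation}
\notag
E_{\lambda} := \Bigl\{x \in \mathbb{R}^{n} : \limsup\limits_{l \to 0^{+}} \frac{1}{l^{d}}\int\limits_{Q_{l}(x)}|F(y)|\,dy > \lambda\Bigr\}.
\end{equation}
It suffices to show $\mathcal{H}^{d}(E_{\lambda}) = 0$ for every $\lambda > 0$, since then $E_{F} := \bigcup_{k \in \mathbb{N}}E_{1/k}$ has $\mathcal{H}^{d}(E_{F}) = 0$, and for $x \notin E_{F}$ the $\limsup$ is zero, hence so is the limit.

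To estimate $\mathcal{H}^{d}(E_{\lambda})$, I fix $\varepsilon > 0$ and, by density of $C_{c}(\mathbb{R}^{n})$ in $L_{1}(\mathbb{R}^{n})$, decompose $F = g + h$ with $g \in C_{c}(\mathbb{R}^{n})$ and $\|h|L_{1}(\mathbb{R}^{n})\| < \varepsilon$. Since $d < n$ and $g$ is bounded,
\begin{equation}
\notag
\frac{1}{l^{d}}\int\limits_{Q_{l}(x)}|g(y)|\,dy \le \|g\|_{\infty}\, l^{n-d} \xrightarrow{l \to 0^{+}} 0
\end{equation}
uniformly in $x$. Therefore, choosing $\delta > 0$ so small that $\|g\|_{\infty}\,\delta^{n-d} < \lambda/2$, every $x \in E_{\lambda}$ admits some $l_{x} \in (0,\delta)$ with $l_{x}^{-d}\int_{Q_{l_{x}}(x)}|h(y)|\,dy > \lambda/2$. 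The family $\{Q_{l_{x}}(x)\}_{x \in E_{\lambda}}$ then has bounded diameters, so the Vitali $5r$-covering lemma (valid for cubes in the $\|\cdot\|_{\infty}$-norm) yields a pairwise disjoint subfamily $\{Q_{l_{i}}(x_{i})\}_{i \in I}$ such that $E_{\lambda} \subset \bigcup_{i \in I} 5Q_{l_{i}}(x_{i})$. Using Remark 2.1 (equivalence of the dyadic and classical $d$-Hausdorff contents up to a universal constant $C_{n,d}$), I get
\begin{equation}
\notag
\mathcal{H}^{d}_{5\delta}(E_{\lambda}) \le C_{n,d}\sum_{i \in I}(5 l_{i})^{d} \le \frac{2 \cdot 5^{d}\,C_{n,d}}{\lambda}\sum_{i \in I}\int\limits_{Q_{l_{i}}(x_{i})}|h(y)|\,dy \le \frac{2 \cdot 5^{d}\,C_{n,d}\,\varepsilon}{\lambda}.
\end{equation}
Letting first $\delta \to 0^{+}$ and then $\varepsilon \to 0^{+}$ gives $\mathcal{H}^{d}(E_{\lambda}) = 0$, which completes the proof.

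The only mildly technical step is the covering argument together with the passage between the dyadic and classical versions of $\mathcal{H}^{d}_{\delta}$; all other ingredients (density of $C_{c}$ in $L_{1}$, the trivial bound $l^{n-d}\|g\|_{\infty}$, and the exhaustion reducing $L_{1}^{\operatorname{loc}}$ to $L_{1}$) are routine. An alternative would be to invoke Sawyer's Theorem 2.3 for the fractional maximal operator $\mathcal{M}^{R}_{1,n-d}$, but that is heavier machinery than needed here.
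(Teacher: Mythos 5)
Your argument is correct. Note, however, that the paper does not prove Proposition \ref{Prop.2.5} at all: it simply cites the elementary proof in Evans--Gariepy, Section 2.4.3. The cited argument runs through different inputs than yours: one applies the Lebesgue differentiation theorem to see that the set $E_{\lambda}$ where the $\limsup$ exceeds $\lambda$ is $\mathcal{L}^{n}$-null (since the averages behave like $l^{n-d}\fint_{Q_{l}(x)}|F|$ and $d<n$), and then invokes the general comparison between upper $d$-densities of the Radon measure $\mu=|F|\,\mathcal{L}^{n}$ and Hausdorff measure, namely $\mathcal{H}^{d}(E_{\lambda})\le C\,\mu(E_{\lambda})/\lambda=0$. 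Your proof replaces both ingredients by the density of $C_{c}(\mathbb{R}^{n})$ in $L_{1}(\mathbb{R}^{n})$ together with a direct Vitali $5r$-covering estimate on the bad set for the small-$L_{1}$ remainder $h$; this is self-contained, avoids the density-comparison theorem, and all the delicate points (choosing $\delta$ after $g$ so that $\|g\|_{\infty}\delta^{n-d}<\lambda/2$, letting $\delta\to 0$ for the fixed decomposition before sending $\varepsilon\to 0$, and passing between the dyadic and classical contents via Remark \ref{Rem2.1}) are handled correctly. The only price is the explicit use of the $C_{c}$-approximation and the covering lemma, which the cited route packages into standard theorems; either way the result and constants are the same, so your proof is a legitimate elementary alternative to the reference given in the paper.
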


Given $p \in (1,\infty)$, recall the notion of \textit{$C_{1,p}$-capacity} (see e.g., \cite{A}, Section 2.1). In what follows, we say that some property holds $(1,p)$-quasieverywhere ($(1,p)$-q.e. for short)
if it holds everywhere except a set $E \subset \mathbb{R}^{n}$ with $C_{1,p}(E)=0$.
The following proposition summarizes some connections between the $C_{1,p}$-capacities and
the $d$-Hausdorff measures (see Theorems 5.1.9, 5.1.13 of \cite{A} for details).

\begin{Prop}
\label{Hausdorf_capacity}
Let $p \in (1,n]$ and let $E \subset \mathbb{R}^{n}$. If $\mathcal{H}^{n-p}(E) < +\infty$ then $C_{1,p}(E) = 0$.
If $C_{1,p}(E) = 0$, then $\mathcal{H}^{d}(E)=0$ for every $d > n-p$.
\end{Prop}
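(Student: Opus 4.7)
The plan is to deduce both implications from classical Adams--Hedberg potential theory; as the parenthetical reference to Theorems 5.1.9 and 5.1.13 of \cite{A} already indicates, neither statement requires the dyadic or combinatorial machinery developed later in this paper.

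For the first implication, I would reduce, by inner regularity of $C_{1,p}$, to the case when $E$ is compact and argue by contrapositive. If $C_{1,p}(E)>0$, the dual characterization of Bessel capacity produces a nonzero nonnegative Radon measure $\mu$ with $\operatorname{supp}\mu\subset E$ and finite linear energy $\|G_{1}*\mu|L_{p'}(\mathbb{R}^{n})\|<+\infty$, $p'=p/(p-1)$. Wolff's inequality rephrases this as a Wolff-potential bound,
\begin{equation}
\notag
\int\limits_{\mathbb{R}^{n}}\int\limits_{0}^{1}\Bigl(\frac{\mu(Q_{r}(x))}{r^{n-p}}\Bigr)^{p'-1}\,\frac{dr}{r}\,d\mu(x)<+\infty,
\end{equation}
which forces the density $\mu(Q_{r}(x))/r^{n-p}$ to decay to zero along an $r$-averaged $\mu$-full subset of $\mathbb{R}^{n}$. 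Combined with a Vitali covering argument applied to $\mu\lfloor_{E}$, this is incompatible with $\mathcal{H}^{n-p}(E)<+\infty$, producing the desired contradiction. It is here that $p>1$ enters essentially: Wolff's inequality trivializes when $p'=\infty$.

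For the second implication, fix $d>n-p$ and assume, for contradiction, that $\mathcal{H}^{d}(E)>0$. Frostman's lemma provides a nonzero Radon measure $\mu$ with compact support in $E$ satisfying $\mu(Q_{r}(x))\leq Cr^{d}$ for all $x\in\mathbb{R}^{n}$ and $r>0$. Since $C_{1,p}(E)=0$, for any $\eta>0$ there exists a nonnegative $f\in L_{p}(\mathbb{R}^{n})$ with $\|f|L_{p}(\mathbb{R}^{n})\|^{p}<\eta$ and $G_{1}*f(x)\geq 1$ for every $x\in E$. Choosing $s=(n-d)/p\in[0,1)$ and decomposing the Bessel kernel dyadically on $\{|y|\leq 1\}$ produces the pointwise bound
\begin{equation}
\notag
G_{1}*f(x)\leq C\,\mathcal{M}^{1}_{1,s}[f](x)+C\,\|f|L_{p}(\mathbb{R}^{n})\|,
\end{equation}
where the geometric series $\sum_{k\geq 0}2^{-k(1-s)}$ converges exactly because $d>n-p$. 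Since $qs=ps=n-d$, Theorem~\ref{Th.Sawyer} applied with $q=p$, $R=1$, and $\mathfrak{m}=\mu$ gives $\|\mathcal{M}^{1}_{1,s}[f]|L_{p}(\mu)\|\leq C\|f|L_{p}(\mathbb{R}^{n})\|$. Consequently,
\begin{equation}
\notag
\mu(E)\leq \int\limits_{E}(G_{1}*f(x))^{p}\,d\mu(x)\leq C\bigl(1+\mu(\mathbb{R}^{n})\bigr)\|f|L_{p}(\mathbb{R}^{n})\|^{p}<C'\eta.
\end{equation}
Letting $\eta\to 0$ forces $\mu(E)=0$, the desired contradiction; by Remark~\ref{Rem2.1} we conclude $\mathcal{H}^{d}(E)=0$.

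The main obstacle is the first implication, specifically the correct invocation of Wolff's inequality and the extraction of the mass-distribution contradiction; the second implication slots comfortably into Theorem~\ref{Th.Sawyer}, which is already available in the excerpt, modulo the routine dyadic decomposition of the Bessel kernel.
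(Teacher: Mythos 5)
The paper gives no proof of this proposition at all: it is quoted verbatim from \cite{A} (Theorems 5.1.9 and 5.1.13), so your reconstruction is measured against the classical arguments rather than against anything in the text. On that score the analytic core of what you write is sound and is essentially the Adams--Hedberg route. For the first half, the dual capacitary characterization plus Wolff's inequality gives $\int_{0}^{1}\bigl(\mu(Q_{r}(x))r^{p-n}\bigr)^{p'-1}\,\frac{dr}{r}<\infty$ for $\mu$-a.e.\ $x$, and since $r\mapsto\mu(Q_{r}(x))$ is monotone, a single radius with $\mu(Q_{r}(x))\geq c\,r^{n-p}$ propagates to a whole dyadic range of radii, so finiteness of the integral forces the full limit $\mu(Q_{r}(x))/r^{n-p}\to 0$ $\mu$-a.e.; the standard density--Hausdorff comparison then contradicts $\mathcal{H}^{n-p}(E)<\infty$ together with $\mu\neq 0$. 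For the second half, your Frostman-plus-Sawyer scheme is exactly the technique the paper itself uses in Proposition \ref{Proposition.2.10}, and the exponent bookkeeping is correct: $s=(n-d)/p<1$ precisely because $d>n-p$, and $ps=n-d$ meets the hypothesis of Theorem \ref{Th.Sawyer} with $q=p$.

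The one step that would fail as written is the reduction to nice sets: the proposition concerns an \emph{arbitrary} $E\subset\mathbb{R}^{n}$, and neither ``inner regularity of $C_{1,p}$'' (approximation of $E$ from inside by compacta of positive capacity) nor Frostman's lemma with a measure carried by a compact subset of $E$ is available for arbitrary, possibly non-Suslin, sets; Choquet capacitability and Frostman both need compact, Borel or Suslin sets. Both gaps are repaired by passing to Borel hulls instead of compact subsets of $E$. For the first implication, Borel regularity of $\mathcal{H}^{n-p}$ yields a Borel set $B\supset E$ with $\mathcal{H}^{n-p}(B)=\mathcal{H}^{n-p}(E)<\infty$; by monotonicity of the capacity and capacitability of Borel sets it then suffices to treat compact subsets of $B$, to which your measure-theoretic argument applies. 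For the second implication, $C_{1,p}(E)=0$ produces a nonnegative $f\in L_{p}(\mathbb{R}^{n})$ with $G_{1}\ast f\equiv+\infty$ on $E$, so $E$ lies in the Borel set $\{G_{1}\ast f=+\infty\}$, which still has zero capacity; running your Frostman/Sawyer contradiction on that superset (or estimating it directly by the maximal-function bound, which is how \cite{A} proves Theorem 5.1.13 without Frostman) gives $\mathcal{H}^{d}$-nullity of the superset and hence of $E$. With these reductions inserted, your proof is complete and consistent with the toolkit the paper already assumes.
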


\subsection{Thick sets and Frostman-type measures}

As we briefly mentioned in the introduction, cubes whose intersections
with a given closed set $S \subset \mathbb{R}^{n}$ are ``massive'' enough will be important for the construction of our extension operator in Section 5. We formalize this as follows.

Given a nonempty set $E \subset \mathbb{R}^{n}$ and numbers $d \in (0,n]$, $\lambda \in (0,1]$, we say that a \textit{cube $Q$ with $l(Q) \in (0,1]$ is $(d,\lambda)$-thick  with respect to the set $E$} if
\begin{equation}
\label{eq3.1}
\mathcal{H}^{d}_{\infty}(Q \cap E) \geq \lambda (l(Q))^{d}.
\end{equation}
We say that a \textit{cube $Q$ with $l(Q) \in (0,1]$ is $(d,\lambda)$-thin with respect to the set $E$} if
\begin{equation}
\label{eq3.2}
\mathcal{H}^{d}_{\infty}(Q \cap E) < \lambda (l(Q))^{d}.
\end{equation}
We also define the family
\begin{equation}
\label{eqq.thick_family}
\begin{split}
&\mathcal{F}_{E}(d,\lambda):=\{Q: \hbox{$Q$ is $(d,\lambda)$-thick w.r.t. $E$}\}.\\
\end{split}
\end{equation}

For the construction of the extension operator we will need a very special sequence of measures.

\begin{Def}
\label{Def.Frostman}
Let $d \in (0,n]$  and let $S \subset \mathbb{R}^{n}$ be a closed set with $\mathcal{H}^{d}_{\infty}(S) > 0$. We say that a sequence of
measures $\{\mathfrak{m}_{k}\}=\{\mathfrak{m}_{k}\}_{k \in \mathbb{N}_{0}}$ is \textit{$d$-Frostman on $S$} if  the following conditions hold:

{\rm (\textbf{M}1)} for every $k \in \mathbb{N}_{0}$,
\begin{equation}
\label{eq28'}
\operatorname{supp}\mathfrak{m}_{k}=S;
\end{equation}

{\rm (\textbf{M}2)} there exists a constant $C_{1} > 0$ such that, for each $k \in \mathbb{N}_{0}$,
\begin{equation}
\label{eq29'}
\mathfrak{m}_{k}(Q_{l}(x)) \le C_{1} l^{d} \quad \hbox{for every} \quad x \in \mathbb{R}^{n} \quad \hbox{and every} \quad l \in (0,2^{-k}];
\end{equation}

{\rm (\textbf{M}3)} there exists a constant $C_{2} > 0$ such that, for each $k \in \mathbb{N}_{0}$,
\begin{equation}
\label{eq30'}
\mathfrak{m}_{k}(Q_{k,m} \cap S) \geq
C_{2}\mathcal{H}^{d}_{\infty}(Q_{k,m} \cap S) \quad \hbox{for every}  \quad m \in \mathbb{Z}^{n};
\end{equation}

{\rm (\textbf{M}4)}  $\mathfrak{m}_{k}=w_{k}\mathfrak{m}_{0}$ with $w_{k} \in L_{\infty}(\mathfrak{m}_{0})$ for every $k \in \mathbb{N}_{0}$
and there exists a constant $C_{3} > 0$ such that, for all $k \in \mathbb{N}_{0}$ and $j \in \mathbb{N}$,
\begin{equation}
\label{eq31'}
\frac{1}{C_{3}}2^{(d-n)j}w_{k+j}(x) \le w_{k}(x) \le C_{3}w_{k+j}(x) \quad \hbox{for} \quad \mathfrak{m}_{0}-\hbox{a.e.} \quad x \in S.
\end{equation}

\textit{The class of sequences of
measures, which are $d$-Frostman on $S$} will be denoted by $\mathfrak{M}^{d}(S)$.
\end{Def}

\begin{Remark}
It is easy to see that there exist smallest constants $C_{1} > 0$ and $C_{3} > 0$
for which \eqref{eq29'} and \eqref{eq31'} hold. We denote them by $\operatorname{C}_{\{\mathfrak{m}_{k}\},1}$ and $\operatorname{C}_{\{\mathfrak{m}_{k}\},3}$, respectively.
Similarly, there exists
largest constant $C_{2} > 0$ for which \eqref{eq30'} holds, we denote it by $\operatorname{C}_{\{\mathfrak{m}_{k}\},2}$.
\end{Remark}
\hfill$\Box$

\begin{Example}
\label{Ex.2.1}
Let $d^{\ast} \in (0,n]$ and let $S \subset \mathbb{R}^{n}$ be a closed Ahlfors--David $d^{\ast}$-regular set. It is easy to see that, given $d \in (0,d^{\ast})$,
letting $\mathfrak{m}_{k}:=2^{k(d^{\ast}-d)}\mathcal{H}^{d^{\ast}}\lfloor_{S}$, $k \in \mathbb{N}_{0}$ we obtain a sequence of measures $\{\mathfrak{m}_{k}\} \in \mathfrak{M}^{d}(S)$.
\end{Example}

The following obvious observation will be currently used in the sequel.

\begin{Remark}
\label{Rem.Frostman}
Let $S \subset \mathbb{R}^{n}$ be a closed set with $\mathcal{H}^{d}_{\infty}(S) > 0$ for some $d \in (0,n]$. Let
$\{\mathfrak{m}_{k}\} \in \mathfrak{M}^{d}(S)$.
By \eqref{eq31'} it is easy to see that, given $p \in [1,\infty)$,
$f \in L_{p}(\mathfrak{m}_{k_{0}})$ for some fixed $k_{0} \in \mathbb{N}_{0}$
if and only if $f \in L_{p}(\mathfrak{m}_{k})$ for all $k \in \mathbb{N}_{0}$. Hence,
$L_{p}(\{\mathfrak{m}_{k}\})=L_{p}(\mathfrak{m}_{k_{0}})$ for each $k_{0} \in \mathbb{N}_{0}$.

If $S \subset \mathbb{R}^{n}$ is a compact set, then an application H\"older's inequality gives for any $1<q<p<\infty$,
\begin{equation}
\label{eqq.simplest_imbedding}
\|f|L_{q}(\mathfrak{m}_{0})\| \le \Bigl(\mathfrak{m}_{0}(S)\Bigr)^{\frac{p-q}{p}}\|f|L_{p}(\mathfrak{m}_{0})\| \quad \text{for all} \quad f \in L_{p}(\{\mathfrak{m}_{k}\}).
\end{equation}
\end{Remark}

Now we recall a variant of the classical Frostman-type theorem formulated in the form
adapted for our purposes (compare with Theorem 5.1.12 in \cite{A}). We will often use it in the sequel. One can find the detailed proof in Section 3.4
of \cite{TV}.

\begin{ThA}
Let $d \in (0,n]$  and let $S \subset \mathbb{R}^{n}$ be a closed set with $\mathcal{H}^{d}_{\infty}(S) > 0$. Then $\mathfrak{M}^{d}(S) \neq \emptyset$.
\end{ThA}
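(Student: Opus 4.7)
The plan is to construct the sequence $\{\mathfrak{m}_k\}$ by first building a single base measure $\mathfrak{m}_0$ via a refined dyadic Frostman procedure, and then defining every $\mathfrak{m}_k$ for $k \geq 1$ as a piecewise constant reweighting of $\mathfrak{m}_0$ on the dyadic scale $2^{-k}$. In this setup (M1)--(M3) follow essentially from the construction, while (M4) becomes a cube-by-cube comparison of weights that is transparent by design.

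I would build $\mathfrak{m}_0$ along the lines of the classical proof of the Frostman lemma (cf.\ Theorem 5.1.12 in \cite{A}). Fix a very fine initial dyadic scale $k_0$, distribute the mass $\mathcal{H}^{d}_{\infty}(Q \cap S)$ uniformly on $Q \cap S$ for each $Q \in \mathcal{D}_{k_0}$ with $\mathcal{H}^{d}_{\infty}(Q \cap S) > 0$, and sweep upward through the dyadic tree, capping the total mass on each ancestor $\tilde Q$ at $C(l(\tilde Q))^{d}$. A weak$^\ast$ subsequential limit as $k_0 \to \infty$, together with a small regularization guaranteeing $\operatorname{supp}\mathfrak{m}_0 = S$, yields a Radon measure $\mathfrak{m}_0$ with the Frostman upper bound $\mathfrak{m}_0(Q_l(x)) \le C l^{d}$ for all $l \in (0,1]$ and $\mathfrak{m}_0(S) \approx \mathcal{H}^{d}_{\infty}(S)$. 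The capping procedure should be arranged delicately enough to also yield a \emph{dyadic comparability} property: for nested dyadic cubes $Q' \subset Q$ of positive $d$-content, the quotient $\mathfrak{m}_0(Q' \cap S)/\mathfrak{m}_0(Q \cap S)$ is comparable to $(l(Q')/l(Q))^{n-d}$ times $\mathcal{H}^{d}_{\infty}(Q' \cap S)/\mathcal{H}^{d}_{\infty}(Q \cap S)$, up to a universal multiplicative constant.

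Once $\mathfrak{m}_0$ is constructed, I would set, for each $k \geq 1$, $\mathfrak{m}_k := w_k\mathfrak{m}_0$, where $w_k$ is piecewise constant on $\mathcal{D}_k$:
\begin{equation}
\notag
w_k(x) := \frac{\mathcal{H}^{d}_{\infty}(Q \cap S)}{\mathfrak{m}_0(Q \cap S)} \quad \text{for } x \in Q \cap S,\ Q \in \mathcal{D}_k \text{ with } \mathcal{H}^{d}_{\infty}(Q \cap S) > 0,
\end{equation}
and $w_k = 0$ on the remaining part of $S$. Condition (M3) is then immediate: $\mathfrak{m}_k(Q_{k,m} \cap S) = \mathcal{H}^{d}_{\infty}(Q_{k,m} \cap S)$. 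Condition (M1) follows from $\operatorname{supp}\mathfrak{m}_0 = S$ and positivity of $w_k$ on the relevant cubes (with boundary issues patched by the regularization in Step~1). For (M2), a cube $Q_l(x)$ with $l \in (0, 2^{-k}]$ intersects at most finitely many cubes of $\mathcal{D}_k$ by Proposition \ref{Prop21}; on each such $Q \in \mathcal{D}_k$ the dyadic comparability bounds $w_k|_Q$ appropriately, and combining with the Frostman bound on $\mathfrak{m}_0$ gives $\mathfrak{m}_k(Q_l(x)) \le C l^{d}$.

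The hard part will be (M4). For $Q \in \mathcal{D}_{k+j}$ contained in $\tilde Q \in \mathcal{D}_k$, the quotient $w_k(x)/w_{k+j}(x)$ on $Q \cap S$ equals
\begin{equation}
\notag
\frac{\mathcal{H}^{d}_{\infty}(\tilde Q \cap S)/\mathfrak{m}_0(\tilde Q \cap S)}{\mathcal{H}^{d}_{\infty}(Q \cap S)/\mathfrak{m}_0(Q \cap S)},
\end{equation}
so that the two-sided bound in \eqref{eq31'} says exactly that the ``$\mathfrak{m}_0$-per-content'' ratio varies across nested dyadic cubes by a bounded factor in one direction and by at most $2^{(n-d)j}$ in the other. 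This is precisely the dyadic comparability requirement on $\mathfrak{m}_0$ anticipated above, and securing it simultaneously with the Frostman upper bound is what forces the iterative capping in Step~1 to be coordinated finely through all scales rather than done independently. This is the main technical difficulty; the detailed execution of the argument is carried out in Section~3.4 of \cite{TV}.
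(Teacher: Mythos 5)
The paper itself does not prove this theorem: it is quoted from \cite{TV} (Section 3.4), so there is no internal argument to compare with, and your proposal must therefore stand on its own. It does not. The only genuinely hard step — and it is exactly the content of the theorem — is the existence of your base measure $\mathfrak{m}_{0}$ with full support, the Frostman bound, total mass comparable to $\mathcal{H}^{d}_{\infty}(S)$, \emph{and} the two--sided hierarchical property which, for all nested dyadic cubes $Q' \subset Q$ with $j$ generations between them, reads
\begin{equation}
\notag
\frac{1}{C}\,2^{-j(n-d)}\,\frac{\mathcal{H}^{d}_{\infty}(Q' \cap S)}{\mathcal{H}^{d}_{\infty}(Q \cap S)}
\;\le\; \frac{\mathfrak{m}_{0}(Q' \cap S)}{\mathfrak{m}_{0}(Q \cap S)}
\;\le\; C\,\frac{\mathcal{H}^{d}_{\infty}(Q' \cap S)}{\mathcal{H}^{d}_{\infty}(Q \cap S)}.
\end{equation}
Granted this sandwich, your verifications of (M2), (M3) and the bookkeeping for (M4) are essentially correct; but the sandwich itself is a strict strengthening of Frostman's lemma. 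The classical cap-and-sweep construction plus a weak$^{\ast}$ limit gives the upper Frostman bound and mass bounded below by content, and nothing in your sketch produces the lower half of the sandwich (that every dyadic descendant retains at least a $2^{-jn}$-proportional share of its ancestor's mass relative to its content share) uniformly over all nested pairs. You acknowledge this and defer it to \cite{TV}, Section 3.4 — i.e., to the same citation the paper uses for the whole theorem — but it is not evident that that reference proves the particular statement your reduction needs: there the sequence $\{\mathfrak{m}_{k}\}$ is constructed directly, not by exhibiting a single base measure with your hierarchical comparability. Note also that the property as you state it in Step 1 (the mass quotient \emph{comparable to} $(l(Q')/l(Q))^{n-d}$ times the content quotient) is not the right condition and is already false for an Ahlfors--David $d^{\ast}$-regular set with $d < d^{\ast} < n$, where $\mathfrak{m}_{0}=\mathcal{H}^{d^{\ast}}\lfloor_{S}$ is the natural choice and the mass quotient is $\approx (l(Q')/l(Q))^{d^{\ast}}$; only the sandwich version you write at the end can be correct.

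There is also a concrete defect in the weights themselves. You set $w_{k}:=0$ on every $Q \in \mathcal{D}_{k}$ with $\mathcal{H}^{d}_{\infty}(Q \cap S)=0$. Since $w_{0}\equiv 1$, condition (M4) forces $w_{k}\ge 1/C_{3}$ $\mathfrak{m}_{0}$-a.e., so your weights are admissible only if $\mathfrak{m}_{0}$ charges no zero-content cube; but your ``small regularization guaranteeing $\operatorname{supp}\mathfrak{m}_{0}=S$'' does precisely the opposite whenever $S$ has portions of locally zero $d$-content (isolated points, pieces of dimension below $d$), and then either (M1) fails for every $\mathfrak{m}_{k}$ with $k\ge 1$ (their supports miss those portions) or (M4) fails on a set of positive $\mathfrak{m}_{0}$-measure. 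This is not a boundary nuisance that a perturbation patches: any measure satisfying (M2) is atomless, so an isolated point of $S$ can never lie in its support, which shows that the interaction of (M1) and (M2) with the zero-content part of $S$ must be addressed explicitly rather than waved off. In sum, the scaffold of your reduction is reasonable, but the coordinated construction of the base measure — the only nontrivial part — is missing, so the proposal does not constitute a proof.
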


\subsection{Sobolev spaces}

Recall that, given parameters $p \in [1,\infty]$, $n \in \mathbb{N}$ and an open set $G \subset \mathbb{R}^{n}$, the Sobolev space
$W_{p}^{1}(G)$  is the linear space of all (equivalence classes of) real-valued functions $F \in L_{p}(G)$
whose distributional partial derivatives $D^{\gamma}F$, $|\gamma| = 1$ on~%
$G$ belong to $L_{p}(G)$. This space is equipped with the norm
\begin{equation}
\label{eq1.1}
\|F|W^{1}_{p}(G)\|:=\|F|L_{p}(G)\|+\sum\limits_{|\gamma| = 1}\|D^{\gamma}F|L_{p}(G)\|.
\end{equation}
Given $p \in [1,\infty]$, by $W_{p}^{1,\operatorname{loc}}(G)$ we denote the linear space of all (equivalence classes of) real-valued functions $F \in L^{\operatorname{loc}}_{p}(G)$
whose distributional partial derivatives $D^{\gamma}F$, $|\gamma| = 1$ on~%
$G$ belong to $L^{\operatorname{loc}}_{p}(G)$.

We will use the following notation. Given an element $F \in W^{1}_{p}(G)$, we will denote by $\nabla F$ its distributional gradient on $G$
and we put
\begin{equation}
\notag
\|\nabla F(x)\|:=\|\nabla F(x)\|_{\infty}:=\max\limits_{|\gamma|=1}\{|D^{\gamma}F(x)|\}, \quad x \in G.
\end{equation}

\begin{Def}
\label{Def.good.rep}
Let $p \in (1,n]$. Given $F \in L^{\operatorname{loc}}_{1}(\mathbb{R}^{n})$, we say that a Borel
function $\overline{F}$ is a $(1,p)$-good
representative of $F$ if the function $\overline{F}$ has Lebesgue points everywhere except a set $E_{F} \subset \mathbb{R}^{n}$ with
$C_{1,p}(E_{F}) = 0$.
\end{Def}

The following property is a very particular case of Theorem 6.2.1 of \cite{A}.

\begin{Prop}
\label{Th2.4}
Given $p \in (1,n]$, for each $F \in W_{p}^{1}(\mathbb{R}^{n})$ there exists a $(1,p)$-good representative $\overline{F}$ of $F$.
\end{Prop}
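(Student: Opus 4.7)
The plan is to reduce the statement to the Bessel-potential description of the Sobolev space and to invoke (or reprove) the standard capacitary Lebesgue differentiation theorem. Throughout, fix $p \in (1,n]$ and $F \in W_p^1(\mathbb{R}^n)$.

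First I would use the well-known identification $W_p^1(\mathbb{R}^n) = L_p^1(\mathbb{R}^n)$ for $p \in (1,\infty)$: every such $F$ can be written as a Bessel potential $F = G_1 \ast f$ for some $f \in L_p(\mathbb{R}^n)$, with $\|f|L_p(\mathbb{R}^n)\| \approx \|F|W_p^1(\mathbb{R}^n)\|$. This is the bridge between the differential norm and capacitary estimates, and is exactly the input that makes $C_{1,p}$-capacity the natural object.

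Next I would define the candidate representative by
\begin{equation}
\notag
\overline{F}(x) := \lim_{k \to \infty} \fint_{Q_{2^{-k}}(x)} F(y)\,dy
\end{equation}
at every point where this limit exists in $\mathbb{R}$, and by $\overline{F}(x):=0$ elsewhere. Each average $x \mapsto \fint_{Q_{2^{-k}}(x)} F$ is continuous; hence the set where the sequence converges is Borel, and $\overline{F}$ is a Borel function. Restricting to a dyadic sequence of scales causes no loss: once convergence along $r=2^{-k}$ is established outside a set of $C_{1,p}$-capacity zero, a standard monotonicity/interpolation of averages across $r \in (2^{-k-1},2^{-k}]$ upgrades it to a genuine Lebesgue point (and in particular $\overline{F}$ is a pointwise representative of $F$ wherever it is defined).

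The main (and only substantive) step is to show $C_{1,p}(E_F)=0$ for the exceptional set $E_F$. The standard route is a capacitary weak-type estimate for the Bessel-type maximal function: for every $\lambda>0$ and every $f \in L_p(\mathbb{R}^n)$,
\begin{equation}
\notag
C_{1,p}\bigl(\{x \in \mathbb{R}^{n} : G_{1}\ast |f|(x) > \lambda\}\bigr) \le C\,\lambda^{-p}\|f|L_p(\mathbb{R}^n)\|^p.
\end{equation}
Applying this to the truncation $f_N := f\chi_{\{|f|>N\}}$ with $N \to \infty$ forces the oscillation
$$\Omega(x):=\limsup_{k \to \infty} \fint_{Q_{2^{-k}}(x)}|F(y)-F_N^{\mathrm{cont}}(x)|\,dy$$
to vanish outside a set of arbitrarily small $C_{1,p}$-capacity (here $F_N^{\mathrm{cont}}=G_1\ast(f-f_N)$ is continuous by boundedness of $f-f_N$). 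Taking a countable intersection over $N$ gives $C_{1,p}(\{\Omega>0\})=0$, which is the desired conclusion. The hard part is purely this capacitary maximal inequality; everything else is organisational. Since the full argument is Theorem 6.2.1 of \cite{A}, in the paper one is entitled to cite it directly, as is done here.
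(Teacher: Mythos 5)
The paper gives no argument of its own here: it simply records the statement as a very particular case of Theorem 6.2.1 in \cite{A}, which is exactly the citation you fall back on, and the sketch you give (Bessel-potential identification, dyadic averages, truncation plus the capacitary weak-type estimate) is the standard proof behind that cited theorem. So your proposal is correct and takes essentially the same approach as the paper.
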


As we have already mentioned in the present paper, we
consider almost sharp intrinsic descriptions of traces of $W_{p}^{1}(\mathbb{R}^{n})$-spaces.
This motivates us to introduce the following concept.

\begin{Def}
\label{Def.d.trace}
Let $p \in (1,n]$, $d \in (n-p,n]$ and let $S \subset \mathbb{R}^{n}$ be a Borel
set  with $\mathcal{H}^{d}_{\infty}(S) > 0$. Given $F \in W_{p}^{1}(\mathbb{R}^{n})$, we define \textit{the $d$-trace} $F|_{S}^{d}$ of the element $F$ to the set $S$ as the
equivalence class $[\overline{F}]_{d}$ of the pointwise restriction to $S$ of any $(1,p)$-good representative $\overline{F}$ of $F$ modulo coincideness $\mathcal{H}^{d}$-a.e., i.e.,
\begin{equation}
\notag
F|_{S}^{d}:=\{\widetilde{f} \in \mathfrak{B}(S) : \widetilde{f}(x)=\overline{F}(x) \text{ for $\mathcal{H}^{d}$-a.e.\ $x \in S$}\}.
\end{equation}
\textit{We define the $d$-trace space} $W^{1}_{p}(\mathbb{R}^{n})|^{d}_{S}$ as the linear space of  $d$-traces $f=F|^{d}_{S}$ of all elements $F \in W_{p}^{1}(\mathbb{R}^{n})$
to the set $S$ equipped with the quotient-space norm, i.e., for each $f \in W^{1}_{p}(\mathbb{R}^{n})|^{d}_{S}$,
\begin{equation}
\label{eq214}
\|f|W^{1}_{p}(\mathbb{R}^{n})|^{d}_{S}\|:=\inf\{\|F|W_{p}^{1}(\mathbb{R}^{n})\|: F \in W_{p}^{1}(\mathbb{R}^{n}) \hbox{ and } f = F|^{d}_{S}\}.
\end{equation}
We also define the corresponding \textit{$d$-trace operator} $\operatorname{Tr}|^{d}_{S}: W_{p}^{1}(\mathbb{R}^{n}) \to W^{1}_{p}(\mathbb{R}^{n})|^{d}_{S}$ by letting
\begin{equation}
\operatorname{Tr}|^{d}_{S}[F]:=F|^{d}_{S} \quad \hbox{for every} \quad F \in W_{p}^{1}(\mathbb{R}^{n}).
\end{equation}
\end{Def}

\begin{Remark}
\label{Rem25}
Note that Definition \ref{Def.d.trace} is correct. Indeed, by Proposition \ref{Hausdorf_capacity} if
$p \in (1,n]$, $d \in (n-p,n]$ and $\mathcal{H}^{d}_{\infty}(S) > 0$, then $C_{1,p}(S) > 0$. Hence, given a Sobolev element
$F \in W_{p}^{1}(\mathbb{R}^{n})$, the restriction $\overline{F}|_{S}$
of any $(1,p)$-good representative of $F$ to the set $S$ is well defined. Furthermore, it follows from Proposition \ref{Hausdorf_capacity}
that the $d$-trace $F|_{S}^{d}$ does not depend on the choice of a $(1,p)$-good representative $\overline{F}$ of $F$.

Clearly, the $d$-trace operator $\operatorname{Tr}|^{d}_{S}$ is a linear and bounded mapping from $W_{p}^{1}(\mathbb{R}^{n})$ to $W^{1}_{p}(\mathbb{R}^{n})|^{d}_{S}$.
\end{Remark}

\begin{Remark}
\label{Rem26}
By the H\"older's inequality $W^{1,\operatorname{loc}}_{p}(\mathbb{R}^{n}) \subset W^{1,\operatorname{loc}}_{q}(\mathbb{R}^{n})$ for all $1 \le q \le p < \infty$.
Hence, given parameters $p \in (1,n]$, $d \in (n-p,n]$, and a compact
set $S \subset \mathbb{R}^{n}$ with $\mathcal{H}^{d}_{\infty}(S) > 0$,
it is easy to show using smooth cut-off functions that, for each $\varepsilon \in (0,\min\{p-(n-d),p-1\})$,
\begin{equation}
\notag
W^{1}_{p}(\mathbb{R}^{n})|^{d}_{S} \subset W^{1}_{p-\varepsilon}(\mathbb{R}^{n})|^{d}_{S}
\end{equation}
and the operator $\operatorname{Tr}|^{d}_{S}$ is well defined on $W^{1}_{p-\varepsilon}(\mathbb{R}^{n})$.
\end{Remark}

Remarks \ref{Rem25}, \ref{Rem26} justify the following definition.

\begin{Def}
\label{def.ext.oper}
Let $p \in (1,n]$, $d \in (n-p,n]$, and
$\varepsilon \in [0,\min\{p-(n-d),p-1\})$. Let $S \subset \mathbb{R}^{n}$ be a compact set with $\mathcal{H}^{d}_{\infty}(S) > 0$.
By $\mathfrak{E}(S,p,d,\varepsilon)$ we denote the linear space of all mappings $\operatorname{Ext}:W_{p}^{1}(\mathbb{R}^{n})|^{d}_{S} \to W_{p-\varepsilon}^{1}(\mathbb{R}^{n})$ such that:

{\rm ($\mathfrak{E}$1)} $\operatorname{Ext}$ is linear and bounded;

{\rm ($\mathfrak{E}$2)} $\operatorname{Ext}$ is \textit{the right inverse} of the $d$-trace operator,
i.e., $\operatorname{Tr}|^{d}_{S} \circ \operatorname{Ext} = \operatorname{Id}$ on $W_{p}^{1}(\mathbb{R}^{n})|^{d}_{S}$.
\end{Def}

Now we make a simple but nontrivial observation.

\begin{Prop}
\label{Prop.d_trace_space_is_complete}
Let $p \in (1,n]$, $d \in (n-p,n]$. Let $S \subset \mathbb{R}^{n}$ be a compact set with $\mathcal{H}^{d}_{\infty}(S) > 0$. Then the space $W^{1}_{p}(\mathbb{R}^{n})|^{d}_{S}$
is a Banach space.
\end{Prop}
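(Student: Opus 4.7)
The plan is to realize $W^{1}_{p}(\mathbb{R}^{n})|^{d}_{S}$ as a quotient of the Banach space $W^{1}_{p}(\mathbb{R}^{n})$ by the kernel of $\operatorname{Tr}|^{d}_{S}$ and verify that this kernel is closed. More precisely, set
\begin{equation}
\notag
N:=\ker \operatorname{Tr}|^{d}_{S}=\{F \in W_{p}^{1}(\mathbb{R}^{n}): \overline{F}=0 \;\; \mathcal{H}^{d}\text{-a.e. on } S\}.
\end{equation}
If $N$ is closed, the quotient $W_{p}^{1}(\mathbb{R}^{n})/N$ is a Banach space, and definition \eqref{eq214} identifies this quotient isometrically with $W^{1}_{p}(\mathbb{R}^{n})|^{d}_{S}$ via the canonical bijection $[F]\mapsto F|^{d}_{S}$. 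So everything reduces to showing $N$ is closed.

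To prove closedness of $N$, I would take $F_{k}\in N$ with $F_{k}\to F$ in $W_{p}^{1}(\mathbb{R}^{n})$ and aim to show $F\in N$. The first step is to invoke the standard capacity-theoretic fact (see \cite{A}, Theorem 6.1.4 / Proposition 6.2.1) that $W_{p}^{1}$-convergence forces $C_{1,p}$-quasi-everywhere convergence of $(1,p)$-good representatives along a subsequence: namely, one can extract $k_{j}\to\infty$ so that $\overline{F}_{k_{j}}(x)\to \overline{F}(x)$ for every $x\in \mathbb{R}^{n}\setminus E$ with $C_{1,p}(E)=0$. Since we assume $d>n-p$, Proposition \ref{Hausdorf_capacity} yields $\mathcal{H}^{d}(E)=0$, so the convergence $\overline{F}_{k_{j}}\to \overline{F}$ holds $\mathcal{H}^{d}$-a.e. on $\mathbb{R}^{n}$, in particular on $S$.

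Now for each $j$ there is a set $E_{j}\subset S$ with $\mathcal{H}^{d}(E_{j})=0$ such that $\overline{F}_{k_{j}}\equiv 0$ on $S\setminus E_{j}$. The set $E_{\ast}:=E\cup\bigcup_{j}E_{j}$ satisfies $\mathcal{H}^{d}(E_{\ast})=0$ by countable subadditivity, and on $S\setminus E_{\ast}$ the pointwise limit of the zero sequence equals $\overline{F}$; hence $\overline{F}=0$ on $S\setminus E_{\ast}$. This proves $F\in N$, so $N$ is closed.

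The main obstacle is the first step, but it is entirely standard once one quotes the Bessel-capacity representative theorem from Adams--Hedberg. With $N$ closed, the standard theorem that the quotient of a Banach space by a closed subspace is a Banach space (with the infimum norm) finishes the proof, since the infimum in \eqref{eq214} is literally the quotient norm of the coset $F+N$.
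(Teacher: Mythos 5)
Your proposal is correct and follows essentially the same route as the paper: reduce completeness to closedness of the kernel $N^{d}=\{F: F|^{d}_{S}=0\}$ via the standard quotient-space argument, extract a subsequence whose $(1,p)$-good representatives converge $C_{1,p}$-quasi-everywhere (the paper cites \cite{Kos} instead of \cite{A}, but it is the same standard fact), and then use Proposition \ref{Hausdorf_capacity} together with countable subadditivity of $\mathcal{H}^{d}$ to conclude $\overline{F}=0$ $\mathcal{H}^{d}$-a.e.\ on $S$. No gaps.
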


\begin{proof}
It is sufficient to show that the space $W^{1}_{p}(\mathbb{R}^{n})|^{d}_{S}$ is complete. Due to the standard facts from the Banach-space theory
it is sufficient to show that $N^{d}:=\{F \in W^{1}_{p}(\mathbb{R}^{n}):F|^{d}_{S}=0\}$
is a closed linear subspace of $W^{1}_{p}(\mathbb{R}^{n})$.
Indeed, we fix $F \in W^{1}_{p}(\mathbb{R}^{n})$ and a sequence $\{F_{k}\} \subset N^{d}$ such that $\|F-F_{k}|W^{1}_{p}(\mathbb{R}^{n})\| \to 0$, $k \to \infty$.
Combing Proposition 7.3.1 and Theorem 7.4.5 from \cite{Kos} we conclude that there is a $(1,p)$-good representative $\overline{F}$ of $F$ and there exist $(1,p)$-good representatives
$\overline{F}_{k}$ of $F_{k}$, $k \in \mathbb{N}$ such that for some strictly
increasing sequence $\{k_{s}\} \subset \mathbb{N}$, for some set $E_{1}$ with $C_{1,p}(E_{1})=0$ we have $\overline{F}_{k_{s}}(x) \to \overline{F}(x)$, $s \to \infty$ for each $x \in S \setminus E_{1}$.
On the other hand, there is a set $E_{2} \subset S$ such that $\mathcal{H}^{d}(E_{2}) = 0$ and $\overline{F}_{k}(x)=0$ for each $x \in S \setminus E_{2}$ for all $k \in \mathbb{N}$. As a result, taking into account Proposition \ref{Hausdorf_capacity} we deduce that
$\mathcal{H}^{d}(E_{1} \cup E_{2}) = 0$ and $\overline{F}(x)=\lim_{s \to \infty}\overline{F}_{k_{s}}(x)=0$ for all $x \in S \setminus (E_{1} \cup E_{2})$. This gives $F \in N^{d}$.
The proof is complete.
\end{proof}

The following proposition is a minor modification
of the classical Poincar\'e-type inequality.

\begin{Prop}
\label{Lm4.1}
For every $c,c' \geq 1$ there exists a constant $C=C(n,c,c') > 0$ such that, for
any cubes $Q_{1}:=Q_{l}(x_{1})$, $Q_{2}:=Q_{cl}(x_{2})$ with $l > 0$ and $\|x_{1}-x_{2}\| \le c'l$,
\begin{equation}
\label{eq4.4}
\fint\limits_{Q_{1}}\fint\limits_{Q_{2}}|F(y)-F(z)|\,dz\,dy \le C l \fint\limits_{(2c'+c)Q_{1}}\|\nabla F(y)\|\,dy \quad
\text{for all} \quad F \in W_{1}^{1,\operatorname{loc}}(\mathbb{R}^{n}).
\end{equation}
\end{Prop}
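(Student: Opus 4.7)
The plan is to reduce the statement to the classical Poincaré--Wirtinger inequality on a single enclosing cube. First, I would verify the elementary geometric containment
\[
Q_{1} \subset \widetilde{Q} := (2c'+c)Q_{1}, \qquad Q_{2} \subset \widetilde{Q}.
\]
The first inclusion is trivial since $2c'+c \geq 1$. For the second, any $z \in Q_{2}$ satisfies
\[
\|z-x_{1}\|_{\infty} \le \|z-x_{2}\|_{\infty}+\|x_{2}-x_{1}\|_{\infty} \le \tfrac{cl}{2}+c'l = \tfrac{(2c'+c)l}{2},
\]
so $z$ lies in the cube centered at $x_{1}$ with side length $(2c'+c)l$, which is precisely $\widetilde{Q}$.

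Second, by standard density arguments one may assume $F$ is smooth. Introducing the average $F_{\widetilde{Q}} := \fint_{\widetilde{Q}} F(y)\,dy$ and applying the triangle inequality,
\[
\fint_{Q_{1}}\fint_{Q_{2}}|F(y)-F(z)|\,dz\,dy \le \fint_{Q_{1}}|F(y)-F_{\widetilde{Q}}|\,dy + \fint_{Q_{2}}|F(z)-F_{\widetilde{Q}}|\,dz.
\]
Third, since $Q_{i} \subset \widetilde{Q}$ for $i=1,2$, the trivial estimate $\fint_{Q_{i}} |\cdot| \le \frac{|\widetilde{Q}|}{|Q_{i}|}\fint_{\widetilde{Q}}|\cdot|$ combined with the volume comparison $|\widetilde{Q}|/|Q_{i}| \le \max\{(2c'+c)^{n},((2c'+c)/c)^{n}\} =: A(n,c,c')$ gives
\[
\fint_{Q_{i}}|F-F_{\widetilde{Q}}|\,dy \le A(n,c,c')\fint_{\widetilde{Q}}|F-F_{\widetilde{Q}}|\,dy, \qquad i=1,2.
\]
Finally, I would invoke the classical Poincaré--Wirtinger inequality on the cube $\widetilde{Q}$ of side length $(2c'+c)l$,
\[
\fint_{\widetilde{Q}}|F(y)-F_{\widetilde{Q}}|\,dy \le C_{0}(n)(2c'+c)l\fint_{\widetilde{Q}}\|\nabla F(y)\|\,dy,
\]
and combine the pieces, yielding \eqref{eq4.4} with $C := 2A(n,c,c')\,C_{0}(n)(2c'+c)$.

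The argument is essentially routine and I do not expect any serious obstacle; the only thing to be careful about is the geometric containment together with tracking the dependence of the constant on $c$ and $c'$. The same proof works for any $F \in W_{1}^{1,\operatorname{loc}}(\mathbb{R}^{n})$ by a standard mollification argument, since $\widetilde{Q}$ is a fixed bounded domain and both sides of \eqref{eq4.4} are continuous under $W_{1}^{1}$-convergence on $\widetilde{Q}$.
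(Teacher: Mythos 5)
Your proof is correct and follows essentially the same route as the paper: compare both averages to the average over the enlarged cube $(2c'+c)Q_{1}$ (which contains $Q_{1}$ and $Q_{2}$), control the change of averaging domain by a volume ratio depending only on $n,c,c'$, and apply the classical Poincar\'e--Wirtinger inequality on that cube. The only cosmetic difference is that you apply the triangle inequality before inflating the domains, whereas the paper inflates the double average first; the mollification remark is unnecessary but harmless.
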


\begin{proof}
Fix cubes $Q_{1}$, $Q_{2}$ satisfying the
assumptions of the lemma. Recall the classical Poincar\'e-type inequality (see (7.45) in \cite{Tr}). More precisely, there exists a constant $C'=C'(n) > 0$
such that, for any cube $Q$, the following inequality
\begin{equation}
\label{eq4.4''}
\fint\limits_{Q}\Bigl|F(y)-\fint\limits_{Q}F(z)\,dz\Bigr|\,dy
\le C'(n) l \fint\limits_{Q}\|\nabla F(y)\|\,dy
\end{equation}
holds for all $F \in W_{1}^{1,\operatorname{loc}}(\mathbb{R}^{n})$.

Now we fix an arbitrary $F \in W_{1}^{1,\operatorname{loc}}(\mathbb{R}^{n})$. Since $\|x_{1}-x_{2}\| \le c'l$ we clearly have $(2c'+c)Q_{1} \supset Q_{2}$.
Hence by \eqref{eq4.4''},
\begin{equation}
\begin{split}
&\fint\limits_{Q_{1}}\fint\limits_{Q_{2}}|F(y)-F(z)|\,dy\,dz\\
&\le (2c'+c)^{2n}\fint\limits_{(2c'+c)Q_{1}}
\fint\limits_{(2c'+c)Q_{1}}\Bigl|F(y)-\fint\limits_{(2c'+c)Q_{1}} F(x)\,dx+\fint\limits_{(2c'+c)Q_{1}} F(x)\,dx-F(z)\Bigr|\,dy\,dz\\
&\le 2(2c'+c)^{2n}\fint\limits_{(2c'+c)Q_{1}}\Bigl|F(y)-\fint\limits_{(2c'+c)Q_{1}} F(x)\,dx\Bigr|\,dy\\
&\le 2 C'(n)(2c'+c)^{2n}
l \fint\limits_{(2c'+c)Q_{1}}\|\nabla F(y)\|\,dy.
\end{split}
\end{equation}
This gives \eqref{eq4.4} with $C(n)=2C'(n)(2c'+c)^{2n}$.
\end{proof}

Now we recall the key analytical feature of Frostman-type measures.
Namely, given a parameter $d \in (0,n]$, for each cube $Q$ and any
$F \in W^{1,\operatorname{loc}}_{\sigma}(\mathbb{R}^{n})$, for any large enough $\sigma$
we can control effectively how close are the average value of $F$ over
$Q$ calculated with respect to a $d$-Frostman measure $\mathfrak{m}$
and the average value of $F$ over $Q$ calculated with respect to the classical Lebesgue measures $\mathcal{L}^{n}$.
More precisely, the following result was established in \cite{TV}. Given $l > 0$ we set
$k(l):=[\log_{2}(\frac{1}{l})]$.

\begin{ThA}
\label{Th4.3}
Let $d \in (0,n]$, $\lambda \in (0,1]$, $\sigma \in (\max\{1,n-d\},n]$. Let
$S \subset \mathbb{R}^{n}$ be a~closed set with $\mathcal{H}^{d}_{\infty}(S) > 0$ and $\{\mathfrak{m}_{k}\} \in \mathfrak{M}^{d}(S)$.
Then there exists a constant $C > 0$ depending only on $C_{\{\mathfrak{m}_{k}\},i}$, $i=1,2,3$ and parameters
$n$, $d$, $\lambda$, $\sigma$
(but independent of a construction of the sequence $\{\mathfrak{m}_{k}\}$)
such that the following inequality
\begin{equation}
\label{eq4.10}
\fint\limits_{Q \cap S}\Bigl|F|^{d}_{S}(y)-\fint\limits_{Q}F(z)\,dz\Bigr|\,d\mathfrak{m}_{k(l)}(y) \le C l
\Bigl(\fint\limits_{Q}\sum\limits_{|\gamma|=1}|D^{\gamma}F(t)|^{\sigma}\,dt\Bigr)^{\frac{1}{\sigma}}
\end{equation}
holds for each cube $Q=Q_{l}(x) \in \mathcal{F}_{S}(d,\lambda)$ with $l \in (0,1]$ and any $F \in W_{\sigma}^{1,\operatorname{loc}}(\mathbb{R}^{n})$.
\end{ThA}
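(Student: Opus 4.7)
My plan is to prove the inequality by a telescoping argument along a chain of self-dyadic subdivisions of $Q$, combined with a Fubini/H\"older step that turns the resulting sum of small-scale $\sigma$-averages back into a single $\sigma$-average over $Q$. First I fix a $(1,\sigma)$-good representative $\overline{F}$ of $F$; since $\sigma>1$ this exists, and since $\sigma>n-d$, Proposition~\ref{Hausdorf_capacity} implies that the exceptional set of non-Lebesgue points of $\overline{F}$ has zero $\mathcal{H}^{d}$-measure, hence zero $\mathfrak{m}_{k(l)}$-measure by (M2) and countable subadditivity. Consequently, for $\mathfrak{m}_{k(l)}$-a.e.\ $y\in Q\cap S$ we have $F|^{d}_{S}(y)=\overline{F}(y)=\lim_{j\to\infty}\fint_{Q_j(y)}F\,dz$ along any regular shrinking family containing $y$.

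For each $y\in Q$ I set $Q_{0}(y):=Q$ and inductively let $Q_{j+1}(y)\subset Q_{j}(y)$ be the unique half-subcube of $Q_j(y)$ (obtained by bisecting along every coordinate) containing $y$, so that $l(Q_{j}(y))=2^{-j}l$ and $Q_{j}(y)\subset Q$ for every $j$. Telescoping,
\[
\overline{F}(y)-\fint_{Q}F\,dz=\sum_{j=0}^{\infty}\Bigl(\fint_{Q_{j+1}(y)}F-\fint_{Q_{j}(y)}F\Bigr),
\]
and since $|Q_j(y)|=2^{n}|Q_{j+1}(y)|$, the standard $L^{1}$ Poincar\'e inequality on $Q_j(y)$ combined with Jensen's inequality produces the link bound
\[
\Bigl|\fint_{Q_{j+1}(y)}F-\fint_{Q_{j}(y)}F\Bigr|\le C\cdot 2^{-j}l\cdot\Bigl(\fint_{Q_{j}(y)}\|\nabla F\|^{\sigma}\,dz\Bigr)^{1/\sigma}.
\]

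The core step is to integrate each link against $\mathfrak{m}_{k(l)}$ on $Q\cap S$ and aggregate. H\"older's inequality with exponents $(\sigma,\sigma')$ reduces the problem to estimating
\[
\int_{Q\cap S}\fint_{Q_{j}(y)}\|\nabla F\|^{\sigma}\,dz\,d\mathfrak{m}_{k(l)}(y)=\int_{Q}\|\nabla F\|^{\sigma}(z)\,\frac{\mathfrak{m}_{k(l)}(Q_{j}(z)\cap S)}{|Q_{j}(z)|}\,dz,
\]
where the identity comes from Fubini together with the observation $\{y\in Q:z\in Q_j(y)\}=Q_j(z)$, a direct consequence of the self-dyadic construction. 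Since $l(Q_j(z))=2^{-j}l\le 2^{-k(l)}$, the Frostman upper bound (M2) yields $\mathfrak{m}_{k(l)}(Q_j(z)\cap S)/|Q_j(z)|\le C_{1}(2^{-j}l)^{d-n}$, an amplification by $2^{j(n-d)}$. Combining this with $\mathfrak{m}_{k(l)}(Q\cap S)\le C_{1}l^{d}$ inside H\"older, multiplying by $2^{-j}l$ and summing over $j$, the resulting geometric series $\sum_{j\ge 0}2^{j((n-d)/\sigma-1)}$ converges precisely because $\sigma>n-d$, and we arrive at the un-normalized bound
\[
\int_{Q\cap S}\Bigl|\overline{F}(y)-\textstyle\fint_{Q}F\Bigr|\,d\mathfrak{m}_{k(l)}(y)\le C'\,l^{d+1}\Bigl(\fint_{Q}\|\nabla F\|^{\sigma}\Bigr)^{1/\sigma}.
\]

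The main obstacle is the final normalization: to pass to $\fint_{Q\cap S}$ one needs the matching lower bound $\mathfrak{m}_{k(l)}(Q\cap S)\gtrsim\lambda l^{d}$, and this is the only step where the $(d,\lambda)$-thickness of $Q$ is genuinely used. Property (M3) supplies such a lower bound only for the \emph{standard} dyadic cubes $Q_{k,m}$, whereas $Q=Q_{l}(x)$ is arbitrary, so I would extract a global dyadic cube $Q^{\ast}\subset Q$ of level $k^{\ast}=k(l)+c_{0}(n,d)$ on which $\mathcal{H}^{d}_{\infty}(Q^{\ast}\cap S)\gtrsim\lambda l^{d}$; the combinatorial kernel is to iteratively bisect $Q$ into $2^{n}$ halves, at each step invoking the subadditivity $\mathcal{H}^{d}_{\infty}(A\cup B)\le\mathcal{H}^{d}_{\infty}(A)+\mathcal{H}^{d}_{\infty}(B)$ to select a halved piece retaining a $2^{-n}$-fraction of the content, and choosing $c_{0}$ large enough that one of these pieces aligns with the global dyadic grid. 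Once $Q^{\ast}$ is secured, (M3) gives $\mathfrak{m}_{k^{\ast}}(Q^{\ast}\cap S)\gtrsim\lambda l^{d}$, (M4) transfers this to $\mathfrak{m}_{k(l)}(Q^{\ast}\cap S)\gtrsim\lambda l^{d}$ with an admissible loss factor $(C_{3}2^{(n-d)c_{0}})^{-1}$, and the inclusion $Q^{\ast}\subset Q$ then yields the desired bound on $\mathfrak{m}_{k(l)}(Q\cap S)$. Dividing the un-normalized bound by this lower bound produces the stated estimate with $C$ depending exactly on the parameters listed in the theorem.
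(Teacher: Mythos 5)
A preliminary remark: the paper does not prove Theorem \ref{Th4.3} at all --- it is imported from \cite{TV}, and the remark following it points to the Riesz-potential trace inequality of \cite{Ver} --- so your argument has to be judged on its own. Its first half does stand: the self-dyadic chain $Q_{j}(y)$, the Poincar\'e--Jensen link estimate, the Fubini identity $\{y\in Q: z\in Q_{j}(y)\}=Q_{j}(z)$, the use of (M2) (legitimate because $2^{-j}l\le l\le 2^{-k(l)}$), and the geometric series, convergent exactly because $\sigma>n-d$, correctly yield the un-normalized bound $\int_{Q\cap S}\bigl|F|^{d}_{S}(y)-\fint_{Q}F\,dz\bigr|\,d\mathfrak{m}_{k(l)}(y)\le C\,l^{d+1}\bigl(\fint_{Q}\|\nabla F\|^{\sigma}\,dt\bigr)^{1/\sigma}$ with $C=C(n,d,\sigma,\operatorname{C}_{\{\mathfrak{m}_{k}\},1})$.

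The normalization step, however, contains a genuine gap, and it is not a repairable detail: it is the only place where the thickness of $Q$ enters, i.e.\ the crux of the statement. First, the proposed extraction of $Q^{\ast}$ cannot work as described: the pieces obtained by repeatedly bisecting an arbitrary cube $Q_{l}(x)$ have side $2^{-t}l$ and corners determined by $x$, so for generic $x$ and $l$ they never coincide with cubes of the global dyadic grid, however large $c_{0}$ is, and replacing a selected piece by a dyadic cube contained in it destroys the content lower bound. Second, the object you want need not exist: if $d\le n-1$ the content of $Q\cap S$ may sit on a face of $Q$ (take $n=2$, $d=\tfrac12$, $S=[\tfrac13,1]\times[0,1]$, $Q=[-\tfrac23,\tfrac13]\times[0,1]$, so that $Q\cap S=\{\tfrac13\}\times[0,1]$ has content $1$), and then every dyadic cube contained in $Q$ misses $S$ entirely, because $\tfrac13$ is not dyadic. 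Third, and decisively, the inequality $\mathfrak{m}_{k(l)}(Q\cap S)\gtrsim\lambda l^{d}$ that your final division requires is false in general: for this $S$ one can build $\{\mathfrak{m}_{k}\}\in\mathfrak{M}^{1/2}(S)$ from $2^{k(2-d)}\mathcal{L}^{2}\lfloor_{S}$ plus one-dimensional pieces on the top, bottom and right edges (these are the only places where (M3) forces singular mass, since every dyadic square straddling the line $x_{1}=\tfrac13$ contains a sliver of the slab of width at least $2^{-j}/3$), and this sequence gives the face $Q\cap S$ zero mass, and mass $\approx\delta$ to $Q_{\delta}\cap S$ for the translate $Q_{\delta}=Q+(\delta,0)$, which is still $(d,\lambda)$-thick. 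Consequently no bound of size $l^{d+1}$ for the un-normalized integral can yield \eqref{eq4.10} after dividing by the mass: the correct estimate must scale with the actual mass $\mathfrak{m}_{k(l)}(Q\cap S)$, which is exactly what the pointwise Riesz-potential bound combined with the Adams--Sawyer-type trace inequality (Theorem \ref{Th.Sawyer}, used in this way in the proof of Proposition \ref{Proposition.2.10}) delivers. Your scheme can be salvaged only in the range $d>n-1$: there the $2^{-j}$-collar of $\partial Q$ has $\mathfrak{m}_{j}$-mass at most $C(n)\operatorname{C}_{\{\mathfrak{m}_{k}\},1}l^{n-1}2^{j(n-1-d)}$ by (M2), so taking $j=k(l)+c_{0}(n,d,\lambda)$, applying (M3) to the generation-$j$ dyadic cubes contained in $Q$ and then transferring back with (M4) does give $\mathfrak{m}_{k(l)}(Q\cap S)\gtrsim\lambda l^{d}$.
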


\begin{Remark}
We should note that Theorem \ref{Th4.3} is in fact an easy consequence of the corresponding
beautiful trace theorem for the Riesz potentials \cite{Ver}.
\end{Remark}
\hfill$\Box$

\begin{Prop}
\label{Proposition.2.10}
Let $d \in (0,n]$ and $p \in (\max\{1,n-d\},\infty)$.~Let $S \subset Q_{0,0}$ be a compact set with $\mathcal{H}^{d}_{\infty}(S) > 0$ and
$\{\mathfrak{m}_{k}\} \in \mathfrak{M}^{d}(S)$.~If $F \in W_{p}^{1,\operatorname{loc}}(\mathbb{R}^{n})$ then $f=F|^{d}_{S} \in L_{p}(\{\mathfrak{m}_{k}\})$.~Furthermore, there exists a constant $C > 0$ depending only on $n,d,p$ and $\operatorname{C}_{\mathfrak{m}_{k},i}$, $i=1,2,3$ such that
the following inequality
\begin{equation}
\label{eqq.222}
\|f|L_{p}(\mathfrak{m}_{0})\| \le C \|F|W_{p}^{1}(3Q_{0,0})\|
\end{equation}
holds for all $F \in W_{p}^{1,\operatorname{loc}}(\mathbb{R}^{n})$ with $f=F|^{d}_{S}$.
\end{Prop}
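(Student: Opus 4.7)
The plan is to reduce the problem, via Remark \ref{Rem.Frostman}, to proving the bound \eqref{eqq.222} with the $L_p(\mathfrak{m}_0)$-norm on the left, and then to control $|f(x)|$ pointwise $\mathfrak{m}_0$-almost everywhere on $S$ by a Calder\'on-type expression in $F$ and $\nabla F$ on $3Q_{0,0}$. Since $S \subset Q_{0,0}$, condition (\textbf{M}2) applied at scale $l=1$ yields $\mathfrak{m}_0(S) \le \operatorname{C}_{\{\mathfrak{m}_k\},1}$, and combining this with (\textbf{M}2) itself extends the Frostman upper bound $\mathfrak{m}_0(Q_l(x)) \le C l^d$ to all scales $l > 0$ with a constant depending only on $\operatorname{C}_{\{\mathfrak{m}_k\},1}$. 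In particular, $\mathfrak{m}_0$ annihilates every $\mathcal{H}^d$-null Borel set, and coupled with Proposition \ref{Hausdorf_capacity} (applicable because $d > n-p$) this ensures that any $(1,p)$-good representative $\overline{F}$ of $F$ has a Lebesgue point at $\mathfrak{m}_0$-a.e.\ $x \in S$ and coincides with $f$ there.

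At any such $x$ I would telescope
\[
|f(x)| \le \biggl|\fint_{Q_1(x)} F(y)\,dy\biggr| + \sum_{k=0}^{\infty}\biggl|\fint_{Q_{2^{-k-1}}(x)}F(y)\,dy - \fint_{Q_{2^{-k}}(x)}F(y)\,dy\biggr|,
\]
and dominate each telescoping difference, via Proposition \ref{Lm4.1} applied to the pair $Q_{2^{-k-1}}(x) \subset Q_{2^{-k}}(x)$, by $C \cdot 2^{-k}\fint_{Q_{2^{-k}}(x)}\|\nabla F(y)\|\,dy$. Next I would fix an auxiliary exponent $s$ with $\max\{0,(n-d)/p\} \le s < 1$ (this interval is nonempty precisely because $p > \max\{1,n-d\}$), use the elementary estimate $2^{-k}\fint_{Q_{2^{-k}}(x)}\|\nabla F\|\,dy \le 2^{-k(1-s)}\mathcal{M}^{1}_{1,s}[\,\|\nabla F\|\,](x)$, and sum the geometric series $\sum_{k\ge 0} 2^{-k(1-s)}$ to obtain
\[
|f(x)| \le \fint_{Q_1(x)} |F(y)|\,dy + C\, \mathcal{M}^{1}_{1,s}[\,\|\nabla F\|\,](x) \quad \text{for } \mathfrak{m}_0\text{-a.e. } x \in S.
\]

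Finally, I would raise to the $p$-th power and integrate against $\mathfrak{m}_0$. For $x \in S \subset Q_{0,0}$ and $l \in (0,1]$ one has $Q_l(x) \subset 3Q_{0,0}$, so the restricted maximal functions see only values of $F$ and $\|\nabla F\|$ on $3Q_{0,0}$. The first term is controlled by H\"older's inequality together with the bound on $\mathfrak{m}_0(S)$, contributing $C\|F|L_p(3Q_{0,0})\|$. For the second term I would invoke Theorem \ref{Th.Sawyer} with $q = p$, which is legitimate precisely because $ps \ge n-d$ and the Frostman upper bound holds at every scale, yielding $C\|\nabla F|L_p(3Q_{0,0})\|$. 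Summing the two contributions produces \eqref{eqq.222}. The main delicate point is the availability of an exponent $s$ that is simultaneously strictly less than one (for geometric summability) and at least $(n-d)/p$ (for Sawyer's theorem); this is exactly what the hypothesis $p > \max\{1,n-d\}$ delivers, so no essential obstacle arises beyond careful bookkeeping of constants.
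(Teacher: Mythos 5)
Your proposal is correct and follows essentially the same route as the paper: a pointwise bound of $|f(x)|$ by an average of $F$ plus a restricted fractional maximal function $\mathcal{M}^{R}_{1,s}[\|\nabla F\|](x)$ with $ps \ge n-d$, $s<1$ (obtained there via the Hajlasz/Riesz-potential estimate and "telescoping arguments", here spelled out through dyadic telescoping and Proposition \ref{Lm4.1}), followed by Sawyer's Theorem \ref{Th.Sawyer} with $\mathfrak{m}=\mathfrak{m}_{0}$ and H\"older for the averaged term. The only differences are harmless bookkeeping (the Poincar\'e step enlarges the cube to $Q_{2^{-k+1}}(x)$, so one should take $R=2$ or treat $k=0$ separately), which does not affect the argument.
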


\begin{proof}
We fix a parameter $p \in (\max\{1,n-d\},\infty)$ and an element $F \in W_{p}^{1}(\mathbb{R}^{n})$.
By Remark \ref{Rem25},
the $d$-trace $f:=F|^{d}_{S}$ is well defined. Furthermore, the measure $\mathfrak{m}_{0}$ is absolutely continuous with respect to $\mathcal{H}^{d}\lfloor_{S}$.
Hence, using classical estimates (see, for example, Section 2 in \cite{Haj}) and telescoping arguments
it is easy to see that, for each $\delta \in (0,1)$, there is $C > 0$ (independent on $F$) such that
\begin{equation}
\notag
\Bigl|f(x)-\fint\limits_{Q_{0,0}} F(y)\,dy\Bigr| \le C\int\limits_{Q_{0,0}} \frac{\|\nabla F (y)\|}{\|x-y\|^{n-1}}\,dy
\le C\mathcal{M}_{1,1-\delta}^{1}[\|\nabla F\|](x) \quad \text{for} \quad \mathfrak{m}_{0}-\text{a.e.} \quad x \in S.
\end{equation}
Now we fix $\delta \in (0,1)$ so small that $p(1-\delta) > n-d$ and recall that $S \subset Q_{0,0}$.~An application of Theorem \ref{Th.Sawyer} with $\mathfrak{m}=\mathfrak{m}_{0}$ and $s=1-\delta$
in combination with H\"older's inequality gives
\begin{equation}
\begin{split}
\notag
&\int\limits_{S}|f(x)|^{p}\,d\mathfrak{m}_{0}(x)\le \int\limits_{S}\Bigl|f(x)-\fint\limits_{Q_{0,0}} F(y)\,dy\Bigr|^{p}\,d\mathfrak{m}_{0}(x)+\Bigl|\fint\limits_{Q_{0,0}} F(y)\,dy\Bigr|^{p}\\
&\le C \Bigl(\int\limits_{Q_{0,0}}\Bigl(\mathcal{M}_{1,1-\delta}^{1}[\|\nabla F\|](x)\Bigr)^{p}\,d\mathfrak{m}_{0}(x)\Bigr)^{\frac{1}{p}} + \Bigl(\int\limits_{Q_{0,0}}|F(y)|^{p}\,dy\Bigr)^{\frac{1}{p}} \le C \|F|W_{p}^{1}(3Q_{0,0})\| < +\infty.
\end{split}
\end{equation}
The proposition is proved.
\end{proof}


\section{Combinatorial and measure-theoretic tools}

In this section we built combinatorial and geometric measure theory foundations needed for our purposes. Based on the machinery developed in this section
we introduce new Calder\'on-type maximal functions in Section 4 and present a new construction of the extension operator in Section 5.
Throughout the whole section, \textit{we fix $d^{\ast} \in (0,n]$ and a closed set} $S \subset Q_{0,0}$
with $\mathcal{H}^{d^{\ast}}_{\infty}(S) > 0$.

\subsection{Admissible sequences of coverings}

Most of the definitions and results of this subsection are borrowed from our recent paper \cite{T2}, where the reader
can find all necessary details.

In the construction of the extension operator we will need to work with the family of all
$(d,\lambda)$-dyadically thick dyadic cubes. Such family gives a ``skeleton'' for the construction.
This motivates us to introduce the following concept
(recall notation $\mathcal{F}_{S}(d,\lambda)$ given in \eqref{eqq.thick_family}).

\begin{Def}
\label{Def.dlambdakeystone}
Let $\lambda \in (0,1]$ and let $d \in (0,n]$ be such that $\mathcal{H}^{d}_{\infty}(S)>0$.
We define \textit{the $(d,\lambda)$-keystone (for $S$) family} by letting
\begin{equation}
\mathcal{DF}(d,\lambda):=\mathcal{DF}_{S}(d,\lambda):=\mathcal{D}_{+} \bigcap \mathcal{F}_{S}(d,\lambda).
\end{equation}
The corresponding index set $\mathcal{A}(d,\lambda):=\mathcal{A}_{S}(d,\lambda) \subset \mathbb{N}_{0} \times \mathbb{Z}^{n}$
is called \textit{the $(d,\lambda)$-keystone (for $S$) index set}, i.e.,
\begin{equation}
\notag
\mathcal{DF}(d,\lambda)=\{Q_{\alpha}\}_{\alpha \in \mathcal{A}(d,\lambda)}.
\end{equation}
\end{Def}

Clearly, it is difficult to work with the whole family $\mathcal{DF}(d,\lambda)$.
Given $d \in (0,n]$ and $\lambda \in (0,1]$, we need a natural decomposition of the
$(d,\lambda)$-keystone family $\mathcal{DF}(d,\lambda)$
in analogy with a natural decomposition of the family $\mathcal{D}_{+}$ into subfamilies $\mathcal{D}_{k}$, $k \in \mathbb{N}_{0}$.
Based on Netrusov's ideas \cite{Net} such a decomposition was recently obtained in \cite{T2} and is given by the following theorem.

\begin{Th}
\label{Th.adm.sys.cover.}
Let $\lambda \in (0,1)$ and $d \in (0,n]$ be such that $\mathcal{H}^{d}_{\infty}(S)>0$.
Then there exists a unique sequence of families $\{\mathcal{Q}^{j}(d,\lambda)\}_{j \in \mathbb{N}}$, called the canonical decomposition of $\mathcal{DF}(d,\lambda)$,
satisfying the following conditions:

{\rm ($\mathcal{F}$1)} $\mathcal{DF}(d,\lambda)=\cup_{j \in \mathbb{N}}\mathcal{Q}^{j}(d,\lambda)$;

{\rm ($\mathcal{F}$2)} for each $j \in \mathbb{N}$, the family $\mathcal{Q}^{j}(d,\lambda)$ is a dyadic nonoverlapping $d$-almost covering of  $S$;

{\rm ($\mathcal{F}$3)} $\mathcal{Q}^{j}(d,\lambda) \succ \mathcal{Q}^{j+1}(d,\lambda)$ for every $j \in \mathbb{N}$.


\end{Th}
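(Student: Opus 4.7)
The plan is to define the families canonically by counting strictly larger thick dyadic ancestors. For each $Q \in \mathcal{DF}(d,\lambda)$ set
\[
 r(Q) := \#\{Q' \in \mathcal{DF}(d,\lambda) : Q' \supsetneq Q\};
\]
since $Q \in \mathcal{D}_{+}$ has only finitely many dyadic ancestors of side length $\le 1$, $r(Q)$ is a finite nonnegative integer. I then put
\[
 \mathcal{Q}^{j}(d,\lambda) := \{Q \in \mathcal{DF}(d,\lambda) : r(Q) = j-1\},
 \qquad j \in \mathbb{N}.
\]
With this definition ($\mathcal{F}$1) is tautological, and nonoverlapping of each $\mathcal{Q}^{j}$ is immediate: two distinct dyadic cubes are either nested or have disjoint interiors, and if $Q \subsetneq Q'$ with both cubes in $\mathcal{DF}(d,\lambda)$ then $r(Q) \ge r(Q')+1$, so nested pairs cannot share a value of $r$. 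For ($\mathcal{F}$3) I pick any $Q' \in \mathcal{Q}^{j+1}$ and let $Q$ be the smallest strict thick dyadic ancestor of $Q'$: then the strict thick ancestors of $Q$ are exactly the remaining $j-1$ strict thick ancestors of $Q'$, so $r(Q) = j-1$, i.e., $Q \in \mathcal{Q}^{j}$, and uniqueness of such $Q$ is again forced by nonoverlapping.

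The principal work lies in ($\mathcal{F}$2), the $d$-almost covering property, which I verify by induction on $j$. For $j=1$ it reduces to the statement that for $\mathcal{H}^{d}$-a.e.\ $x \in S$ there exists $k \in \mathbb{N}_{0}$ such that the dyadic cube $Q_{k,m(k,x)} \ni x$ satisfies $\mathcal{H}^{d}_{\infty}(Q_{k,m(k,x)} \cap S) \ge \lambda\, 2^{-kd}$; once this is known, $x$ lies in some element of $\mathcal{DF}(d,\lambda)$, hence in the largest such, which by construction belongs to $\mathcal{Q}^{1}$. For the inductive step I fix $Q^{*} \in \mathcal{Q}^{j}$ and apply the same density statement to the piece $Q^{*} \cap S$, which has positive $\mathcal{H}^{d}_{\infty}$-content: for $\mathcal{H}^{d}$-a.e.\ $x \in Q^{*} \cap S$ there exist arbitrarily small dyadic thick cubes containing $x$, in particular at least one thick dyadic cube strictly contained in $Q^{*}$ and containing $x$, whose maximal instance belongs to $\mathcal{Q}^{j+1}$ by the definition of $r$. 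Summing the countably many $\mathcal{H}^{d}$-null exceptional sets over $Q^{*} \in \mathcal{Q}^{j}$ yields an $\mathcal{H}^{d}$-null set, which is $\mathcal{H}^{d}_{\infty}$-null by Remark \ref{Rem2.1}.

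Uniqueness is then a combinatorial consequence of ($\mathcal{F}$1), ($\mathcal{F}$3), and nonoverlapping. Given another sequence $\{\mathcal{R}^{j}\}$ satisfying the three properties, the families $\mathcal{R}^{j}$ are pairwise disjoint (otherwise iterating ($\mathcal{F}$3) upward from a common cube yields two distinct nested cubes in $\mathcal{R}^{1}$). Setting $\rho(Q) = j$ when $Q \in \mathcal{R}^{j}$, I claim $Q^{*} \supsetneq Q$ implies $\rho(Q^{*}) < \rho(Q)$: iterate ($\mathcal{F}$3) upward from $Q$ and from $Q^{*}$ to obtain strictly increasing chains $Q = Q_{0} \subsetneq \dots \subsetneq Q_{\rho(Q)-1}$ and $Q^{*} = R_{0} \subsetneq \dots \subsetneq R_{\rho(Q^{*})-1}$ with $Q_{i}, R_{i}$ in $\mathcal{R}^{\rho(Q)-i}$ and $\mathcal{R}^{\rho(Q^{*})-i}$ respectively, both terminating in $\mathcal{R}^{1}$; nonoverlapping of $\mathcal{R}^{1}$ forces the two top cubes to coincide, and nonoverlapping of each lower $\mathcal{R}^{m}$ propagates this match one level at a time, so the alternative $\rho(Q^{*}) \ge \rho(Q)$ ends with $Q \supseteq Q^{*}$, a contradiction. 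The ($\mathcal{F}$3)-chain from $Q$ exhibits $\rho(Q)-1$ strict thick ancestors with pairwise distinct $\rho$-values in $\{1, \dots, \rho(Q)-1\}$, while the same injectivity of $\rho$ on the dyadic-ancestor chain of $Q$ shows that every strict thick ancestor contributes a value in this set. Hence $r(Q) = \rho(Q) - 1$, and $\mathcal{R}^{j} = \mathcal{Q}^{j}(d,\lambda)$ for every $j$.

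The main obstacle is the density input appearing in ($\mathcal{F}$2). A naive application of the lower density bound for $\mathcal{H}^{d}_{\infty}$ together with a pigeonhole over the $3^{n}$ dyadic neighbors of a given point at a given scale yields dyadic thickness only for $\lambda$ below some explicit constant depending on $n$ and $d$, whereas the theorem asks for arbitrary $\lambda \in (0,1)$. Closing this gap requires the refined dyadic density result for the Hausdorff content established in \cite{T2}; I would cite that result directly at the appropriate step rather than reprove it.
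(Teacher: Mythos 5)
The first thing to note is that the paper contains no proof of Theorem \ref{Th.adm.sys.cover.} at all: the statement is imported wholesale from \cite{T2} (``such a decomposition was recently obtained in \cite{T2}''), so there is no internal argument to compare yours with. Your reconstruction -- declaring $Q\in\mathcal{Q}^{j}(d,\lambda)$ when $Q$ has exactly $j-1$ strictly larger thick dyadic ancestors -- is the natural one, and your treatment of ($\mathcal{F}$1), of the nonoverlapping property, of ($\mathcal{F}$3) and of uniqueness is essentially sound (one small repair: a cube in $\mathcal{R}^{j}\cap\mathcal{R}^{j'}$, $j<j'$, does not obviously produce two \emph{distinct} nested cubes of $\mathcal{R}^{1}$, since the two chains may merge; the clean argument is to climb only $j'-j$ steps from the $\mathcal{R}^{j'}$-membership and contradict the nonoverlapping of $\mathcal{R}^{j}$). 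You also candidly outsource the analytic core -- that for \emph{every} $\lambda\in(0,1)$, all points of a set of positive $d$-content outside an $\mathcal{H}^{d}_{\infty}$-null set lie in arbitrarily small $(d,\lambda)$-thick dyadic cubes -- to \cite{T2}. Since the paper itself cites \cite{T2} for the whole theorem, this is defensible, but it means your proposal supplies the combinatorial shell while the measure-theoretic heart remains external.

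There is, however, a genuine gap in your proof of ($\mathcal{F}$2), independent of that citation. In the inductive step you pass from ``arbitrarily small thick dyadic cubes containing $x$'' to ``at least one thick dyadic cube strictly contained in $Q^{*}$ and containing $x$''. This deduction is invalid for $x\in S\cap\partial Q^{*}$: the cubes here are closed, so a point on a face of $Q^{*}$ lies in dyadic cubes on \emph{both} sides of that face, the small thick ones may all lie outside $Q^{*}$ (arrange $S$ to approach a boundary point of $Q^{*}$ only from the exterior), and $S\cap\partial Q^{*}$ need not be $\mathcal{H}^{d}$-negligible -- $S$ can even be entirely contained in a dyadic hyperplane, e.g.\ a segment on $\{y=1/2\}$ in the plane, in which case \emph{all} of $S$ sits on boundaries of the relevant cubes. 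The fix is to drop the localization to $Q^{*}$ and the induction altogether: if $x$ lies in thick dyadic cubes of arbitrarily small side length, then, since the dyadic cubes containing $x$ form a tree with at most $2^{n}$ infinite branches, some branch carries an infinite nested chain of thick cubes $P_{1}\supsetneq P_{2}\supsetneq\cdots$ containing $x$; and for any thick $Q$, along the chain consisting of $Q$ and its strict thick ancestors the counter $r$ takes every value $0,1,\dots,r(Q)$. Hence $\sup\{r(Q):Q\ni x,\ Q\in\mathcal{DF}(d,\lambda)\}=\infty$ and every value $j-1$ is attained, so the single density statement already shows that $x$ is covered by every $\mathcal{Q}^{j}$ simultaneously. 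With this replacement your argument closes, modulo the cited density lemma.
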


\begin{Remark}
\label{Rem.31111}
It follows immediately from conditions ($\mathcal{F}$1)--($\mathcal{F}$3) that if, for some cubes $\overline{Q} \in \mathcal{Q}^{j}(d,\lambda)$ and $\underline{Q} \in \mathcal{Q}^{j+1}(d,\lambda)$, there is a dyadic cube $Q \in \mathcal{D}_{+}$  such that
\begin{equation}
\notag
\underline{Q} \subset Q \subset \overline{Q} \quad \hbox{and} \quad l(Q) \in (l(\underline{Q}), l(\overline{Q})),
\end{equation}
then $Q \notin \mathcal{DF}(d,\lambda)$.
\end{Remark}
\hfill$\Box$

The following result reflects the fundamental combinatorial property of the families $\mathcal{Q}^{j}(d,\lambda)$, $j \in \mathbb{N}$.
Informally speaking, each family $\mathcal{Q}^{j}(d,\lambda)$ satisfies a some sort of \textit{Carleson packing condition.} We recall
notation \eqref{restriction of the family}.

\begin{Th}
\label{Th.33}
Let $\lambda_{1},\lambda_{2} \in (0,1)$ and let $d \in (0,n]$ be such that $\mathcal{H}^{d}_{\infty}(S)>0$.
Let $\{\mathcal{Q}^{j}(d,\lambda_{1})\}_{j \in \mathbb{N}}$ and $\{\mathcal{Q}^{j}(d,\lambda_{2})\}_{j \in \mathbb{N}}$ be the canonical decompositions
of  $\mathcal{DF}(d,\lambda_{1})$ and $\mathcal{DF}(d,\lambda_{2})$, respectively. Given a cube $Q \in \mathcal{D}_{+}$, let
\begin{equation}
\notag
j_{0}:=j(Q):=\min\{j \in \mathbb{N}_{0}:\{Q\} \succ \mathcal{Q}^{j}(d,\lambda_{1})|_{Q}\}.
\end{equation}
Then the following inequality
\begin{equation}
\label{eq.12}
\sum \{(l(Q'))^{d}: Q' \in \mathcal{C}\} \le
\begin{cases}
&2^{n-d}\frac{(l(Q))^{d}}{\lambda_{2}}, \quad Q \in \mathcal{DF}(d,1),\\
&\frac{(l(Q))^{d}}{\lambda_{2}}, \quad Q \notin \mathcal{DF}(d,1),
\end{cases}
\end{equation}
holds for any family $\mathcal{C} \subset \mathcal{DF}(d,\lambda_{2})$ such that:

{\rm (1)} $\operatorname{int}Q'' \cap \operatorname{int}Q' = \emptyset$ for any $Q',Q'' \in \mathcal{C}$ with $Q' \neq Q''$;

{\rm (2)} $\{Q\} \succeq \mathcal{C} \succeq \mathcal{Q}^{j_{0}}(d,\lambda_{1})|_{Q}$.
\end{Th}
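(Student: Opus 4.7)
The plan is to combine the $(d,\lambda_2)$-thickness of the cubes of $\mathcal{C}$ with a careful covering argument for $S\cap Q$ and an induction on $l(Q)$.

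\textbf{Step 1 (thickness reduction).} Since each $Q'\in\mathcal{C}$ lies in $\mathcal{DF}(d,\lambda_2)$, inequality \eqref{eq3.1} gives $\lambda_2(l(Q'))^d\le \mathcal{H}^d_\infty(Q'\cap S)$, so it suffices to estimate $\sum_{Q'\in\mathcal{C}}\mathcal{H}^d_\infty(Q'\cap S)$ by $(l(Q))^d$ (respectively $2^{n-d}(l(Q))^d$).

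\textbf{Step 2 (optimal cover).} Fix $\epsilon>0$ and pick a dyadic nonoverlapping covering $\{R_\gamma\}_{\gamma\in\Gamma}$ of $S\cap Q$ with $\sum_\gamma(l(R_\gamma))^d\le \mathcal{H}^d_\infty(S\cap Q)+\epsilon$. Replacing any $R_\gamma\supsetneq Q$ with $Q$ itself only decreases the sum, so WLOG each $R_\gamma\subseteq Q$. When $Q\notin\mathcal{DF}(d,1)$ one has $\mathcal{H}^d_\infty(S\cap Q)<(l(Q))^d$, so choosing $\epsilon$ small enough forces every $R_\gamma\subsetneq Q$ strictly.

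\textbf{Step 3 (classification).} Because $\mathcal{C}$ and $\{R_\gamma\}$ are both nonoverlapping dyadic families, for any $Q'\in\mathcal{C}$ and $R_\gamma$ exactly one of the following occurs: $R_\gamma\subseteq Q'$, $R_\gamma\supsetneq Q'$, or $\mathrm{int}\,R_\gamma\cap\mathrm{int}\,Q'=\emptyset$. Moreover, by nonoverlap of $\{R_\gamma\}$, at most one $R_\gamma$ can strictly contain a given $Q'$; call it $R^\ast(Q')$. Split
$\mathcal{C}=\mathcal{C}_i\sqcup\mathcal{C}_{ii}$, where $\mathcal{C}_{ii}:=\{Q'\in\mathcal{C}:R^\ast(Q')\text{ exists}\}$. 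The refinement hypothesis $\mathcal{C}\succeq\mathcal{Q}^{j_0}(d,\lambda_1)|_Q$ guarantees that $\mathcal{C}$ is an almost-covering of $S\cap Q$, so $R_\gamma$'s disjoint from $\bigcup\mathcal{C}$ contribute only $\mathcal{H}^d_\infty$-null mass and may be dropped. For $Q'\in\mathcal{C}_i$, the cubes $\{R_\gamma:R_\gamma\subseteq Q'\}$ still cover $Q'\cap S$, yielding $\mathcal{H}^d_\infty(Q'\cap S)\le\sum_{R_\gamma\subseteq Q'}(l(R_\gamma))^d$, and since each $R_\gamma$ is used for at most one $Q'$:
\begin{equation}
\notag
\sum_{Q'\in\mathcal{C}_i}\mathcal{H}^d_\infty(Q'\cap S)\le\sum_\gamma(l(R_\gamma))^d\le\mathcal{H}^d_\infty(S\cap Q)+\epsilon.
\end{equation}

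\textbf{Step 4 (induction on $l(Q)$).} For $Q'\in\mathcal{C}_{ii}$ the bound $\mathcal{H}^d_\infty(Q'\cap S)\le(l(Q'))^d$ is trivially available, so the residual task is to control $\sum_R\sum_{Q'\in\mathcal{C}_{ii,R}}(l(Q'))^d$, where $R$ ranges over the cover cubes that strictly contain some member of $\mathcal{C}$ and $\mathcal{C}_{ii,R}:=\{Q'\in\mathcal{C}_{ii}:R^\ast(Q')=R\}$. Using Remark~3.3 and the nesting of the canonical decomposition, I verify that $\mathcal{C}_{ii,R}$ inherits the hypotheses of the theorem with $R$ in place of $Q$ and the canonical decomposition $\mathcal{Q}^{j(R)}(d,\lambda_1)|_R$ in place of $\mathcal{Q}^{j_0}(d,\lambda_1)|_Q$. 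When $Q\notin\mathcal{DF}(d,1)$, every such $R$ satisfies $l(R)<l(Q)$, so the induction hypothesis applies and gives $\sum_{\mathcal{C}_{ii,R}}(l(Q'))^d\le(l(R))^d/\lambda_2$ (or $2^{n-d}(l(R))^d/\lambda_2$, depending on whether $R\in\mathcal{DF}(d,1)$), and summing over $R$ and combining with Step 3 telescopes into the claimed bound after $\epsilon\to 0$.

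\textbf{Step 5 (the factor $2^{n-d}$).} For $Q\in\mathcal{DF}(d,1)$ the optimal cover may equal $\{Q\}$, blocking direct induction. I instead use the cover by the $2^n$ dyadic children $\{Q_1,\dots,Q_{2^n}\}$ of $Q$, with total cost $2^n\cdot(l(Q)/2)^d=2^{n-d}(l(Q))^d$. All children have side length strictly smaller than $l(Q)$, so induction applies on each, and the factor $2^{n-d}$ arises precisely from passing one dyadic level down at the top.

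\textbf{Main obstacle.} The technical heart is in Step 4: showing that the restricted family $\mathcal{C}_{ii,R}$ indeed satisfies the hypotheses of the theorem relative to $R$ (with the canonical level $j(R)$), which is the compatibility between the canonical decompositions on $Q$ and on $R$. This uses the uniqueness statement in Theorem~\ref{Th.adm.sys.cover.} and Remark~\ref{Rem.31111} in an essential way. Once this compatibility is in hand, the induction closes cleanly with the stated constants $1$ and $2^{n-d}$.
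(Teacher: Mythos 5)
You should know that this paper does not actually contain a proof of Theorem \ref{Th.33}: it is quoted from \cite{T2}, so your proposal can only be judged on its own terms, and on those terms it has two genuine gaps. The first is the one you yourself flag in Step 4 but never carry out: the claim that $\mathcal{C}_{ii,R}$ satisfies the hypotheses of the theorem relative to $R$ with the canonical level $j(R)$ is precisely the substance of the statement, not a routine verification. To make it work you need (i) that $\mathcal{Q}^{j_{0}}(d,\lambda_{1})|_{Q}$ really is a $d$-almost covering of $S\cap Q$ (this fails in the degenerate situation where a cube of $\mathcal{Q}^{j_{0}}(d,\lambda_{1})$ strictly contains $Q$, so that the restriction is empty and the minimum defining $j_{0}$ is attained vacuously -- a case your argument never addresses), and (ii) that every cube of $\mathcal{Q}^{j(R)}(d,\lambda_{1})|_{R}$ sits inside a member of $\mathcal{C}$ which is itself strictly inside $R$; neither follows formally from $\mathcal{C}\succeq\mathcal{Q}^{j_{0}}(d,\lambda_{1})|_{Q}$ alone, and both require the structural properties of the Netrusov-type construction of the canonical decomposition, i.e.\ exactly the material of \cite{T2}. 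In addition, "induction on $l(Q)$" has no base case, since dyadic side lengths accumulate at $0$; at best you have an infinite descent, which must be reorganized (say, over finite subfamilies of $\mathcal{C}$) before it is a proof scheme at all. The covering claim in Step 3 that $\{R_{\gamma}\subseteq Q'\}$ covers $Q'\cap S$ also ignores the portion of $S$ on $\partial Q'$ covered by touching, non-contained cubes, though that is a repairable technicality.

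The second gap is quantitative and, to my mind, fatal to the scheme as written: even granting the compatibility, the recursion does not close with the stated constants. In Step 4 a cover cube $R$ may well lie in $\mathcal{DF}(d,1)$, in which case the inductive bound is $2^{n-d}(l(R))^{d}/\lambda_{2}$; your Steps 3--4 then give only $\lambda_{2}\sum_{Q'\in\mathcal{C}}(l(Q'))^{d}\le A+2^{n-d}B$ with $A+B\le\mathcal{H}^{d}_{\infty}(S\cap Q)+\epsilon$, and since $\mathcal{H}^{d}_{\infty}(S\cap Q)$ can be arbitrarily close to $(l(Q))^{d}$ even when $Q\notin\mathcal{DF}(d,1)$, this does not yield the constant $1$ claimed in \eqref{eq.12}. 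Worse, in Step 5 the children of a $(d,1)$-thick cube may all be $(d,1)$-thick themselves (take $S=Q_{0,0}$ and any $d<n$: every dyadic subcube is $(d,1)$-thick), so the per-child inductive bound is $2^{n-d}(l(Q)/2)^{d}/\lambda_{2}$ and summing gives $2^{2(n-d)}(l(Q))^{d}/\lambda_{2}$; iterating, the constant compounds at every thick generation and is unbounded. The whole point of this Carleson-type packing inequality is that the loss $2^{n-d}$ is incurred only once, uniformly in the number of scales, and a worst-case, scale-by-scale induction cannot produce that: you would need a genuinely stronger inductive statement (for instance a bound in terms of $\mathcal{H}^{d}_{\infty}(S\cap Q)$ rather than $(l(Q))^{d}$, which your own Steps 2--3 suggest but you do not formulate), or a direct packing argument exploiting how the families $\mathcal{Q}^{j}(d,\lambda)$ are built, as in \cite{T2}. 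As it stands, the proposal is an outline whose two load-bearing steps -- the compatibility of the canonical decompositions and the non-compounding of constants -- are missing.
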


Sometimes it will be convenient to work with projections of $(d,\lambda)$-keystone families to $k$th ``dyadic levels''.
Hence, we introduce the following concept.

\begin{Def}
\label{Def.proj}
Let $\lambda \in (0,1]$ and let $d \in (0,n]$ be such that $\mathcal{H}^{d}_{\infty}(S) > 0$.
For each $k \in \mathbb{N}_{0}$ we define the families
\begin{equation}
\begin{split}
&\mathcal{DF}_{k}(d,\lambda):=\mathcal{DF}_{S,k}(d,\lambda):=\mathcal{DF}(d,\lambda) \cap \mathcal{D}_{k};\\
&\widetilde{\mathcal{DF}}_{k}(d,\lambda):=\widetilde{\mathcal{DF}}_{S,k}(d,\lambda):=\{Q \in  \mathcal{D}_{k}: \hbox{ there is } Q' \in
\operatorname{n}(Q) \cap \mathcal{DF}_{k}(d,\lambda)\}.
\end{split}
\end{equation}
The corresponding index sets will be denoted by $\mathcal{A}_{k}(d,\lambda):=\mathcal{A}_{S,k}(d,\lambda)$
and $\widetilde{\mathcal{A}}_{k}(d,\lambda):=\widetilde{\mathcal{A}}_{S,k}(d,\lambda)$, respectively, i.e.,
\begin{equation}
\notag
\mathcal{DF}_{k}(d,\lambda):=\{Q_{k,m}| m \in \mathcal{A}_{k}(d,\lambda)\}, \quad
\widetilde{\mathcal{DF}}_{k}(d,\lambda):=\{Q_{k,\widetilde{m}}| \widetilde{m} \in \widetilde{\mathcal{A}}_{k}(d,\lambda)\}.
\end{equation}
\end{Def}

\begin{Def}
\label{Def.essent}
Let $\lambda \in (0,1)$ and let $d \in (0,n]$ be such that $\mathcal{H}^{d}_{\infty}(S) > 0$.
Let $\{\mathcal{Q}^{j}(d,\lambda)\}_{j \in \mathbb{N}}$ be the canonical decomposition of the family $\mathcal{DF}(d,\lambda)$.
We define \textit{the $(d,\lambda)$-essential part of $S$} by
\begin{equation}
\label{eq3.32}
\underline{S}(d,\lambda) : =\bigcap\limits_{j \in \mathbb{N}}\bigcup \{Q: Q \in \mathcal{Q}^{j}(d,\lambda)\}.
\end{equation}
\end{Def}

Sometimes it will be important to collect all dyadic cubes which are $(d,\lambda)$-thick with respect to a given set $S \subset \mathbb{R}^{n}$
and whose dilations contain a given cube $Q$.
From the intuitive point of view, such a family looks like a ``tower'' of cubes.

\begin{Def}
\label{bigkeystone}
Let $d \in (0,n]$ be such that $\mathcal{H}^{d}_{\infty}(S) > 0$. Let $\lambda \in (0,1]$ and $c \geq 1$.
Given a cube $Q \subset \mathbb{R}^{n}$, we define \textit{the $(d,\lambda,c)$-tower of} $Q$ by
\begin{equation}
\notag
T_{d,\lambda,c}(Q):=\{Q': Q' \in \mathcal{DF}(d,\lambda) \hbox{ and } Q \subset cQ'\}.
\end{equation}
In the case $c=1$, we  write $T_{d,\lambda}(Q)$ instead of $T_{d,\lambda,1}(Q)$, and we call the family
$T_{d,\lambda}(Q)$ simply \textit{the $(d,\lambda)$-tower} of $Q$.
\end{Def}

\begin{Remark}
\label{cover.y}
Note that Definition \ref{bigkeystone} admits the case
when a cube $Q$ has side length $l(Q)=0$, i.e., $Q=\{x\}$ for some $x \in \mathbb{R}^{n}$.
Hence, one can consider the $(d,\lambda,c)$-tower of $x$, which will be denoted by $T_{d,\lambda,c}(x)$.
\end{Remark}

\begin{Prop}
\label{Prop41''}
Let $d \in (0,n]$ be such that $\mathcal{H}^{d}_{\infty}(S) > 0$. Let $\lambda \in (0,1)$ and $c \geq 1$.
Then the following properties hold true:

{ \rm (1)} $\underline{S}(d,\lambda) \subset S$;

{ \rm (2)} $\mathcal{H}^{d}_{\infty}(S \setminus \underline{S}(d,\lambda))=0$;

{ \rm (3)} $\#T_{d,\lambda,c}(x)=+\infty$ for any $x \in \underline{S}(d,\lambda)$.

\end{Prop}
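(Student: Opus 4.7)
The plan proceeds through the three claims in order, each exploiting the canonical decomposition $\{\mathcal{Q}^{j}(d,\lambda)\}_{j\in\mathbb{N}}$ of $\mathcal{DF}(d,\lambda)$ provided by Theorem \ref{Th.adm.sys.cover.}. The core observation, useful for both (1) and (3), is that for each $x\in\underline{S}(d,\lambda)$ one can extract, using property ($\mathcal{F}3$), a nested sequence of cubes $Q^{j}\in\mathcal{Q}^{j}(d,\lambda)$ with $x\in Q^{j+1}\subset Q^{j}$ and $l(Q^{j+1})<l(Q^{j})$, and these side lengths lie in $\{2^{-k}:k\in\mathbb{N}_{0}\}$, so the strict dyadic decrease forces $l(Q^{j})\to 0$.

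For part (1), I would fix $x\in\underline{S}(d,\lambda)$ and build the chain $\{Q^{j}\}$ as above. Since each $Q^{j}\in\mathcal{DF}(d,\lambda)$ is $(d,\lambda)$-thick with respect to $S$, inequality \eqref{eq3.1} yields $\mathcal{H}^{d}_{\infty}(Q^{j}\cap S)\geq\lambda(l(Q^{j}))^{d}>0$, so $Q^{j}\cap S\neq\emptyset$. Picking $y^{j}\in Q^{j}\cap S$ gives $\|y^{j}-x\|\leq l(Q^{j})\to 0$, and closedness of $S$ forces $x\in S$.

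For part (2), I would use the set-theoretic identity
\begin{equation}
\notag
S\setminus\underline{S}(d,\lambda)=\bigcup_{j\in\mathbb{N}}\Bigl(S\setminus\bigcup\mathcal{Q}^{j}(d,\lambda)\Bigr).
\end{equation}
By property ($\mathcal{F}2$), each $\mathcal{Q}^{j}(d,\lambda)$ is a $d$-almost covering of $S$, so by Definition \ref{dyadic.cov} every set in this countable union has vanishing $d$-Hausdorff content. Remark \ref{Rem2.1} converts this into $\mathcal{H}^{d}\bigl(S\setminus\bigcup\mathcal{Q}^{j}(d,\lambda)\bigr)=0$, and countable subadditivity of $\mathcal{H}^{d}$ yields $\mathcal{H}^{d}(S\setminus\underline{S}(d,\lambda))=0$. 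Applying Remark \ref{Rem2.1} once more translates this back into vanishing $d$-Hausdorff content.

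Part (3) is then a direct consequence of the construction in (1). For any $c\geq 1$ and any $x\in\underline{S}(d,\lambda)$, each cube $Q^{j}$ in the chain belongs to $\mathcal{DF}(d,\lambda)$ and satisfies $x\in Q^{j}\subset cQ^{j}$, i.e., $\{x\}\subset cQ^{j}$, so $Q^{j}\in T_{d,\lambda,c}(x)$; the strict decrease of side lengths makes the $Q^{j}$ pairwise distinct, so $\#T_{d,\lambda,c}(x)=+\infty$.

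The only genuine subtlety is ensuring $l(Q^{j})\to 0$ in (1): this uses the strict inequality in the relation $\succ$ (not merely $\succeq$) combined with the discreteness of dyadic side lengths; both of these are built into ($\mathcal{F}3$) and Definition \ref{Def.dlambdakeystone}. The remainder of the argument is routine given the properties of the canonical decomposition already established in Section 3.
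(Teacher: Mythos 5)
Your argument is correct and follows essentially the same route as the paper: thickness of the cubes of $\mathcal{Q}^{j}(d,\lambda)$ together with closedness of $S$ for (1), property ($\mathcal{F}$2) combined with \eqref{eq3.32} for (2), and one cube per level plus ($\mathcal{F}$3) for (3). The only step you assert without justification is the extraction of a genuinely nested chain $x\in Q^{j+1}\subset Q^{j}$ (it does exist, e.g.\ by a K\"onig-type argument, since at most $2^{n}$ nonoverlapping cubes of each family contain $x$ and ($\mathcal{F}$3) supplies the parent map), but it is also dispensable: for \emph{any} cube $Q^{j}\in\mathcal{Q}^{j}(d,\lambda)$ containing $x$, iterating ($\mathcal{F}$3) upward through the families $\mathcal{Q}^{j-1},\dots,\mathcal{Q}^{1}$ gives $l(Q^{j})\le 2^{-j+1}$, which already yields $l(Q^{j})\to 0$ for (1) and forces infinitely many distinct cubes among the $Q^{j}$ for (3).
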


\begin{proof}
To prove (1), we note that the following inclusion (given $\delta > 0$, by $U_{\delta}(S)$ we denote the $\delta$-neighborhood of $S$)
\begin{equation}
\notag
\bigcup \{Q: Q \in \mathcal{Q}^{j}(d,\lambda)\} \subset U_{2^{-j+1}}(S)
\end{equation}
holds for any $j \in \mathbb{N}$ and take into account that the set $S$ is closed.

To prove (2) we combine the assertion ($\mathcal{F}$2) of Theorem \ref{Th.adm.sys.cover.} with \eqref{eq3.32}.

By \eqref{eq3.32} for each $x \in \underline{S}(d,\lambda)$
for every $j \in \mathbb{N}$ there is a cube $Q^{j} \in \mathcal{Q}^{j}_{S}(d,\lambda)$ containing $x$.
Combining this with assertion ($\mathcal{F}$3) of Theorem \ref{Th.adm.sys.cover.} we get (3).
\end{proof}

\subsection{Covering cubes}

As far as we know, the concepts introduced in this section have been never explicitly used in the literature.
Recall that the set $S$ was fixed at the beginning of the section.
Recall Definition \ref{Def.dlambdakeystone} and the notation $\mathcal{DF}(d,\lambda)$.

\begin{Def}
\label{cover.cube}
Let $d \in (0,n]$ be such that $\mathcal{H}^{d}_{\infty}(S) > 0$. Let $\lambda \in (0,1]$, $c \geq 1$.
Given  a cube $Q \in \mathcal{D}_{+}$, we say that a cube $\overline{Q} \in \mathcal{D}_{+}$
\textit{is a $(d,\lambda,c)$-covering for} $Q$
if the following conditions hold:

{\rm (\textbf{C}1)} $l(\overline{Q}) \geq l(Q)$ and $Q \subset c\overline{Q}$;

{\rm (\textbf{C}2)} $\overline{Q} \in \mathcal{DF}(d,\lambda)$;

{\rm (\textbf{C}3)} if $Q' \in \mathcal{D}_{+}$, $Q \subset Q'$ and $l(Q') \in (l(Q),l(\overline{Q}))$ then $Q' \notin \mathcal{DF}(d,\lambda)$.

Given a cube $Q \in \mathcal{D}_{+}$, we denote by $\mathcal{K}_{d,\lambda,c}(Q)$ the family (perhaps empty)
of all $(d,\lambda,c)$-covering for $Q$ cubes. By $\mathcal{K}_{d,\lambda,c}$ we also mean a set-valued mapping
which with each cube $Q \in \mathcal{D}_{+}$ associates the set (perhaps empty) $\mathcal{K}_{d,\lambda,c}(Q)$.
\end{Def}

We will also need some special selections of set-valued mappings $\mathcal{K}_{d,\lambda,c}$.

\begin{Def}
\label{Def.selector}
Let $d \in (0,n]$ be such that $\mathcal{H}^{d}_{\infty}(S) > 0$. Let $\lambda \in (0,1]$, $c \geq 1$.
\textit{A selection of} $\mathcal{K}_{d,\lambda,c}$ \textit{with a domain $\mathfrak{D} \subset \mathcal{D}_{+}$} is a mapping $\kappa_{d,\lambda,c}:\mathfrak{D} \to \mathcal{DF}(d,\lambda)$ such that:

{\rm (\textbf{Sel}1)} $\mathcal{K}_{d,\lambda,c}(Q) \neq \emptyset$ for all $Q \in \mathfrak{D}$;

{\rm (\textbf{Sel}2)} $\kappa_{d,\lambda,c}(Q) \in \mathcal{K}_{d,\lambda,c}(Q)$
for all $Q \in \mathfrak{D}$.
\end{Def}

\begin{Def}
\label{str.cover.cube}
Let $d \in (0,n]$ be such that $\overline{\lambda}  = \mathcal{H}^{d}_{\infty}(S) > 0$. Let $\lambda \in (0,1]$, $c \geq 1$.
Given a dyadic cube $Q \subset Q_{0,0}$ with $l(Q) < 1$, we say that $\operatorname{K}(Q) \in \mathcal{D}_{+}$
is \textit{a strongly $(d,\lambda)$-covering cube for} $Q$ if it is $(d,\lambda,1)$-covering and $l(\operatorname{K}(Q)) > l(Q)$.
\end{Def}

\begin{Remark}
\label{Remm.3.3'}
Note that in the sequel Definition \ref{str.cover.cube} will be used in the range $\lambda \in (0,\overline{\lambda}]$ because in this case
we have $Q_{0,0} \in \mathcal{DF}(d,\lambda)$ and hence, $\mathcal{K}_{d,\lambda,1}(Q) \neq \emptyset$ for any  $Q \in \mathcal{D}_{+}$
such that $Q \subset Q_{0,0}$. The requirement $l(Q) < 1$ allows one to find a unique cube $\operatorname{K}(Q) \in \mathcal{K}_{d,\lambda,1}(Q)$
with $l(\operatorname{K}(Q)) > l(Q)$.
\end{Remark}
\hfill$\Box$

\begin{Remark}
\label{Remm.3.3}
Let $d,\lambda,c$ be the same as in Definition \ref{cover.cube}. Given a cube $Q \in \mathcal{D}_{+}$, it is clear that there can exist several $(d,\lambda,c)$-covering
dyadic cubes for $Q$.
However, it is easy to see that there is a constant $C=C(n,c) > 0$ such that
\begin{equation}
\notag
\# \mathcal{K}_{d,\lambda,c}(Q) \cap \mathcal{D}_{k} \le C \quad \hbox{for every} \quad k \in \mathbb{N}_{0}.
\end{equation}
\end{Remark}
\hfill$\Box$

\begin{Def}
\label{Def.shadow}
Let $d \in (0,n]$ be such that $\mathcal{H}^{d}_{\infty}(S) > 0$. Let $\lambda \in (0,1]$ and $c \geq 1$.
Given a cube $\overline{Q} \in \mathcal{DF}(d,\lambda)$, we define the \textit{$(d,\lambda,c)$-shadow family of $\overline{Q}$} by letting
\begin{equation}
\notag
\mathcal{SH}_{d,\lambda,c}(\overline{Q}):=\{\underline{Q} \in \mathcal{DF}(d,\lambda): l(\underline{Q}) < l(\overline{Q}) \hbox{ and } \overline{Q} \in \mathcal{K}_{d,\lambda,c}(\underline{Q})\}.
\end{equation}
\end{Def}

The following proposition collects elementary properties of $(d,\lambda,c)$-shadow families of cubes. We recall notation
\eqref{restriction of the family}.

\begin{Prop}
\label{Prop.3.1}
Let $d \in (0,n]$ be such that $\mathcal{H}^{d}_{\infty}(S) > 0$. Let $\lambda \in (0,1)$ and $c \geq 1$.
Then, for each cube $\overline{Q} \in \mathcal{DF}(d,\lambda)$, the following properties hold:

{\rm (1)} $\mathcal{SH}_{d,\lambda,c}(\overline{Q}) \neq \emptyset$;

{\rm (2)} the family $\mathcal{SH}_{d,\lambda,c}(\overline{Q})$ is nonoverlapping.
\end{Prop}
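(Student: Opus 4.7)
The plan is to reduce both parts to condition (\textbf{C}3) of Definition \ref{cover.cube} together with the canonical decomposition $\{\mathcal{Q}^{j}(d,\lambda)\}_{j\in\mathbb{N}}$ supplied by Theorem \ref{Th.adm.sys.cover.}. The key observation is that, by Remark \ref{Rem.31111}, two consecutive levels $\mathcal{Q}^{j}(d,\lambda)$ and $\mathcal{Q}^{j+1}(d,\lambda)$ witness exactly the ``no $(d,\lambda)$-keystone cube strictly in between'' property demanded by (\textbf{C}3). So a shadow cube for $\overline{Q}$ should appear at the very next level of the canonical decomposition below the one containing $\overline{Q}$.

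For part (1), let $j_{0}\in\mathbb{N}$ be the unique index with $\overline{Q}\in\mathcal{Q}^{j_{0}}(d,\lambda)$; uniqueness follows by combining property $(\mathcal{F}3)$ of Theorem \ref{Th.adm.sys.cover.} with the nonoverlapping property of each $\mathcal{Q}^{j}(d,\lambda)$. The goal is to produce $\underline{Q}'\in\mathcal{Q}^{j_{0}+1}(d,\lambda)$ with $\underline{Q}'\subset c\overline{Q}$ and $l(\underline{Q}')<l(\overline{Q})$. By $(\mathcal{F}2)$ the family $\mathcal{Q}^{j_{0}+1}(d,\lambda)$ is a $d$-almost covering of $S$; by $(\mathcal{F}3)$ combined with the nonoverlapping of $\mathcal{Q}^{j_{0}}(d,\lambda)$, every cube of $\mathcal{Q}^{j_{0}+1}(d,\lambda)$ is strictly contained in a unique parent in $\mathcal{Q}^{j_{0}}(d,\lambda)$. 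Since $\mathcal{H}^{d}_{\infty}(\overline{Q}\cap S)\ge\lambda\,l(\overline{Q})^{d}>0$ cannot be eliminated by the $\mathcal{H}^{d}_{\infty}$-null exceptional set of the almost covering, at least one cube $\underline{Q}'\in\mathcal{Q}^{j_{0}+1}(d,\lambda)$ must have $\overline{Q}$ as its parent; in particular $\underline{Q}'\subsetneq\overline{Q}\subset c\overline{Q}$ and $l(\underline{Q}')<l(\overline{Q})$. Conditions (\textbf{C}1), (\textbf{C}2) of Definition \ref{cover.cube} then become immediate. For (\textbf{C}3), any dyadic $Q'$ with $\underline{Q}'\subset Q'$ and $l(Q')<l(\overline{Q})$ is, by the standard dyadic tower structure, automatically contained in $\overline{Q}$; Remark \ref{Rem.31111} then yields $Q'\notin\mathcal{DF}(d,\lambda)$. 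Hence $\overline{Q}\in\mathcal{K}_{d,\lambda,c}(\underline{Q}')$ and $\underline{Q}'\in\mathcal{SH}_{d,\lambda,c}(\overline{Q})$.

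For part (2), suppose toward a contradiction that $\underline{Q}_{1},\underline{Q}_{2}\in\mathcal{SH}_{d,\lambda,c}(\overline{Q})$ satisfy $\operatorname{int}\underline{Q}_{1}\cap\operatorname{int}\underline{Q}_{2}\neq\emptyset$. Being dyadic, they are comparable by inclusion, and without loss of generality $\underline{Q}_{1}\subsetneq\underline{Q}_{2}$, so $l(\underline{Q}_{1})<l(\underline{Q}_{2})<l(\overline{Q})$. Since $\overline{Q}\in\mathcal{K}_{d,\lambda,c}(\underline{Q}_{1})$, applying (\textbf{C}3) with $Q'=\underline{Q}_{2}$ gives $\underline{Q}_{2}\notin\mathcal{DF}(d,\lambda)$, contradicting $\underline{Q}_{2}\in\mathcal{SH}_{d,\lambda,c}(\overline{Q})\subset\mathcal{DF}(d,\lambda)$.

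The main subtlety, rather than a genuine obstacle, lies in securing a cube $\underline{Q}'\in\mathcal{Q}^{j_{0}+1}(d,\lambda)$ properly contained in $\overline{Q}$ and not merely adjacent to it along the boundary. This is forced by combining the positive $\mathcal{H}^{d}_{\infty}$ content of $\overline{Q}\cap S$, the $d$-almost covering property $(\mathcal{F}2)$, and the strict descent of side lengths imposed by $(\mathcal{F}3)$; once this point is in hand, the rest reduces to a routine verification of (\textbf{C}1)--(\textbf{C}3) and of dyadic comparability.
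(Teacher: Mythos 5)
Your argument is correct and follows essentially the same route as the paper: for (1) you locate the level $\mathcal{Q}^{j_{0}}(d,\lambda)$ containing $\overline{Q}$, use the $d$-almost covering property of $\mathcal{Q}^{j_{0}+1}(d,\lambda)$ together with $\mathcal{H}^{d}_{\infty}(\overline{Q}\cap S)\geq\lambda (l(\overline{Q}))^{d}>0$ to produce a cube of the next level inside $\overline{Q}$, and verify (\textbf{C}1)--(\textbf{C}3) via Remark \ref{Rem.31111}; for (2) you use dyadic nestedness and condition (\textbf{C}3), exactly as in the paper. The step you single out as the ``main subtlety'' (obtaining a cube of $\mathcal{Q}^{j_{0}+1}(d,\lambda)$ genuinely contained in $\overline{Q}$ rather than merely touching it) is handled by the same inference from $(\mathcal{F}2)$, $(\mathcal{F}3)$ and positivity of content that the paper itself uses, so no further work is required beyond what the paper assumes.
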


\begin{proof}
To prove the first claim we fix a cube $\overline{Q} \in \mathcal{DF}(d,\lambda)$. Note that there is a number $j \in \mathbb{N}_{0}$
such that $\overline{Q} \in \mathcal{Q}^{j}(d,\lambda)$. Since $\mathcal{H}^{d}_{\infty}(\overline{Q}) \geq \lambda (l(\overline{Q}))^{d} > 0$ and the family
$\mathcal{Q}^{j+1}(d,\lambda)$ is a dyadic nonoverlapping $d$-almost covering of the set $S$, we
deduce that $\mathcal{Q}^{j+1}(d,\lambda)|_{\overline{Q}} \neq \emptyset$. By property $(\mathcal{F}3)$ of Theorem \ref{Th.adm.sys.cover.} we have
$\mathcal{Q}^{j+1}(d,\lambda)|_{\overline{Q}} \prec \overline{Q}$. Combining these observations
with Definitions \ref{cover.cube}, \ref{Def.shadow} and Remark \ref{Rem.31111} we obtain $\mathcal{Q}^{j+1}(d,\lambda)|_{\overline{Q}} \subset \mathcal{SH}_{d,\lambda,c}(\overline{Q})$ which proves (1).

To prove the second claim we fix a cube $\overline{Q} \in \mathcal{DF}(d,\lambda)$ and two different cubes $\underline{Q}_{1}, \underline{Q}_{2} \in \mathcal{SH}_{d,\lambda,c}(\overline{Q})$. Since the cubes $\underline{Q}_{1}$ and $\underline{Q}_{2}$
are dyadic, there are only two possible cases. In the first case, the cubes have disjoint interiors, in the second case one of them
is contained in the other one. We claim that the second case is never realised.
Indeed, assume the contrary. Without loss of generality we may assume that $\underline{Q}_{1} \subset \underline{Q}_{2}$. Since $\underline{Q}_{1}
\neq \underline{Q}_{2}$ we have $l(\underline{Q}_{1}) \le \frac{1}{2}l(\underline{Q}_{2})$.
Furthermore, by Definition \ref{Def.shadow} it follows that $l(\underline{Q}_{2}) \le \frac{1}{2}l(\overline{Q})$. Hence,
$l(\underline{Q}_{2}) \in (l(\underline{Q}_{1}),l(\overline{Q}))$. According to condition (3) of Definition \ref{cover.cube} this implies that $\underline{Q}_{2} \notin \mathcal{DF}(d,\lambda)$.
On the other hand, by Definition \ref{Def.shadow} the cubes $\underline{Q}_{1}$, $\underline{Q}_{2} \in \mathcal{DF}(d,\lambda)$.
This contradiction proves the claim. The proof is complete.
\end{proof}

Given a cube $Q \in \mathcal{D}_{k}$ with $k \in \mathbb{N}_{0}$, we set
\begin{equation}
\label{eq.Gamma}
\Gamma_{c}(Q):=\{Q' \in \mathcal{D}_{k}:Q' \cap cQ \neq \emptyset\}.
\end{equation}

\begin{Remark}
\label{Rem.card.gamma}
By Proposition \ref{Prop21''''}, if $Q' \in \Gamma_{c}(Q)$,
then $[c]Q \cap Q' \neq \emptyset$. Hence, $Q' \subset ([c]+2)Q$. Since different cubes in $\Gamma_{c}(Q)$ have disjoint
interiors and equal side lengths, we get
\begin{equation}
\notag
\#\Gamma_{c}(Q) \le \frac{\mathcal{L}^{n}(([c]+2)Q)}{\mathcal{L}^{n}(Q')} \le ([c]+2)^{n}.
\end{equation}
\end{Remark}
\hfill$\Box$

The following concept will be extremely useful in proving the direct trace theorem in Section 7.

\begin{Def}
\label{Def.iceberg}
 Let $d \in (0,n]$ be such that $\mathcal{H}^{d}_{\infty}(S) > 0$. Let $\lambda \in (0,1)$ and $c \geq 1$.
Given a cube $\overline{Q} \in \mathcal{DF}(d,\lambda)$,
we define the \textit{$(d,\lambda,c)$-iceberg} $\mathcal{IC}_{d,\lambda,c}(\overline{Q})$ of the cube $\overline{Q}$ as the family of all dyadic cubes $Q' \in \mathcal{D}_{+}$
satisfying the following conditions:

{\rm (\textbf{I}1)} $Q' \notin \mathcal{SH}_{d,\lambda,c}(\overline{Q})$;

{\rm (\textbf{I}2)} $l(Q') \le l(\overline{Q})$;

{\rm (\textbf{I}3)} there exists $\underline{Q'} \in \mathcal{SH}_{d,\lambda,c}(\overline{Q})$ such that $\underline{Q'} \subset Q'$.

\end{Def}

\begin{Remark}
\label{Rem.iceberg_intuition}
It follows immediately from Definition \ref{Def.iceberg} that if $Q \in \mathcal{IC}_{d,\lambda,c}(\overline{Q})$ and $Q' \supset Q$ is such that $l(Q') \le l(\overline{Q})$,
then $Q' \in \mathcal{IC}_{d,\lambda,c}(\overline{Q})$. Roughly speaking, in order to imagine $\mathcal{IC}_{d,\lambda,c}(\overline{Q})$, given $Q \in \mathcal{SH}_{d,\lambda,c}(\overline{Q})$, one
should built a tower composed of nested cubes whose side lengths grow up to the length $l(\overline{Q})$. The term ``iceberg'' was chosen due to the following reasons.
On the one hand, we will see below that the ``top of a given iceberg'' $\mathcal{IC}_{d,\lambda,c}(\overline{Q})$, i.e., the cube $\overline{Q}$, contains
useful information about the behavior of a given function $f:S \to \mathbb{R}$. On the other hand, cubes from the ``invisible part'' of $\mathcal{IC}_{d,\lambda,c}(\overline{Q})$, i.e.,
all cubes from $\mathcal{IC}_{d,\lambda,c}(\overline{Q})$ whose side lengths are smaller than $l(\overline{Q})$ do not contain some useful information
for the extension of a given function.
\end{Remark}
\hfill$\Box$

The following proposition reflects basic geometric properties of \textit{$(d,\lambda,c)$-icebergs}.

\begin{Prop}
\label{Propp.shad}
Let $d \in (0,n]$ be such that $\mathcal{H}^{d}_{\infty}(S) > 0$. Let $\lambda \in (0,1)$ and $c \geq 1$. Given a cube $\overline{Q} \in \mathcal{DF}(d,\lambda)$,
the following properties hold:

{\rm (1)} if $Q \in \mathcal{IC}_{d,\lambda,c}(\overline{Q})$ and $l(Q) < l(\overline{Q})$, then $Q \notin \mathcal{DF}(d,\lambda)$;

{\rm (2)} if $Q'' \in \mathcal{D}_{+}$ and $l(Q'')=l(\overline{Q})$, then $Q'' \in \Gamma_{c}(\overline{Q}) \cap \mathcal{IC}_{d,\lambda,c}(\overline{Q})$
if and only if $\mathcal{SH}_{d,\lambda,c}(\overline{Q})|_{Q''} \neq \emptyset$.

\end{Prop}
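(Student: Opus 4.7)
The plan is to prove both statements by unwinding the definitions of $\mathcal{SH}_{d,\lambda,c}(\overline{Q})$ and $\mathcal{IC}_{d,\lambda,c}(\overline{Q})$ and exploiting condition (\textbf{C}3) in the definition of a $(d,\lambda,c)$-covering cube, which forbids intermediate-sized dyadic cubes containing a given cube from being in $\mathcal{DF}(d,\lambda)$.

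For (1), I would start with $Q \in \mathcal{IC}_{d,\lambda,c}(\overline{Q})$ satisfying $l(Q) < l(\overline{Q})$. By property (\textbf{I}3), pick $\underline{Q'} \in \mathcal{SH}_{d,\lambda,c}(\overline{Q})$ with $\underline{Q'} \subset Q$. Since $\underline{Q'}$ and $Q$ are both dyadic and nested, $l(\underline{Q'}) \le l(Q)$. If equality held, then $Q = \underline{Q'} \in \mathcal{SH}_{d,\lambda,c}(\overline{Q})$, contradicting (\textbf{I}1); hence $l(Q) \in (l(\underline{Q'}), l(\overline{Q}))$. Since $\overline{Q} \in \mathcal{K}_{d,\lambda,c}(\underline{Q'})$, applying (\textbf{C}3) from Definition \ref{cover.cube} to the chain $\underline{Q'} \subset Q$ at the intermediate side length immediately gives $Q \notin \mathcal{DF}(d,\lambda)$. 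This is the whole argument for (1).

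For (2), fix $Q'' \in \mathcal{D}_{+}$ with $l(Q'') = l(\overline{Q})$. The forward implication is essentially tautological: if $Q'' \in \mathcal{IC}_{d,\lambda,c}(\overline{Q})$, then condition (\textbf{I}3) directly produces a cube $\underline{Q'} \in \mathcal{SH}_{d,\lambda,c}(\overline{Q})$ with $\underline{Q'} \subset Q''$, so $\mathcal{SH}_{d,\lambda,c}(\overline{Q})|_{Q''} \neq \emptyset$ by the definition of family restriction in \eqref{restriction of the family}. For the converse, start with some $\underline{Q'} \in \mathcal{SH}_{d,\lambda,c}(\overline{Q})$ contained in $Q''$. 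By Definition \ref{Def.shadow}, $\overline{Q} \in \mathcal{K}_{d,\lambda,c}(\underline{Q'})$, so condition (\textbf{C}1) yields $\underline{Q'} \subset c\overline{Q}$; hence $Q'' \cap c\overline{Q} \supset \underline{Q'} \neq \emptyset$, and since $l(Q'') = l(\overline{Q})$ places $Q''$ on the same dyadic level, this means $Q'' \in \Gamma_{c}(\overline{Q})$ by \eqref{eq.Gamma}. It remains to verify (\textbf{I}1)--(\textbf{I}3): (\textbf{I}3) is our hypothesis, (\textbf{I}2) is immediate, and (\textbf{I}1) follows from the observation that every member of $\mathcal{SH}_{d,\lambda,c}(\overline{Q})$ has side length strictly smaller than $l(\overline{Q})$ by Definition \ref{Def.shadow}, whereas $l(Q'') = l(\overline{Q})$.

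There is no real obstacle here; the only mildly delicate point is the trichotomy between $l(Q)$ and $l(\underline{Q'})$ in part (1), where one must rule out the equality case before invoking (\textbf{C}3) on the strict inequality. Everything else is a direct bookkeeping of the definitions.
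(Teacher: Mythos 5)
Your proof is correct and follows essentially the same route as the paper: in (1) you extract a shadow cube via (\textbf{I}3), rule out equality of side lengths via (\textbf{I}1), and invoke (\textbf{C}3); in (2) you use (\textbf{I}3) for the forward direction and verify (\textbf{I}1)--(\textbf{I}3) together with (\textbf{C}1) and \eqref{eq.Gamma} for the converse, exactly as in the paper's argument. The only difference is that you spell out explicitly the step $l(\underline{Q'})<l(Q)$ that the paper asserts directly, which is fine.
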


\begin{proof}
To prove the first claim, note that by conditions (\textbf{I}1), (\textbf{I}3) of Definition \ref{Def.iceberg}, given a cube $Q \in \mathcal{IC}_{d,\lambda,c}(\overline{Q})$, there is a cube $\underline{Q} \in \mathcal{SH}_{d,\lambda,c}(\overline{Q})$ such that $\underline{Q} \subset Q$ and $l(\underline{Q}) < l(Q)$. On the other hand, by Definition \ref{Def.shadow}
we have $\overline{Q} \in \mathcal{K}_{d,\lambda,c}(\underline{Q})$. Hence,
condition (\textbf{C}3) of Definition \ref{cover.cube} gives the claim.

If $Q'' \in \mathcal{D}_{+}$, $l(Q'')=l(\overline{Q})$ and $Q'' \in \Gamma_{c}(\overline{Q}) \cap \mathcal{IC}_{d,\lambda,c}(\overline{Q})$,
then $\mathcal{SH}_{d,\lambda,c}(\overline{Q})|_{Q''} \neq \emptyset$ by condition (\textbf{I}3) in Definition \ref{Def.iceberg}.
Conversely, suppose that $\mathcal{SH}_{d,\lambda,c}(\overline{Q})|_{Q''} \neq \emptyset$ for some $Q'' \in \mathcal{D}_{+}$ such that $l(Q'')=l(\overline{Q})$.
Clearly, conditions (\textbf{I}2) and (\textbf{I}3) of Definition \ref{Def.iceberg} are hold true with $Q'$ replaced by $Q''$. On the other hand, by Definition \ref{Def.shadow}, $l(Q') < l(Q'')$
for all $Q' \in \mathcal{SH}_{d,\lambda,c}(\overline{Q})|_{Q''}$ and hence condition (\textbf{I}1) holds true with $Q'$ replaced by $Q''$. This shows that
$Q'' \in \mathcal{IC}_{d,\lambda,c}(\overline{Q})$. Finally, by (\textbf{C}1) of Definition \ref{cover.cube} and Definition \ref{Def.shadow},the  condition
$\mathcal{SH}_{d,\lambda,c}(\overline{Q})|_{Q''} \neq \emptyset$ implies the existence of a cube $\underline{Q} \in \mathcal{DF}(d,\lambda)$
such that $\underline{Q} \subset Q''$ and $\underline{Q} \subset c\overline{Q}$, Hence, by \eqref{eq.Gamma} it follows that $Q'' \in \Gamma_{c}(\overline{Q})$.
This proves the second claim.
\end{proof}

Now we show that any $(d,\lambda,c)$-shadow family
satisfies \textit{a certain Carleson packing condition.} This will be a key tool in proving the main results
of Section 7.

\begin{Prop}
\label{Lm.shadow}
Let $d \in (0,n]$ be such that $\mathcal{H}^{d}_{\infty}(S) > 0$. Let $\lambda \in (0,1)$ and $c \geq 1$.
Then, for each $\overline{Q} \in \mathcal{DF}(d,\lambda)$ and any $Q \in \mathcal{IC}_{d,\lambda,c}(\overline{Q})$,
\begin{equation}
\label{eq3.15}
\sum \{(l(Q'))^{\widetilde{d}}: Q' \in \mathcal{SH}_{d,\lambda,c}(\overline{Q})|_{Q}\} \le \frac{2^{n-\widetilde{d}}}{\lambda} (l(Q))^{\widetilde{d}} \quad \text{for all} \quad
\widetilde{d} \in [d,n].
\end{equation}
Furthermore,
\begin{equation}
\label{eq3.15'''}
\sum\{(l(Q'))^{\widetilde{d}}:Q' \in \mathcal{SH}_{d,\lambda,c}(\overline{Q})\} \le ([c]+2)^{n}\frac{2^{n-\widetilde{d}}}{\lambda} (l(\overline{Q}))^{\widetilde{d}}
\quad \hbox{for all} \quad \widetilde{d} \in [d,n].
\end{equation}
\end{Prop}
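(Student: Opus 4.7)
The plan is to reduce \eqref{eq3.15} to the case $\widetilde{d}=d$ and then apply Theorem \ref{Th.33} to the shadow family; \eqref{eq3.15'''} will then follow by partitioning $\mathcal{SH}_{d,\lambda,c}(\overline{Q})$ according to the at most $([c]+2)^{n}$ cubes of $\Gamma_{c}(\overline{Q})$ at scale $l(\overline{Q})$. First observe that every $Q'\in\mathcal{SH}_{d,\lambda,c}(\overline{Q})|_{Q}$ satisfies $l(Q')\le l(Q)/2$: by Definition \ref{Def.shadow} one has $l(Q')<l(\overline{Q})$; if $l(Q)<l(\overline{Q})$, Proposition \ref{Propp.shad}(1) gives $Q\notin\mathcal{DF}(d,\lambda)$ so $Q'\ne Q$, while if $l(Q)=l(\overline{Q})$ then $l(Q')<l(Q)$ outright, and in both cases dyadicity forces $l(Q')\le l(Q)/2$. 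Consequently, for $\widetilde{d}\in[d,n]$ one has $(l(Q'))^{\widetilde{d}}\le 2^{d-\widetilde{d}}(l(Q))^{\widetilde{d}-d}(l(Q'))^{d}$, and summing reduces \eqref{eq3.15} to the single inequality $\sum(l(Q'))^{d}\le 2^{n-d}(l(Q))^{d}/\lambda$.

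For this reduced estimate I set $\mathcal{C}:=\mathcal{SH}_{d,\lambda,c}(\overline{Q})|_{Q}$; Proposition \ref{Prop.3.1}(2) ensures $\mathcal{C}$ is nonoverlapping, and $\{Q\}\succeq\mathcal{C}$ is built into the restriction. The plan is then to apply Theorem \ref{Th.33} with $\lambda_{1}=\lambda_{2}=\lambda$, so it remains to verify the hypothesis $\mathcal{C}\succeq\mathcal{Q}^{j_{0}}(d,\lambda)|_{Q}$, where $j_{0}:=\min\{j:\{Q\}\succ\mathcal{Q}^{j}(d,\lambda)|_{Q}\}$. Given $R\in\mathcal{Q}^{j_{0}}(d,\lambda)|_{Q}$, I would take the maximal dyadic ancestor $R^{\ast}\supseteq R$ with $l(R^{\ast})<l(\overline{Q})$, $R^{\ast}\subset c\overline{Q}$, and $R^{\ast}\in\mathcal{DF}(d,\lambda)$; property (C3) of Definition \ref{cover.cube} then forces $\overline{Q}\in\mathcal{K}_{d,\lambda,c}(R^{\ast})$, placing $R^{\ast}\in\mathcal{SH}_{d,\lambda,c}(\overline{Q})|_{Q}=\mathcal{C}$ with $R\subset R^{\ast}$. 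Theorem \ref{Th.33} then yields $\sum_{Q'\in\mathcal{C}}(l(Q'))^{d}\le 2^{n-d}(l(Q))^{d}/\lambda$; the factor $2^{n-d}$ is needed in the subcase $Q\in\mathcal{DF}(d,1)$, in particular for $Q=\overline{Q}$, and is also admissible in the thin case.

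To deduce \eqref{eq3.15'''} from \eqref{eq3.15}, note that by (C1) every $Q'\in\mathcal{SH}_{d,\lambda,c}(\overline{Q})$ lies in $c\overline{Q}$; since $l(Q')<l(\overline{Q})$ it sits in a unique $Q''\in\Gamma_{c}(\overline{Q})$ with $l(Q'')=l(\overline{Q})$, and by Proposition \ref{Propp.shad}(2) only those $Q''$ which happen to belong to $\mathcal{IC}_{d,\lambda,c}(\overline{Q})$ contribute. Applying \eqref{eq3.15} to each such $Q''$ at scale $l(Q'')=l(\overline{Q})$ and summing over the at most $([c]+2)^{n}$ members of $\Gamma_{c}(\overline{Q})$ (Remark \ref{Rem.card.gamma}) produces \eqref{eq3.15'''} with the claimed constant.

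The main obstacle is precisely the inclusion $R^{\ast}\subset c\overline{Q}$ in the verification above: knowing only that $R\subset Q$ and $Q\in\mathcal{IC}_{d,\lambda,c}(\overline{Q})$ does not immediately prevent the dyadic ancestor $R^{\ast}$ from drifting outside $c\overline{Q}$ as we enlarge. I expect this to be handled either by a slight enlargement of the dilation constant—replacing $c$ by some $c+O(1)$ and absorbing the loss into the prefactor $([c]+2)^{n}$—or by bypassing Theorem \ref{Th.33} altogether and running a bespoke stopping-time argument on the dyadic subtree rooted at $Q$ that directly combines thickness of each $Q'\in\mathcal{C}$ with the thinness of every strict intermediate dyadic ancestor forced by (C3).
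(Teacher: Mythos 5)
Your reduction to $\widetilde{d}=d$ and your deduction of \eqref{eq3.15'''} from \eqref{eq3.15} match the paper, but the central step of \eqref{eq3.15} is where your argument breaks. You apply Theorem \ref{Th.33} with $\mathcal{C}=\mathcal{SH}_{d,\lambda,c}(\overline{Q})|_{Q}$, which forces you to verify the lower sandwich $\mathcal{C}\succeq\mathcal{Q}^{j_{0}}(d,\lambda)|_{Q}$, and that verification is exactly the step you admit you cannot close. The obstacle is genuine: for $R\in\mathcal{Q}^{j_{0}}(d,\lambda)|_{Q}$, take $R^{\ast}$ the largest dyadic ancestor of $R$ belonging to $\mathcal{DF}(d,\lambda)$ with $l(R^{\ast})<l(\overline{Q})$; Remark \ref{Rem.iceberg_intuition} and Proposition \ref{Propp.shad}(1) give $R^{\ast}\subseteq Q$, and maximality gives (\textbf{C}3), but nothing forces $R^{\ast}\subset c\overline{Q}$, because $Q$ itself need not lie in $c\overline{Q}$ (it only contains one shadow cube and satisfies $l(Q)\le l(\overline{Q})$). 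If $R^{\ast}\not\subset c\overline{Q}$ --- and nothing you prove excludes this --- then no cube of $\mathcal{SH}_{d,\lambda,c}(\overline{Q})$ contains $R$ at all: a shadow cube containing $R$ strictly smaller than $R^{\ast}$ would contradict its own (\textbf{C}3) since $R^{\ast}$ is a thick strict ancestor below scale $l(\overline{Q})$, one equal to $R^{\ast}$ fails (\textbf{C}1), and a larger one would contradict maximality of $R^{\ast}$. So the relation $\succeq$ you need can fail for the given $c$, and your two fallbacks remain sketches; note also that enlarging $c$ changes the shadow family itself, not merely constants, so it is not a matter of "absorbing a loss" (this route can in fact be salvaged, since $\mathcal{SH}_{d,\lambda,c}(\overline{Q})\subset\mathcal{SH}_{d,\lambda,c+2}(\overline{Q})$, one has $Q\subset(c+2)\overline{Q}$, and the bound in Theorem \ref{Th.33} is independent of $c$ --- but none of this is in your text).

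The paper avoids the issue by running the comparison in the opposite direction: it proves the inclusion $\mathcal{SH}_{d,\lambda,c}(\overline{Q})|_{Q}\subseteq\mathcal{Q}^{j_{0}}(d,\lambda)|_{Q}$ --- for $Q'\in\mathcal{SH}_{d,\lambda,c}(\overline{Q})|_{Q}$, condition (\textbf{C}3) for the pair $(Q',\overline{Q})$ together with Remark \ref{Rem.iceberg_intuition} and Proposition \ref{Propp.shad}(1) shows that every $(d,\lambda,1)$-covering cube of $Q'$ contains $Q$ and has side length at least $l(\overline{Q})$, which places $Q'$ in the layer $\mathcal{Q}^{j_{0}}(d,\lambda)|_{Q}$ --- and then it majorizes the shadow sum by the sum over the whole layer and applies Theorem \ref{Th.33} to $\mathcal{C}=\mathcal{Q}^{j_{0}}(d,\lambda)|_{Q}$, for which the sandwich condition is trivial. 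That reversal is precisely what removes the $c\overline{Q}$-containment problem; as written, your proposal has a genuine gap at its key step.
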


\begin{proof}
We fix arbitrary cubes $\overline{Q} \in \mathcal{DF}(d,\lambda)$ and $Q \in \mathcal{IC}_{d,\lambda,c}(\overline{Q})$, and define
\begin{equation}
\notag
j_{0}:=\min\{j \in \mathbb{N}_{0}:\{Q\} \succ \mathcal{Q}^{j}(d,\lambda)|_{Q}\}.
\end{equation}
By Remark \ref{Rem.iceberg_intuition}, the first assertion of Proposition
\ref{Propp.shad}, and (\textbf{C}3) of Definition \ref{cover.cube}, if $Q' \in \mathcal{SH}_{d,\lambda,c}(\overline{Q})|_{Q}$ and $\overline{Q'} \in \mathcal{K}_{d,\lambda,1}(Q')$, then $\overline{Q'} \supset Q$, and,
furthermore, $l(\overline{Q'}) \geq l(\overline{Q})$. Hence, by Definition \ref{Def.shadow} we conclude that
$Q' \in \mathcal{Q}^{j_{0}}(d,\lambda)|_{Q}$.
As a result, an application of Theorem \ref{Th.33} with $\lambda_{1}=\lambda_{2}=\lambda$ gives
\begin{equation}
\notag
\begin{split}
&\sum \{(l(Q'))^{\widetilde{d}}| Q' \in \mathcal{SH}_{d,\lambda,c}(\overline{Q})|_{Q}\}\\
&\le \sum \{(\frac{1}{2})^{\widetilde{d}-d}(l(Q))^{\widetilde{d}-d}(l(Q'))^{d}| Q' \in \mathcal{Q}^{j_{0}}(d,\lambda)|_{Q}\} \le \frac{2^{n-\widetilde{d}}}{\lambda}(l(Q))^{\widetilde{d}}.
\end{split}
\end{equation}
This proves the first claim.

Now we prove \eqref{eq3.15'''}. In view of the second assertion of Proposition \ref{Propp.shad} it is sufficient to sum estimate \eqref{eq3.15} over all cubes $Q \in \Gamma_{c}(\overline{Q}) \cap \mathcal{IC}_{d,\lambda,c}(\overline{Q})$. Taking into account Remark \ref{Rem.card.gamma}
we get, for any $\widetilde{d} \in [d,n]$, the required inequality
\begin{equation}
\begin{split}
\notag
&\sum\{(l(Q'))^{\widetilde{d}}:Q' \in \mathcal{SH}_{d,\lambda,c}(\overline{Q})\}=\sum\limits_{Q \in \Gamma_{c}(\overline{Q}) \cap \mathcal{IC}_{d,\lambda,c}(\overline{Q})}
\sum \{(l(Q'))^{\widetilde{d}}: Q' \in \mathcal{SH}_{d,\lambda,c}(\overline{Q})|_{Q}\}
\\
& \le
\#\Gamma_{c}(\overline{Q}) \frac{2^{n-\widetilde{d}}}{\lambda }  (l(\overline{Q}))^{\widetilde{d}} \le ([c]+2)^{n}\frac{2^{n-\widetilde{d}}}{\lambda}(l(\overline{Q}))^{\widetilde{d}}.
\end{split}
\end{equation}
\end{proof}

\subsection{Whitney-type cavities and hollow cubes}

Due to the great importance of this subsection in our further analysis, we would like to describe informally the main ideas
underlying in the core of the concepts introduced below.
Recall that the set $S$ was fixed at the beginning of the section.
In addition, we fix in this subsection a number $d \in (0,n)$
such that $\mathcal{H}^{d}_{\infty}(S) > 0$ and a parameter $\lambda \in (0,1]$.

Recall that, given $\tau \in (0,1]$, a cube $Q_{l}(x)$ is said to be \textit{$(S,\tau)$-porous} if there is a point $y(x) \in Q_{l}(x)$ such that
\begin{equation}
\notag
Q_{\tau l}(y(x)) \subset Q_{l}(x) \setminus S.
\end{equation}

Recall also \cite{St}, Ch.6 that (since $S$ is closed, nonempty and $S \neq \mathbb{R}^{n}$) there is a nonempty nonoverlapping family $\mathcal{W}_{S} \subset \mathcal{D}$
(called \textit{the Whitney decomposition}, or, sometimes \textit{Whitney covering}) such that $\mathbb{R}^{n} \setminus S = \cup \{Q:Q \in \mathcal{W}_{S}\}$ and
\begin{equation}
\notag
\operatorname{dist}(Q,S) \le l(Q) \le 4\operatorname{dist}(Q,S).
\end{equation}
Cubes $Q$ from the family $\mathcal{W}_{S}$ are called Whitney cubes.

It should be noted that $(S,\tau)$-porous cubes and Whitney cubes are indispensable tools
in different topics dealing with extensions of functions \cite{Ihn,Shv2,Tri,TV}.
These two concepts are intimately related to each other. Indeed, given a cube $Q=Q_{l}(x) \in \mathcal{W}_{S}$ one can find a cube $\widetilde{Q}=\widetilde{Q}_{l}(\widetilde{x})$
whose center $\widetilde{x}$ is the metric projection of $x$ to $S$ such that  $Q \subset c\widetilde{Q}$ for some universal constant $c \geq 1$.
This proves that the cube $c\widetilde{Q}$ is $(S,\frac{1}{c})$-porous. Conversely, given an $(S,\tau)$-porous cube $\widetilde{Q}$, one
can find a Whitney cube $Q$ such that $Q \subset \widetilde{c}\widetilde{Q}$ for some universal constant $\widetilde{c} \geq 1$
and $l(Q) \approx l(\widetilde{Q})$.

Let us informally describe why the notions of Whitney cubes and porous cubes are so useful.
If either $p \in (1,n]$ and $S$ is regular enough or
$p > n$ and $S$ is arbitrary, one can effectively
absorb the information about the behavior of a given function $f: S \to \mathbb{R}$ from any porous with respect to $S$ cube $\widetilde{Q}$
and then, in some sense, transfer this information to the corresponding Whitney cube $Q$ with comparable side length. This gives the rough idea
of the classical Whitney extension operator. Unfortunately, in the case $1 < p \le n$ and without any additional regularity assumptions on the set $S$,
only few cubes $\widetilde{Q}$ with $\widetilde{Q} \cap S \neq \emptyset$ can be effectively used
for gathering information about a given function $f:S \to \mathbb{R}$.
One cannot hope that these cubes are porous in general. Instead of Whitney cubes and $(S,\tau)$-porous cubes,
we introduce the special cavities.


\begin{Def}
\label{Def.hollow}
Let $c \geq 1$ and $\varkappa \in \mathbb{N}$.
Given a cube $Q \in \mathcal{D}_{+}$,
we define \textit{the special cavity}
\begin{equation}
\label{eq.cavity}
\Omega_{c,\varkappa}(Q):= \Omega_{d,\lambda,c,\varkappa}(Q):=cQ \setminus \bigcup \{cQ': Q' \in \mathcal{DF}(d,\lambda) \hbox{ and } l(Q') \le 2^{-\varkappa}l(Q)\}.
\end{equation}
\end{Def}

Recall that at the beginning of this subsection we fixed $S$, $d \in (0,n)$ and $\lambda \in (0,1)$. The following result was established in \cite{T2} (see Theorem 4.2 therein).

\begin{ThA}
\label{Th.spec.cavit.}
For each $c \geq 1$, there exist constants $\tau=\tau(n,d) > 0$
and $\underline{\varkappa}=\underline{\varkappa}(n,d,\lambda,c) \in \mathbb{N}$ such that
\begin{equation}
\mathcal{L}^{n}(\Omega_{c,\varkappa}(Q)) \geq \tau (l(Q))^{n}
\end{equation}
for each cube $Q \in \mathcal{D}_{+} \setminus \mathcal{DF}(d,\lambda)$ and any $\varkappa > \underline{\varkappa}$.
\end{ThA}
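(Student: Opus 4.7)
The plan is to upper-bound the Lebesgue measure of the set removed from $cQ$ in the definition of $\Omega_{c,\varkappa}(Q)$ and show that, for $\varkappa$ large, this removed set has measure at most $(c^n - \tau)(l(Q))^n$ with $\tau = 1/2$ (which depends only on $n,d$ trivially). A comparison of centers shows that every dyadic cube $Q'$ with $l(Q') \le 2^{-\varkappa}l(Q)$ and $cQ' \cap cQ \neq \emptyset$ lies inside $(c+1)Q$; hence all relevant small thick cubes are contained in the union of the $O(c^n)$ dyadic cubes $\tilde{Q}$ of side $l(Q)$ meeting $(c+1)Q$. Passing to the subfamily $\mathcal{G}^{\max}$ of dyadically maximal elements of $\mathcal{G}:=\{Q' \in \mathcal{DF}(d,\lambda) : l(Q') \le 2^{-\varkappa}l(Q),\ cQ' \cap cQ \neq \emptyset\}$, which is nonoverlapping and has the same $c$-dilated union, reduces the problem to estimating $\sum_{\mathcal{G}^{\max}}(l(Q'))^n$.

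I would control this sum in two stages. First, for each brother $\tilde{Q}$, the Carleson-type packing of Theorem \ref{Th.33} (with $\lambda_1=\lambda_2=\lambda$), applied after a mild augmentation of $\mathcal{G}^{\max}|_{\tilde{Q}}$ by the missing cubes of $\mathcal{Q}^{j_0(\tilde{Q})}(d,\lambda)|_{\tilde{Q}}$, delivers the $d$-content bound $\sum_{Q' \in \mathcal{G}^{\max}|_{\tilde{Q}}}(l(Q'))^d \le 2^{n-d}(l(Q))^d/\lambda$; summing over the finitely many brothers gives $\sum_{Q' \in \mathcal{G}^{\max}}(l(Q'))^d \le C(n,c,\lambda)(l(Q))^d$. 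Second, I would convert from $d$-content to Lebesgue volume using the elementary inequality $(l(Q'))^n \le (2^{-\varkappa}l(Q))^{n-d}(l(Q'))^d$, valid since $d < n$ and $l(Q') \le 2^{-\varkappa}l(Q)$. This yields
\begin{equation}
\notag
\mathcal{L}^n\Bigl(\bigcup_{Q' \in \mathcal{G}} cQ'\Bigr) \le c^n \sum_{Q' \in \mathcal{G}^{\max}}(l(Q'))^n \le C'(n,c,d,\lambda)\,2^{-(n-d)\varkappa}(l(Q))^n.
\end{equation}
Selecting $\underline{\varkappa}=\underline{\varkappa}(n,d,\lambda,c)$ so large that this last factor is at most $c^n(l(Q))^n/2$ for all $\varkappa > \underline{\varkappa}$ gives the desired inequality $\mathcal{L}^n(\Omega_{c,\varkappa}(Q)) \ge c^n(l(Q))^n/2 \ge (l(Q))^n/2$.

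The main obstacle is the augmentation step invoked when applying Theorem \ref{Th.33} to $\mathcal{G}^{\max}|_{\tilde{Q}}$: since the family $\mathcal{G}$ is artificially capped at side length $2^{-\varkappa}l(Q)$, the family $\mathcal{G}^{\max}|_{\tilde{Q}}$ need not dominate the canonical family $\mathcal{Q}^{j_0(\tilde{Q})}(d,\lambda)|_{\tilde{Q}}$, whose cubes may have intermediate sizes. To repair the nesting hypothesis $\mathcal{C} \succeq \mathcal{Q}^{j_0(\tilde{Q})}|_{\tilde{Q}}$ of Theorem \ref{Th.33}, I would adjoin to $\mathcal{G}^{\max}|_{\tilde{Q}}$ each cube of $\mathcal{Q}^{j_0(\tilde{Q})}|_{\tilde{Q}}$ not already contained in some element of $\mathcal{G}^{\max}|_{\tilde{Q}}$, and verify, using Remark \ref{Rem.31111} together with the maximality of the cubes in $\mathcal{G}^{\max}$, that the augmented family remains nonoverlapping so that Theorem \ref{Th.33} indeed applies.
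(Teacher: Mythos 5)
Your plan hinges on the estimate $\sum_{Q' \in \mathcal{G}^{\max}}(l(Q'))^{d} \le C(n,c,\lambda)(l(Q))^{d}$ with $C$ independent of $\varkappa$, and hence on the conclusion that the removed set has measure at most $C'2^{-(n-d)\varkappa}(l(Q))^{n}$. Both claims are false. Take $S=Q_{0,0}$ and $\lambda$ smaller than the constant $c(n,d)$ with $\mathcal{H}^{d}_{\infty}(Q')\ge c(n,d)(l(Q'))^{d}$, so that \emph{every} dyadic cube contained in $Q_{0,0}$ lies in $\mathcal{DF}(d,\lambda)$; let $Q=[1+2^{-k},1+2^{1-k}]\times[0,2^{-k}]^{n-1}$ (a thin cube of $\mathcal{D}_{+}$ disjoint from $S$) and $c=10$. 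The brother $\tilde Q=[1-2^{-k},1]\times[0,2^{-k}]^{n-1}$ is contained in $cQ$, so all $2^{\varkappa n}$ dyadic subcubes of $\tilde Q$ of side $2^{-\varkappa}l(Q)$ belong to your family $\mathcal{G}$ and are maximal in it (no member of $\mathcal{G}$ has larger side); hence $\sum_{Q'\in\mathcal{G}^{\max}|_{\tilde Q}}(l(Q'))^{d}=2^{\varkappa(n-d)}(l(Q))^{d}\to\infty$, and the union of the corresponding $cQ'$ already covers $\tilde Q$, a set of measure $(l(Q))^{n}$, for every $\varkappa$. So the removed set does not become a small fraction of $cQ$ as $\varkappa\to\infty$: the theorem is true in this example only because the part of $cQ$ away from $Q_{0,0}$ (essentially $Q$ itself, up to a collar of width $\sim c2^{-\varkappa}l(Q)$) is never touched, which is not what your argument establishes. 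This also shows your "repair" of the hypothesis $\mathcal{C}\succeq\mathcal{Q}^{j_{0}}(d,\lambda)|_{\tilde Q}$ in Theorem \ref{Th.33} cannot be carried out: for large $\varkappa$ the canonical cubes of $\mathcal{Q}^{j_{0}}(d,\lambda)|_{\tilde Q}$ have side comparable to $l(\tilde Q)$ and strictly contain many members of $\mathcal{G}^{\max}|_{\tilde Q}$, so adjoining them violates the nonoverlapping condition (1), while discarding the contained members destroys the bound on the original sum; that hypothesis of Theorem \ref{Th.33} is precisely what excludes families artificially capped at side $2^{-\varkappa}l(Q)$, and no argument can recover the capped packing bound because, as the example shows, it is simply false.

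A structural symptom of the problem is that your proof never uses the hypothesis $Q\in\mathcal{D}_{+}\setminus\mathcal{DF}(d,\lambda)$, which is essential: for $S=Q_{0,0}$, a thick dyadic cube $Q\subset Q_{0,0}$ and $c=1$, the small thick cubes tile $Q$ and $\Omega_{1,\varkappa}(Q)=\emptyset$ for every $\varkappa$, so any argument insensitive to thinness of $Q$ cannot be correct. Note also that the paper contains no proof of this statement to compare with: it is imported from \cite{T2} (Theorem 4.2). Whatever the details there, a correct proof must exploit the thinness of $Q$ to produce, inside $cQ$, a region of diameter comparable to $l(Q)$ that the dilations $cQ'$ of the small thick cubes cannot reach, rather than trying to show that the total measure they remove from $cQ$ is small — it need not be.
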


The first nice property of special cavities is that they do not intersect ``too much''.

\begin{Prop}
\label{Th.intersect.cavities}
Let $c \geq 1$ and $\varkappa \in \mathbb{N}$.
Then there is a constant $C=C(n,c,\varkappa) > 0$ such that
\begin{equation}
\label{eq3.14'}
M(\{\Omega_{c,\varkappa}(Q):Q \in \mathcal{DF}(d,\lambda)\}) \le C.
\end{equation}
\end{Prop}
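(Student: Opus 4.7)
The plan is to fix an arbitrary point $x \in \mathbb{R}^{n}$ and directly bound the number of cubes $Q \in \mathcal{DF}(d,\lambda)$ for which $x \in \Omega_{c,\varkappa}(Q)$. The argument rests on a simple ``scale separation'' observation coming straight from the definition of the cavity, combined with the elementary covering bound in Proposition \ref{Prop21}.

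First I would record the following key monotonicity statement: if $Q_{1}, Q_{2} \in \mathcal{DF}(d,\lambda)$ are two cubes such that $x \in \Omega_{c,\varkappa}(Q_{1}) \cap \Omega_{c,\varkappa}(Q_{2})$ and, say, $l(Q_{2}) \le l(Q_{1})$, then necessarily
\[
l(Q_{2}) > 2^{-\varkappa} l(Q_{1}).
\]
Indeed, if instead $l(Q_{2}) \le 2^{-\varkappa}l(Q_{1})$, then $Q_{2}$ would qualify as one of the cubes $Q'$ in the union defining $\Omega_{c,\varkappa}(Q_{1})$ in \eqref{eq.cavity}, and since $x \in \Omega_{c,\varkappa}(Q_{2}) \subset cQ_{2}$, we would have $x \in cQ_{2}$, contradicting the condition $x \notin cQ'$ built into the definition of $\Omega_{c,\varkappa}(Q_{1})$.

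Second, since all cubes in $\mathcal{DF}(d,\lambda) \subset \mathcal{D}_{+}$ are dyadic, their side lengths are integer powers of $2$. The previous estimate therefore shows that the set of side lengths $\{l(Q) : Q \in \mathcal{DF}(d,\lambda), \, x \in \Omega_{c,\varkappa}(Q)\}$ occupies at most $\varkappa$ consecutive dyadic scales. At each such scale $k$, the cubes $Q \in \mathcal{D}_{k}$ with $x \in cQ$ number at most $([c]+2)^{n}$ by Proposition \ref{Prop21}, since $\Omega_{c,\varkappa}(Q) \subset cQ$ forces any admissible $Q$ to lie in the family $c\mathcal{D}_{k}$ evaluated at $x$.

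Combining these two observations gives at once the bound
\[
\#\{Q \in \mathcal{DF}(d,\lambda) : x \in \Omega_{c,\varkappa}(Q)\} \le \varkappa \, ([c]+2)^{n},
\]
uniformly in $x$, which proves \eqref{eq3.14'} with $C = \varkappa ([c]+2)^{n}$. There is no real obstacle here: the only subtle point is making sure that the strict versus non-strict inequalities in the definition of the cavity line up correctly so that the scale-separation argument goes through, but this is immediate from the form of the set cut out in \eqref{eq.cavity}.
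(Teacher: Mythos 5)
Your proof is correct and follows essentially the same route as the paper: both arguments use the inclusion $\Omega_{c,\varkappa}(Q)\subset cQ$ together with the exclusion of $cQ'$ for small cubes $Q'\in\mathcal{DF}(d,\lambda)$ in \eqref{eq.cavity} to confine all admissible cubes through a given point to at most $O(\varkappa)$ dyadic scales, and then invoke Proposition \ref{Prop21} to bound the multiplicity at each scale by $([c]+2)^{n}$. Your organization around the largest cube through $x$ even yields the slightly cleaner constant $\varkappa([c]+2)^{n}$ versus the paper's $2\varkappa([c]+2)^{n}$, but the idea is identical.
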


\begin{proof}
Fix dyadic cubes $Q_{1}=Q_{k_{1},m_{1}} \in \mathcal{DF}_{k_{1}}(d,\lambda)$  and $Q_{2}=Q_{k_{2},m_{2}} \in \mathcal{DF}_{k_{2}}(d,\lambda)$ with $k_{1},k_{2} \in \mathbb{N}_{0}$.
By \eqref{eq.cavity} we have
\begin{equation}
\label{eq3.16'}
\Omega_{c,\varkappa}(Q_{k,m}) \subset cQ_{k,m}.
\end{equation}
Hence,
\begin{equation}
\notag
\Omega_{c,\varkappa}(Q_{k_{1},m_{1}}) \cap \Omega_{c,\varkappa}(Q_{k_{2},m_{2}}) \neq \emptyset \quad \text{implies} \quad cQ_{k_{1},m_{1}} \cap cQ_{k_{2},m_{2}} \neq \emptyset.
\end{equation}
By \eqref{eq.cavity} this gives $|k_{1}-k_{2}| \le \varkappa$ provided that $\Omega_{c,\varkappa}(Q_{k_{1},m_{1}}) \cap \Omega_{c,\varkappa}(Q_{k_{2},m_{2}}) \neq \emptyset$.
Hence, if for some point $x \in \mathbb{R}^{n}$ there are $k(x) \in \mathbb{N}_{0}$ and $m(x) \in \mathbb{Z}^{n}$ such that
$x \in \Omega_{c,\varkappa}(Q_{k(x),m(x)})$, then we get
\begin{equation}
\label{eq3.15'}
\{j \in \mathbb{N}_{0}: \hbox{there is } m \in \mathbb{Z}^{n} \hbox{ s.t. }  \Omega_{c,\varkappa}(Q_{j,m}) \ni x \}
\subset [k(x)-\varkappa,k(x)+\varkappa] \cap \mathbb{N}_{0}.
\end{equation}
We use \eqref{eq3.16'}, \eqref{eq3.15'}, and apply Proposition \ref{Prop21}. This gives
\begin{equation}
\notag
\sum\limits_{Q \in \mathcal{D}_{+}}\chi_{\Omega_{c,\varkappa}(Q)}(x) \le \sum\limits_{k=k(x)-\varkappa}^{k(x)+\varkappa}
 \sum\limits_{m \in \mathbb{Z}^{n}}\chi_{cQ_{k,m}}(x) \le 2\varkappa ([c]+2)^{n} \quad \hbox{for all} \quad x \in \mathbb{R}^{n}.
\end{equation}
The proof is complete.
\end{proof}

Fix $c \geq 1$ and let $\underline{\varkappa}$ be the same as in Theorem \ref{Th.spec.cavit.}.
Consider the family
\begin{equation}
\label{eqq.3.14}
\begin{split}
&\mathcal{P}(c):=\mathcal{P}_{S}(d,\lambda,c)\\
&:=\Bigl\{Q \in \mathcal{DF}(d,\lambda): cQ \supset Q' \hbox{ for some } Q' \in \mathcal{D}_{+}\setminus \mathcal{DF}(d,\lambda) \hbox{ with } l(Q') \geq \frac{l(Q)}{4}\Bigr\}.
\end{split}
\end{equation}
We will see below that in the case of essentially
irregular sets $S \subset \mathbb{R}^{n}$ the role of the family of cubes $\mathcal{P}(c)$ for the extension
of functions is essentially the same as that of the family of all $(S,\tau)$-porous cubes in the case of sufficiently regular
sets $S \subset \mathbb{R}^{n}$.


\section{Calder\'on-type maximal functions and new function spaces}

In this section we introduce a far-reaching generalization of the Calder\'on-type maximal function introduced in \cite{TV}.
The latter generalizes the classical Calder\'on maximal function \cite{Cal}.

Let $\{\mathfrak{m}_{k}\}$ be an admissible sequence of measures on $\mathbb{R}^{n}$ and $f \in L_{1}(\{\mathfrak{m}_{k}\})$.
Given dyadic cubes $Q_{1} \in \mathcal{D}_{k_{1}}$, $Q_{2} \in \mathcal{D}_{k_{2}}$ with $k_{1},k_{2} \in \mathbb{N}_{0}$, we recall \eqref{eq.average} and put
\begin{equation}
\begin{split}
\label{eq.6.1'}
&\Phi_{f,\{\mathfrak{m}_{k}\}}(Q_{1},Q_{2}):=\Phi_{f,\{\mathfrak{m}_{k}\}}(Q_{2},Q_{1})\\
&:=\frac{1}{\min\{l(Q_{1}),l(Q_{2})\}} \fint\limits_{Q_{1}}\fint\limits_{Q_{2}}
|f(x)-f(y)|\,d\mathfrak{m}_{k_{1}}(x)\,d\mathfrak{m}_{k_{2}}(y).
\end{split}
\end{equation}
By \eqref{eq.average} and \eqref{eq.6.1'} it is easy to see that
\begin{equation}
\label{eqq.42}
\fint\limits_{Q_{1}}\Bigl|f(y)-\fint\limits_{Q_{2}}f(x)\,d\mathfrak{m}_{k_{2}}(x)\Bigr|\,d\mathfrak{m}_{k_{1}}(y)
\le \min\{l(Q_{1}),l(Q_{2})\}\Phi_{f,\{\mathfrak{m}_{k}\}}(Q_{1},Q_{2}).
\end{equation}

\begin{Prop}
\label{Prop.4.1}
Let $\{\mathfrak{m}_{k}\}$ be an admissible sequence of measures on $\mathbb{R}^{n}$ and $f \in L_{1}(\{\mathfrak{m}_{k}\})$. Then, the following inequality
\begin{equation}
\begin{split}
&\min\{l(Q_{1}),l(Q_{3})\}\Phi_{f,\{\mathfrak{m}_{k}\}}(Q_{1},Q_{3}) \\
&\le \min\{l(Q_{1}),l(Q_{2})\}\Phi_{f,\{\mathfrak{m}_{k}\}}(Q_{1},Q_{2}) + \min\{l(Q_{2}),l(Q_{3})\}\Phi_{f,\{\mathfrak{m}_{k}\}}(Q_{2},Q_{3})
\end{split}
\end{equation}
holds for any cubes $Q_{i} \in \mathcal{D}_{+}$ with $\mathfrak{m}_{k_{i}}(Q_{i}) \neq 0$, $i=1,2,3$.
\end{Prop}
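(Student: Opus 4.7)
The plan is to strip away the prefactor $\min\{l(Q_i),l(Q_j)\}^{-1}$ and prove the equivalent statement that the raw double averages
\[
A(Q_i,Q_j) := \fint_{Q_i}\fint_{Q_j}|f(x)-f(y)|\,d\mathfrak{m}_{k_i}(x)\,d\mathfrak{m}_{k_j}(y)
\]
satisfy the triangle inequality $A(Q_1,Q_3) \le A(Q_1,Q_2) + A(Q_2,Q_3)$. Indeed, since multiplying $\Phi_{f,\{\mathfrak{m}_k\}}(Q_i,Q_j)$ by $\min\{l(Q_i),l(Q_j)\}$ produces exactly $A(Q_i,Q_j)$, the claimed inequality is literally this triangle inequality for the $A$'s.

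To establish the inequality for $A$, I would start from the pointwise triangle inequality $|f(x)-f(z)| \le |f(x)-f(y)| + |f(y)-f(z)|$, valid for all $x,y,z$ where $f$ is defined. Since $\mathfrak{m}_{k_2}(Q_2)>0$, I can average both sides over $y \in Q_2$ against $d\mathfrak{m}_{k_2}$; the left-hand side is independent of $y$, so
\[
|f(x)-f(z)| \le \fint_{Q_2}|f(x)-f(y)|\,d\mathfrak{m}_{k_2}(y) + \fint_{Q_2}|f(y)-f(z)|\,d\mathfrak{m}_{k_2}(y).
\]
Now I integrate this inequality in $x$ over $Q_1$ against $d\mathfrak{m}_{k_1}$ (normalized by $\mathfrak{m}_{k_1}(Q_1)$) and in $z$ over $Q_3$ against $d\mathfrak{m}_{k_3}$ (normalized by $\mathfrak{m}_{k_3}(Q_3)$). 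Applying Fubini's theorem to each of the two resulting triple integrals — which is legitimate because $f \in L_1(\{\mathfrak{m}_k\})$ implies $f \in L_1(\mathfrak{m}_{k_i})$ for each $i$ by admissibility, so each absolute value $|f(x)-f(y)|$ is integrable on the product — yields
\[
A(Q_1,Q_3) \le A(Q_1,Q_2) + A(Q_2,Q_3),
\]
where on the first term the $z$-integral over $Q_3$ contributes a factor of $1$ and on the second term the $x$-integral over $Q_1$ contributes a factor of $1$.

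There is really no obstacle here; the only point that requires a moment of care is to make sure the integrability hypothesis is invoked cleanly so that the repeated applications of Fubini and the normalizations are all justified, and to confirm that all the $\mathfrak{m}_{k_i}(Q_i)$ are strictly positive (this is given by assumption). The cleanest write-up is to prove the inequality for $A$ as a standalone step and then divide through by the appropriate minima to recover the stated form for $\Phi_{f,\{\mathfrak{m}_k\}}$.
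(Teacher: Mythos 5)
Your proof is correct and is essentially the paper's argument: the paper inserts the constant $\fint_{Q_{2}}f\,d\mathfrak{m}_{k_{2}}$ into $|f(x)-f(z)|$ and invokes \eqref{eqq.42}, which is the same ``pass through the middle cube'' computation as your averaging of the pointwise triangle inequality over $Q_{2}$. The only cosmetic difference is that, since the integrands are nonnegative, Tonelli already justifies the interchanges, so the appeal to $f\in L_{1}(\{\mathfrak{m}_{k}\})$ for Fubini is not even needed.
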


\begin{proof}
Let $Q_{i} \in \mathcal{D}_{k_{i}}$, $i=1,2,3$ with $k_{1},k_{2},k_{3} \in \mathbb{N}_{0}$. Hence, by \eqref{eq.6.1'} and \eqref{eqq.42} we have
\begin{equation}
\begin{split}
&\min\{l(Q_{1}),l(Q_{3})\}\Phi_{f,\{\mathfrak{m}_{k}\}}(Q_{1},Q_{3})\\
&=\fint\limits_{Q_{1}}\fint\limits_{Q_{2}}
\Bigl|f(x)-\fint\limits_{Q_{3}}f(z)\,d\mathfrak{m}_{k_{3}}(z)+\fint\limits_{Q_{3}}f(z)\,d\mathfrak{m}_{k_{3}}(z)-f(y)\Bigr|\,d\mathfrak{m}_{k_{2}}(y)\,d\mathfrak{m}_{k_{1}}(x)\\
&\le \fint\limits_{Q_{1}}\Bigl|f(x) -\fint\limits_{Q_{3}}f(z)\,d\mathfrak{m}_{k_{3}}(z)\Bigr|\,d\mathfrak{m}_{k_{1}}(x) +
\fint\limits_{Q_{3}}\Bigl|f(y)-\fint\limits_{Q_{2}}f(y)\,d\mathfrak{m}_{k_{2}}(z)\Bigr|\,d\mathfrak{m}_{k_{3}}(y)\\
&\le \min\{l(Q_{1}),l(Q_{2})\}\Phi_{f,\{\mathfrak{m}_{k}\}}(Q_{1},Q_{2}) + \min\{l(Q_{2}),l(Q_{3})\}\Phi_{f,\{\mathfrak{m}_{k}\}}(Q_{2},Q_{3}).
\end{split}
\end{equation}
The proof is complete.
\end{proof}

Now we define the dyadic generalized Calder\'on-type maximal function. It will be an indispensable tool in our further analysis.
We recall Definitions \ref{Def.Frostman} and \ref{cover.cube}.

\begin{Def}
\label{Cal.max.function}
Let $d \in (0,n]$ and let $S \subset Q_{0,0}$ be a compact set with $\mathcal{H}^{d}_{\infty}(S) > 0$. Let $\lambda \in (0,1]$ and $c \geq 1$. Let
$\{\mathfrak{m}_{k}\} \in \mathfrak{M}^{d}(S)$ and $f \in L_{1}(\{\mathfrak{m}_{k}\})$. Given
a point $x \in \mathbb{R}^{n}$, we define
\begin{equation}
f^{\natural}_{\{\mathfrak{m}_{k}\},\lambda,c}(x):=\sup \Phi_{f,\{\mathfrak{m}_{k}\}}(\underline{Q},\overline{Q}),
\end{equation}
where the supremum is taken over all pairs of cubes
$\underline{Q},\overline{Q}$ satisfying the following conditions:

{ \rm (\textbf{f}1)} $x \in c\underline{Q}$ and $\underline{Q},\overline{Q} \in \mathcal{DF}_{S}(d,\lambda)$;

{ \rm (\textbf{f}2)} $0 < l(\underline{Q}) \le l(\overline{Q}) \le 1$;

{ \rm (\textbf{f}3)} $\overline{Q} \in \mathcal{K}_{d,\lambda,c}(\underline{Q})$.

If there are no pairs $\underline{Q},\overline{Q}$ satisfying conditions (\textbf{f}1)--(\textbf{f}3),
we put $f^{\natural}_{\{\mathfrak{m}_{k}\},\lambda,c}(x):=0$.
The mapping $x \to f^{\natural}_{\{\mathfrak{m}_{k}\},\lambda,c}(x)$ is called \textit{the dyadic generalized Calder\'on-type maximal function}.
\end{Def}

The dyadic generalized Calder\'on-type maximal functions have some straightforward monotonicity properties, which follow
immediately from Definition
\ref{Cal.max.function}. We omit an elementary proof.

\begin{Prop}
\label{Rem51}
Let $d \in (0,n]$ and let $S \subset Q_{0,0}$ be a compact set with $\mathcal{H}^{d}_{\infty}(S) > 0$. Let
$\{\mathfrak{m}_{k}\} \in \mathfrak{M}^{d}(S)$ and $f \in L_{1}(\{\mathfrak{m}_{k}\})$.
Then, given a parameter $\lambda \in (0,1]$ and a point $x \in \mathbb{R}^{n}$,
\begin{equation}
\notag
f^{\natural}_{\{\mathfrak{m}_{k}\},\lambda,c_{1}}(x) \le f^{\natural}_{\{\mathfrak{m}_{k}\},\lambda,c_{2}}(x) \quad \text{for any} \quad 1 \le c_{1} \le c_{2} < \infty.
\end{equation}

\end{Prop}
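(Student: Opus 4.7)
The plan is to show that every pair $(\underline{Q},\overline{Q})$ admissible in the supremum defining $f^{\natural}_{\{\mathfrak{m}_{k}\},\lambda,c_{1}}(x)$ is also admissible in the supremum defining $f^{\natural}_{\{\mathfrak{m}_{k}\},\lambda,c_{2}}(x)$; the required monotonicity then follows from the fact that the supremum of $\Phi_{f,\{\mathfrak{m}_{k}\}}(\underline{Q},\overline{Q})$ over a larger index set is at least as large.

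Concretely, I would fix $1 \le c_{1} \le c_{2} < \infty$, a point $x \in \mathbb{R}^{n}$, and a pair $(\underline{Q},\overline{Q})$ of dyadic cubes satisfying conditions (\textbf{f}1)--(\textbf{f}3) of Definition \ref{Cal.max.function} with $c=c_{1}$, and check each condition for $c=c_{2}$. For (\textbf{f}1), the inclusion $c_{1}\underline{Q} \subset c_{2}\underline{Q}$ (which holds because the cubes are concentric and $c_{1} \le c_{2}$) gives $x \in c_{2}\underline{Q}$; the membership $\underline{Q},\overline{Q} \in \mathcal{DF}_{S}(d,\lambda)$ does not involve $c$. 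Condition (\textbf{f}2) does not involve $c$ at all. For (\textbf{f}3) one has to verify that $\mathcal{K}_{d,\lambda,c_{1}}(\underline{Q}) \subset \mathcal{K}_{d,\lambda,c_{2}}(\underline{Q})$; inspecting Definition \ref{cover.cube}, condition (\textbf{C}1) becomes weaker as $c$ grows (since $\underline{Q} \subset c_{1}\overline{Q}$ implies $\underline{Q} \subset c_{2}\overline{Q}$), while conditions (\textbf{C}2) and (\textbf{C}3) do not depend on $c$ at all. Hence $\overline{Q} \in \mathcal{K}_{d,\lambda,c_{1}}(\underline{Q})$ implies $\overline{Q} \in \mathcal{K}_{d,\lambda,c_{2}}(\underline{Q})$.

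Thus the set of admissible pairs $(\underline{Q},\overline{Q})$ entering the supremum for $c_{1}$ is a subset of that for $c_{2}$, and taking suprema preserves the inequality. The degenerate case in which no admissible pair exists for $c_{1}$ is handled by the convention $f^{\natural}_{\{\mathfrak{m}_{k}\},\lambda,c_{1}}(x)=0$, which trivially satisfies the claimed inequality because $\Phi_{f,\{\mathfrak{m}_{k}\}} \ge 0$. There is no real obstacle here; the only point that could be overlooked is the verification that (\textbf{C}3) is unchanged when $c$ varies, but this is immediate from its formulation, which quantifies only over intermediate dyadic cubes $Q'$ with $\underline{Q} \subset Q'$ and $l(Q') \in (l(\underline{Q}),l(\overline{Q}))$.
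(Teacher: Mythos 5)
Your proof is correct and is exactly the elementary verification the paper omits (it states that the monotonicity "follows immediately from Definition \ref{Cal.max.function}"): every pair admissible for $c_{1}$ remains admissible for $c_{2}$ since (\textbf{f}1) and (\textbf{C}1) only weaken as $c$ grows while the remaining conditions are $c$-independent, and the degenerate empty-supremum case is handled by the nonnegativity of $\Phi_{f,\{\mathfrak{m}_{k}\}}$. Nothing is missing.
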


\begin{Def}
\label{Deff.5.4}
Let $d \in (0,n]$ and let $S \subset Q_{0,0}$ be a compact set with $\mathcal{H}^{d}_{\infty}(S) > 0$. Given
$f \in \mathfrak{B}(S) \cap L_{1}^{loc}(\{\mathfrak{m}_{k}\})$, we say that $x \in S$
is a $d$-\textit{regular point of} $f$ and write $x \in S_{f}(d)$ if, for each sequence $\{\widetilde{\mathfrak{m}}_{k}\} \in \mathfrak{M}^{d}(S)$,
\begin{equation}
\label{eq222}
\lim\limits_{k \to \infty}\max\limits_{\substack{Q \in T_{d,\lambda,c}(x) \cap \mathcal{D}_{k}}}\fint\limits_{Q}|f(x)-f(y)|\,d\widetilde{\mathfrak{m}}_{k}(y)=0
\quad \hbox{for any $c \geq 1$ and any $\lambda \in (0,1)$}.
\end{equation}
If, for some $k \in \mathbb{N}_{0}$, the set $T_{d,\lambda,c}(x) \cap \mathcal{D}_{k} = \emptyset$, the corresponding maximum is defined to be zero.
By $S_{f}(d)$ we denote \textit{the set of all $d$-regular points of $f$}.
\end{Def}

Now we are ready to introduce the function spaces, which will play the role of intermediate spaces between trace spaces of Sobolev spaces.

\begin{Def}
\label{def.x.trace}
Let $d^{\ast} \in (0,n]$ and let $S \subset Q_{0,0}$ be a compact set with $\lambda^{\ast}:=\mathcal{H}^{d^{\ast}}_{\infty}(S) > 0$.
Let $\lambda \in (0,1]$ and $c \geq 1$ be some fixed constants.
Let $d \in (0,d^{\ast}]$ and let $\{\mathfrak{m}_{k}\} \in \mathfrak{M}^{d}(S)$. Given $p \in (1,\infty)$, we say that $f \in L_{1}(\{\mathfrak{m}_{k}\})$ belongs to
$\widetilde{\operatorname{X}}^{d^{\ast}}_{p,d,\{\mathfrak{m}_{k}\}}(S)$ if the following conditions are satisfied:

{ \rm (1)} $\mathcal{H}^{d^{\ast}}(S \setminus S_{f}(d'))=0$ for all $d' \in [d,d^{\ast}]$;

{ \rm (2)} $\widetilde{\mathcal{N}}_{p,\lambda,\{\mathfrak{m}_{k}\},c}(f) < +\infty$, where we put
\begin{equation}
\label{eqq.tr.sp.norm}
\mathcal{N}_{p,\{\mathfrak{m}_{k}\},\lambda,c}(f):=\|f^{\natural}_{\{\mathfrak{m}_{k}\},\lambda,c}|L_{p}(\mathbb{R}^{n})\|, \quad
\widetilde{\mathcal{N}}_{p,\lambda,\{\mathfrak{m}_{k}\},c}(f):=\|f|L_{p}(\mathfrak{m}_{0})\|+\mathcal{N}_{p,\{\mathfrak{m}_{k}\},\lambda,c}(f).
\end{equation}
We define the space $\operatorname{X}^{d^{\ast}}_{p,d,\{\mathfrak{m}_{k}\}}(S)$ as the quotient space, i.e.,
\begin{equation}
\notag
\operatorname{X}^{d^{\ast}}_{p,d,\{\mathfrak{m}_{k}\}}(S):=\widetilde{\operatorname{X}}^{d^{\ast}}_{p,d,\{\mathfrak{m}_{k}\}}(S) / \{f \in \widetilde{\operatorname{X}}^{d^{\ast}}_{p,d,\{\mathfrak{m}_{k}\}}(S): \widetilde{\mathcal{N}}_{p,\{\mathfrak{m}_{k}\},\lambda,c}(f)=0\}.
\end{equation}
We equip the space $\operatorname{X}^{d^{\ast}}_{p,d,\{\mathfrak{m}_{k}\}}(S)$ with the norm given by the functional $\widetilde{\mathcal{N}}_{p,\{\mathfrak{m}_{k}\},\lambda,c}$, i.e.,
given a class of equivalent functions $[f] \in \operatorname{X}^{d^{\ast}}_{p,d,\{\mathfrak{m}_{k}\}}(S)$,
we put
\begin{equation}
\|[f]|\operatorname{X}^{d^{\ast}}_{p,d,\{\mathfrak{m}_{k}\}}(S)\|:=\widetilde{\mathcal{N}}_{p,\{\mathfrak{m}_{k}\},\lambda,c}(f).
\end{equation}
\end{Def}

\begin{Remark}
\label{Remm.4.2}
In what follows, we will identify the class of equivalent functions $[f] \in \operatorname{X}^{d^{\ast}}_{p,d,\{\mathfrak{m}_{k}\}}(S)$ with its arbitrary
representative $f$. It should be remarked that the structure of the class $[f]$ is not so straightforward.

Indeed, let
$S=S_{1} \cup S_{2}$ and $S_{1} \cap S_{2} = \emptyset$. Assume that $d \in (0,d^{\ast})$, $S_{1}$ is a closed Ahlfors--David $d$-regular set, and $S_{2}$ is a closed Ahlfors--David $d^{\ast}$-regular set.
Then, keeping in mind Example \ref{Ex.2.1}, it is easy to see that
changing a given $f: S \to \mathbb{R}$ on an $\mathcal{H}^{d^{\ast}}$-null set we can violate
condition (2) of Definition \ref{def.x.trace}.

On the other hand, keeping in mind Example \ref{Ex.2.1}, it is easy to see that if $S$ is an Ahlfors--David $n$-regular set and $d^{\ast} \in (0,n)$,
then changing a given $f: S \to \mathbb{R}$ on an $\mathfrak{m}_{0}$-null set we can violate condition (1) of the definition (because in this case $\mathfrak{m}_{0}$ coincides,
up to some constant, with the measure $\mathcal{L}^{n}\lfloor_{S}$).

In fact, the above definition of the space $\operatorname{X}^{d^{\ast}}_{p,d,\{\mathfrak{m}_{k}\}}(S)$ depends on the choice
of parameters $\lambda$, $c$. Typically, these parameters are always fixed and hence, we omit them from the corresponding notation.
\end{Remark}
\hfill$\Box$

\begin{Remark}
\label{Remm.4.3}
Let us verify that Definition \ref{def.x.trace} is correct, i.e., that $\operatorname{X}^{d^{\ast}}_{p,d,\{\mathfrak{m}_{k}\}}(S)$ is a normed linear space. First of all, we note that, for each $d' \in [d,d^{\ast}]$,
\begin{equation}
\notag
S_{f_{1}+f_{2}}(d') \supset S_{f_{1}}(d') \cap S_{f_{2}}(d').
\end{equation}
Hence, it remains to verify the triangle inequality. To this end, it suffices to verify that
\begin{equation}
\label{eqq.409}
\widetilde{\mathcal{N}}_{p,\{\mathfrak{m}_{k}\},\lambda,c}(f_{1}+f_{2}) \le \widetilde{\mathcal{N}}_{p,\{\mathfrak{m}_{k}\},\lambda,c}(f_{1})+
\widetilde{\mathcal{N}}_{p,\{\mathfrak{m}_{k}\},\lambda,c}(f_{2}).
\end{equation}
holds for any $f_{1},f_{2} \in L_{1}(\{\mathfrak{m}_{k}\})$. Indeed, by \eqref{eq.6.1'} and the triangle inequality, it is easy to see that,
for any cubes $Q_{1},Q_{2} \in \mathcal{D}_{+}$ and any $f_{1},f_{2} \in L_{1}(\{\mathfrak{m}_{k}\})$,
\begin{equation}
\notag
\Phi_{f_{1}+f_{2},\{\mathfrak{m}_{k}\}}(Q_{1},Q_{2}) \le \Phi_{f_{1},\{\mathfrak{m}_{k}\}}(Q_{1},Q_{2})+
\Phi_{f_{2},\{\mathfrak{m}_{k}\}}(Q_{1},Q_{2}).
\end{equation}
Using this inequality we get by Definition \ref{Cal.max.function}
\begin{equation}
\label{eqq.410}
(f_{1}+f_{2})^{\natural}_{\{\mathfrak{m}_{k}\},\lambda,c}(x) \le
(f_{1})^{\natural}_{\{\mathfrak{m}_{k}\},\lambda,c}(x)+(f_{2})^{\natural}_{\{\mathfrak{m}_{k}\},\lambda,c}(x), \quad x \in \mathbb{R}^{n}.
\end{equation}
Combining \eqref{eqq.410} with the triangle inequalities for the $L_{p}(\mathbb{R}^{n})$-norm and for the $L_{p}(\mathfrak{m}_{0})$-norm, respectively, we obtain
\eqref{eqq.409} and complete the proof.
\end{Remark}

\begin{Remark}
We prove in Section 8 that the spaces $\operatorname{X}^{d^{\ast}}_{p,d,\{\mathfrak{m}_{k}\}}(S)$ are complete provided that
$p \in (1,\infty)$, $d^{\ast} > n-p$, $d \in (n-p,d^{\ast}]$ and $\{\mathfrak{m}_{k}\} \in \mathfrak{M}^{d}(S)$.
\end{Remark}


\section{Extension operators}

The extension operator which will be constructed in this section is the \textit{most challenging part} of the present paper.
In all previously known studies concerned with extension problems for the first-order
Sobolev-spaces $W_{p}^{1}(\mathbb{R}^{n})$ \cite{Shv1}, \cite{Shv2}, \cite{TV} the authors basically used the classical Whitney extension operator
with minor modifications. Surprisingly, it perfectly worked.
However, in our case we should introduce completely new extension operators.

\textit{Throughout the section we fix the following data:}

{\rm(\textbf{D.5.1})} a parameter $d \in (0,n]$ and a compact set $S \subset Q_{0,0}$ with $\overline{\lambda}:=\mathcal{H}^{d}_{\infty}(S) > 0$;

{\rm(\textbf{D.5.2})} an arbitrary parameter $\lambda \in (0,\overline{\lambda})$ ;

{\rm(\textbf{D.5.3})} an arbitrary sequence of measures $\{\mathfrak{m}_{k}\} \in \mathfrak{M}^{d}(S)$.

Since the parameters $d,\lambda$ and the set $S$ are fixed, we will use the following simplified notation.
We set $\mathcal{DF}:=\mathcal{DF}_{S}(d,\lambda)$, and for each $k \in \mathbb{N}_{0}$, we set $\mathcal{DF}_{k}:=\mathcal{DF}_{S,k}(d,\lambda)$, $\widetilde{\mathcal{DF}}_{k}:=\widetilde{\mathcal{DF}}_{S,k}(d,\lambda)$,
$\mathcal{A}_{k}:=\mathcal{A}_{S,k}(d,\lambda)$, $\widetilde{\mathcal{A}}_{k}:=\widetilde{\mathcal{A}}_{S,k}(d,\lambda)$.
Furthermore, we recall Definition \ref{Def.essent} and set $\underline{S}:=\underline{S}(d,\lambda)$ for brevity.
Note that in accordance with our notation if a function $g: \mathbb{R}^{n} \to \mathbb{R}$ is differentiable at some point
$y \in \mathbb{R}^{n}$, then
\begin{equation}
\notag
\|\nabla g(y)\|:=\|\nabla g(y)\|_{\infty}:=\max\Bigl\{\Bigl|\frac{\partial g}{\partial x_{1}}(y)\Bigr|,...,\Bigl|\frac{\partial g}{\partial x_{n}}(y)\Bigr|\Bigr\}.
\end{equation}

\subsection{Construction of the extension operator}

First of all, we present a formal construction of the new extension operator and then
informally describe the driving ideas
of our construction.

Let a function $\widetilde{\psi}_{0} \in C_{0}^{\infty}(\mathbb{R})$ be such that:

{\rm ($i$)} $\chi_{[\frac{1}{10},\frac{9}{10}]} (\cdot)\le \widetilde{\psi}_{0}(\cdot) \le \chi_{[-\frac{1}{10},\frac{11}{10}]}(\cdot)$
and $\widetilde{\psi}_{0}(\cdot) > 0$ on the interval $(-1/10,11/10)$;

{\rm ($ii$)} $\sum\limits_{m \in \mathbb{Z}}\widetilde{\psi}_{0}(\cdot-m) \equiv 1$ on $\mathbb{R}$.

We set
\begin{equation}
\notag
C_{\widetilde{\psi}_{0}} := \max_{t \in \mathbb{R}}|\frac{d\widetilde{\psi}_{0}}{dt}(t)|.
\end{equation}
We define a function $\psi_{0} \in C_{0}^{\infty}(\mathbb{R}^{n})$ by
\begin{equation}
\notag
\psi_{0}(x):=\prod_{i=1}^{n}\widetilde{\psi}_{0}(x_{i}) \quad \hbox{for every} \quad x=(x_{1},...,x_{n}) \in \mathbb{R}^{n}
\end{equation}
and set $\psi_{k,m} (\cdot): = \psi_{0}(2^{k}(\cdot-m))$ for every $(k,m) \in \mathbb{N}_{0} \times \mathbb{Z}^{n}$. Clearly, the following properties hold:

{\rm ($\operatorname{i}$)} for each $k \in \mathbb{N}_{0}$ and any $m \in \mathbb{Z}^{n}$,
\begin{equation}
\label{eqq.5.1}
\chi_{\frac{4}{5}Q_{k,m}}(\cdot) \le \psi_{k,m}(\cdot) \le  \chi_{\frac{6}{5}Q_{k,m}}(\cdot) \quad \hbox{and} \quad \psi_{k,m}(\cdot) > 0 \hbox{ on }
\operatorname{int}(\frac{6}{5} Q_{k,m});
\end{equation}

{\rm ($\operatorname{ii}$)} for each $k \in \mathbb{N}_{0}$,
\begin{equation}
\label{eqq.5.2}
\sum\limits_{m \in \mathbb{Z}^{n}}\psi_{k,m}(\cdot) \equiv 1 \quad \hbox{on} \quad \mathbb{R}^{n}.
\end{equation}

{\rm ($\operatorname{iii}$)} for each $k \in \mathbb{N}_{0}$,
\begin{equation}
\label{eqq.5.3}
\sum\limits_{m \in \mathbb{Z}^{n}}\|\nabla \psi_{k,m}(y)\| \le C(n)C_{\widetilde{\psi}_{0}}2^{k} \quad \text{for all} \quad y \in \mathbb{R}^{n}.
\end{equation}

Recall notation $\operatorname{n}_{k}(m)$ (see Section 2) and  define
\begin{equation}
\label{eq4.1}
\mathfrak{c}_{k,m}:=\#(\operatorname{n}_{k}(m) \cap \mathcal{A}_{k}), \qquad (k,m) \in \mathbb{N}_{0} \times \mathbb{Z}^{n}.
\end{equation}

\begin{Remark}
\label{Remm.5.0}
By Definition \ref{Def.proj} and \eqref{eq4.1}, it readily follows that $\mathfrak{c}_{k,\widetilde{m}} \neq 0$,
for all $\widetilde{m} \in \widetilde{\mathcal{A}}_{k}$.
\end{Remark}

Given $f \in L_{1}(\{\mathfrak{m}_{k}\})$, we set
\begin{equation}
\label{eq2.16}
f_{k,m}:=
\begin{cases}
\frac{1}{\mathfrak{c}_{k,m}}\sum\limits_{m' \in \operatorname{n}_{k}(m) \cap \mathcal{A}_{k}}\fint\limits_{Q_{k,m'}}f(x)d\mathfrak{m}_{k}(x), \quad \mathfrak{c}_{k,m} \neq 0;\\
0, \quad \mathfrak{c}_{k,m}=0.
\end{cases}
\end{equation}

Now we are going to define inductively the special sequence of functions which play the role of an approximating sequence for the extension.
Note that since $S \subset Q_{0,0}$ and $\mathcal{H}^{d}_{\infty}(S) > 0$ we have
\begin{equation}
\notag
f_{0,0}=\fint\limits_{Q_{0,0}}f(x)\,d\mathfrak{m}_{0}(x).
\end{equation}
This observation justifies the following definition.

\begin{Def}
\label{Def.spec.app.seq}
Given $f \in L_{1}(\{\mathfrak{m}_{k}\})$, we define \textit{the special approximating sequence} $\{f_{k}\}:=\{f_{k}\}(\{\mathfrak{m}_{k}\})$ \textit{for} $f$ inductively.
At the zero step we set (note that by (\textbf{D.5.1}) we have $f_{0,m}=f_{0,0}$ for any cube $Q_{0,m} \subset 3Q_{0,0}$)
\begin{equation}
\notag
f_{0}(x):=  \sum_{Q_{0,m} \subset 3Q_{0,0}}\psi_{0,m}(x)f_{0,0} ,\quad x \in \mathbb{R}^{n}.
\end{equation}
Assume that, for some $k \in \mathbb{N}$, we have already constructed functions $f_{0},...,f_{k-1}$. We set
\begin{equation}
\label{eq2.18}
f_{k}(x):= f_{k-1}(x)+\sum \limits_{\widetilde{m} \in \widetilde{\mathcal{A}}_{k}} \psi_{k,\widetilde{m}}(x)(f_{k,\widetilde{m}}-f_{k-1}(x)), \quad x \in \mathbb{R}^{n}.
\end{equation}
\end{Def}

\begin{Remark}
\label{Rem.spec_app_seq_representatives}
Clearly, the sequence $\{f_{k}\}:=\{f_{k}\}(\{\mathfrak{m}_{k}\})$ is well defined, i.e., does not depend on the choice of representatives $\overline{f}$ of $f$.
Hence, in what follows, if $\overline{f} \in \mathfrak{B}(S)$ is such that the equivalence class $f$ of $\overline{f}$ belongs to $L_{1}(\{\mathfrak{m}_{k}\})$, then by
the special approximating sequence for $\overline{f}$ we always mean the special approximating sequence $\{f_{k}\}:=\{f_{k}\}(\{\mathfrak{m}_{k}\})$ for $f$.
\end{Remark}

\begin{Def}
\label{k.index}
For each $y \in \mathbb{R}^{n}$, we define \textit{the lower and the upper supporting index sets}, respectively, by letting
\begin{equation}
\notag
\begin{split}
&\underline{K}(y):=\underline{K}_{d,\lambda}(y):=\{k \in \mathbb{N}_{0}: \hbox{there exists} \quad m \in \mathcal{A}_{k} \quad \hbox{such that} \quad  y \in \frac{14}{5}Q_{k,m}\};\\
&\overline{K}(y):=\overline{K}_{d,\lambda}(y):=\{k \in \mathbb{N}_{0}: \hbox{there exists} \quad  \widetilde{m} \in \widetilde{\mathcal{A}}_{k} \quad \hbox{such that} \quad  \psi_{k,\widetilde{m}}(y) \neq 0\}.
\end{split}
\end{equation}
\end{Def}

\begin{Remark}
\label{Remm.5.1}
By Proposition \ref{Prop41''} we have
\begin{equation}
\label{eqq.cardinality}
\#\underline{K}(y)=\#\overline{K}(y)=+\infty \quad \text{for all} \quad y \in \underline{S}.
\end{equation}
Furthermore, by \eqref{eqq.5.1} and Definition \ref{k.index} we clearly
have
\begin{equation}
\notag
\underline{K}(y)=\emptyset \quad \Longleftrightarrow \quad y \notin \frac{14}{5}Q_{0,0} \quad \hbox{and}
\quad \overline{K}(y)=\emptyset \quad \Longleftrightarrow \quad y \notin \frac{16}{5}\operatorname{int}Q_{0,0}.
\end{equation}
\end{Remark}

The following proposition collects the basic properties of the lower and the upper supporting index sets.

\begin{Prop}
\label{Propp.5.1}
The lower and the upper supporting index sets have the following properties:

{\rm (1)} $0 \in \underline{K}(y)$ for each $y \in \frac{14}{5}Q_{0,0} \setminus \underline{S}$;

{\rm (2)} $0 \in \overline{K}(y)$ for each $y \in \frac{16}{5}Q_{0,0} \setminus \underline{S}$;

{\rm (3)} $\underline{K}(y) \subset \overline{K}(y)$ for each $y \in \mathbb{R}^{n}$;

{\rm (4)} $\#\underline{K}(y) \le \#\overline{K}(y) < +\infty$ for each $y \in \frac{16}{5}Q_{0,0} \setminus \underline{S}$.
\end{Prop}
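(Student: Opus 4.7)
The plan rests on the basic observation that the ambient cube $Q_{0,0}$ itself lies in $\mathcal{DF}_{0}$: since $S\subset Q_{0,0}$ and $\lambda<\overline{\lambda}=\mathcal{H}^{d}_{\infty}(S)$, we have
\[
\mathcal{H}^{d}_{\infty}(Q_{0,0}\cap S)=\mathcal{H}^{d}_{\infty}(S)=\overline{\lambda}>\lambda(l(Q_{0,0}))^{d},
\]
so $Q_{0,0}\in\mathcal{F}_{S}(d,\lambda)\cap\mathcal{D}_{0}=\mathcal{DF}_{0}$ and therefore $0\in\mathcal{A}_{0}$. Claim~(1) is then immediate from Definition~\ref{k.index}: for $y\in\frac{14}{5}Q_{0,0}$ the index $m=0\in\mathcal{A}_{0}$ is the required witness.

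Claims~(2) and~(3) will both follow from a single support-geometry lemma: whenever $m\in\mathcal{A}_{k}$ and $y\in\frac{14}{5}Q_{k,m}$, every $m'\in\mathbb{Z}^{n}$ with $\psi_{k,m'}(y)\neq 0$ satisfies $m'_{i}\in\{m_{i}-1,m_{i},m_{i}+1\}$ for each $i$, so that $Q_{k,m'}\in\operatorname{n}(Q_{k,m})$ and hence $m'\in\widetilde{\mathcal{A}}_{k}$ by Definition~\ref{Def.proj}. I would verify this by a one-dimensional coordinate check: the projection $[m_{i}-9/10,m_{i}+19/10]$ of $2^{k}\cdot\frac{14}{5}Q_{k,m}$ versus the translated support $[m'_{i}-1/10,m'_{i}+11/10]$ of $\widetilde{\psi}_{0}(\,\cdot-m'_{i})$ forces $|m_{i}-m'_{i}|<2$, and integrality pins $|m_{i}-m'_{i}|\in\{0,1\}$. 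Coupled with the partition-of-unity identity~\eqref{eqq.5.2}, which guarantees the existence of at least one such $m'$, this produces $k\in\overline{K}(y)$. Applying the lemma with $k=0$, $m=0$ (and noting that the supports $\frac{6}{5}Q_{0,m'}$, $m'\in\{-1,0,1\}^{n}\subset\widetilde{\mathcal{A}}_{0}$, cover $\frac{16}{5}Q_{0,0}$, with boundary points handled by the same coordinate check) yields~(2); applying it with an arbitrary $k\in\underline{K}(y)$ and its witnessing $m\in\mathcal{A}_{k}$ yields~(3), and in particular the inclusion $\underline{K}(y)\subset\overline{K}(y)$ used to obtain the first inequality of~(4).

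The main obstacle is the finiteness $\#\overline{K}(y)<+\infty$ in~(4), which I will argue by contradiction. If $\#\overline{K}(y)=+\infty$, the lemma above produces, for each such $k$, an index $m(k)\in\mathcal{A}_{k}$ with $\operatorname{dist}(y,Q_{k,m(k)})\le 3\cdot 2^{-k}\to 0$, so $y$ is arbitrarily well approximated by cubes of $\mathcal{DF}_{k}$ as $k\to\infty$. Since $y\notin\underline{S}$, Definition~\ref{Def.essent} supplies $j_{0}\in\mathbb{N}$ with $y\notin\bigcup\mathcal{Q}^{j_{0}}(d,\lambda)$, and the monotonicity in~($\mathcal{F}$3) of Theorem~\ref{Th.adm.sys.cover.} yields $\bigcup\mathcal{Q}^{j}\subset\bigcup\mathcal{Q}^{j_{0}}$ for $j\geq j_{0}$. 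To close the argument one invokes two structural features of the canonical decomposition from~\cite{T2}: first, local finiteness of each $\mathcal{Q}^{j}$ near points of $\mathbb{R}^{n}\setminus\underline{S}$, which makes $\bigcup\mathcal{Q}^{j_{0}}$ closed in a neighborhood of $y$ and gives $\delta:=\operatorname{dist}(y,\bigcup\mathcal{Q}^{j_{0}})>0$; and second, a positive lower bound on the side lengths of cubes from $\bigcup_{j<j_{0}}\mathcal{Q}^{j}$ lying in a neighborhood of $y$. For $k$ large enough, neither $Q_{k,m(k)}\in\bigcup_{j\geq j_{0}}\mathcal{Q}^{j}$ (which would force $3\cdot 2^{-k}\geq\delta$) nor $Q_{k,m(k)}\in\bigcup_{j<j_{0}}\mathcal{Q}^{j}$ (which would violate the size lower bound) is possible—the required contradiction. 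Extracting these two structural facts cleanly from the inductive construction of~\cite{T2} (where $\mathcal{Q}^{j+1}$ is obtained by peeling one generation of strict descendants from $\mathcal{Q}^{j}$) is the principal technical challenge, since the present excerpt only cites that construction.
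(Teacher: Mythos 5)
Your treatment of (1)--(3), and of the inequality $\#\underline{K}(y)\le\#\overline{K}(y)$ in (4), is correct and is essentially the paper's own argument: $Q_{0,0}\in\mathcal{DF}_{0}$ since $\lambda<\overline{\lambda}=\mathcal{H}^{d}_{\infty}(S)$, and your coordinate check on the supports, combined with \eqref{eqq.5.1}, \eqref{eqq.5.2} and Definition \ref{Def.proj}, yields (2) and (3). (One small caveat you share with the paper: at boundary points of $\frac{16}{5}Q_{0,0}$, e.g.\ $2^{0}y_{1}=-\frac{11}{10}$, all $\psi_{0,m'}$ with $m'\in\{-1,0,1\}^{n}$ vanish, so your covering claim really only gives (2) on $\frac{16}{5}\operatorname{int}Q_{0,0}$; compare Remark \ref{Remm.5.1}.)

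The genuine gap is in your argument for $\#\overline{K}(y)<+\infty$. The two ``structural features'' of the canonical decomposition that you postpone to \cite{T2} are not merely unverified, they are false in general: a fixed family $\mathcal{Q}^{j}(d,\lambda)$ need not be locally finite near a point of $S\setminus\underline{S}$, and cubes of the early generations lying near $y$ need not have side lengths bounded away from zero. Concretely (say $n=1$), take $S=[0,\tfrac14]\cup\{\tfrac12\}\cup\bigcup_{k\in K}A_{k}$, where $K\subset\mathbb{N}$ is a very sparse infinite set and $A_{k}$ is a dyadic subinterval of $Q_{k}:=[\tfrac12+2^{-k},\tfrac12+2^{1-k}]$ with $\mathcal{H}^{d}_{\infty}(A_{k})$ just above $\lambda 2^{-kd}$. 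Then every $Q_{k}$, $k\in K$, belongs to $\mathcal{DF}_{k}$ and is a neighbor of the cube $[\tfrac12,\tfrac12+2^{-k}]\ni\tfrac12$, so $[\tfrac12,\tfrac12+2^{-k}]\in\widetilde{\mathcal{DF}}_{k}$ and $k\in\overline{K}(\tfrac12)$ for every $k\in K$; on the other hand, for $K$ sparse no cube $[\tfrac12,\tfrac12+2^{-m}]$ with $m\ge1$ is $(d,\lambda)$-thick, so only finitely many cubes of $\mathcal{DF}$ contain $\tfrac12$, whence $\tfrac12\notin\underline{S}$ by Proposition \ref{Prop41''}(3). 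In this configuration the cubes of the relevant generations $\mathcal{Q}^{j}$ accumulate at $\tfrac12$ with side lengths tending to $0$, so neither $\delta>0$ nor your lower bound on side lengths exists, and the contradiction you aim for cannot be reached; indeed $\#\overline{K}(y)=+\infty$ can genuinely occur at points of $S\setminus\underline{S}$, so no completion of this step is possible there. Your instinct that something beyond ``$y\notin\bigcup_{j\ge j(y)}\bigcup\{Q:Q\in\mathcal{Q}^{j}(d,\lambda)\}$'' is needed was sound --- that relation constrains only cubes \emph{containing} $y$, while $\overline{K}(y)$ is governed by $\mathcal{DF}_{k}$-cubes merely adjacent to the cube of $\mathcal{D}_{k}$ containing $y$, and this is exactly where the paper's own two-line proof of (4) is silent. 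What is robust, and what the later construction actually needs (Definition \ref{extension_operator}, Theorem \ref{Lm.derivative}), is finiteness of $\overline{K}(y)$ for $y\notin S$: there $k\in\overline{K}(y)$ forces a cube of $\mathcal{DF}_{k}$, hence a point of $S$, within distance of order $2^{-k}$ of $y$, so $2^{-k}\gtrsim\operatorname{dist}(y,S)>0$ bounds $k$. If you want a salvageable statement, prove (4) for $y\in\frac{16}{5}Q_{0,0}\setminus S$ by this distance argument rather than attempting the contradiction via the canonical decomposition.
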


\begin{proof}
Note that $Q_{0,0} \in \mathcal{DF}_{0}$ because $S \subset Q_{0,0}$ and
$\mathcal{H}^{d}_{\infty}(S)=\overline{\lambda} > \lambda$ according to our assumptions. Hence, by \eqref{eqq.5.1} and Definition \ref{k.index}
we get properties (1) and (2).

Property (3) follows directly from \eqref{eqq.5.1} and Definitions \ref{Def.proj}, \ref{k.index}.

To prove property (4) it is sufficient to show that $\#\overline{K}(y) < +\infty$  for every $y \in \frac{16}{5}Q_{0,0} \setminus \underline{S}$
because the inequality $\#\underline{K}(y) \le \#\overline{K}(y)$ follows directly from property (3) just proved.
By \eqref{eq3.32} it follows that, for each $y \in \frac{16}{5}Q_{0,0} \setminus \underline{S}$, there exists $j(y) \in \mathbb{N}$
such that
\begin{equation}
\notag
y \notin \bigcup \{Q:Q \in \mathcal{Q}^{j(y)}(d,\lambda)\}.
\end{equation}
Combining this fact with property $(\mathcal{F}3)$ of Theorem \ref{Th.adm.sys.cover.} we deduce that in fact
\begin{equation}
\notag
y \notin \bigcup\limits_{j \geq j(y)} \bigcup \{Q:Q \in \mathcal{Q}^{j}(d,\lambda)\}.
\end{equation}
This gives the required claim.
\end{proof}

Proposition \ref{Propp.5.1} justifies the following concept.

\begin{Def}
\label{Deff.5.3}
We define \textit{the lower supporting index} $\underline{k}(y)$ and \textit{the upper supporting index}  $\overline{k}(y)$, respectively, by letting
\begin{equation}
\label{eqq.5.11}
\begin{split}
&\underline{k}(y):=\max\{k: k \in \underline{K}(y)\}  \quad \hbox{for each} \quad  y \in \frac{14}{5}Q_{0,0} \setminus \underline{S};\\
&\overline{k}(y):=\max\{k:k \in \overline{K}(y)\} \quad \hbox{for each} \quad y \in \frac{16}{5}Q_{0,0} \setminus \underline{S}.
\end{split}
\end{equation}
\end{Def}

\begin{Prop}
\label{Rem1}
Let $f \in L_{1}(\mathfrak{m}_{k})$ and let $\{f_{k}\}$ be the special approximating sequence for $f$.
Then, for each point $y \in \frac{14}{5}Q_{0,0} \setminus \underline{S}$, the following properties hold true:

{\rm (1)} for every $k \in \underline{K}(y)$,
\begin{equation}
\label{eq47'''}
\mathfrak{c}_{k,\widetilde{m}} \neq 0 \quad \hbox{if} \quad y \in Q_{k,\widetilde{m}};
\end{equation}

{\rm (2)} for every $k \in \underline{K}(y)$,
\begin{equation}
\label{eq.resol.unity}
\sum_{\widetilde{m} \in \widetilde{\mathcal{A}}_{k}}\psi_{k,\widetilde{m}}(y)=1;
\end{equation}

{\rm (3)} $\underline{k}(y) \le \overline{k}(y) < +\infty$ and
\begin{equation}
\label{eq48'''}
f_{k}(y)=f_{\overline{k}(y)}(y)  \quad \hbox{for any} \quad k > \overline{k}(y).
\end{equation}
\end{Prop}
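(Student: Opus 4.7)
The plan is to prove (1) and (2) by elementary metric computations for dyadic cubes, and (3) by a short induction based on Proposition \ref{Propp.5.1} and the recursion \eqref{eq2.18}. The core observation is that, for $c > 0$ and indices $m, m' \in \mathbb{Z}^n$, the inclusion $y \in cQ_{k,m}$ is equivalent to $\|y - x_{k,m}\|_\infty \le \tfrac{c}{2} \cdot 2^{-k}$, where $x_{k,m}$ denotes the center of $Q_{k,m}$. Moreover, a direct check from the definitions shows that $m' \in \operatorname{n}_k(m)$ and $Q_{k,m'} \in \operatorname{n}(Q_{k,m})$ are both equivalent to $\|m - m'\|_\infty \le 1$.

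For part (1), fix $k \in \underline{K}(y)$ and pick $m^* \in \mathcal{A}_k$ with $y \in \tfrac{14}{5}Q_{k,m^*}$. If additionally $y \in Q_{k,\widetilde m}$, the triangle inequality gives $\|x_{k,m^*} - x_{k,\widetilde m}\|_\infty \le \bigl(\tfrac{7}{5} + \tfrac{1}{2}\bigr) 2^{-k} = \tfrac{19}{10}\cdot 2^{-k} < 2\cdot 2^{-k}$, so $\|m^* - \widetilde m\|_\infty \le 1$, placing $m^*$ in $\operatorname{n}_k(\widetilde m) \cap \mathcal{A}_k$ and forcing $\mathfrak{c}_{k,\widetilde m} \geq 1$ via \eqref{eq4.1}. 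For part (2), by \eqref{eqq.5.2} it suffices to show that $\psi_{k,m}(y) \neq 0$ forces $m \in \widetilde{\mathcal{A}}_k$; the strict positivity condition (i) on $\widetilde\psi_0$ upgrades $\psi_{k,m}(y) \neq 0$ to $\|y - x_{k,m}\|_\infty < \tfrac{3}{5}\cdot 2^{-k}$, and combining with $y \in \tfrac{14}{5}Q_{k,m^*}$ yields $\|x_{k,m^*} - x_{k,m}\|_\infty < \bigl(\tfrac{7}{5} + \tfrac{3}{5}\bigr) 2^{-k} = 2\cdot 2^{-k}$, hence $\|m^* - m\|_\infty \le 1$. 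Thus $Q_{k,m^*} \in \operatorname{n}(Q_{k,m}) \cap \mathcal{DF}_k$ and $m \in \widetilde{\mathcal{A}}_k$ by Definition \ref{Def.proj}.

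For part (3), the bounds $\underline{k}(y) \le \overline{k}(y) < +\infty$ follow from assertions (3)--(4) of Proposition \ref{Propp.5.1} (applied to $y \in \tfrac{14}{5}Q_{0,0} \setminus \underline{S} \subset \tfrac{16}{5}Q_{0,0} \setminus \underline{S}$). For the identity $f_k(y) = f_{\overline{k}(y)}(y)$ when $k > \overline{k}(y)$, the condition $k \notin \overline{K}(y)$ means, by Definition \ref{k.index}, that $\psi_{k,\widetilde m}(y) = 0$ for every $\widetilde m \in \widetilde{\mathcal{A}}_k$; the recursion \eqref{eq2.18} then collapses to $f_k(y) = f_{k-1}(y)$, and iterating this identity down from $k$ to $\overline{k}(y) + 1$ finishes the proof.

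There are no genuine obstacles here: the entire argument is a calibration check of the numerical constant $\tfrac{14}{5}$ chosen in Definition \ref{k.index}, tuned precisely so that the neighbor bound $\|m^* - \cdot\|_\infty \le 1$ is forced in both (1) and (2). The only subtlety — needing the \emph{strict} inequality in (2) — is absorbed by the strict positivity of $\widetilde\psi_0$ on $(-\tfrac{1}{10},\tfrac{11}{10})$.
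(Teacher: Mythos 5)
Your proof is correct and follows essentially the same route as the paper: parts (1) and (2) are exactly the neighbor-index computations that the paper compresses into ``combine Definitions \ref{Def.proj}, \ref{k.index} with \eqref{eqq.5.1}, \eqref{eqq.5.2}'', and part (3) is the paper's argument verbatim (Proposition \ref{Propp.5.1}(3)--(4) plus the collapse of the recursion \eqref{eq2.18} when $\psi_{k,\widetilde m}(y)=0$ for all $\widetilde m \in \widetilde{\mathcal{A}}_{k}$). The only cosmetic point is that the strict bound $\|y-x_{k,m}\|_{\infty}<\tfrac{3}{5}2^{-k}$ in (2) comes from continuity of $\widetilde{\psi}_{0}$ together with the support bound (so it vanishes at the endpoints), rather than from its strict positivity on the open interval; the conclusion is unaffected.
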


\begin{proof}
By Definition \ref{k.index} if  $k \in \underline{K}(y)$, then $y \in \frac{14}{5}Q_{k,m}$ for some $m \in \mathcal{A}_{k}$.
Hence, if in addition $y \in Q_{k,\widetilde{m}}$, then $\widetilde{m} \in \widetilde{\mathcal{A}}_{k} \cap \operatorname{n}_{k}(m)$. Consequently, Definition \ref{Def.proj} and \eqref{eq4.1}
implies that $\mathfrak{c}_{k,\widetilde{m}} \neq 0$.

To prove \eqref{eq.resol.unity} it is sufficient to combine Definitions \ref{Def.proj}, \ref{k.index} with \eqref{eqq.5.1} and \eqref{eqq.5.2}.

The inequality $\underline{k}(y) \le \overline{k}(y)$ is an immediate consequence of property (3) of Proposition \ref{Propp.5.1}. The inequality $\overline{k}(y) < +\infty$
follows from property (4) of Proposition \ref{Propp.5.1}.
Finally, by Definition \ref{Deff.5.3} we have $\psi_{k,\widetilde{m}}(y)=0$ for each $k > \overline{k}(y)$ and any $\widetilde{m} \in \widetilde{\mathcal{A}}_{k}$. Now property (3) follows from \eqref{eq2.18}.
\end{proof}

Now having at our disposal Proposition \ref{Rem1} and property (1)
of Proposition \ref{Prop41''} we can built the desirable extension operator.

\begin{Def}
\label{extension_operator}
Given $f \in \mathfrak{B}(S) \cap L_{1}(\{\mathfrak{m}_{k}\})$, let  $\{f_{k}\}$ be the special approximating for $f$.
We define
\begin{equation}
\label{eq.exst.op}
\operatorname{Ext}_{S,\{\mathfrak{m}_{k}\},\lambda}(f)(x):=\operatorname{Ext}(f)(x):=
\begin{cases}
f(x), \quad x \in S;\\
\lim\limits_{k \to \infty}f_{k}(x), \quad x \in \mathbb{R}^{n} \setminus S.
\end{cases}
\end{equation}
\end{Def}


The following obvious observation is an immediate consequence of Definition \ref{extension_operator}.

\begin{Prop}
\label{Propp.5.2}
The operator $\operatorname{Ext}$ defined by \eqref{eq.exst.op} is a linear mapping from
$\mathfrak{B}(S) \cap L_{1}(\{\mathfrak{m}_{k}\})$ into $\mathfrak{B}(\mathbb{R}^{n})$.
\end{Prop}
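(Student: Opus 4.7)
The plan is to verify three things: (i) the pointwise limit in \eqref{eq.exst.op} exists at every $x \in \mathbb{R}^{n} \setminus S$; (ii) the map $\operatorname{Ext}$ is linear; (iii) $\operatorname{Ext}(f)$ is Borel measurable on $\mathbb{R}^{n}$. For (i), by Proposition \ref{Prop41''}(1) we have $\underline{S} \subset S$, so it suffices to show convergence on $\mathbb{R}^{n} \setminus \underline{S}$. Fix $y \in \mathbb{R}^{n} \setminus \underline{S}$. Combining Remark \ref{Remm.5.1} (which gives $\overline{K}(y) = \emptyset$ whenever $y \notin \frac{16}{5}\operatorname{int}Q_{0,0}$) with property (4) of Proposition \ref{Propp.5.1} (which gives $\#\overline{K}(y) < +\infty$ for $y \in \frac{16}{5}Q_{0,0} \setminus \underline{S}$), I see that $\overline{K}(y)$ is always a finite subset of $\mathbb{N}_{0}$. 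Hence there exists $k_{\ast}(y) \in \mathbb{N}_{0}$ such that $\psi_{k,\widetilde{m}}(y) = 0$ for every $k > k_{\ast}(y)$ and every $\widetilde{m} \in \widetilde{\mathcal{A}}_{k}$, and then the recursion \eqref{eq2.18} forces $f_{k}(y) = f_{k_{\ast}(y)}(y)$ for all $k > k_{\ast}(y)$, so the sequence $\{f_{k}(y)\}$ is eventually constant and its limit exists.

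For (ii), inspection of \eqref{eq2.16} shows that $f \mapsto f_{k,m}$ is linear, because $\mathfrak{c}_{k,m}$ is a purely combinatorial quantity that does not depend on $f$. Induction on $k$ through \eqref{eq2.18} yields linearity of $f \mapsto f_{k}$ at every step, and passage to the pointwise limit preserves linearity.

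For (iii), the assumption $S \subset Q_{0,0}$ together with the $(d,\lambda)$-thickness condition forces each $\mathcal{DF}_{k}$, and consequently each $\widetilde{\mathcal{A}}_{k}$, to be a finite set; therefore each $f_{k}$ is a finite linear combination of smooth functions $\psi_{k,\widetilde{m}}$ with scalar coefficients, and is in particular continuous on $\mathbb{R}^{n}$. A pointwise limit of a sequence of Borel functions on the open (hence Borel) set $\mathbb{R}^{n} \setminus S$ is Borel, and on $S$ one has $\operatorname{Ext}(f) = f \in \mathfrak{B}(S)$. Gluing these two Borel pieces across the complementary Borel sets $S$ and $\mathbb{R}^{n} \setminus S$ yields a Borel function on $\mathbb{R}^{n}$. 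No step here presents a serious obstacle: once one observes that $\overline{K}(y)$ is finite for every $y \notin \underline{S}$, the convergence question reduces to the triviality that an eventually constant sequence converges, and no quantitative estimates are needed at this stage.
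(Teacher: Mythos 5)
Your argument is correct and essentially the proof the paper intends: the paper treats the statement as an immediate consequence of the preceding material (Proposition \ref{Propp.5.1}, Remark \ref{Remm.5.1}, Proposition \ref{Rem1} and property (1) of Proposition \ref{Prop41''}), which is exactly your observation that $\overline{K}(y)$ is finite for every $y \notin \underline{S}\subset S$, hence $\{f_{k}(y)\}$ is eventually constant, combined with linearity of the averages $f_{k,m}$ and Borel gluing across $S$ and $\mathbb{R}^{n}\setminus S$. One cosmetic point: by the recursion \eqref{eq2.18}, $f_{k}$ is a finite algebraic combination (sums and products across different levels $j\le k$) of the smooth functions $\psi_{j,\widetilde{m}}$, not a finite linear combination of the $\psi_{k,\widetilde{m}}$ alone, but it is still smooth, so your continuity and measurability conclusions stand unchanged.
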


Since the construction of our extension operator is quite tricky, we would like to
describe the driving ideas informally.

\textit{The first idea} consists in using only cubes from the family $\mathcal{DF}$
to extract some useful information about the behavior of a given function $f \in L_{1}(\{\mathfrak{m}_{k}\})$.
Informally speaking, the family $\mathcal{DF}$ gives some sort of a skeleton for the extension operator.
Indeed, Theorem \ref{Th4.3} allows us to hope that averaging over these cubes with respect to measures $\mathfrak{m}_{k}$, $k \in \mathbb{N}_{0}$
is necessary in constructions of almost optimal Sobolev extensions.

\textit{The second idea looks} a little bit technical. Nevertheless, it is quite important.
Recall (see Subsection 3.3) that by $\mathcal{W}_{S}$ we denote the Whitney decomposition of $\mathbb{R}^{n} \setminus S$.
In the majority of the available investigations the family $\mathcal{W}_{S}$ plays a crucial role in constructions
of extension operators. It allowed one in some sense to transfer the information about a given function $f:S \to \mathbb{R}$ from $S$ into $\mathbb{R}^{n} \setminus S$.
In contrast, our approach uses the family $\cup_{k \in \mathbb{N}_{0}}\widetilde{\mathcal{DF}}_{k}\setminus \mathcal{DF}$ .
In the case when either $S$ is regular enough or $p > n$, then this innovation gives nothing new in comparison
with the classical approach of H. Whitney.
If $S$ is highly irregular and $p \in (1,n]$, the modification becomes essential. It helps
to avoid the study of the complicated combinatorial structure of the family $\mathcal{W}_{S}$. Informally
speaking, it is difficult to built a ``nice tree'' associated
with the family $\mathcal{W}_{S}$.

\textit{The third idea} involves an additional averaging over neighboring cubes in \eqref{eq2.16}.
This simple trick together with the use of families $\mathcal{DF}_{k}$, $k \in \mathbb{N}_{0}$ helps one to avoid large derivatives.
Roughly speaking, given $f \in L_{1}(\{\mathfrak{m}_{k}\})$, pointwise estimates of $\operatorname{Ext}(f)$ from above will contain only terms
like
\begin{equation}
\notag
\Bigl|\fint\limits_{Q_{k,m}}f(y)\,d\mathfrak{m}_{k}(y)-\fint\limits_{Q_{l,m'}}f(y)\,d\mathfrak{m}_{l}(y)\Bigr|,
\end{equation}
where $Q_{l,m'} \in \mathcal{K}_{d,\lambda,c}(Q_{k,m})$ for some $c > 1$. This is crucial in proving the optimality of the extension.

We should note that the roots of the above ideas go back to the paper of V.~Rychkov \cite{Ry}. However, in this paper only $d$-thick sets were considered.
In the case when $S$ is $d$-thick, the analysis of the pointwise behavior
of special approximating sequences is much more simple and transparent.

\subsection{Fine properties of the special approximating sequence}

In this subsection, given  $f \in L_{1}(\{\mathfrak{m}_{k}\})$, we
investigate a pointwise behavior of $\operatorname{Ext}_{S,d,\lambda}(f)$.

We start with a technical observation, which will be commonly used.

\begin{Prop}
\label{Prop4.1'''}
Let $f \in L_{1}(\{\mathfrak{m}_{k}\})$ and $c \in \mathbb{R}$. Then, for each $k \in \mathbb{N}_{0}$ and any $\widetilde{m}^{k} \in \widetilde{\mathcal{A}}_{k}$,
\begin{equation}
\label{eqq.5.12}
|f_{k,\widetilde{m}^{k}}-c| \le \max\limits_{m^{k} \in \operatorname{n}_{k}(\widetilde{m}^{k})\cap \mathcal{A}_{k}}\fint\limits_{Q_{k,m^{k}}}|f(x)-c|d\mathfrak{m}_{k}(x).
\end{equation}

Furthermore, for each $k,j \in \mathbb{N}_{0}$ and any $\widetilde{m}^{k} \in \widetilde{\mathcal{A}}_{k}$, $\widetilde{m}^{j} \in \widetilde{\mathcal{A}}_{j}$,
\begin{equation}
\label{eqq.5.13}
\begin{split}
&|f_{k,\widetilde{m}^{k}}-f_{j,\widetilde{m}^{j}}| \le \max \fint\limits_{Q_{k,m^{k}}}\fint\limits_{Q_{j,m^{j}}}|f(x)-f(y)|\,d\mathfrak{m}_{j}(y)d\mathfrak{m}_{k}(x)
\end{split}
\end{equation}
where the maximum is taken over all $m^{k} \in \operatorname{n}_{k}(\widetilde{m}^{k}) \cap \mathcal{A}_{k}$ and all $m^{j} \in \operatorname{n}_{j}(\widetilde{m}^{j}) \cap \mathcal{A}_{j}$.
\end{Prop}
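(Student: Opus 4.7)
The plan is to derive both inequalities directly from the definition of $f_{k,m}$ given in \eqref{eq2.16} together with the triangle inequality and the elementary fact that an average of a finite collection of nonnegative numbers is dominated by their maximum. The key preliminary observation is that, since $\widetilde{m}^{k} \in \widetilde{\mathcal{A}}_{k}$ and $\widetilde{m}^{j} \in \widetilde{\mathcal{A}}_{j}$, Remark \ref{Remm.5.0} guarantees $\mathfrak{c}_{k,\widetilde{m}^{k}} \neq 0$ and $\mathfrak{c}_{j,\widetilde{m}^{j}} \neq 0$, so that the representation \eqref{eq2.16} is meaningful in both slots and expresses $f_{k,\widetilde{m}^{k}}$ as a double average of $f$, first with respect to $\mathfrak{m}_{k}$ over each neighboring cube, and then over the finite index set $\operatorname{n}_{k}(\widetilde{m}^{k}) \cap \mathcal{A}_{k}$.

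For the first inequality \eqref{eqq.5.12}, I would subtract the constant $c$ inside the double average (this is legitimate since the normalizing weights sum to one), obtaining
\begin{equation}
\notag
f_{k,\widetilde{m}^{k}}-c = \frac{1}{\mathfrak{c}_{k,\widetilde{m}^{k}}}\sum_{m^{k} \in \operatorname{n}_{k}(\widetilde{m}^{k}) \cap \mathcal{A}_{k}}\fint_{Q_{k,m^{k}}}(f(x)-c)\,d\mathfrak{m}_{k}(x).
\end{equation}
Taking absolute values, applying the triangle inequality inside the sum and under the integral, and bounding the arithmetic mean by the maximum over $m^{k} \in \operatorname{n}_{k}(\widetilde{m}^{k}) \cap \mathcal{A}_{k}$ yields \eqref{eqq.5.12}.

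For the second inequality \eqref{eqq.5.13}, I would use the same representation twice and combine the two finite sums into a single double sum over pairs $(m^{k},m^{j})$. Since the inner integrand in each slot is constant with respect to the other variable, we may write
\begin{equation}
\notag
\fint_{Q_{k,m^{k}}}f(x)\,d\mathfrak{m}_{k}(x)-\fint_{Q_{j,m^{j}}}f(y)\,d\mathfrak{m}_{j}(y) = \fint_{Q_{k,m^{k}}}\fint_{Q_{j,m^{j}}}(f(x)-f(y))\,d\mathfrak{m}_{j}(y)\,d\mathfrak{m}_{k}(x).
\end{equation}
Taking absolute values, passing them inside via the triangle inequality, and then replacing the double arithmetic mean over $(m^{k},m^{j}) \in (\operatorname{n}_{k}(\widetilde{m}^{k}) \cap \mathcal{A}_{k}) \times (\operatorname{n}_{j}(\widetilde{m}^{j}) \cap \mathcal{A}_{j})$ by the maximum produces \eqref{eqq.5.13}.

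There is no real obstacle here; the statement is a bookkeeping lemma whose only content beyond the definitions is the nonvanishing of $\mathfrak{c}_{k,\widetilde{m}^{k}}$ supplied by Remark \ref{Remm.5.0}. The mild point to be careful about is the order of triangle-inequality steps (first over the outer sum, then under the integral), so that the final upper bound is genuinely a maximum of averages rather than an average of maxima.
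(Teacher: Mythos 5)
Your proposal is correct, and for \eqref{eqq.5.12} it coincides with the paper's argument (write $f_{k,\widetilde{m}^{k}}-c$ as the weighted average of $\fint_{Q_{k,m^{k}}}(f-c)\,d\mathfrak{m}_{k}$ and bound the average by the maximum). For \eqref{eqq.5.13}, however, your route differs from the paper's and is in fact the cleaner one. The paper iterates \eqref{eqq.5.12}: first with $c=f_{j,\widetilde{m}^{j}}$, then pointwise in $y$ with $c=f(y)$, which produces a bound in which $\max_{m^{j}}\fint_{Q_{j,m^{j}}}|f(y)-f(x)|\,d\mathfrak{m}_{j}(x)$ (a maximum depending on $y$) sits inside the $\mathfrak{m}_{k}$-average and is then pulled outside; as literally written that last exchange goes the wrong way, since the average of a pointwise maximum dominates the maximum of the averages, and it would only yield the stated bound after replacing the maximum by a sum at the cost of a factor $3^{n}$. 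Your symmetric expansion of both $f_{k,\widetilde{m}^{k}}$ and $f_{j,\widetilde{m}^{j}}$ via \eqref{eq2.16}, combined into a single double average over pairs $(m^{k},m^{j})$ together with the identity $\fint_{Q_{k,m^{k}}}f\,d\mathfrak{m}_{k}-\fint_{Q_{j,m^{j}}}f\,d\mathfrak{m}_{j}=\fint_{Q_{k,m^{k}}}\fint_{Q_{j,m^{j}}}(f(x)-f(y))\,d\mathfrak{m}_{j}(y)\,d\mathfrak{m}_{k}(x)$, gives the constant-free inequality directly. The only point you should state explicitly is that this identity, and your remark that "the normalizing weights sum to one," use $\mathfrak{m}_{k}(Q_{k,m^{k}})>0$ and $\mathfrak{m}_{j}(Q_{j,m^{j}})>0$ (recall the convention \eqref{eq.average} sets averages over null sets to zero); this is guaranteed because $m^{k}\in\mathcal{A}_{k}$ and $m^{j}\in\mathcal{A}_{j}$ index $(d,\lambda)$-thick cubes, so property {\rm (\textbf{M}3)} of Definition \ref{Def.Frostman} gives these cubes positive $\mathfrak{m}_{k}$- and $\mathfrak{m}_{j}$-measure. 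With that remark added, your proof is complete and slightly sharper in presentation than the paper's.
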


\begin{proof}
To prove \eqref{eqq.5.12} we fix $k \in \mathbb{N}_{0}$ and $\widetilde{m}^{k} \in \widetilde{\mathcal{A}}_{k}$. By Remark \ref{Remm.5.0}, we have $\mathfrak{c}_{k,\widetilde{m}^{k}} > 0$.
Hence,
\begin{equation}
\label{eqq.5.14}
1=\frac{1}{\mathfrak{c}_{k,\widetilde{m}^{k}}}
\sum\limits_{m^{k} \in \operatorname{n}_{k}(\widetilde{m}^{k})\cap \mathcal{A}_{k}}1.
\end{equation}
Using this observation and \eqref{eq2.16}, we deduce the required estimate
\begin{equation}
\begin{split}
&|f_{k,\widetilde{m}^{k}}-c| =
\Bigl|\frac{1}{\mathfrak{c}_{k,\widetilde{m}^{k}}}\sum\limits_{m^{k} \in \operatorname{n}_{k}(\widetilde{m}^{k})\cap \mathcal{A}_{k}}
\fint\limits_{Q_{k,m^{k}}}f(x)d\mathfrak{m}_{k}(x)-\frac{1}{\mathfrak{c}_{k,\widetilde{m}^{k}}}\sum\limits_{m^{k} \in \operatorname{n}_{k}(\widetilde{m}^{k})\cap \mathcal{A}_{k}}c\Bigr|\\
&\le \max\limits_{m^{k} \in \operatorname{n}_{k}(\widetilde{m}^{k})\cap \mathcal{A}_{k}}\Bigl|\fint\limits_{Q_{k,m^{k}}}f(x)d\mathfrak{m}_{k}(x)-c\Bigr| \le \max\limits_{m^{k} \in \operatorname{n}_{k}(\widetilde{m}^{k})\cap \mathcal{A}_{k}}\fint\limits_{Q_{k,m^{k}}}|f(x)-c|d\mathfrak{m}_{k}(x).
\end{split}
\end{equation}

Now we fix arbitrary $k,j \in \mathbb{N}_{0}$ and $\widetilde{m}^{k} \in \widetilde{\mathcal{A}}_{k}$, $\widetilde{m}^{j} \in \widetilde{\mathcal{A}}_{j}$.
We firstly apply \eqref{eqq.5.12} with $c=f_{j,\widetilde{m}^{j}}$ and then, for $\mathfrak{m}_{k}$-a.e. $y \in S$, we apply
\eqref{eqq.5.12} with $c=f(y)$.
This gives
\begin{equation}
\label{eq44}
\begin{split}
&|f_{k,\widetilde{m}^{k}}-f_{j,\widetilde{m}^{j}}| \le \max\limits_{m^{k} \in \operatorname{n}_{k}(\widetilde{m}^{k})\cap \mathcal{A}_{k}}\fint\limits_{Q_{k,m^{k}}}|f(y)-f_{j,\widetilde{m}^{j}}|d\mathfrak{m}_{k}(y)\\
&\le \max\limits_{m^{k} \in \operatorname{n}_{k}(\widetilde{m}^{k})\cap \mathcal{A}_{k}}\fint\limits_{Q_{k,m^{k}}}
\Bigl(\max\limits_{m^{j} \in \operatorname{n}_{j}(\widetilde{m}^{j})\cap \mathcal{A}_{j}}\fint\limits_{Q_{j,m^{j}}}|f(y)-f(x)|\,d\mathfrak{m}_{j}(x)\Bigr)\,d\mathfrak{m}_{k}(y)\\
&\le \max \fint\limits_{Q_{k,m^{k}}}\fint\limits_{Q_{j,m^{j}}}|f(y)-f(x)|\,d\mathfrak{m}_{j}(x)d\mathfrak{m}_{k}(y),
\end{split}
\end{equation}
where the maximum is taken over all $m^{k} \in \operatorname{n}_{k}(\widetilde{m}^{k}) \cap \mathcal{A}_{k}$ and all $m^{j} \in \operatorname{n}_{j}(\widetilde{m}^{j}) \cap \mathcal{A}_{j}$.

The proof is complete.
\end{proof}

Now we introduce the keystone tool of this section.
More precisely, given $f \in L_{1}(\{\mathfrak{m}_{k}\})$,
the inductive definition of the sequence $\{f_{k}\}_{k\in\mathbb{N}_{0}}$, as given in \eqref{eq2.18}, is not so useful for practical computations.
In view of this, we present an explicit formula for functions $f_{k}$, $k \in \mathbb{N}_{0}$.

\begin{Lm}
\label{Lm.function}
Let $f \in L_{1}(\mathfrak{m}_{k})$ and let $\{f_{k}\}$ be the special approximating sequence for $f$. Then, for every $i,k \in \mathbb{N}_{0}$ with
$k > i$,
\begin{equation}
\label{eq2.19}
\begin{split}
&f_{k}(x)=f_{i}(x)+\sum\limits_{j=i+1}^{k}S_{i,k}^{j}(x), \quad x \in \mathbb{R}^{n},
\end{split}
\end{equation}
where, for each  $j \in \{i+1,...,k\}$ and every $x \in \mathbb{R}^{n}$, we set
\begin{equation}
\label{eq221}
\begin{split}
&S^{j}_{i,k}(x):=
\begin{cases}
\sum\limits_{\widetilde{m}^{j} \in \widetilde{\mathcal{A}}_{j}}\psi_{j,\widetilde{m}^{j}}(x)\Bigl(
\prod\limits_{r=j+1}^{k}\Bigl(\sum\limits_{\widetilde{m}^{r} \in \mathbb{Z}^{n} \setminus \widetilde{\mathcal{A}}_{r}}
\psi_{r,\widetilde{m}^{r}}(x)\Bigr)\Bigr)(f_{j,\widetilde{m}^{j}}-f_{i}(x)), \quad j \in \{i+1,...,k-1\};\\
\sum\limits_{\widetilde{m}^{k} \in \widetilde{\mathcal{A}}_{k}}\psi_{k,\widetilde{m}^{k}}(x)(f_{k,\widetilde{m}^{k}}-f_{i}(x)), \quad j=k.
\end{cases}
\end{split}
\end{equation}
The corresponding product in \eqref{eq221} disappears for $k < i+2$.
\end{Lm}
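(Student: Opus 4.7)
The plan is to prove the identity by induction on the ``gap'' $N := k - i \geq 1$ for fixed $i \in \mathbb{N}_{0}$.

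For the base case $N = 1$, the formula reduces to $f_{i+1}(x) = f_{i}(x) + S^{i+1}_{i,i+1}(x)$, where the case $j = k$ in \eqref{eq221} gives $S^{i+1}_{i,i+1}(x) = \sum_{\widetilde{m} \in \widetilde{\mathcal{A}}_{i+1}}\psi_{i+1,\widetilde{m}}(x)(f_{i+1,\widetilde{m}}-f_{i}(x))$ (the product over $r \in \{i+2,\dots,i+1\}$ is empty by convention). This is literally the recursive definition \eqref{eq2.18}.

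For the inductive step, assume the formula holds for some $k \geq i+1$ and derive it for $k+1$. Using \eqref{eq2.18} and then the partition-of-unity property \eqref{eqq.5.2} to write $1 = \sum_{\widetilde{m} \in \widetilde{\mathcal{A}}_{k+1}}\psi_{k+1,\widetilde{m}}(x) + \sum_{\widetilde{m} \in \mathbb{Z}^{n}\setminus\widetilde{\mathcal{A}}_{k+1}}\psi_{k+1,\widetilde{m}}(x)$, I would rewrite
\begin{equation}
\notag
f_{k+1}(x) - f_{i}(x) = \Bigl(\sum_{\widetilde{m} \in \mathbb{Z}^{n}\setminus\widetilde{\mathcal{A}}_{k+1}}\psi_{k+1,\widetilde{m}}(x)\Bigr)(f_{k}(x)-f_{i}(x)) + S^{k+1}_{i,k+1}(x).
\end{equation}
The second summand is already in the desired form. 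For the first summand, the key observation is that passing from $S^{j}_{i,k}$ to $S^{j}_{i,k+1}$ for any fixed $j \in \{i+1,\dots,k\}$ amounts to multiplying by exactly the extra factor $\sum_{\widetilde{m} \in \mathbb{Z}^{n}\setminus\widetilde{\mathcal{A}}_{k+1}}\psi_{k+1,\widetilde{m}}(x)$; this is immediate by comparing the two displays in \eqref{eq221}, noting that when $j = k$ the empty product in $S^{k}_{i,k}$ becomes the single-factor product in $S^{k}_{i,k+1}$, and when $j < k$ the product simply picks up one additional factor corresponding to $r = k+1$. Applying the inductive hypothesis $f_{k}(x) - f_{i}(x) = \sum_{j=i+1}^{k} S^{j}_{i,k}(x)$ and distributing the scalar factor over the sum then yields $\sum_{j=i+1}^{k} S^{j}_{i,k+1}(x)$, which together with $S^{k+1}_{i,k+1}(x)$ gives exactly $\sum_{j=i+1}^{k+1} S^{j}_{i,k+1}(x)$, completing the induction.

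This proof is essentially bookkeeping and I do not anticipate any serious obstacle; the only subtle point is handling the boundary case $j = k$ versus $j < k$ uniformly when comparing $S^{j}_{i,k}$ and $S^{j}_{i,k+1}$, which requires interpreting the empty product in \eqref{eq221} as $1$. No convergence or measurability issues arise since everything is a finite sum at each fixed $x$ (the $\psi_{k,\widetilde{m}}$ have finite overlap).
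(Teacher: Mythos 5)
Your proposal is correct and follows essentially the same route as the paper: induction on $k$ for fixed $i$, using the recursion \eqref{eq2.18} together with the partition of unity \eqref{eqq.5.2} to convert the factor $1-\sum_{\widetilde{m}\in\widetilde{\mathcal{A}}_{k+1}}\psi_{k+1,\widetilde{m}}$ into $\sum_{\widetilde{m}\in\mathbb{Z}^{n}\setminus\widetilde{\mathcal{A}}_{k+1}}\psi_{k+1,\widetilde{m}}$, which turns each $S^{j}_{i,k}$ into $S^{j}_{i,k+1}$. The paper's induction step performs exactly this bookkeeping (its identity \eqref{eq222'} is your ``key observation''), so the two arguments coincide up to the order in which the terms are grouped.
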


\begin{proof}
Given a fixed $i \in \mathbb{N}_{0}$, we prove \eqref{eq2.19} by induction.

\textit{The base.} For $k=i+1$, the statement is obvious in view of our construction.

\textit{The induction step.}
Suppose that \eqref{eq2.19} is proved for some $k=l \in \mathbb{N}$, $l > i$. We show that \eqref{eq2.19}
holds true with $k=l+1$.

Indeed, first of all, we note that by \eqref{eq221}, for each $j=i+1,...,l$, we have
\begin{equation}
\begin{split}
\label{eq222'}
&\Bigl(1-\sum\limits_{\widetilde{m}^{l+1} \in \widetilde{\mathcal{A}}_{l+1}}\psi_{l+1,\widetilde{m}^{l+1}}(x)\Bigr)S^{j}_{i,l}(x)\\
&=\sum\limits_{\widetilde{m}^{l+1} \in \mathbb{Z}^{n} \setminus \widetilde{\mathcal{A}}_{l+1}}\psi_{l+1,\widetilde{m}^{l+1}}(x)S^{j}_{i,l}(x)
=S^{j}_{i,l+1}(x), \quad x \in \mathbb{R}^{n}.
\end{split}
\end{equation}
On the other hand, by \eqref{eq221}
\begin{equation}
\label{eq415}
S^{l+1}_{i,l+1}(x)=\sum\limits_{\widetilde{m}^{l+1} \in \widetilde{\mathcal{A}}_{l+1}}\psi_{l+1,\widetilde{m}^{l+1}}(x)(f_{l+1,\widetilde{m}^{l+1}}-f_{i}(x)), \quad x \in \mathbb{R}^{n}.
\end{equation}

Now we plug \eqref{eq2.19} with $k=l$ into \eqref{eq2.18} and use \eqref{eq222'}, \eqref{eq415}. This gives the required identity
\begin{equation}
\begin{split}
&f_{l+1}(x)=f_{i}(x) + \sum\limits_{j=i+1}^{l}S_{i,l}^{j}(x)+\sum\limits_{\widetilde{m}^{l+1} \in \widetilde{\mathcal{A}}_{l+1}}\psi_{l+1,\widetilde{m}^{l+1}}(x)
\Bigl(f_{l+1,\widetilde{m}^{l+1}}-f_{i}(x) - \sum\limits_{j=i+1}^{l}S_{i,l}^{j}(x)\Bigr)\\
&=f_{i}(x)+\sum\limits_{j=i+1}^{l}\Bigl(1-\sum\limits_{\widetilde{m}^{l+1} \in \widetilde{\mathcal{A}}_{l+1}}\psi_{l+1,\widetilde{m}^{l+1}}(x)\Bigr)S^{j}_{i,l}(x)+\\
&+\sum\limits_{\widetilde{m}^{l+1} \in \widetilde{\mathcal{A}}_{l+1}}\psi_{l+1,\widetilde{m}^{l+1}}(x)(f_{l+1,\widetilde{m}^{l+1}}-f_{i}(x))
=f_{i}(x)+\sum\limits_{j=i+1}^{l+1}S^{j}_{i,l+1}(x), \quad x \in \mathbb{R}^{n}.\\
\end{split}
\end{equation}

The lemma is proved.

\end{proof}

\begin{Remark}
\label{Remm.5.2}
By Lemma \ref{Lm.function} applied with $i=0$ and \eqref{eq.resol.unity} we have
\begin{equation}
\label{43''}
f_{k}(y)=\sum\limits_{\widetilde{m} \in \widetilde{\mathcal{A}}_{k}}\psi_{k,\widetilde{m}}(y)
f_{k,\widetilde{m}} \quad \hbox{for each} \quad y \in \frac{14}{5}Q_{0,0} \quad \hbox{for every}  \quad k \in \underline{K}(y).
\end{equation}
This nice reproducing formula will simplify some intermediate computations in the proof of the forthcoming assertions.
\end{Remark}
\hfill$\Box$

By Proposition \ref{Propp.5.1}, we can pass to the limit in $f_{k}(x)$ for every $x \in \mathbb{R}^{n} \setminus S$.
This is not the case for an arbitrary $x \in S$. However, if a given function
$f:S \to \mathbb{R}$ is sufficiently regular we can extract
convergent subsequences $\{f_{k_{s}}(x)\}$
for appropriate points $x \in \mathbb{R}^{n}$.

\begin{Lm}
\label{Lm.function2}
Given $f \in \mathfrak{B}(S) \cap L_{1}(\{\mathfrak{m}_{k}\})$, for every point $x \in \underline{S} \cap S_{f}(d)$, there exists
an increasing sequence $\{k_{s}\}=\{k_{s}(x)\}_{s \in \mathbb{N}_{0}} \subset \mathbb{N}_{0}$ such that
\begin{equation}
\notag
\lim_{s \to \infty}f_{k_{s}}(x)=f(x).
\end{equation}
\end{Lm}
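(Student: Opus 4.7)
The plan is to exploit the reproducing formula from Remark \ref{Remm.5.2} together with the averaging inequality in Proposition \ref{Prop4.1'''} to reduce the statement to the defining property of a $d$-regular point. Since $x \in \underline{S}$, Remark \ref{Remm.5.1} (see \eqref{eqq.cardinality}) guarantees $\#\underline{K}(x) = +\infty$, so I may choose any strictly increasing sequence $\{k_{s}\}_{s \in \mathbb{N}_{0}} \subset \underline{K}(x)$. Because $S \subset Q_{0,0}$ yields $x \in \frac{14}{5}Q_{0,0}$, the reproducing formula \eqref{43''} applies, and combined with \eqref{eq.resol.unity} it gives, for every $k \in \underline{K}(x)$,
$$
f_{k}(x) - f(x) = \sum_{\widetilde{m} \in \widetilde{\mathcal{A}}_{k}} \psi_{k,\widetilde{m}}(x)\bigl(f_{k,\widetilde{m}} - f(x)\bigr),
$$
from which $|f_{k}(x) - f(x)| \le \max\{|f_{k,\widetilde{m}} - f(x)| : \widetilde{m} \in \widetilde{\mathcal{A}}_{k},\ \psi_{k,\widetilde{m}}(x) > 0\}$ because the coefficients are nonnegative and sum to $1$.

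Next I would bound each of these extremal summands using Proposition \ref{Prop4.1'''}. Applying \eqref{eqq.5.12} with $c = f(x)$ gives
$$
|f_{k,\widetilde{m}} - f(x)| \le \max_{m^{k} \in \operatorname{n}_{k}(\widetilde{m}) \cap \mathcal{A}_{k}} \fint_{Q_{k,m^{k}}} |f(y) - f(x)|\, d\mathfrak{m}_{k}(y).
$$
The key geometric input is that $\psi_{k,\widetilde{m}}(x) > 0$ forces $x \in \frac{6}{5}Q_{k,\widetilde{m}}$ by \eqref{eqq.5.1}, so $x$ lies at $\ell_{\infty}$-distance at most $\tfrac{3}{5} \cdot 2^{-k}$ from the center of $Q_{k,\widetilde{m}}$; moreover, any $m^{k} \in \operatorname{n}_{k}(\widetilde{m})$ corresponds to a cube whose center is within $\ell_{\infty}$-distance $2^{-k}$ of the center of $Q_{k,\widetilde{m}}$. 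A triangle inequality then yields $x \in 4 Q_{k,m^{k}}$, and since $m^{k} \in \mathcal{A}_{k}$ means $Q_{k,m^{k}} \in \mathcal{DF}(d,\lambda)$, we obtain $Q_{k,m^{k}} \in T_{d,\lambda,4}(x) \cap \mathcal{D}_{k}$.

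Combining the two estimates gives, for every $k \in \underline{K}(x)$,
$$
|f_{k}(x) - f(x)| \le \max_{Q \in T_{d,\lambda,4}(x) \cap \mathcal{D}_{k}} \fint_{Q} |f(y) - f(x)|\, d\mathfrak{m}_{k}(y).
$$
Since $x \in S_{f}(d)$ and $\{\mathfrak{m}_{k}\} \in \mathfrak{M}^{d}(S)$, Definition \ref{Deff.5.4} applied with the fixed $\lambda$ and $c = 4$ forces the right-hand side to tend to $0$ as $k \to \infty$; letting $k$ run through $\{k_{s}\}$ yields $\lim_{s \to \infty} f_{k_{s}}(x) = f(x)$.

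The main obstacle is the geometric verification that the averaging cubes $Q_{k,m^{k}}$ produced by the bound of Proposition \ref{Prop4.1'''} are uniformly absorbed into a single tower $T_{d,\lambda,c}(x)$ with dilation constant $c$ independent of $k$ and $\widetilde{m}$; once this absorption is secured, the hypothesis $x \in S_{f}(d)$ delivers the desired vanishing essentially mechanically, and the infinitude of $\underline{K}(x)$ coming from $x \in \underline{S}$ supplies the required subsequence.
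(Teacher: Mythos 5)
Your proposal is correct and follows essentially the same route as the paper: pick the increasing sequence inside $\underline{K}(x)$ (infinite since $x \in \underline{S}$), use the reproducing formula \eqref{43''} together with Proposition \ref{Prop4.1'''} to bound $|f_{k}(x)-f(x)|$ by averages of $|f-f(x)|$ over cubes of $\mathcal{DF}(d,\lambda)$ whose suitable dilates contain $x$, and then invoke the $d$-regularity of $x$ from Definition \ref{Deff.5.4}. Your explicit tracking of the dilation constant (placing the averaging cubes in $T_{d,\lambda,4}(x)$) is just a slightly more careful bookkeeping of the same estimate the paper performs with the dilate $\frac{14}{5}Q_{k_{s},m}$ (more precisely $\frac{16}{5}Q_{k_{s},m}$), and it is harmless since \eqref{eq222} holds for every $c \geq 1$.
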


\begin{proof}
Fix a point $x \in \underline{S} \cap S_{f}(d)$.
By Remark \ref{Remm.5.1}, the set $\underline{K}(x)$ is infinite and can be written as a strictly
increasing sequence $\{k_{s}\}=\{k_{s}(x)\}_{s \in \mathbb{N}_{0}} \subset \mathbb{N}_{0}$.
Hence, by \eqref{eq2.16}, \eqref{43''} and \eqref{eq222}, we get
\begin{equation}
\begin{split}
&|f(x)-f_{k_{s}}(x)| \le \sum\limits_{\substack{m \in \mathcal{A}_{k_{s}}\\ x \in \frac{14}{5}Q_{k_{s},m}}}\fint\limits_{Q_{k_{s},m}}|f(x)-f(y)|\,d\mathfrak{m}_{k_{s}}(y)\\
&\le C \max\limits_{\substack{m \in \mathcal{A}_{k_{s}}\\x \in \frac{14}{5}Q_{k_{s},m}}}\fint\limits_{Q_{k_{s},m}}|f(x)-f(y)|\,d\mathfrak{m}_{k_{s}}(y) \to 0, \quad s \to \infty.
\end{split}
\end{equation}
The lemma is proved.
\end{proof}

The following technical assertion will be important in proving the main results of this section.

\begin{Prop}
\label{Prop43}
Let $s \in \mathbb{N}$ and $\{k_{i}\}_{i=1}^{s} \subset \mathbb{N}_{0}$ be such that $k_{1} < .... < k_{s}$. Then
\begin{equation}
\label{eq.416''}
\begin{split}
&1-\sum\limits_{j=1}^{s-1}\sum\limits_{\widetilde{m}^{k_{j}} \in \widetilde{\mathcal{A}}_{k_{j}}}
\psi_{k_{j},\widetilde{m}^{k_{j}}}(x)\prod\limits_{r=j+1}^{s}\Bigl(\sum\limits_{\widetilde{m}^{k_{r}} \in \mathbb{Z}^{n} \setminus \widetilde{\mathcal{A}}_{k_{r}}}\psi_{k_{r},\widetilde{m}^{k_{r}}}(x)\Bigr)
-\sum\limits_{\widetilde{m}^{k_{s}} \in \widetilde{\mathcal{A}}_{k_{s}}}\psi_{k_{s},\widetilde{m}^{k_{s}}}(x)\\
&=\prod\limits_{j=1}^{s}\Bigl(\sum\limits_{\widetilde{m}^{k_{j}} \in \mathbb{Z}^{n} \setminus \widetilde{\mathcal{A}}_{k_{j}}}\psi_{k_{j},\widetilde{m}^{k_{j}}}(x)\Bigr) \quad \hbox{for all} \quad
x \in \mathbb{R}^{n},
\end{split}
\end{equation}
where the first sum in the left-hand side of \eqref{eq.416''} is zero in the case $s=1$.
\end{Prop}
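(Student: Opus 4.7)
My plan is to reduce \eqref{eq.416''} to a purely algebraic identity and prove it by a one-line induction. Fix $x \in \mathbb{R}^n$ and, for each $j \in \{1,\dots,s\}$, set
$$a_{j} := \sum_{\widetilde{m} \in \widetilde{\mathcal{A}}_{k_j}} \psi_{k_j,\widetilde{m}}(x), \qquad b_{j} := \sum_{\widetilde{m} \in \mathbb{Z}^{n} \setminus \widetilde{\mathcal{A}}_{k_j}} \psi_{k_j,\widetilde{m}}(x).$$
The partition-of-unity property \eqref{eqq.5.2} gives $a_{j}+b_{j}=1$ for each $j$, and \eqref{eq.416''} is equivalent to the pointwise identity
$$a_{s} + \sum_{j=1}^{s-1} a_{j}\prod_{r=j+1}^{s} b_{r} + \prod_{j=1}^{s} b_{j} = 1. \qquad (\ast)$$

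I would establish $(\ast)$ by induction on $s$. The base case $s=1$ is exactly $a_{1}+b_{1}=1$, which follows from \eqref{eqq.5.2}. For the inductive step, assume $(\ast)$ holds with $s$ replaced by $s-1$, i.e.
$$a_{s-1} + \sum_{j=1}^{s-2} a_{j}\prod_{r=j+1}^{s-1} b_{r} + \prod_{j=1}^{s-1} b_{j} = 1.$$
In the left-hand side of $(\ast)$ I factor $b_{s}$ out of every term except $a_{s}$. Using the convention that the empty product ($j=s-1$ in the outer sum) equals $1$, I obtain
$$a_{s} + b_{s}\Bigl( a_{s-1} + \sum_{j=1}^{s-2} a_{j}\prod_{r=j+1}^{s-1} b_{r} + \prod_{j=1}^{s-1} b_{j}\Bigr) = a_{s} + b_{s}\cdot 1 = 1,$$
by the inductive hypothesis and the relation $a_{s}+b_{s}=1$. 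This closes the induction and proves \eqref{eq.416''}.

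There is essentially no obstacle here beyond careful bookkeeping: one must verify that the empty-product convention stated after \eqref{eq221} matches the $j=s-1$ term appearing after factoring out $b_s$, and that the case $s=1$ (where the first sum in \eqref{eq.416''} is declared empty) coincides with the base step $a_{1}+b_{1}=1$. Conceptually, $(\ast)$ is nothing but the expansion of $1=\prod_{j=1}^{s}(a_j+b_j)$ grouped according to the largest index at which $a_j$ is chosen, with the all-$b$ term singled out; but the inductive presentation above is the most economical way to write the proof.
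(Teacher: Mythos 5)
Your proof is correct and takes essentially the same route as the paper: an induction on $s$ whose only input is the partition-of-unity identity \eqref{eqq.5.2}, i.e. $a_j+b_j=1$ in your notation. The only (cosmetic) difference is that you peel off the largest index $k_s$ and factor out $b_s$, while the paper applies the inductive hypothesis to the indices $k_2<\dots<k_{s}$ and factors out the $k_1$-term; both reductions are equivalent bookkeeping.
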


\begin{proof}
We prove \eqref{eq.416''} by induction.

\textit{The base}. For $s=1$, this is obvious because the second term in the left-hand side of \eqref{eq.416''} is zero by definition.

\textit{The induction step}. Suppose that we have proved \eqref{eq.416''} for some $s_{0} \in \mathbb{N}$
and arbitrary nonnegative integer numbers $k'_{1} < ... < k'_{s_{0}}$ (in place of $\{k_{i}\}_{i=1}^{s}$).
To make the induction step, we take an arbitrary $k_{1} < ... < k_{s_{0}+1}$ and apply \eqref{eq.416''} with
$k'_{i}=k_{i+1}$, $i=1,...,s_{0}$. This gives
\begin{equation}
\label{eq416''}
\begin{split}
&1-\sum\limits_{j=1}^{s_{0}}\sum\limits_{\widetilde{m}^{k_{j}} \in \widetilde{\mathcal{A}}_{k_{j}}}
\psi_{k_{j},\widetilde{m}^{k_{j}}}(x)\prod\limits_{r=j+1}^{s_{0}+1}\Bigl(\sum\limits_{\widetilde{m}^{k_{r}} \in \mathbb{Z}^{n} \setminus \widetilde{\mathcal{A}}_{k_{r}}}\psi_{k_{r},\widetilde{m}^{k_{r}}}(x)\Bigr)
-\sum\limits_{\widetilde{m}^{k_{s_{0}+1}} \in \widetilde{\mathcal{A}}_{k_{s_{0}+1}}}\psi_{k_{s_{0}+1},\widetilde{m}^{k_{s_{0}+1}}}(x)\\
&=\prod\limits_{j=2}^{s_{0}+1}\Bigl(\sum\limits_{\widetilde{m}^{k_{j}} \in \mathbb{Z}^{n} \setminus \widetilde{\mathcal{A}}_{k_{j}}}\psi_{k_{j},\widetilde{m}^{k_{j}}}(x)\Bigr)-
\sum\limits_{\widetilde{m}^{k_{1}} \in \widetilde{\mathcal{A}}_{k_{1}}}\psi_{k_{1},\widetilde{m}^{k_{1}}}(x)\prod\limits_{j=2}^{s_{0}+1}\Bigl(\sum\limits_{\widetilde{m}^{k_{j}} \in \mathbb{Z}^{n} \setminus \widetilde{\mathcal{A}}_{k_{j}}}\psi_{k_{j},\widetilde{m}^{k_{j}}}(x)\Bigr)\\
&=\Bigl(1-\sum\limits_{\widetilde{m}^{k_{1}} \in \mathcal{A}_{k_{1}}}\psi_{k_{1},m^{k_{1}}}(x)\Bigr)\prod\limits_{j=2}^{s_{0}+1}\Bigl(\sum\limits_{\widetilde{m}^{k_{j}} \in \mathbb{Z}^{n} \setminus \widetilde{\mathcal{A}}_{k_{j}}}\psi_{k_{j},\widetilde{m}^{k_{j}}}(x)\Bigr)=\prod\limits_{j=1}^{s_{0}+1}\Bigl(\sum\limits_{\widetilde{m}^{k_{j}} \in \mathbb{Z}^{n} \setminus \widetilde{\mathcal{A}}_{k_{j}}}\psi_{k_{j},\widetilde{m}^{k_{j}}}(x)\Bigr).
\end{split}
\end{equation}

\end{proof}

\begin{Remark}
\label{Remark4.2}
By \eqref{eqq.5.1} and Proposition \ref{Prop43}, for each $s \in \mathbb{N} \cap [2,+\infty)$, we have
\begin{equation}
\label{eq517''}
0 \le \sum\limits_{j=1}^{s-1}\sum\limits_{\widetilde{m}^{k_{j}} \in \widetilde{\mathcal{A}}_{k_{j}}}
\psi_{k_{j},\widetilde{m}^{k_{j}}}(x)\prod\limits_{r=j+1}^{s}\Bigl(\sum\limits_{\widetilde{m}^{k_{r}} \in
\mathbb{Z}^{n} \setminus \widetilde{\mathcal{A}}_{k_{r}}}\psi_{k_{r},\widetilde{m}^{k_{r}}}(x)\Bigr) \le 1
\end{equation}
for any $\{k_{i}\}_{i=1}^{s} \subset \mathbb{N}_{0}$ with  $k_{1} < .... < k_{s}$.
\end{Remark}
\hfill$\Box$

Now we formulate the main result of this section. This result contains an important computation, which will
be an indispensable tool in proving some pointwise estimates in Section 6.~We recall Definition \ref{k.index}.

\begin{Th}
\label{Lm.derivative}
There exists a constant $C > 0$ depending only on $C_{\widetilde{\psi}_{0}}$ and $n$ such that, for each
$f \in L_{1}(\{\mathfrak{m}_{k}\})$, for every $y \in \frac{14}{5}Q_{0,0} \setminus S$ and $k^{\ast} \in \underline{K}(y)$,
\begin{equation}
\label{eq.230''}
\|\nabla f_{k}(y)\| \le C \operatorname{M}_{k,c}(y) \quad \text{for all} \quad k \geq k^{\ast} \quad \text{and all} \quad c \in \mathbb{R},
\end{equation}
where
\begin{equation}
\label{eq4.21}
\operatorname{M}_{k,c}(y):=2^{k}\max\fint\limits_{Q_{j,m}}|f(x)-c|\,d\mathfrak{m}_{j}(x),
\end{equation}
the maximum in \eqref{eq4.21} is taken over all $j \in \{k^{\ast},...,k\}$ and  all $m \in \mathcal{A}_{j}$ with $\chi_{\frac{16}{5}Q_{j,m}}(y) \neq 0$.
\end{Th}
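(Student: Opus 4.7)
The plan is induction on $k \geq k^*$ based on the inductive formula \eqref{eq2.18} together with two partition-of-unity identities that hold \emph{at the point $y$} whenever $k^* \in \underline{K}(y)$. The underlying geometric fact is this: if $y \in \frac{14}{5}Q_{k^*,m}$ for some $m \in \mathcal{A}_{k^*}$ and $\psi_{k^*,\widetilde{m}}(y) \neq 0$, then the distance between the centers of $Q_{k^*,m}$ and $Q_{k^*,\widetilde{m}}$ is strictly less than $2\cdot 2^{-k^*}$, and integrality of $m - \widetilde{m}$ upgrades this to $\|m - \widetilde{m}\|_\infty \leq 1$. Hence $Q_{k^*,m}\cap Q_{k^*,\widetilde{m}} \neq \emptyset$ and $\widetilde{m} \in \widetilde{\mathcal{A}}_{k^*}$, which together with \eqref{eqq.5.2} yields at the point $y$
\begin{equation}
\notag
\sum_{\widetilde{m} \in \widetilde{\mathcal{A}}_{k^*}}\psi_{k^*,\widetilde{m}}(y) = 1 \qquad \text{and} \qquad \sum_{\widetilde{m} \in \widetilde{\mathcal{A}}_{k^*}}\nabla\psi_{k^*,\widetilde{m}}(y) = 0.
\end{equation}

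For the base case $k = k^*$, I differentiate \eqref{eq2.18}. The first identity above kills the coefficient of $\nabla f_{k^*-1}(y)$, while the second lets me insert an arbitrary $c \in \mathbb{R}$, giving
\begin{equation}
\notag
\nabla f_{k^*}(y) = \sum_{\widetilde{m} \in \widetilde{\mathcal{A}}_{k^*}}\nabla\psi_{k^*,\widetilde{m}}(y)(f_{k^*,\widetilde{m}} - c).
\end{equation}
Combining $\|\nabla\psi_{k^*,\widetilde{m}}(y)\| \leq C(n)C_{\widetilde{\psi}_0}2^{k^*}$ with Proposition \ref{Prop4.1'''} and the analogous integrality argument for indices in $\operatorname{n}_{k^*}(\widetilde{m})$ (which confirms that each averaging cube $Q_{k^*,m'}$ in \eqref{eqq.5.12} satisfies $y \in \frac{16}{5}Q_{k^*,m'}$), I arrive at $\|\nabla f_{k^*}(y)\| \leq C \operatorname{M}_{k^*,c}(y)$.

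For the inductive step, differentiating \eqref{eq2.18} at level $k+1$ yields
\begin{equation}
\notag
\nabla f_{k+1}(y) = \beta_{k+1}(y)\nabla f_{k}(y) + \sum_{\widetilde{m} \in \widetilde{\mathcal{A}}_{k+1}}\nabla\psi_{k+1,\widetilde{m}}(y)\bigl[(f_{k+1,\widetilde{m}} - c) - (f_{k}(y) - c)\bigr]
\end{equation}
with $\beta_{k+1}(y) := 1 - \sum_{\widetilde{m} \in \widetilde{\mathcal{A}}_{k+1}}\psi_{k+1,\widetilde{m}}(y) \in [0,1]$. The first summand is $\leq C_k\operatorname{M}_{k,c}(y) \leq \tfrac{C_k}{2}\operatorname{M}_{k+1,c}(y)$ by the induction hypothesis and the obvious monotonicity $\operatorname{M}_{k+1,c}(y) \geq 2\operatorname{M}_{k,c}(y)$. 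In the second summand, the $(f_{k+1,\widetilde{m}} - c)$-piece is bounded exactly as in the base case; for the $(f_{k}(y) - c)$-piece I use the identity $\sum_{\widetilde{m} \in \widetilde{\mathcal{A}}_{k+1}}\nabla\psi_{k+1,\widetilde{m}}(y) = -\sum_{\widetilde{m} \notin \widetilde{\mathcal{A}}_{k+1}}\nabla\psi_{k+1,\widetilde{m}}(y)$ (which again has gradient sum $\leq C(n)C_{\widetilde{\psi}_0}2^{k+1}$) together with the auxiliary estimate $|f_k(y) - c| \leq 2^{-k}\operatorname{M}_{k,c}(y)$. The latter follows by iterating the convex-combination inequality
\begin{equation}
\notag
|f_j(y) - c| \leq \beta_j(y)|f_{j-1}(y) - c| + (1 - \beta_j(y))\max_{\widetilde{m}^j \in \widetilde{\mathcal{A}}_j}|f_{j,\widetilde{m}^j} - c|, \qquad j = k^*+1,\dots,k,
\end{equation}
starting from the base bound $|f_{k^*}(y) - c| \leq \max_{\widetilde{m}}|f_{k^*,\widetilde{m}} - c|$ (which again uses $\beta_{k^*}(y) = 0$) and Proposition \ref{Prop4.1'''}. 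Collecting everything delivers a recursion of the form $C_{k+1} \leq \tfrac{1}{2}C_k + C_0$, which closes to a uniform constant depending only on $n$ and $C_{\widetilde{\psi}_0}$.

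The main obstacle is the bookkeeping: at every step one must check that the averaging cubes entering the estimates really are indexed by some $m \in \mathcal{A}_j$ with $y \in \frac{16}{5}Q_{j,m}$, so that the maximum defining $\operatorname{M}_{k,c}(y)$ genuinely dominates them. This reduces each time to the dyadic-integrality argument from the opening paragraph and is precisely what forces the somewhat peculiar factors $\tfrac{14}{5}$ and $\tfrac{16}{5}$ in Definition \ref{k.index} and in the statement of the theorem.
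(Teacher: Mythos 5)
Your argument is correct, but it follows a genuinely different route from the paper. The paper does not induct on $k$: it fixes $i=k^{\ast}$ in the explicit multi-level representation of Lemma \ref{Lm.function}, differentiates every term $S^{j}_{k^{\ast},k}$ by the Leibniz rule, and controls the resulting sums of products of partition functions through the combinatorial identity of Proposition \ref{Prop43} and Remark \ref{Remark4.2}, splitting $\nabla f_{k}(y)$ into seven remainder terms $R^{1}_{k},\dots,R^{7}_{k}$. You instead differentiate only the one-step recursion \eqref{eq2.18} and close an induction via the contraction $\operatorname{M}_{k+1,c}(y)\geq 2\operatorname{M}_{k,c}(y)$, which absorbs the previous level's gradient with a factor $\tfrac12$ and yields the recursion $C_{k+1}\leq\tfrac12 C_{k}+C_{0}$; the role of the paper's terms $R^{3}_{k},R^{6}_{k}$ (the $|c-f_{k^{\ast}}(y)|$ contributions) is played by your auxiliary bound $|f_{k}(y)-c|\leq 2^{-k}\operatorname{M}_{k,c}(y)$, proved by iterating the convex-combination inequality, which again only needs $\beta_{j}(y)\in[0,1]$ and Proposition \ref{Prop4.1'''}. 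What your route buys is economy and transparency: for this theorem you bypass Lemma \ref{Lm.function}, Proposition \ref{Prop43} and Remark \ref{Remark4.2} altogether, and you avoid differentiating the reproducing formula \eqref{43''} (a pointwise identity) by working with \eqref{eq2.18}, which holds identically in $x$; what the paper's heavier route buys is that the explicit formula and the telescoping identity are reused later (in Theorems \ref{Th.convergence} and \ref{estimate.s's'small}), so no machinery is wasted. Two small points you should make explicit when writing this up: (i) the gradient identity $\sum_{\widetilde{m}\in\widetilde{\mathcal{A}}_{k^{\ast}}}\nabla\psi_{k^{\ast},\widetilde{m}}(y)=0$ needs the one-line observation that $\nabla\psi_{k^{\ast},\widetilde{m}}(y)\neq 0$ forces $\psi_{k^{\ast},\widetilde{m}}(y)>0$ (a nonnegative smooth function has vanishing gradient at its zeros), so the sum over $\widetilde{\mathcal{A}}_{k^{\ast}}$ coincides with the full, locally finite, partition sum, whose gradient vanishes; and (ii) the base case $k^{\ast}=0$ is not covered by \eqref{eq2.18} and must be treated from the definition of $f_{0}$, where for $y\in\frac{14}{5}Q_{0,0}$ all active indices lie in the summation range and $f_{0,m}=f_{0,0}$, so $\nabla f_{0}(y)=f_{0,0}\sum_{m}\nabla\psi_{0,m}(y)=0$. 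Both gaps are trivial to fill, so the proof stands.
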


\begin{proof}
We fix arbitrary $k \geq k^{\ast}$ and $c \in \mathbb{R}$.
An application of Lemma \ref{Lm.function} with $i=k^{\ast}$ gives (below we assume that the corresponding sum is zero if $k=k^{\ast}$)
\begin{equation}
\label{eq227}
\nabla f_{k}(y)= \nabla f_{k^{\ast}}(y)+\sum\limits_{j=k^{\ast}+1}^{k}\nabla S^{j}_{k^{\ast},k}(y).
\end{equation}
Without loss of generality we will assume that $k > k^{\ast}$ because the case $k = k^{\ast}$ is much simpler.
We split the proof into several steps.

\textit{Step 1.} Since $k^{\ast} \in \underline{K}(y)$ by \eqref{eq.resol.unity} and Remark \ref{Remm.5.2}  we have
\begin{equation}
\label{eqq.5.28}
\nabla f_{k^{\ast}}(y) = \sum\limits_{\widetilde{m} \in \widetilde{\mathcal{A}}_{k^{\ast}}}\nabla\psi_{k^{\ast},\widetilde{m}}(y)f_{k^{\ast},\widetilde{m}} =
\sum\limits_{\widetilde{m} \in \widetilde{\mathcal{A}}_{k^{\ast}}}\nabla\psi_{k^{\ast},\widetilde{m}}(y)(f_{k^{\ast},\widetilde{m}}-c).
\end{equation}
Hence, by \eqref{eqq.5.1}--\eqref{eqq.5.3} we get
\begin{equation}
\label{eq.529''}
\|\nabla f_{k^{\ast}}(y)\| \le C 2^{k^{\ast}} \sum\limits_{\widetilde{m} \in \widetilde{\mathcal{A}}_{k^{\ast}}}\chi_{\frac{6}{5}Q_{k^{\ast},\widetilde{m}}}(y)|f_{k^{\ast},\widetilde{m}}-c|.
\end{equation}
Consequently, by \eqref{eqq.5.12} and \eqref{eq.529''} we obtain
\begin{equation}
\label{eq422''}
\begin{split}
&\|\nabla f_{k^{\ast}}(y)\| \le C 2^{k^{\ast}}\sum\limits_{\widetilde{m} \in \widetilde{\mathcal{A}}_{k^{\ast}}}
\chi_{\frac{6}{5}Q_{k^{\ast},\widetilde{m}}}(y)\max\limits_{m \in \operatorname{n}_{k^{\ast}}(\widetilde{m})\cap \mathcal{A}_{k^{\ast}}}
\fint\limits_{Q_{k^{\ast},m}}|f(x)-c|\,d\mathfrak{m}_{k^{\ast}}(x)\\
&\le C\Bigl(\sum\limits_{m \in \mathbb{Z}^{n}}\chi_{\frac{6}{5}Q_{k^{\ast},\widetilde{m}}}(y)\Bigr)\operatorname{M}_{k,c}(y)
\le C\operatorname{M}_{k,c}(y).
\end{split}
\end{equation}

\textit{Step 2.}
By \eqref{eq221} we get, for each $j \in \{k^{\ast}+1,...,k-1\}$ (the corresponding product disappears when $k < k^{\ast}+2$),
\begin{equation}
\label{eq228}
\begin{split}
&\nabla S^{j}_{k^{\ast},k}(y)=\sum\limits_{\widetilde{m}^{j} \in \widetilde{\mathcal{A}}_{j}}\nabla\Bigl(\psi_{j,\widetilde{m}^{j}}(y)\prod\limits_{r=j+1}^{k}\Bigl(
\sum\limits_{\widetilde{m}^{r} \in \mathbb{Z}^{n} \setminus \widetilde{\mathcal{A}}_{r}}\psi_{r,\widetilde{m}^{r}}(y)\Bigr)\Bigr)(f_{j,\widetilde{m}^{j}}-c+c-f_{k^{\ast}}(y))\\
&-\sum\limits_{\widetilde{m}^{j} \in \widetilde{\mathcal{A}}_{j}}
\psi_{j,\widetilde{m}^{j}}(y)\prod\limits_{r=j+1}^{k}\Bigl(\sum\limits_{\widetilde{m}^{r} \in \mathbb{Z}^{n} \setminus \widetilde{\mathcal{A}}_{r}}\psi_{r,\widetilde{m}^{r}}(y)\Bigr)\nabla
f_{k^{\ast}}(y).\\
\end{split}
\end{equation}
We also have
\begin{equation}
\label{eq228''}
\begin{split}
&\nabla S^{k}_{k^{\ast},k}(y)= \sum\limits_{\widetilde{m}^{k} \in \widetilde{\mathcal{A}}_{k}}\nabla \psi_{k,\widetilde{m}^{k}}(y)(f_{k,\widetilde{m}^{k}}-c+c -f_{k^{\ast}}(y))-\sum\limits_{\widetilde{m}^{k} \in \widetilde{\mathcal{A}}_{k}}
\psi_{k,\widetilde{m}^{k}}(y)\nabla f_{k^{\ast}}(y).\\
\end{split}
\end{equation}

\textit{Step 3.} For each $j\in \{k^{\ast}+1,...,k-1\}$ we use the
Leibniz rule. Using \eqref{eqq.5.1} -- \eqref{eqq.5.3}, we obtain (the corresponding product in the last string disappears if $k \le k^{\ast}+2$)
\begin{equation}
\begin{split}
\label{eq4.23}
&\Sigma^{j}_{k}(y):=\sum_{\widetilde{m}^{j} \in \widetilde{\mathcal{A}}_{j}}\Bigl\|\nabla\Bigl(\psi_{j,m^{j}}(y)\prod_{r=j+1}^{k}\Bigl(
\sum_{\widetilde{m}^{r} \in \mathbb{Z}^{n} \setminus \widetilde{\mathcal{A}}_{r}}\psi_{r,\widetilde{m}^{r}}(y)\Bigr)\Bigr)\Bigr\|\\
&\le \sum_{\widetilde{m}^{j} \in \widetilde{\mathcal{A}}_{j}}\|\nabla \psi_{j,\widetilde{m}^{j}}(y)\|\prod_{r=j+1}^{k}\Bigl(
\sum_{\widetilde{m}^{r} \in \mathbb{Z}^{n} \setminus \widetilde{\mathcal{A}}_{r}}\psi_{r,\widetilde{m}^{r}}(y)\Bigr)\\
&+ \sum_{\widetilde{m}^{j} \in \widetilde{\mathcal{A}}_{j}}\psi_{j,\widetilde{m}^{j}}(y)\Bigl\|\nabla \Bigl(\prod_{r=j+1}^{k}\Bigl(
\sum_{\widetilde{m}^{r} \in \mathbb{Z}^{n} \setminus \widetilde{\mathcal{A}}_{r}}\psi_{r,\widetilde{m}^{r}}(y)\Bigr)\Bigr)\Bigr\|\\
&\le C 2^{j}
+  \Bigl(\sum\limits_{\widetilde{m}^{j} \in \widetilde{\mathcal{A}}_{j}}\psi_{j,\widetilde{m}^{j}}(y)\Bigr)
\sum\limits_{r'=j+1}^{k}C 2^{r'}\prod\limits_{\substack{r=j+1 \\ r \neq r'}}^{k}
\Bigl(\sum\limits_{\widetilde{m}^{r} \in \mathbb{Z}^{n} \setminus \widetilde{\mathcal{A}}_{r}}\psi_{r,\widetilde{m}^{r}}(y)\Bigr)\Bigr).
\end{split}
\end{equation}
We use, for each $r' \in \{k^{\ast}+2,...,k\}$,
Remark \ref{Remark4.2} with $s=k-k^{\ast}-1$ and with
\begin{equation}
\begin{split}
\notag
&k_{1}:=k^{\ast}+1,...,k_{r'-k^{\ast}-1}:= r'-1 \quad \text{and} \quad k_{r'-k^{\ast}}:=r'+1,...,k_{s}:=k.
\end{split}
\end{equation}
This gives (we assume that $k > k^{\ast}+2$) the crucial estimate
\begin{equation}
\notag
\begin{split}
&\sum\limits_{j=k^{\ast}+1}^{r'-1}
\Bigl(\sum\limits_{\widetilde{m}^{j} \in \widetilde{\mathcal{A}}_{j}}\psi_{j,\widetilde{m}^{j}}(y)\Bigr)\prod\limits_{\substack{r=j+1 \\ r \neq r'}}^{k}
\Bigl(\sum\limits_{\widetilde{m}^{r} \in \mathbb{Z}^{n} \setminus \widetilde{\mathcal{A}}_{r}}\psi_{r,\widetilde{m}^{r}}(y)\Bigr)\\
&\le \sum\limits_{\substack{j=k^{\ast}+1\\j \neq r'}}^{k-1}\Bigl(\sum\limits_{\widetilde{m}^{j} \in \widetilde{\mathcal{A}}_{j}}\psi_{j,\widetilde{m}^{j}}(y)\Bigr)\prod\limits_{\substack{r=j+1 \\ r \neq r'}}^{k}
\Bigl(\sum\limits_{\widetilde{m}^{r} \in \mathbb{Z}^{n} \setminus \widetilde{\mathcal{A}}_{r}}\psi_{r,\widetilde{m}^{r}}(y)\Bigr) \le 1.
\end{split}
\end{equation}
Hence, using \eqref{eq4.23} and changing the order of summing, we get (the corresponding product below disappears if $k = k^{\ast}+2$)
\begin{equation}
\label{eq423''}
\begin{split}
&\sum\limits_{j=k^{\ast}+1}^{k-1}\Sigma^{j}_{k}(y)\\
&\le C\Bigl( 2^{k}
+\sum\limits_{r'=k^{\ast}+2}^{k}2^{r'}\sum\limits_{j=k^{\ast}+1}^{r'-1}
\Bigl(\sum\limits_{\widetilde{m}^{j} \in \widetilde{\mathcal{A}}_{j}}\psi_{j,\widetilde{m}^{j}}(y)\Bigr)\prod\limits_{\substack{r=j+1 \\ r \neq r'}}^{k}
\Bigl(\sum\limits_{\widetilde{m}^{r} \in \mathbb{Z}^{n} \setminus \widetilde{\mathcal{A}}_{r}}\psi_{r,\widetilde{m}^{r}}(y)\Bigr)\Bigr) \le
C2^{k}.
\end{split}
\end{equation}
Furthermore, by \eqref{eqq.5.3} we have
\begin{equation}
\label{eq4.23''}
\Sigma^{k}_{k}(y):=\sum_{\widetilde{m}^{k} \in \widetilde{\mathcal{A}}_{k}}\|\nabla \psi_{k,\widetilde{m}^{k}}(y)\| \le C 2^{k}.
\end{equation}

\textit{Step 4.} Now  we plug \eqref{eqq.5.28}, \eqref{eq228}, \eqref{eq228''}  into \eqref{eq227}.
This gives
\begin{equation}
\label{eq234'}
\begin{split}
\|\nabla f_{k}(y)\| \le \sum\limits_{i=1}^{7}R^{i}_{k}(y).
\end{split}
\end{equation}
In the right-hand side of \eqref{eq234'} we put
\begin{equation}
\begin{split}
&R^{1}_{k}(y):=\|\nabla f_{k^{\ast}}(y)\|, \quad R^{7}_{k}(y):=\sum\limits_{\widetilde{m}^{k} \in \widetilde{\mathcal{A}}_{k}}\psi_{k,\widetilde{m}^{k}}(y)\|\nabla f_{k^{\ast}}(y)\|\\
&R^{4}_{k}(y):=\sum\limits_{j=k^{\ast}+1}^{k-1}\sum\limits_{\widetilde{m}^{j} \in \widetilde{\mathcal{A}}_{j}}
\psi_{j,\widetilde{m}^{j}}(y)\prod\limits_{r=j+1}^{k}\Bigl(\sum\limits_{\widetilde{m}^{r} \in \mathbb{Z}^{n} \setminus \widetilde{\mathcal{A}}_{r}}\psi_{r,\widetilde{m}^{r}}(y)\Bigr)
\|\nabla f_{k^{\ast}}(y)\|,\\
&R^{3}_{k}(y):=\sum\limits_{j=k^{\ast}+1}^{k-1}\Sigma^{j}_{k}(y)|c-f_{k^{\ast}}(y)|, \quad R^{6}_{k}(y):= \sum\limits_{\widetilde{m}^{k} \in \widetilde{\mathcal{A}}_{k}}\|\nabla \psi_{k,\widetilde{m}^{k}}(y)\| |c-f_{k^{\ast}}(y)|,\\
&R^{2}_{k}(y):=\sum\limits_{j=k^{\ast}+1}^{k-1}\Sigma^{j}_{k}(y)\sum\limits_{\widetilde{m}^{j} \in \widetilde{\mathcal{A}}^{j}}\chi_{\frac{6}{5}Q_{j,\widetilde{m}^{j}}}(y)|f_{j,\widetilde{m}^{j}}-c|, \quad
R^{5}_{k}(y):=\sum\limits_{\widetilde{m}^{k} \in \widetilde{\mathcal{A}}_{k}}\|\nabla \psi_{k,\widetilde{m}^{k}}(y)\||f_{k,\widetilde{m}^{k}}-c|.
\end{split}
\end{equation}

\textit{Step 5.} By  \eqref{eqq.5.2} and \eqref{eq422''} we have
\begin{equation}
\label{eqq5.38}
\begin{split}
&R^{7}_{k}(y) \le R^{1}_{k}(y)
\le C \operatorname{M}_{k,c}(y).
\end{split}
\end{equation}

\textit{Step 6.} By Remark \ref{Remark4.2} and \eqref{eq422''} we obtain
\begin{equation}
\label{eqq5.39}
\begin{split}
&R^{4}_{k}(y)  \le \|\nabla f_{k^{\ast}}(y)\|=R^{1}_{k}(y) \le C \operatorname{M}_{k,c}(y).
\end{split}
\end{equation}

\textit{Step 7.} By \eqref{eq423''} and \eqref{eq4.23''} we clearly get
\begin{equation}
\notag
\begin{split}
&R^{3}_{k}(y) \le C 2^{k}|c-f_{k^{\ast}}(y)|, \quad R^{6}_{k}(y) \le C 2^{k}|c-f_{k^{\ast}}(y)|.\\
\end{split}
\end{equation}
Hence, using arguments similar to those used in step 1, we deduce
\begin{equation}
\label{eqq5.40}
\begin{split}
R^{3}_{k}(y)+R^{6}_{k}(y) \le C \operatorname{M}_{k,c}(y).
\end{split}
\end{equation}

\textit{Step 8.} Finally, we use Proposition \ref{Prop4.1'''} and take into account \eqref{eq4.21}, \eqref{eq423''}, \eqref{eq4.23''}. This leads us to the estimates
\begin{equation}
\label{eqq5.41}
\begin{split}
&R^{2}_{k}(y) \le C \operatorname{M}_{k,c}(y), \quad  R^{5}_{k}(y) \le C  \operatorname{M}_{k,c}(y).
\end{split}
\end{equation}

\textit{Step 9.} Collecting \eqref{eq234'}--\eqref{eqq5.41}, we deduce  \eqref{eq.230''} and complete the proof.

\end{proof}


\section{The reverse trace theorem}

In this section we prove the \textit{so-called reverse trace theorem.} More precisely, given a nonempty compact set $S \subset Q_{0,0}$
and a function $f \in \mathfrak{B}(S)$, we find conditions sufficient for the existence of a Sobolev extension $F$ of $f$.

\textit{Throughout the section we fix the following data:}

{\rm(\textbf{D.6.1})} parameters $d^{\ast} \in (0,n]$, $d \in (0,d^{\ast}]$ and a compact set $S \subset Q_{0,0}$ with $\overline{\lambda}:=\mathcal{H}^{d^{\ast}}_{\infty}(S) > 0$;

{\rm(\textbf{D.6.2})} an arbitrary parameter $\lambda \in (0,\overline{\lambda})$ ;

{\rm(\textbf{D.6.3})} an arbitrary sequence of measures $\{\mathfrak{m}_{k}\} \in \mathfrak{M}^{d}(S)$.


As a result, we simplify our notation.
More precisely, we set $\mathcal{DF}:=\mathcal{DF}_{S}(d,\lambda)$, $\underline{S}:=\underline{S}(d,\lambda)$, $S_{f}:=S_{f}(d)$.
For each $k \in \mathbb{N}_{0}$ we set $\mathcal{DF}_{k}:=\mathcal{DF}_{S,k}(d,\lambda)$
and $\widetilde{\mathcal{DF}}_{k}:=\widetilde{\mathcal{DF}}_{S,k}(d,\lambda)$.
Given $c \geq 1$, we put $T_{c}(Q):=T_{d,\lambda,c}(Q)$, $\mathcal{K}_{c}(Q):=\mathcal{K}_{d,\lambda,c}(Q)$.
We recall Definition \ref{k.index} and, for any given $y \in \mathbb{R}^{n}$, we put $\underline{K}(y):=\underline{K}_{d,\lambda}(y)$ and $\overline{K}(y):=\overline{K}_{d,\lambda}(y)$.
We also use the symbol $\operatorname{Ext}$ instead of $\operatorname{Ext}_{S,\{\mathfrak{m}_{k}\},\lambda}$ to denote the corresponding
extension operator constructed in Section 5.1. Finally, throughout the section we put $f^{\natural}_{c}:=f^{\natural}_{\{\mathfrak{m}_{k}\},\lambda,c}$.

The following elementary combinatorial fact will be a keystone in proving the main results of this section. We recall Remark \ref{Remm.5.1}.

\begin{Lm}
\label{Lm.keystone_combinatorial}
Let $y \in \frac{14}{5}Q_{0,0}$. Let $k_{1},k_{2} \in \mathbb{N}_{0}$ and $j_{1},j_{2} \in \overline{K}(y)$ be such that:

{\rm (1)} $k_{1} \le j_{1} \le j_{2} \le k_{2}$;

{\rm (2)} $(k_{1},k_{2}) \cap \underline{K}(y) = \emptyset$.

Then $Q_{j_{1},m_{1}} \in \mathcal{K}_{7}(Q_{j_{2},m_{2}})$ for any $Q_{j_{i},m_{i}} \in \mathcal{DF}_{j_{i}}$, $i=1,2$
satisfying $y \in \frac{16}{5}Q_{j_{1},m_{1}} \cap \frac{16}{5}Q_{j_{2},m_{2}}$.
\end{Lm}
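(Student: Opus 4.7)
The plan is to verify conditions (\textbf{C}1)--(\textbf{C}3) of Definition \ref{cover.cube} directly, taking $\overline{Q} = Q_{j_1,m_1}$, $Q = Q_{j_2,m_2}$, and $c = 7$. Condition (\textbf{C}2) is part of the hypothesis since $Q_{j_1,m_1} \in \mathcal{DF}_{j_1} \subset \mathcal{DF}$, and the size inequality $l(Q_{j_1,m_1}) \geq l(Q_{j_2,m_2})$ follows immediately from $j_1 \leq j_2$. The two nontrivial points are the containment $Q_{j_2,m_2} \subset 7Q_{j_1,m_1}$ in (\textbf{C}1) and the forbidden-intermediate-cube assertion (\textbf{C}3).

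For (\textbf{C}1), I plan to use $y$ as a pivot. Writing $\xi_i$ for the center of $Q_{j_i,m_i}$, the hypothesis $y \in \frac{16}{5}Q_{j_i,m_i}$ yields $\|y - \xi_i\| \leq \frac{8}{5}\cdot 2^{-j_i}$, hence $\|\xi_1 - \xi_2\| \leq \frac{8}{5}(2^{-j_1} + 2^{-j_2})$. In the strict case $j_1 < j_2$ one has $2^{-j_2} \leq \frac{1}{2}\cdot 2^{-j_1}$, providing enough slack so that any $z \in Q_{j_2,m_2}$ lies within $\frac{53}{20}\cdot 2^{-j_1} < \frac{7}{2}\cdot 2^{-j_1}$ of $\xi_1$. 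In the diagonal case $j_1 = j_2 = j$ the vector $\xi_1 - \xi_2$ belongs to $2^{-j}\mathbb{Z}^n$, so the crude estimate $\frac{16}{5}\cdot 2^{-j}$ sharpens by integrality to $\|\xi_1 - \xi_2\| \leq 3\cdot 2^{-j}$, yielding containment in $7Q_{j,m_1}$ with equality attainable on the boundary; this is precisely where the constant $7$ becomes sharp.

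For (\textbf{C}3) I will argue by contradiction. Suppose $Q' \in \mathcal{DF} \cap \mathcal{D}_{j'}$ with $j_1 < j' < j_2$ and $Q_{j_2,m_2} \subset Q'$, and let $\xi'$ denote its center. The dyadic nesting $Q_{j_2,m_2} \subset Q'$ gives $\|\xi' - \xi_2\| \leq \frac{1}{2}(2^{-j'} - 2^{-j_2})$; combining with $\|y - \xi_2\| \leq \frac{8}{5}\cdot 2^{-j_2}$ and the gap $2^{-j_2} \leq \frac{1}{2}\cdot 2^{-j'}$ (from $j_2 - j' \geq 1$) produces $\|y - \xi'\| \leq \frac{11}{10}\cdot 2^{-j_2} + \frac{1}{2}\cdot 2^{-j'} \leq \frac{7}{5}\cdot 2^{-j'}$, i.e.\ $y \in \frac{14}{5}Q'$. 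Since $Q' \in \mathcal{DF}_{j'}$, Definition \ref{k.index} then places $j'$ in $\underline{K}(y)$. But by hypothesis (1) we have $k_1 \leq j_1 < j' < j_2 \leq k_2$, so $j' \in (k_1, k_2)$, contradicting hypothesis (2).

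The main obstacle, and the source of the specific constant $7$ in the statement, is the diagonal case $j_1 = j_2$ of (\textbf{C}1): generic continuous triangle-inequality estimates fall short of $\frac{7}{2}\cdot 2^{-j}$, and one must exploit the integrality of $m_1 - m_2$ to recover the sharp bound. The remainder of the argument is routine bookkeeping made effective by the factor-of-two separation $2^{-j_2} \leq \frac{1}{2}\cdot 2^{-j'}$ built into the dyadic grid.
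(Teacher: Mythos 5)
Your proposal is correct and follows essentially the same route as the paper: the containment $Q_{j_2,m_2}\subset 7Q_{j_1,m_1}$ is obtained from the distance estimates forced by $y\in\frac{16}{5}Q_{j_1,m_1}\cap\frac{16}{5}Q_{j_2,m_2}$ sharpened by dyadic integrality (the paper invokes Proposition \ref{Prop21''''} where you use $\xi_1-\xi_2\in 2^{-j}\mathbb{Z}^{n}$), and condition (\textbf{C}3) is verified exactly as in the paper, by showing any intermediate dyadic cube over $Q_{j_2,m_2}$ would satisfy $y\in\frac{14}{5}Q'$ and hence put its generation into $\underline{K}(y)\cap(k_1,k_2)$, contradicting hypothesis (2). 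All numerical constants in your estimates check out.
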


\begin{proof}
Let $Q_{j_{i},m_{i}} \in \mathcal{DF}_{j_{i}}$, $i=1,2$ be such that $y \in \frac{16}{5}Q_{j_{1},m_{1}} \cap \frac{16}{5}Q_{j_{2},m_{2}}$.

Fix a dyadic cube $Q \supset Q_{j_{2},m_{2}}$ with $l(Q) \in (2^{-j_{2}},2^{-j_{1}})$.
We claim that $Q \notin \mathcal{DF}$. Indeed, assume the contrary.
On the one hand, by the construction
\begin{equation}
\label{eq.6.key1}
-\log_{2}l(Q) \in (j_{1},j_{2}) \subset (k_{1},k_{2}).
\end{equation}
On the other hand, since $Q \in \mathcal{D}_{+}$ we have $l(Q) \geq 2 l(Q_{j_{2},m_{2}})$.
Hence, taking into account that $y \in \frac{16}{5}Q_{j_{2},m_{2}}$ it is easy to see that $y \in \frac{14}{5}Q$. This implies
that
\begin{equation}
\label{eq.6.key2}
-\log_{2}l(Q) \in \underline{K}(y).
\end{equation}
By \eqref{eq.6.key1} and \eqref{eq.6.key2} we have $(k_{1},k_{2}) \cap \underline{K}(y) \neq \emptyset$, which contradicts
the assumptions of the lemma. This proves the claim.

Since $y \in \frac{16}{5}Q_{j_{1},m_{1}} \cap \frac{16}{5}Q_{j_{2},m_{2}}$ we clearly have
$\operatorname{dist}(Q_{j_{1},m_{1}},Q_{j_{2},m_{2}}) \le \frac{11}{5}2^{-j_{1}}$.
Since the cubes are dyadic by Proposition \ref{Prop21''''} we get $\operatorname{dist}(Q_{j_{1},m_{1}},Q_{j_{2},m_{2}}) \le 2^{1-j_{1}}$.
Hence, $Q_{j_{2},m_{2}} \subset 7Q_{j_{1},m_{1}}$.

The proof is complete.
\end{proof}

Having at our disposal Lemma \ref{Lm.keystone_combinatorial} we can establish an important estimate.

\begin{Lm}
\label{Lm.lp_minus_lq}
For each $f \in L_{1}(\{\mathfrak{m}_{k}\})$, for every $y \in \frac{14}{5}Q_{0,0}$, the following holds.
If $k_{1},k_{2} \in \mathbb{N}_{0}$ and $j_{1},j_{2} \in \overline{K}(y)$ satisfy the assumptions of Lemma \ref{Lm.keystone_combinatorial},
then
\begin{equation}
|f_{j_{1},\widetilde{m}_{1}}-f_{j_{2},\widetilde{m}_{2}}| \le   2^{-\max\{j_{1},j_{2}\}}f^{\sharp}_{7}(y)
\end{equation}
for any $Q_{j_{i},\widetilde{m}_{i}} \in \widetilde{\mathcal{DF}}_{j_{i}}$, $i=1,2$ satisfying $y \in \frac{6}{5}Q_{j_{1},\widetilde{m}_{1}} \cap \frac{6}{5}Q_{j_{2},\widetilde{m}_{2}}$.
\end{Lm}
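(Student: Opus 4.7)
My plan is to reduce the statement to the combinatorial fact just proved in Lemma \ref{Lm.keystone_combinatorial} by means of the averaging estimate of Proposition \ref{Prop4.1'''} and the definition of the Calder\'on-type maximal function $f^{\natural}_{7}$. The second inequality of Proposition \ref{Prop4.1'''} gives
\begin{equation}
\notag
|f_{j_{1},\widetilde{m}_{1}}-f_{j_{2},\widetilde{m}_{2}}| \le \max \fint\limits_{Q_{j_{1},m_{1}}}\fint\limits_{Q_{j_{2},m_{2}}}|f(x)-f(z)|\,d\mathfrak{m}_{j_{2}}(z)\,d\mathfrak{m}_{j_{1}}(x),
\end{equation}
where the maximum is taken over all $m_{i} \in \operatorname{n}_{j_{i}}(\widetilde{m}_{i})\cap \mathcal{A}_{j_{i}}$, $i=1,2$. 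Hence it suffices to bound each such double average by $2^{-j_{2}}f^{\natural}_{7}(y)$ and take the maximum (recall $j_{2}=\max\{j_{1},j_{2}\}$).

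Next I fix an arbitrary such pair $Q_{j_{1},m_{1}}\in\mathcal{DF}_{j_{1}}$, $Q_{j_{2},m_{2}}\in\mathcal{DF}_{j_{2}}$ and verify geometrically that $y$ still lies in the dilated neighbor cubes. Indeed, since $Q_{j_{i},m_{i}}\subset 3Q_{j_{i},\widetilde{m}_{i}}$ the $\|\cdot\|_{\infty}$-distance between centers is at most $2^{-j_{i}}$, so from $y\in\frac{6}{5}Q_{j_{i},\widetilde{m}_{i}}$ one gets $\|y-\operatorname{center}(Q_{j_{i},m_{i}})\|_{\infty}\le\frac{8}{5}2^{-j_{i}}$, i.e.\ $y\in\frac{16}{5}Q_{j_{i},m_{i}}$. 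Now the hypotheses of Lemma \ref{Lm.keystone_combinatorial} are met for this pair, and the lemma yields $Q_{j_{1},m_{1}}\in\mathcal{K}_{d,\lambda,7}(Q_{j_{2},m_{2}})$.

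It remains to connect the double average to $f^{\natural}_{7}(y)$. Using $y\in\frac{16}{5}Q_{j_{2},m_{2}}\subset 7Q_{j_{2},m_{2}}$, $0<l(Q_{j_{2},m_{2}})\le l(Q_{j_{1},m_{1}})=2^{-j_{1}}\le 1$, and the covering relation just established, all three conditions (\textbf{f}1)--(\textbf{f}3) of Definition \ref{Cal.max.function} are satisfied by the pair $(\underline{Q},\overline{Q})=(Q_{j_{2},m_{2}},Q_{j_{1},m_{1}})$. Thus $\Phi_{f,\{\mathfrak{m}_{k}\}}(Q_{j_{2},m_{2}},Q_{j_{1},m_{1}})\le f^{\natural}_{7}(y)$, and the defining identity \eqref{eq.6.1'} with $\min\{l(Q_{j_{1},m_{1}}),l(Q_{j_{2},m_{2}})\}=2^{-j_{2}}$ converts this into
\begin{equation}
\notag
\fint\limits_{Q_{j_{1},m_{1}}}\fint\limits_{Q_{j_{2},m_{2}}}|f(x)-f(z)|\,d\mathfrak{m}_{j_{2}}(z)\,d\mathfrak{m}_{j_{1}}(x)\le 2^{-j_{2}}f^{\natural}_{7}(y),
\end{equation}
which is the desired uniform bound on each neighbor pair.

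The only delicate point is the choice of the dilation factor: the passage from $\widetilde{\mathcal{DF}}$-cubes to their $\mathcal{DF}$-neighbors costs a factor $\tfrac{16}{5}$ in the dilation containing $y$, and this must fit into the dilation factor used in the tower/covering relation supplied by Lemma \ref{Lm.keystone_combinatorial}. Since that lemma produces a covering cube with the universal constant $7 > \tfrac{16}{5}$, everything is consistent, and no further optimization is needed. I do not expect any other obstacle; the rest is purely a matter of assembling the three ingredients above.
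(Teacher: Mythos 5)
Your argument is correct and follows the paper's own proof essentially verbatim: reduce via Proposition \ref{Prop4.1'''} to double averages over the $\mathcal{A}_{j_i}$-neighbors, observe that $y \in \frac{16}{5}Q_{j_1,m_1}\cap\frac{16}{5}Q_{j_2,m_2}$ for those neighbors, invoke Lemma \ref{Lm.keystone_combinatorial} to get $Q_{j_1,m_1}\in\mathcal{K}_{7}(Q_{j_2,m_2})$, and conclude via conditions (\textbf{f}1)--(\textbf{f}3) of Definition \ref{Cal.max.function} and \eqref{eq.6.1'}. Your explicit verification of the $\frac{16}{5}$-inclusion and of (\textbf{f}1)--(\textbf{f}3) only spells out steps the paper leaves implicit.
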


\begin{proof}
Given $f \in L_{1}(\{\mathfrak{m}_{k}\})$ and $y \in \frac{14}{5}Q_{0,0}$,
we fix $k_{1},k_{2} \in \mathbb{N}_{0}$ and $j_{1},j_{2} \in \overline{K}(y)$ satisfying the assumptions of Lemma \ref{Lm.keystone_combinatorial}. Furthermore, we fix
$Q_{j_{i},\widetilde{m}_{i}} \in \widetilde{\mathcal{DF}}_{j_{i}}$, $i=1,2$ such that $y \in \frac{6}{5}Q_{j_{1},\widetilde{m}_{1}} \cap \frac{6}{5}Q_{j_{2},\widetilde{m}_{2}}$.
It is clear that,
for any $m_{i} \in \operatorname{n}_{j_{i}}(\widetilde{m}_{i})$, $i=1,2$, we have
\begin{equation}
\label{eq.6.key_inclusion}
y \in \frac{16}{5}Q_{j_{1},m_{1}} \cap \frac{16}{5}Q_{j_{2},m_{2}}
\end{equation}
We use Proposition \ref{Prop4.1'''}, take into account \eqref{eq.6.key_inclusion}, and
apply Lemma \ref{Lm.keystone_combinatorial}. This gives
\begin{equation}
\begin{split}
&|f_{j_{1},\widetilde{m}_{1}}-f_{j_{2},\widetilde{m}_{2}}|\\
&\le \max\limits_{\substack{ i =1,2 \\
m_{i} \in \operatorname{n}_{j_{i}}(\widetilde{m}_{i}) \cap \mathcal{A}_{j_{i}}}}\fint\limits_{Q_{j_{1}},m_{1}}\fint\limits_{Q_{j_{2}},m_{2}}|f(x)-f(y)|\,
d\mathfrak{m}_{j_{1}}(x)d\mathfrak{m}_{j_{2}}(y) \le 2^{-\max\{j_{1},j_{2}\}}f^{\sharp}_{7}(y).
\end{split}
\end{equation}
The proof is complete.
\end{proof}

We recall Definitions \ref{Def.spec.app.seq} and \ref{k.index}. Given $f \in L_{1}(\{\mathfrak{m}_{k}\})$, we establish a useful pointwise estimate of the functions $f_{k}-f_{l}$ for any $k,l \in \mathbb{N}$. This is crucial
to deduce convergence properties of the special approximating sequence $\{f_{k}\}$.

\begin{Th}
\label{Th.convergence}
Let $f \in L_{1}(\{\mathfrak{m}_{k}\})$ and $y \in \frac{14}{5}Q_{0,0}$.~Let $\{f_{k}\}$ be the special approximating sequence for $f$.~Then, for any $k,s \in \mathbb{N}_{0}$ with $s \geq k$,
\begin{equation}
\label{eqq.6.7}
|f_{k}(y)-f_{s}(y)| \le 2^{-\underline{k}(y,k)+3}f^{\natural}_{7}(y),
\end{equation}
where $\underline{k}(y,k)=\max\{k' \in \underline{K}(y):k' \le k\}$.
\end{Th}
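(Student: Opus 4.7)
The plan is to pivot through the value $f_{k_0}(y)$, where $k_0 := \underline{k}(y,k)$, and then conclude by the triangle inequality. Because $k_0 \in \underline{K}(y)$, the reproducing identity \eqref{43''} gives $f_{k_0}(y) = \sum_{\widetilde{m}^0 \in \widetilde{\mathcal{A}}_{k_0}} \psi_{k_0, \widetilde{m}^0}(y) f_{k_0, \widetilde{m}^0}$ as a genuine convex combination (the weights sum to $1$), and this is precisely the hook for invoking Lemma \ref{Lm.lp_minus_lq}.

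The first estimate I will establish is $|f_k(y) - f_{k_0}(y)| \le 2^{-k_0} f^{\natural}_{7}(y)$. I will apply Lemma \ref{Lm.function} with $i = k_0$ to write $f_k(y) - f_{k_0}(y) = \sum_{j=k_0+1}^{k} S^{j}_{k_0,k}(y)$, observe that $S^{j}_{k_0,k}(y)$ can only be nonzero when $j \in \overline{K}(y)$, use the convex combination above together with the product-$\le 1$ estimate to reduce the task to controlling differences $|f_{j,\widetilde{m}^j} - f_{k_0,\widetilde{m}^0}|$, and then apply Lemma \ref{Lm.lp_minus_lq} with $k_1 = k_0$, $k_2 = k$, $j_1 = k_0$, $j_2 = j$. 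The hypothesis $(k_0,k) \cap \underline{K}(y) = \emptyset$ is automatic from the definition of $k_0$, so I obtain $|f_{j,\widetilde{m}^j} - f_{k_0,\widetilde{m}^0}| \le 2^{-j} f^{\natural}_{7}(y)$; summing the resulting geometric series over $j \in (k_0, k]$ concludes this step.

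The second estimate, $|f_s(y) - f_{k_0}(y)| \le 2 \cdot 2^{-k_0} f^{\natural}_{7}(y)$, is more subtle since $(k_0, s]$ may now contain points of $\underline{K}(y)$. I will list them as $\underline{K}(y) \cap [k_0,s] = \{k_0 = k^{(0)} < k^{(1)} < \cdots < k^{(M)}\}$ and split
\[
f_s(y) - f_{k_0}(y) = \bigl(f_s(y) - f_{k^{(M)}}(y)\bigr) + \sum_{r=0}^{M-1}\bigl(f_{k^{(r+1)}}(y) - f_{k^{(r)}}(y)\bigr).
\]
The tail $f_s(y) - f_{k^{(M)}}(y)$ is treated by the argument of the previous paragraph with $k_0$ replaced by $k^{(M)}$, producing the bound $2^{-k^{(M)}} f^{\natural}_{7}(y)$. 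For each telescoping term I apply Lemma \ref{Lm.function} with $i = k^{(r)}$ and $k = k^{(r+1)}$; the decisive observation is that every $S^{j}_{k^{(r)}, k^{(r+1)}}(y)$ with $j < k^{(r+1)}$ is automatically $0$, because the defining product contains the factor $\sum_{\widetilde{m} \in \mathbb{Z}^n \setminus \widetilde{\mathcal{A}}_{k^{(r+1)}}} \psi_{k^{(r+1)},\widetilde{m}}(y)$, which vanishes by \eqref{eqq.5.2} and \eqref{eq.resol.unity} (using $k^{(r+1)} \in \underline{K}(y)$). Only $j = k^{(r+1)}$ survives, and Lemma \ref{Lm.lp_minus_lq} with $k_1 = k^{(r)}$, $k_2 = k^{(r+1)}$ yields $|f_{k^{(r+1)}}(y) - f_{k^{(r)}}(y)| \le 2^{-k^{(r+1)}} f^{\natural}_{7}(y)$. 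Summing $\sum_{r} 2^{-k^{(r+1)}} \le 2^{-k_0}$ and adding the tail gives the claimed bound.

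Putting the two halves together produces $|f_k(y) - f_s(y)| \le 3\cdot 2^{-k_0} f^{\natural}_{7}(y) \le 2^{-k_0+3} f^{\natural}_{7}(y)$, which is \eqref{eqq.6.7}. The main technical point — and the only nonroutine ingredient — is the vanishing of the intermediate $S^{j}_{k^{(r)}, k^{(r+1)}}$ terms: this is what collapses the representation formula of Lemma \ref{Lm.function} between consecutive points of $\underline{K}(y)$ to a single term and reduces the whole argument to a clean geometric-sum estimate through Lemma \ref{Lm.lp_minus_lq}.
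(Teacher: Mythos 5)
Your argument is correct and essentially reproduces the paper's proof: the same decomposition through consecutive points of $\underline{K}(y)$, the same use of Lemma \ref{Lm.function} together with Remark \ref{Remark4.2} for the pieces between consecutive supporting indices, and the same application of Lemma \ref{Lm.lp_minus_lq} (justified by the automatic emptiness of $\underline{K}(y)$ in the relevant gaps), followed by a geometric-sum bound. The only cosmetic deviation is in the telescoping terms, where the paper invokes the reproducing formula \eqref{43''} at both scales $k_{l}$, $k_{l+1}$ directly, while you recover the same single-term identity from Lemma \ref{Lm.function} by observing that the factor $\sum_{\widetilde{m} \in \mathbb{Z}^{n}\setminus\widetilde{\mathcal{A}}_{k^{(r+1)}}}\psi_{k^{(r+1)},\widetilde{m}}(y)$ vanishes; both routes are equally valid.
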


\begin{proof}
We fix $k,s \in \mathbb{N}_{0}$ with $s \geq k$ and split the proof into several steps.

\textit{Step 1.}
By Remark \ref{Remm.5.1}, we have $\underline{K}(y) \neq \emptyset$. We
write the set $\underline{K}(y)$ in an increasing order, i.e.,
$\underline{K}(y) = \{k_{l}\}$ where $\{k_{l}\}=\{k_{l}\}_{l=1}^{N}$, $N \in \mathbb{N}\cup +\infty$ is an increasing sequence or a finite family of numbers taken in an increasing order. We put
\begin{equation}
\label{eqq.6.index}
\underline{l}:=\max\{l:k_{l} \le k\} \quad \text{and} \quad \overline{l}:=\max\{l:k_{l} \le s\}.
\end{equation}
We clearly have (the second sum disappears in the case $\underline{l}=\overline{l}$)
\begin{equation}
\label{eqq.6.8}
|f_{k}(y)-f_{s}(y)| \le |f_{k}(y)-f_{k_{\underline{l}}}(y)|+\sum\limits_{l=\underline{l}}^{\overline{l}-1}|f_{k_{l}}(y)-f_{k_{l+1}}(y)|+|f_{s}(y)-f_{k_{\overline{l}}}(y)|.
\end{equation}

\textit{Step 2.}
Since $\{k_{l}\} \subset  \underline{K}(y)$, by Remark \ref{Remm.5.2} we get
\begin{equation}
\label{eqq.6.9}
f_{k_{l}}(y)=\sum\limits_{\widetilde{m}^{k_{l}} \in \widetilde{\mathcal{A}}_{k_{l}}}\psi_{k_{l},\widetilde{m}^{k_{l}}}(y)f_{k_{l},\widetilde{m}^{k_{l}}} \quad \text{for each} \quad l.
\end{equation}
By \eqref{eqq.6.9}, for each $l \in \{\underline{l},...,\overline{l}-1\}$, we obtain
\begin{equation}
\label{eqq.609}
\begin{split}
&|f_{k_{l}}(y)-f_{k_{l+1}}(y)| \le \sum\limits_{\widetilde{m}^{k_{l}} \in \widetilde{\mathcal{A}}_{k_{l}}}\sum\limits_{\widetilde{m}^{k_{l+1}} \in \widetilde{\mathcal{A}}_{k_{l+1}}}\psi_{k_{l},\widetilde{m}^{k_{l}}}(y)
\psi_{k_{l+1},\widetilde{m}^{k_{l+1}}}(y)|f_{k_{l},\widetilde{m}^{k_{l}}}
-f_{k_{l+1},\widetilde{m}^{k_{l+1}}}|.
\end{split}
\end{equation}

The crucial observation is that, given $l \in \{\underline{l},...,\overline{l}-1\}$, we have $(k_{l},k_{l+1}) \cap \underline{K}(y) = \emptyset$. Hence,
an application
of Lemma \ref{Lm.lp_minus_lq} with $k_{1}=j_{1}=k_{l}$ and $k_{2}=j_{2}=k_{l+1}$ gives
\begin{equation}
\label{eqq.610}
|f_{k_{l},\widetilde{m}^{k_{l}}}
-f_{k_{l+1},\widetilde{m}^{k_{l+1}}}| \le 2^{-k_{l+1}}f^{\natural}_{7}(y)
\end{equation}
for every $l \in \{\underline{l},...,\overline{l}-1\}$ and any indexes $\widetilde{m}^{k_{l}} \in \widetilde{A}_{k_{l}}$, $\widetilde{m}^{k_{l+1}} \in \widetilde{A}_{k_{l+1}}$
satisfying $\psi_{k_{l},\widetilde{m}^{k_{l}}}(y) \neq 0$ and $\psi_{k_{l+1},\widetilde{m}^{k_{l+1}}}(y) \neq 0$.

As a result, we plug \eqref{eqq.610} into \eqref{eqq.609} and take into account \eqref{eqq.5.2}. We obtain
\begin{equation}
\label{eqq.6.10}
\begin{split}
&|f_{k_{l},\widetilde{m}^{k_{l}}}
-f_{k_{l+1},\widetilde{m}^{k_{l+1}}}|\\
&\le 2^{-k_{l+1}}f^{\natural}_{7}(y)\sum\limits_{\widetilde{m}^{k_{l}} \in \widetilde{\mathcal{A}}_{k_{l}}}
\sum\limits_{\widetilde{m}^{k_{l+1}} \in \widetilde{\mathcal{A}}_{k_{l+1}}}\psi_{k_{l},\widetilde{m}^{k_{l}}}(y)\psi_{k_{l+1},\widetilde{m}^{k_{l+1}}}(y) \le 2^{-k_{l+1}}f^{\natural}_{7}(y).
\end{split}
\end{equation}

\textit{Step 3.} We assume that $k > k_{\underline{l}}$ because otherwise $f_{k}(y)-f_{k_{\underline{l}}}(y)=0$.
An application of Lemma \ref{Lm.function} with $i=k_{\underline{l}}$ gives
\begin{equation}
\label{eqq.6.11}
|f_{k}(y)-f_{k_{\underline{l}}}(y)| \le
\sum\limits_{r=k_{\underline{l}}+1}^{k}
S^{r}_{k_{\underline{l}},k}(y).
\end{equation}
If $k-k_{\underline{k}} \geq 2$, then by \eqref{eq.resol.unity}, \eqref{eq221} and \eqref{eqq.6.9}, for each $r \in \{k_{\underline{l}}+1,...,k-1\}$ (the corresponding product below disappears for $k=k_{\underline{k}}+2$),
\begin{equation}
\label{eqq.611'}
\begin{split}
&S^{r}_{k_{\underline{l}},k}(y) \le \sum\limits_{\widetilde{m}^{r} \in \widetilde{\mathcal{A}}_{r}}\psi_{r,\widetilde{m}^{r}}(y)\Bigl(
\prod\limits_{r'=r+1}^{k}\Bigl(\sum\limits_{\widetilde{m}^{r'} \in \mathbb{Z}^{n} \setminus \widetilde{\mathcal{A}}_{r'}}
\psi_{r',\widetilde{m}^{r'}}(y)\Bigr)\Bigr)\sum\limits_{\widetilde{m}^{k_{\underline{l}}} \in \widetilde{\mathcal{A}}_{k_{\underline{l}}}}\psi_{k_{\underline{l}},\widetilde{m}^{k_{\underline{l}}}}(y)
|f_{r,\widetilde{m}^{r}}-f_{k_{\underline{l}},\widetilde{m}^{k_{\underline{l}}}}|.\\
\end{split}
\end{equation}
Similarly,
\begin{equation}
\label{eqq.611''}
\begin{split}
&S^{k}_{k_{\underline{l}},k}(y) \le \sum\limits_{\widetilde{m}^{k} \in \widetilde{\mathcal{A}}_{k}}\psi_{k,\widetilde{m}^{k}}(y)
\sum\limits_{\widetilde{m}^{k_{\underline{l}}} \in \widetilde{\mathcal{A}}_{k_{\underline{l}}}}\psi_{k_{\underline{l}},\widetilde{m}^{k_{\underline{l}}}}(y)
|f_{k,\widetilde{m}^{k}}-f_{k_{\underline{l}},\widetilde{m}^{k_{\underline{l}}}}|.
\end{split}
\end{equation}

The crucial observation is that by \eqref{eqq.6.index} we have $(k_{\underline{l}},k) \cap \underline{K}(y) = \emptyset$. Hence, given $r \in \{k_{\underline{l}}+1,...,k\}$, applying Lemma \ref{Lm.lp_minus_lq}
with $k_{1}=j_{1}=k_{\underline{l}}$ and $k_{2}=k$, $j_{2}=r$ we obtain
\begin{equation}
\label{eqq.611'''}
|f_{r,\widetilde{m}^{r}}-f_{k_{\underline{l}},\widetilde{m}^{k_{\underline{l}}}}| \le 2^{-r}f^{\natural}_{7}(y)
\end{equation}
for any indices $\widetilde{m}^{r} \in \widetilde{A}_{r}$, $\widetilde{m}^{k_{\underline{l}}} \in \widetilde{\mathcal{A}}_{k_{\underline{l}}}$ satisfying $\psi_{r,\widetilde{m}^{r}}(y) \neq 0$
and $\psi_{k_{\underline{l}},\widetilde{m}^{k_{\underline{l}}}}(y) \neq 0$.

As a result, collecting \eqref{eqq.6.11}--\eqref{eqq.611'''} and taking into account Remark \ref{Remark4.2} we obtain
\begin{equation}
\label{eqq.6.12}
\begin{split}
&|f_{k}(y)-f_{k_{\underline{l}}}(y)| \le f^{\natural}_{7}(y)\sum\limits_{r=k_{\underline{l}}+1}^{k-1}2^{-r}
\sum\limits_{\widetilde{m}^{r} \in \widetilde{\mathcal{A}}_{r}}\psi_{r,\widetilde{m}^{r}}(y)\Bigl(
\prod\limits_{r'=r+1}^{k}\Bigl(\sum\limits_{\widetilde{m}^{r'} \in \mathbb{Z}^{n} \setminus \widetilde{\mathcal{A}}_{r'}}
\psi_{r',\widetilde{m}^{r'}}(y)\Bigr)\Bigr)\\
&+2^{-k}f^{\natural}_{7}(y) \le 2^{-k_{\underline{l}}+1}f^{\natural}_{7}(y).
\end{split}
\end{equation}

\textit{Step 4.} Repeating the arguments of the previous step, we get
\begin{equation}
\label{eqq.6.13}
|f_{l}(y)-f_{k_{\overline{l}}}(y)| \le 2^{-k_{\overline{l}}+1}f^{\natural}_{7}(y).
\end{equation}

It remains to combine \eqref{eqq.6.8}, \eqref{eqq.6.10}, \eqref{eqq.6.12}, \eqref{eqq.6.13} and take into account that $2^{-k_{l}} \le 2^{-\underline{k}(y,k)}$ for
all $l \in \{\underline{l},...,\overline{l}\}$. As a result, we deduce \eqref{eqq.6.7}
and complete the proof.

\end{proof}

\begin{Ca}
\label{Ca.convergence}
Let $p \in (1,\infty)$ and $f \in \mathfrak{B}(S)$ be such that $\widetilde{\mathcal{N}}_{p,\{\mathfrak{m}_{k}\},\lambda,c}(f) < +\infty$.~Let
$\{f_{k}\}$ be the special approximating sequence for $f$.~Then the equivalence class $[\operatorname{Ext}(f)]$ of $\operatorname{Ext}(f)$ belongs to $L_{p}(\mathbb{R}^{n})$ and
the sequence $\{f_{k}\}$ converges to $[\operatorname{Ext}(f)]$ in $L_{p}(\mathbb{R}^{n})$-sense.
\end{Ca}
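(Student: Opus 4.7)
The plan is to produce a uniform $L_p$-majorant for the sequence $\{f_k\}$, establish pointwise convergence $\mathcal{L}^n$-almost everywhere, and then identify the limit with $[\operatorname{Ext}(f)]$.

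First I would apply Theorem \ref{Th.convergence} with the smaller index $0$ and the larger index $k$, exploiting that $0 \in \underline{K}(y)$ for every $y \in \tfrac{14}{5}Q_{0,0}$ (because $\mathcal{H}^{d^{\ast}}_{\infty}(S) > \lambda$ together with $S \subset Q_{0,0}$ forces $Q_{0,0} \in \mathcal{DF}_0$, hence $0 \in \mathcal{A}_0$). This gives the pointwise bound $|f_k(y)| \le |f_0(y)| + 8\, f^{\natural}_{7}(y)$ for all $k \in \mathbb{N}$ and all $y \in \tfrac{14}{5}Q_{0,0}$. A direct inspection of Definition \ref{Def.spec.app.seq} shows that $f_k \equiv 0$ outside $\tfrac{16}{5}Q_{0,0}$, and on the intermediate collar the same scheme together with a routine adaptation of Proposition \ref{Rem1}(3) produces the analogous bound. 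Since $f_0$ is bounded and compactly supported, and since $f^{\natural}_{7} \in L_p(\mathbb{R}^n)$ by the hypothesis $\widetilde{\mathcal{N}}_{p,\{\mathfrak{m}_{k}\},\lambda,c}(f) < +\infty$ combined with the monotonicity supplied by Proposition \ref{Rem51}, the common majorant lies in $L_p(\mathbb{R}^n)$.

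Next I would verify that $\{f_k(y)\}$ converges at every $y \in \mathbb{R}^n$. If $y \in \underline{S}$, then $\#\underline{K}(y) = +\infty$ by Remark \ref{Remm.5.1}, so $\underline{k}(y,k) \to +\infty$ as $k \to \infty$, and Theorem \ref{Th.convergence} makes $\{f_k(y)\}$ Cauchy in $\mathbb{R}$. If $y \notin \underline{S}$, assertion (3) of Proposition \ref{Rem1} (together with the trivial case outside $\tfrac{16}{5}Q_{0,0}$) shows that $\{f_k(y)\}$ is eventually constant. Denote the resulting pointwise limit by $g$. Dominated convergence with the majorant from the first step then delivers $g \in L_p(\mathbb{R}^n)$ and $\|f_k - g\|_{L_p(\mathbb{R}^n)} \to 0$.

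Finally I would identify $[g]$ with $[\operatorname{Ext}(f)]$. On $\mathbb{R}^n \setminus S$ the equality $g(x) = \operatorname{Ext}(f)(x)$ is immediate from Definition \ref{extension_operator}. On $\underline{S} \cap S_f(d)$ Lemma \ref{Lm.function2} supplies a subsequence $f_{k_s(y)}(y) \to f(y)$, which combined with the pointwise convergence of the whole sequence forces $g(y) = f(y) = \operatorname{Ext}(f)(y)$; the remainder $S \setminus \underline{S}$ has $\mathcal{H}^{d^{\ast}}_{\infty}$-content zero by Proposition \ref{Prop41''}, which in the range of parameters considered is negligible for the $\mathcal{L}^n$-equivalence class. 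The principal obstacle I anticipate is the careful treatment of the collar $\tfrac{16}{5}Q_{0,0} \setminus \tfrac{14}{5}Q_{0,0}$ and the verification that the exceptional set on $S$ where $g$ might differ from $\operatorname{Ext}(f)$ is $\mathcal{L}^n$-null; both amount to bookkeeping based on Proposition \ref{Rem1} and the properties of the Frostman measures from Definition \ref{Def.Frostman}, and do not require any new conceptual ingredient.
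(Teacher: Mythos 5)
Your argument is correct and is essentially the paper's own proof: both rest on Theorem \ref{Th.convergence}, on the fact (via \eqref{eqq.cardinality} and Proposition \ref{Prop41''}) that $\underline{k}(y,k)\to\infty$ for $\mathcal{L}^{n}$-a.e.\ $y\in S$, on a dominating function of the form $C\bigl(|f_{0}|+|f_{1}|+f^{\natural}_{7}\bigr)\in L_{p}(\mathbb{R}^{n})$, and on Definition \ref{extension_operator} together with Lemma \ref{Lm.function2} and Proposition \ref{Rem1} to identify the limit with $\operatorname{Ext}(f)$. The only difference is organizational: you apply dominated convergence directly to $f_{k}\to g$, using the uniform majorant obtained from Theorem \ref{Th.convergence} with lower index $0$ (noting the pointwise convergence on $\underline{S}$ is only guaranteed off the $\mathcal{L}^{n}$-null set where $f^{\natural}_{7}=+\infty$, which suffices), whereas the paper first shows $\{f_{k}\}$ is Cauchy in $L_{p}(\mathbb{R}^{n})$ and then identifies the limit along an $\mathcal{L}^{n}$-a.e.\ convergent subsequence; both treatments of the collar region and of the exceptional subset of $S$ are at the same level of detail as the paper's.
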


\begin{proof}
Given $y \in \frac{14}{5}Q_{0,0}$ and $k \in \mathbb{N}_{0}$, we put $\underline{k}(y,k):=\max\{k': k' \in \underline{K}(y) \text{ and } k' \le k\}.$
By Lemma \ref{Lm.function}, it is easy to see that, for any $k,s \in \mathbb{N}_{0}$ with $s > k$,
\begin{equation}
\notag
\operatorname{supp}(f_{k}-f_{s}) \subset \bigcup\limits_{j \geq k} \bigcup \{4Q_{j,m}: m \in \mathcal{A}_{j}\} \subset U_{\frac{4}{2^{k}}}(S).
\end{equation}
Hence, by Theorem \ref{Th.convergence} we have, for such $k,s$,
\begin{equation}
\notag
\int\limits_{\mathbb{R}^{n}}|f_{k}(y)-f_{s}(y)|^{p}\,dy \le \int\limits_{U_{\frac{4}{2^{k}}}(S)}2^{(3-\underline{k}(y,k))p}(f^{\natural}_{7}(y))^{p}\,dy.
\end{equation}
As a result, by absolute continuity of the Lebesgue integral we obtain
\begin{equation}
\label{eqq.6.14}
\varlimsup\limits_{k,s \to \infty}\int\limits_{\mathbb{R}^{n}}|f_{k}(y)-f_{s}(y)|^{p}\,dy \le \varlimsup\limits_{k \to \infty}\int\limits_{S}2^{(3-\underline{k}(y,k))p}(f^{\natural}_{7}(y))^{p}\,dy.
\end{equation}

To prove that $\{f_{k}\}$ is a Cauchy sequence, it is sufficient to show that the right-hand side of \eqref{eqq.6.14} is zero.
To show this, we proceed as follows. First of all, by \eqref{eqq.cardinality}, Proposition \ref{Prop41''}, and the absolute continuity of the Lebesgue
measure $\mathcal{L}^{n}$ with respect to the Hausdorff measure $\mathcal{H}^{d}$, we have
\begin{equation}
\notag
\lim\limits_{k \to \infty}\underline{k}(y,k)=+\infty \quad \text{for} \quad \mathcal{L}^{n}-a.e. \quad y \in S.
\end{equation}
Since $2^{(3-\underline{k}(y,k))p}(f^{\natural}(y))^{p} \le (8f^{\natural}(y))^{p}$ for all $y \in \mathbb{R}^{n}$, an application of the
Lebesgue dominated convergence theorem proves the claim.

Since $\{f_{k}\}$ is a Cauchy sequence in the Banach space $L_{p}(\mathbb{R}^{n})$, we get the existence of $g \in L_{p}(\mathbb{R}^{n})$ such that
$\|g-f_{k}|L_{p}(\mathbb{R}^{n})\| \to 0$, $k \to \infty$. Hence, there is a subsequence $\{f_{k_{s}}\}$ converging $\mathcal{L}^{n}$-a.e.\ to $g$.
Combining this fact with Definition \ref{extension_operator}, Lemma \ref{Lm.function2} and Proposition \ref{Prop41''},
we have $g(x)=f(x)=\operatorname{Ext}(f)(x)$ for $\mathcal{L}^{n}$-a.e. $x \in S$.
This completes the proof.

\end{proof}

The following fact is a folklore. Nevertheless, we present the proof for the completeness.

\begin{Prop}
\label{Prop.lp_admissible}
Given $p \in (1,\infty)$ and $c > 0$, there exists a constant $C > 0$ such that if $F \in L^{loc}_{1}(\mathbb{R}^{n})$ is such that $\operatorname{supp}F \subset cQ_{0,0}$ and the distributional gradient $\nabla F \in L_{p}(\mathbb{R}^{n},\mathbb{R}^{n})$, then
\begin{equation}
\int\limits_{\mathbb{R}^{n}}|F(x)|^{p}\,dx \le C \int\limits_{cQ_{0,0}}\|\nabla F(y)\|^{p}\,dy.
\end{equation}
\end{Prop}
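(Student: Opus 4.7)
The plan is to establish this classical Friedrichs-type inequality via the fundamental theorem of calculus along a coordinate direction, exploiting that $F$ vanishes outside $cQ_{0,0} = [-c/2, c/2]^n$.

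First, I would observe that since $F \in L_1^{\operatorname{loc}}(\mathbb{R}^n)$ has distributional gradient in $L_p(\mathbb{R}^n, \mathbb{R}^n)$, the function $F$ lies in $W_p^{1,\operatorname{loc}}(\mathbb{R}^n)$. By the standard ACL characterization of Sobolev functions (see, e.g., Maz'ya, Section 1.1.3), there is a representative of $F$ (still denoted by $F$) that is absolutely continuous on $\mathcal{L}^{n-1}$-almost every line parallel to each coordinate axis, with classical partial derivatives coinciding $\mathcal{L}^n$-a.e.\ with the distributional ones. Since $\operatorname{supp} F \subset cQ_{0,0}$, we have $F \equiv 0$ on $\mathbb{R}^n \setminus cQ_{0,0}$, so for $\mathcal{L}^{n-1}$-a.e.\ $(x_2, \ldots, x_n)$ the function $t \mapsto F(t, x_2, \ldots, x_n)$ is absolutely continuous on $\mathbb{R}$ and vanishes for $t \le -c/2$.

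Second, for $\mathcal{L}^n$-a.e.\ $x = (x_1, \ldots, x_n) \in cQ_{0,0}$, the fundamental theorem of calculus then gives
\begin{equation}
\notag
F(x) = \int_{-c/2}^{x_1} \frac{\partial F}{\partial x_1}(t, x_2, \ldots, x_n)\,dt.
\end{equation}
Applying H\"older's inequality with exponents $p$ and $p/(p-1)$ and noting that $x_1 + c/2 \le c$ for $x \in cQ_{0,0}$, I obtain
\begin{equation}
\notag
|F(x)|^p \le c^{p-1}\int_{-c/2}^{c/2}\Bigl|\frac{\partial F}{\partial x_1}(t, x_2, \ldots, x_n)\Bigr|^p\,dt.
\end{equation}

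Third, integrating this over $x \in cQ_{0,0}$, applying Fubini's theorem, and recalling the paper's convention $\|\nabla F(y)\| := \max_{|\gamma|=1}|D^{\gamma}F(y)|$ so that $|\partial_1 F| \le \|\nabla F\|$ pointwise, yields
\begin{equation}
\notag
\int_{\mathbb{R}^n}|F(x)|^p\,dx = \int_{cQ_{0,0}}|F(x)|^p\,dx \le c^p \int_{cQ_{0,0}}\|\nabla F(y)\|^p\,dy,
\end{equation}
which is the required inequality with $C = c^p$. There is no substantive obstacle here; the only technical input is the ACL characterization and the validity of the FTC for Sobolev functions along coordinate lines, which is classical. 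Alternatively, one could bypass ACL by mollifying $F$ with a standard approximate identity to obtain $F_{\varepsilon} \in C_0^{\infty}(\mathbb{R}^n)$ with $\operatorname{supp} F_{\varepsilon} \subset (c+1)Q_{0,0}$, apply the FTC to each $F_{\varepsilon}$, and pass to the limit using that $F_{\varepsilon} \to F$ in $L_1^{\operatorname{loc}}$ and $\nabla F_{\varepsilon} \to \nabla F$ in $L_p$ together with Fatou's lemma.
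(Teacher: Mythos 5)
Your proof is correct, but it follows a genuinely different route from the paper. The paper deduces the inequality from the Haj\l asz-type pointwise estimate $|F(x)-F(y)| \le C\|x-y\|\bigl(\mathcal{M}^{\|x-y\|}_{1}[\|\nabla F\|](x)+\mathcal{M}^{\|x-y\|}_{1}[\|\nabla F\|](y)\bigr)$: for $x \in cQ_{0,0}$ it compares $F(x)$ with the (vanishing) values of $F$ on the annulus $2cQ_{0,0}\setminus cQ_{0,0}$, averages in $y$, applies H\"older, and then invokes the boundedness of the restricted maximal operator (Proposition \ref{Prop.max.funct}). You instead give the classical Friedrichs-type argument: ACL representative, fundamental theorem of calculus along one coordinate direction starting from the face of the cube where $F$ vanishes, H\"older in one variable, and Fubini. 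Your route is more elementary (no maximal-function machinery) and yields an explicit constant $C=c^{p}$ independent of $n$, while the paper's route produces a constant depending on $n$ and $p$ but stays within the toolkit it reuses throughout (the Haj\l asz estimate and Proposition \ref{Prop.max.funct}). Two small remarks: with the paper's conventions $Q_{0,0}=[0,1]^{n}$, so $cQ_{0,0}$ is centered at $(\tfrac12,\dots,\tfrac12)$ rather than at the origin; this only shifts the endpoints of your one-dimensional integrals and leaves the constant $c^{p}$ unchanged. Also, since the hypothesis is only $F\in L^{loc}_{1}$ with $\nabla F\in L_{p}$, the ACL characterization should be cited at the level of $W^{1,loc}_{1}$ (which suffices, as $L_{p}\subset L^{loc}_{1}$), and the vanishing of the trace at the starting endpoint follows from $F=0$ a.e.\ outside $cQ_{0,0}$ together with continuity along almost every line; both points are standard and implicit in what you wrote.
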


\begin{proof}
It is well known \cite{Haj} that there is a constant $C > 0$ such that, for any
given $F \in L_{1}^{loc}(\mathbb{R}^{n})$ with $\|\nabla F\| \in L_{1}^{loc}(\mathbb{R}^{n})$,
there is a set $E_{F}$ with $\mathcal{L}^{n}(\mathbb{R}^{n} \setminus E_{F})=0$ such that
\begin{equation}
\notag
|F(x)-F(y)| \le C \|x-y\|\Bigl(\mathcal{M}^{\|x-y\|}_{1}[\|\nabla F\|](x)+\mathcal{M}^{\|x-y\|}_{1}[\|\nabla F\|](y)\Bigr) \quad \text{for all} \quad x,y \in \mathbb{R}^{n} \setminus E_{F}.
\end{equation}
If, an addition, $\operatorname{supp}F \subset cQ_{0,0}$, then an application of H\"older's inequality gives, for any point $x \in cQ_{0,0}\setminus E_{F}$,
\begin{equation}
\begin{split}
\notag
&|F(x)|^{p} \le  \Bigl(\fint\limits_{2cQ \setminus cQ}|F(x)-F(y)|\,dy\Bigr)^{p} \le C \fint\limits_{2cQ_{0,0} \setminus cQ_{0,0}}|F(x)-F(y)|^{p}\,dy\\
&\le C\Bigl(\mathcal{M}^{5c}_{1}[\|\nabla F\|](x)\Bigr)^{p}+C\fint\limits_{2cQ_{0,0} \setminus cQ_{0,0}}\Bigl(\mathcal{M}^{5c}_{1}[\|\nabla F\|](y)\Bigr)^{p}\,dy.
\end{split}
\end{equation}
As a result, applying Proposition \ref{Prop.max.funct} with $\sigma=1$ and $R=5c$, we have
\begin{equation}
\int\limits_{cQ_{0,0}}|F(x)|^{p}\,dx \le C \int\limits_{2cQ_{0,0}}\Bigl(\mathcal{M}^{5c}_{1}[\|\nabla F\|](y)\Bigr)^{p}\,dy
\le C \int\limits_{cQ_{0,0}}\|\nabla F (y)\|^{p}\,dy.
\end{equation}
The proof is complete.
\end{proof}

The following assertion is \textit{a keystone result} of this section.

\begin{Th}
\label{estimate.s's'small}
There exists a constant $C > 0$ depending only on $n$, $C_{\widetilde{\psi}}$, $\operatorname{C}_{\{\mathfrak{m}_{k}\},2}$ and $\overline{\lambda}$ such that,
for each $f \in L_{1}(\{\mathfrak{m}_{k}\})$, for every $k \in \mathbb{N}_{0}$,
\begin{equation}
\label{eq589}
\|\nabla f_{k}(x)\| \le C \Bigl((f^{\natural}_{7}(x)+\|f|L_{1}(\mathfrak{m}_{0})\|\Bigr) \quad \text{for $\mathcal{L}^{n}$-a.e.} \quad x \in \mathbb{R}^{n}.
\end{equation}
\end{Th}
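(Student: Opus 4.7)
For $y\notin\frac{16}{5}Q_{0,0}$ every cutoff $\psi_{l,\widetilde{m}}$ in the construction vanishes at $y$, so $\nabla f_k(y)=0$. For $y\in\frac{16}{5}Q_{0,0}\setminus\frac{14}{5}Q_{0,0}$, a direct distance computation shows $\psi_{l,\widetilde{m}}(y)=0$ for every $l\ge 1$ and every $\widetilde{m}\in\widetilde{\mathcal{A}}_l$: such a $\widetilde{m}$ must have a neighbour in $\mathcal{DF}_l$ sitting inside $Q_{0,0}$, so the centre of $Q_{l,\widetilde{m}}$ lies within $\frac{1}{2}+\frac{2^{-l}}{2}$ (in $\|\cdot\|_\infty$) of the centre of $Q_{0,0}$, whereas $\|y-\text{centre}(Q_{0,0})\|_\infty>\frac{7}{5}$ separates $y$ from that centre by more than $\frac{9}{10}-\frac{2^{-l}}{2}>\frac{3}{5}\cdot 2^{-l}$, the half-side of $\frac{6}{5}Q_{l,\widetilde{m}}$. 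Because $\widetilde{\psi}_0\in C^\infty_c(\mathbb{R})$ also $\nabla\psi_{l,\widetilde{m}}(y)=0$, and the inductive formula \eqref{eq2.18} gives $\nabla f_k(y)=\nabla f_0(y)=f_{0,0}\,\nabla\bigl(\sum_m\psi_{0,m}\bigr)(y)$. Combined with $\mathfrak{m}_0(S)\ge\operatorname{C}_{\{\mathfrak{m}_k\},2}\,\mathcal{H}^d_\infty(S)\ge\operatorname{C}_{\{\mathfrak{m}_k\},2}\,\overline{\lambda}$, which follows from (M3) applied to $Q_{0,0}$, this yields
\[
\|\nabla f_k(y)\|\le C|f_{0,0}|\le \frac{C}{\operatorname{C}_{\{\mathfrak{m}_k\},2}\,\overline{\lambda}}\,\|f|L_1(\mathfrak{m}_0)\|,
\]
which accounts for the additive $\|f|L_1(\mathfrak{m}_0)\|$ term in \eqref{eq589}.

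\textbf{Main case $y\in\frac{14}{5}Q_{0,0}$.} The same vanishing principle first reduces the problem to $k\in\overline{K}(y)$: if $k\notin\overline{K}(y)$ then $\psi_{k,\widetilde{m}}(y)=\nabla\psi_{k,\widetilde{m}}(y)=0$ for every $\widetilde{m}\in\widetilde{\mathcal{A}}_k$, whence $\nabla f_k(y)=\nabla f_{k-1}(y)$, and iteration descends to $\overline{k}(y,k):=\max(\overline{K}(y)\cap[0,k])\in\overline{K}(y)$. For $k\in\overline{K}(y)$, the identity $\bigcup_{m'\in\operatorname{n}_k(m_k)}\frac{6}{5}Q_{k,m'}=\frac{16}{5}Q_{k,m_k}$ (a direct tiling check) produces some $m_k\in\mathcal{A}_k$ with $y\in\frac{16}{5}Q_{k,m_k}$. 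I choose $k^*:=\underline{k}(y,k)\in\underline{K}(y)$, nonempty because $0\in\underline{K}(y)$ by Proposition \ref{Propp.5.1}, together with the reference constant
\[
c:=\fint_{Q_{k,m_k}}f\,d\mathfrak{m}_k,
\]
and invoke Theorem \ref{Lm.derivative} to get $\|\nabla f_k(y)\|\le C\,\operatorname{M}_{k,c}(y)$. The proof of Theorem \ref{Lm.derivative} uses only the existence of $k^*\in\underline{K}(y)$ and not the condition $y\notin S$, so the same argument covers $y\in\underline{S}$ verbatim.

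\textbf{Cancellation of the $2^k$ prefactor.} For any pair $(j,m)$ entering the maximum defining $\operatorname{M}_{k,c}(y)$ one has $k^*\le j\le k$, $m\in\mathcal{A}_j$, $y\in\frac{16}{5}Q_{j,m}$ and hence $j\in\overline{K}(y)$; by the definition of $k^*$ the interval $(k^*,k)$ misses $\underline{K}(y)$, so Lemma \ref{Lm.keystone_combinatorial} applied with $k_1=k^*,k_2=k,j_1=j,j_2=k$ gives $Q_{j,m}\in\mathcal{K}_{d,\lambda,7}(Q_{k,m_k})$. Consequently $(Q_{k,m_k},Q_{j,m})$ is an admissible pair in Definition \ref{Cal.max.function} for the functional $f^\natural_7$ at the point $y\in 7Q_{k,m_k}$, and
\[
\fint_{Q_{j,m}}|f-c|\,d\mathfrak{m}_j\le l(Q_{k,m_k})\,\Phi_{f,\{\mathfrak{m}_k\}}(Q_{j,m},Q_{k,m_k})\le 2^{-k}f^\natural_7(y).
\]
Multiplication by the leading $2^k$ yields $\operatorname{M}_{k,c}(y)\le f^\natural_7(y)$, hence $\|\nabla f_k(y)\|\le Cf^\natural_7(y)$. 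Assembling the three pieces proves \eqref{eq589} for $\mathcal{L}^n$-a.e.\ $x\in\mathbb{R}^n$, since $\mathcal{H}^d_\infty(S\setminus\underline{S})=0$ forces $\mathcal{L}^n(S\setminus\underline{S})=0$. The crux throughout is exactly the $2^k$ prefactor in $\operatorname{M}_{k,c}$: the argument succeeds only because $c$ is placed at the finest accessible scale $k\in\overline{K}(y)$, which is precisely the regime in which Lemma \ref{Lm.keystone_combinatorial} certifies $Q_{k,m_k}$ as the smaller member of an admissible pair for every $(j,m)$ in the max, so that the $2^{-k}$ coming from the Frostman average cancels the leading $2^k$ exactly.
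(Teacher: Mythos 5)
Your argument is correct in substance and follows essentially the same route as the paper's proof: bound the region outside $\frac{14}{5}Q_{0,0}$ by $|f_{0,0}|\lesssim \|f|L_{1}(\mathfrak{m}_{0})\|$, reduce $\nabla f_{k}(y)$ to the largest accessible scale $\overline{k}(y,k)$, apply Theorem \ref{Lm.derivative} with $k^{\ast}=\underline{k}(y,k)$ and with $c$ equal to the $\mathfrak{m}$-average of $f$ over a thick cube at the top scale, and then recognize each term of $\operatorname{M}_{k,c}(y)$ as coming from an admissible pair for $f^{\natural}_{7}$ via the $\mathcal{K}_{7}$-covering relation, which makes the $2^{-k}$ from \eqref{eqq.42} cancel the prefactor $2^{k}$. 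The only genuine differences are in packaging: you replace the paper's full-measure set $G_{k}$ (used to freeze $f_{k}=f_{\overline{k}^{\ast}}$ in a neighborhood of $y$) by the pointwise observation that a nonnegative smooth cutoff and its gradient both vanish wherever the cutoff vanishes, and you obtain the covering relation from Lemma \ref{Lm.keystone_combinatorial}, whereas the paper re-derives it directly in its Step 3; your explicit remark that the proof of Theorem \ref{Lm.derivative} never uses $y\notin S$ is a point the paper passes over silently.

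One step does need a patch. To invoke Lemma \ref{Lm.keystone_combinatorial} as stated you assert that $y\in\frac{16}{5}Q_{j,m}$ with $m\in\mathcal{A}_{j}$ forces $j\in\overline{K}(y)$. This is not true at every such point: it requires $y$ to lie in the \emph{interior} of some $\frac{6}{5}Q_{j,m'}$ with $Q_{j,m'}$ touching a thick cube, and it can fail when $y$ sits on the boundary of one of the dilated cubes $\frac{6}{5}Q_{j,m'}$ (where $\psi_{j,m'}(y)=0$ even though $y\in\frac{6}{5}Q_{j,m'}$). Since these boundaries form a Lebesgue-null set, you can simply exclude them from the start — this is exactly the role of the paper's set $G_{k}$ — or alternatively observe that the proof of Lemma \ref{Lm.keystone_combinatorial} never actually uses the hypothesis $j_{1}\in\overline{K}(y)$, only $k_{1}\le j_{1}\le j_{2}\le k_{2}$, the disjointness $(k_{1},k_{2})\cap\underline{K}(y)=\emptyset$, and $y\in\frac{16}{5}Q_{j_{1},m_{1}}\cap\frac{16}{5}Q_{j_{2},m_{2}}$. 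Either remark closes the gap, and the $\mathcal{L}^{n}$-a.e.\ conclusion is unaffected.
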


\begin{proof}
We fix an element $f \in L_{1}(\{\mathfrak{m}_{k}\})$ and a number $k \in \mathbb{N}_{0}$.
Consider an open set (recall that all the cubes are assumed to be closed)
\begin{equation}
\label{eqq.623''''}
G_{k}:=\mathbb{R}^{n} \setminus \Bigl(\bigcup_{j=0}^{k}\bigcup_{Q \in \mathcal{D}_{j}}(\frac{16}{5}Q\setminus \operatorname{int}\frac{16}{5}Q)\Bigr).
\end{equation}
We fix a point $y \in \frac{14}{5}Q_{0,0} \cap G_{k}$ and split the proof into several steps.

\textit{Step 1.}  By  $(\textbf{D.6.1})$, $(\textbf{D.6.2})$ and
Proposition \ref{Propp.5.1}, we have $0 \in \underline{K}(y) \cap \overline{K}(y).$
We put
\begin{equation}
\begin{split}
\label{eq592}
\underline{k}^{\ast}:=\max\{k' \in \underline{K}(y):k' \le k\}, \quad
\overline{k}^{\ast}:=\max\{k' \in \overline{K}(y):k' \le k\}.
\end{split}
\end{equation}
Hence, by Definition \ref{k.index}, \eqref{eq592} and \eqref{eqq.5.1}, there exists a cube
\begin{equation}
\label{eq593}
Q \in \mathcal{DF}_{\overline{k}^{\ast}} \quad \hbox{such that} \quad y \in \operatorname{int}\frac{16}{5}Q.
\end{equation}

\textit{Step 2.}
Now we make a key observation. By \eqref{eqq.623''''}, it is easy to see that, for any $j \in [\overline{k}^{\ast},k] \cap \mathbb{N}$ and any $Q' \in \mathcal{D}_{j}$, we have (recall that all cubes are closed)
\begin{equation}
\text{either} \quad y \in \operatorname{int}\frac{16}{5}Q' \quad \text{or} \quad y \in \mathbb{R}^{n} \setminus \frac{16}{5}Q'.
\end{equation}
Hence, there exists a small $\delta=\delta(y,k) > 0$ such that $\overline{k}^{\ast}=\max\{k' \in \overline{K}(y'):k' \le k\}$. This implies that
$f_{k}(y')=f_{\overline{k}^{\ast}}(y')$ for all $y' \in Q_{\delta}(y)$. As a result,
\begin{equation}
\label{eqq.628}
\nabla f_{k}(y)=\nabla f_{\overline{k}^{\ast}}(y).
\end{equation}

\textit{Step 3.}
Assume that $\overline{k}^{\ast} > \underline{k}^{\ast}$ because the case $\overline{k}^{\ast}=
\underline{k}^{\ast}$ is based on the same idea, but technically simpler.
Let $j \in [\underline{k}^{\ast},\overline{k}^{\ast}] \cap \mathbb{N}_{0}$ and $Q_{j,m} \in \mathcal{DF}_{j}$ be such that
\begin{equation}
\label{eq595}
y \in \frac{16}{5} Q_{j,m}.
\end{equation}
By \eqref{eq593}, \eqref{eq595} we have $\operatorname{dist}(Q,Q_{j,m}) \le \frac{11}{5}2^{-j}.$
Since $j \le \overline{k}^{\ast}$ and $Q,Q_{j,m} \in \mathcal{D}_{+}$ by Proposition \ref{Prop21''''} we get
\begin{equation}
\label{eqq.615''}
Q \subset 7Q_{j,m}.
\end{equation}

Let $Q' \in \mathcal{D}_{+}$ be such that $Q' \supset Q$ and $l(Q') \in (l(Q),2^{-j})$ (recall that $2^{-j} \geq l(Q)$).
Since $l(Q') \geq 2l(Q)$ by \eqref{eq593} we have
\begin{equation}
\notag
\frac{14}{5} Q' \ni y \quad \text{and} \quad l(Q') < 2^{-j} \le 2^{-\underline{k}^{\ast}}.
\end{equation}
This implies that $Q' \notin \mathcal{DF}$
because otherwise we would immediately get a contradiction with the maximality of $\underline{k}^{\ast}$.
As a result, by \eqref{eqq.615''}  and Definition \ref{cover.cube} we have
\begin{equation}
\label{eq.6113}
Q_{j,m} \in \mathcal{K}_{7}(Q).
\end{equation}

\textit{Step 4.} We apply Theorem \ref{Lm.derivative}
with $k^{\ast}=\underline{k}^{\ast}$, with $k$ replaced by $\overline{k}^{\ast}$, and with $c=\fint_{Q}f(t)\,d\mu_{\overline{k}^{\ast}}(t)$. Then we use \eqref{eqq.628} and get
\begin{equation}
\notag
\begin{split}
&\|\nabla f_{k}(y)\| = \|\nabla f_{\overline{k}^{\ast}}(y)\| \le C 2^{\overline{k}^{\ast}} \max\Bigl|\fint\limits_{Q_{j,m}}f(t)\,d\mathfrak{m}_{j}(t)
-\fint\limits_{Q}f(t)\,d\mathfrak{m}_{\overline{k}^{\ast}}(t)\Bigr|,
\end{split}
\end{equation}
where the maximum is taken over all $j \in \{\underline{k}^{\ast},...,\overline{k}^{\ast}\}$ and all $m \in \mathcal{A}_{j}$ such that $y \in \frac{16}{5} Q_{j,m}$.
Hence, by \eqref{eq593}, \eqref{eq.6113} and Definition \ref{Cal.max.function} we obtain
\begin{equation}
\label{eqq.6.126}
\|\nabla f_{k}(y)\| \le C f^{\natural}_{7}(y).
\end{equation}

\textit{Step 5.} Given $z \in \mathbb{R}^{n} \setminus \frac{14}{5}Q_{0,0}$, by Definition \ref{Def.spec.app.seq} we have $f_{k}(z)=f_{0}(z)$ for all $k \in \mathbb{N}$.
As a result, taking into account that $\mathfrak{m}_{0}(Q_{0,0})=\mathfrak{m}_{0}(S) \geq \overline{\lambda}\operatorname{C}_{\{\mathfrak{m}_{k}\},2}$ we obtain
\begin{equation}
\label{eqq.6.127}
\begin{split}
&\|\nabla f_{k}(z)\| = \|\nabla f_{0}(z)\| \le C \fint\limits_{Q_{0,0}}|f(x)|\,d\mathfrak{m}_{0}(x)\\
&\le C \int\limits_{Q_{0,0}}|f(x)|\,d\mathfrak{m}_{0}(x) \quad \text{for all} \quad z \in \mathbb{R}^{n} \setminus \frac{14}{5}Q_{0,0}
\quad \text{and all} \quad k \in \mathbb{N}_{0}.
\end{split}
\end{equation}

Note that all the constants $C > 0$ in Steps 1--5 depend only on $n$, $C_{\widetilde{\psi}}$, $\operatorname{C}_{\{\mathfrak{m}_{k}\},2}$ and $\overline{\lambda}$.
Furthermore, it is obvious that $\mathcal{L}^{n}(\mathbb{R}^{n} \setminus G_{k})=0$.
Hence, combining \eqref{eqq.6.126}, \eqref{eqq.6.127} and taking into account that $y \in \frac{14}{5}Q_{0,0} \cap G_{k}$ was chosen arbitrarily
we obtain \eqref{eq589} and complete the proof.

\end{proof}

Now we are ready to present \textit{the main result of this section.}

\begin{Th}
\label{Th.Reverse_trace}
Let $p \in (1,\infty)$, $c \geq 7$ and $f \in \mathfrak{B}(S)$ be such that $\widetilde{\mathcal{N}}_{p,\{\mathfrak{m}_{k}\},\lambda,c}(f) < +\infty$. Then
the $\mathcal{L}^{n}$-equivalence class $[\operatorname{Ext}(f)]$ of the function $\operatorname{Ext}(f)$ belongs to $W^{1}_{p}(\mathbb{R}^{n})$. Furthermore, the special approximating sequence $\{f_{k}\}$
contains a subsequence $\{f_{k_{l}}\}$ such that the sequence of $\mathcal{L}^{n}$-equivalence classes $\{[f_{k_{l}}]\}$ converges weakly in $W^{1}_{p}(\mathbb{R}^{n})$ to $[\operatorname{Ext}(f)]$ and there exists a constant $C > 0$ depending only on the parameters $p,d,n,\lambda,c$ and the constants
$\operatorname{C}_{\{\mathfrak{m}_{k}\},i}$, $i=1,2,3$ such that
\begin{equation}
\label{eqq.73}
\|[\operatorname{Ext}(f)]|W^{1}_{p}(\mathbb{R}^{n})\| \le  C \widetilde{\mathcal{N}}_{p,\{\mathfrak{m}_{k}\},\lambda,c}(f) \quad \text{for every} \quad f \in L_{1}(\{\mathfrak{m}_{k}\}).
\end{equation}
\end{Th}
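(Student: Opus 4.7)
The plan is to combine the $L_{p}$-convergence of the special approximating sequence $\{f_{k}\}$ to $[\operatorname{Ext}(f)]$ established in Corollary \ref{Ca.convergence} with a uniform $W^{1}_{p}$-bound for $\{f_{k}\}$, and then to use reflexivity of $W^{1}_{p}(\mathbb{R}^{n})$ for $p \in (1,\infty)$ to extract a weakly convergent subsequence whose limit must coincide (in $L_{p}$, hence in $\mathfrak{B}$-classes) with $[\operatorname{Ext}(f)]$. The norm bound \eqref{eqq.73} will then follow from the weak lower semicontinuity of the Sobolev norm.

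The technical heart of the argument is the derivation of a uniform-in-$k$ estimate
\begin{equation}
\notag
\|f_{k}|W^{1}_{p}(\mathbb{R}^{n})\| \le C\,\widetilde{\mathcal{N}}_{p,\{\mathfrak{m}_{k}\},\lambda,c}(f).
\end{equation}
First I would observe that every term in Definition \ref{Def.spec.app.seq} is supported in some $\frac{6}{5}Q_{0,m}$ or $\frac{6}{5}Q_{k,\widetilde{m}}$ with $\widetilde{m} \in \widetilde{\mathcal{A}}_{k}$; since $S \subset Q_{0,0}$ forces all cubes from $\mathcal{DF}$ (and their neighbors) to lie in a bounded dilation of $Q_{0,0}$, there exists a constant $c_{0}=c_{0}(n) \geq 1$ such that $\operatorname{supp} f_{k} \subset c_{0}Q_{0,0}$ for every $k \in \mathbb{N}_{0}$. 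The $L_{p}$-part of the norm is controlled directly by Corollary \ref{Ca.convergence} together with the convergence $\|f_{k}|L_{p}(\mathbb{R}^{n})\| \to \|[\operatorname{Ext}(f)]|L_{p}(\mathbb{R}^{n})\|$, which is itself estimated by Proposition \ref{Prop.lp_admissible} once we bound the gradient. For the gradient, I invoke Theorem \ref{estimate.s's'small} to get
\begin{equation}
\notag
\|\nabla f_{k}(x)\| \le C\bigl(f^{\natural}_{\{\mathfrak{m}_{k}\},\lambda,7}(x) + \|f|L_{1}(\mathfrak{m}_{0})\|\bigr) \quad \text{for $\mathcal{L}^{n}$-a.e.\ $x \in \mathbb{R}^{n}$}.
\end{equation}
Integrating the $p$-th power over the bounded set $c_{0}Q_{0,0}$ that contains $\operatorname{supp}\nabla f_{k}$, invoking Proposition \ref{Rem51} to dominate $f^{\natural}_{\{\mathfrak{m}_{k}\},\lambda,7} \le f^{\natural}_{\{\mathfrak{m}_{k}\},\lambda,c}$ since $c \geq 7$, and using H\"older's inequality in the form $\|f|L_{1}(\mathfrak{m}_{0})\| \le (\mathfrak{m}_{0}(S))^{(p-1)/p}\|f|L_{p}(\mathfrak{m}_{0})\|$ (cf.\ Remark \ref{Rem.Frostman}), yields
\begin{equation}
\notag
\|\nabla f_{k}|L_{p}(\mathbb{R}^{n})\| \le C\bigl(\mathcal{N}_{p,\{\mathfrak{m}_{k}\},\lambda,c}(f) + \|f|L_{p}(\mathfrak{m}_{0})\|\bigr) = C\,\widetilde{\mathcal{N}}_{p,\{\mathfrak{m}_{k}\},\lambda,c}(f),
\end{equation}
with $C$ depending only on the listed parameters.

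With uniform boundedness in hand, reflexivity of $W^{1}_{p}(\mathbb{R}^{n})$ delivers a subsequence $\{[f_{k_{l}}]\}$ converging weakly in $W^{1}_{p}(\mathbb{R}^{n})$ to some $G \in W^{1}_{p}(\mathbb{R}^{n})$. Since $W^{1}_{p}(\mathbb{R}^{n}) \hookrightarrow L_{p}(\mathbb{R}^{n})$ continuously, this subsequence also converges to $G$ weakly in $L_{p}(\mathbb{R}^{n})$; but by Corollary \ref{Ca.convergence} the full sequence converges strongly (hence weakly) in $L_{p}(\mathbb{R}^{n})$ to $[\operatorname{Ext}(f)]$, forcing $G=[\operatorname{Ext}(f)]$. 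The estimate \eqref{eqq.73} then follows from the weak lower semicontinuity of the $W^{1}_{p}$-norm:
\begin{equation}
\notag
\|[\operatorname{Ext}(f)]|W^{1}_{p}(\mathbb{R}^{n})\| \le \liminf_{l \to \infty}\|f_{k_{l}}|W^{1}_{p}(\mathbb{R}^{n})\| \le C\,\widetilde{\mathcal{N}}_{p,\{\mathfrak{m}_{k}\},\lambda,c}(f).
\end{equation}
The main obstacle is the gradient estimate: the pointwise bound in Theorem \ref{estimate.s's'small} contains the non-integrable constant $\|f|L_{1}(\mathfrak{m}_{0})\|$, so one crucially needs the (uniform in $k$) compact support of $f_{k}$, which must be extracted from the combinatorial structure of $\mathcal{DF}$ and $\widetilde{\mathcal{DF}}_{k}$ together with the compactness of $S \subset Q_{0,0}$.
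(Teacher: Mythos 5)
Your proposal is correct and follows essentially the same route as the paper: a uniform bound $\|[f_{k}]|W^{1}_{p}(\mathbb{R}^{n})\| \le C\,\widetilde{\mathcal{N}}_{p,\{\mathfrak{m}_{k}\},\lambda,c}(f)$ obtained from the pointwise gradient estimate of Theorem \ref{estimate.s's'small} (with the monotonicity $f^{\natural}_{7}\le f^{\natural}_{c}$, the embedding \eqref{eqq.simplest_imbedding}, and the uniform compact support feeding Proposition \ref{Prop.lp_admissible}), followed by reflexivity of $W^{1}_{p}(\mathbb{R}^{n})$, identification of the weak limit with $[\operatorname{Ext}(f)]$ via the $L_{p}$-convergence of Corollary \ref{Ca.convergence}, and weak lower semicontinuity of the norm. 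No gaps worth noting.
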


\begin{proof}
It is clear that $f_{k} \in C^{\infty}(\mathbb{R}^{n})$ and $\operatorname{supp}f_{k} \subset 4Q_{0,0}$ for all $k \in \mathbb{N}$. Hence, the class $[f_{k}]$ belongs to $L_{1}(\mathbb{R}^{n})$ for all $k \in \mathbb{N}$.
Applying Proposition \ref{Prop.lp_admissible}, Theorem \ref{estimate.s's'small} and using \eqref{eq29'}, \eqref{eqq.simplest_imbedding}, \eqref{eqq.tr.sp.norm},
we deduce that $[f_{k}] \in W^{1}_{p}(\mathbb{R}^{n})$ for all $k \in \mathbb{N}$ and, furthermore,
\begin{equation}
\label{eqq.623''}
\|[f_{k}]|W_{p}^{1}(\mathbb{R}^{n})\| \le C \widetilde{\mathcal{N}}_{p,\{\mathfrak{m}_{k}\},\lambda,c}(f),
\end{equation}
where the constant $C > 0$ does not depend on $f$ and $k$.
Hence, the sequence $\{[f_{k}]\}$ is bounded in $W_{p}^{1}(\mathbb{R}^{n})$. By reflexivity of $W_{p}^{1}(\mathbb{R}^{n})$, this gives the
existence of a subsequence $\{[f_{k_{l}}]\}$ that converges weakly in $W_{p}^{1}(\mathbb{R}^{n})$ to some element $G \in W_{p}^{1}(\mathbb{R}^{n})$.
On the other hand, by Corollary \ref{Ca.convergence}, we clearly have $G=\operatorname{Ext}(f)$. Furthermore, using the standard
arguments from the theory of weakly convergent sequences in combination with \eqref{eqq.623''} we get the required estimate
\begin{equation}
\|[\operatorname{Ext}(f)]|W^{1}_{p}(\mathbb{R}^{n})\| \le \varliminf\limits_{l \to \infty}
\|[f_{k_{l}}]|W_{p}^{1}(\mathbb{R}^{n})\| \le C \widetilde{\mathcal{N}}_{p,\{\mathfrak{m}_{k}\},\lambda,c}(f).
\end{equation}
\end{proof}

\section{The direct trace theorem}

\textit{Throughout the section we fix the following data}:

{\rm (\textbf{D.7.1}) } a parameter $d \in (0,n)$ and \textit{a compact set} $S \subset Q_{0,0}$ with $\overline{\lambda}=\mathcal{H}^{d}_{\infty}(S) > 0;$

{\rm (\textbf{D.7.2}) } an arbitrary parameter $\lambda \in (0,\overline{\lambda})$;

{\rm (\textbf{D.7.3}) } a  sequence of measures $\{\mathfrak{m}_{k}\} \in \mathfrak{M}^{d}(S)$.

The aim of this section is the proof of the so-called direct trace theorem. In other words, we establish
that the trace functional $\widetilde{\mathcal{N}}_{q,\{\mathfrak{m}_{k}\},\lambda,c}$ is bounded on the $d$-trace space $W^{1}_{p}(\mathbb{R}^{n})|_{S}^{d}$ for each $p \in (\max\{1,n-d\},\infty)$
and any $q \in (1,n-d)$.

For the reader's convenience we recall again some notation introduced in the present paper earlier.
Given a cube $Q \subset \mathbb{R}^{n}$, we set $k_{Q}:=[-\log_{2}l(Q)]$.
Recall Definitions \ref{Def.shadow} and \ref{Def.iceberg}. Since the set $S$ and the parameters $d,\lambda$
are fixed during the section, we set $\mathcal{F}:=\mathcal{F}_{S}(d,\lambda)$,
$\mathcal{DF}:=\mathcal{DF}_{S}(d,\lambda)$, $\mathcal{A}:=\mathcal{A}_{S}(d,\lambda)$.
Furthermore, given a cube $Q \in \mathcal{DF}$ and a parameter $c \geq 1$, we set $\mathcal{K}_{c}(Q):=\mathcal{K}_{d,\lambda,c}(Q)$, $\mathcal{SH}_{c}(Q):=\mathcal{SH}_{d,\lambda,c}(Q)$, $\mathcal{IC}_{c}(Q):=\mathcal{IC}_{d,\lambda,c}(Q)$ and $T_{c}(Q):=T_{d,\lambda,c}(Q)$
for brevity. We recall \eqref{eqq.3.14} and put $\mathcal{P}(c):=\mathcal{P}_{S}(d,\lambda,c)$.
Since the sequence $\{\mathfrak{m}_{k}\}$ was also fixed we will write $\Phi_{f}(Q_{1},Q_{2})$ instead of $\Phi_{f,\{\mathfrak{m}_{k}\}}(Q_{1},Q_{2})$
for each $f \in L_{1}(\{\mathfrak{m}_{k}\})$ and any $Q_{1},Q_{2} \in \mathcal{D}_{+}$. Finally,
we put $f^{\natural}_{c}:=f^{\natural}_{\{\mathfrak{m}_{k}\},\lambda,c}$.

We  formulate the following useful technical estimate.

\begin{Lm}
\label{Lm8.1}
Let $\sigma \in (\max\{1,n-d\},n]$, $q \in (1,\infty)$, $c \geq 1$ and $\varepsilon > 0$. Then
there exists a constant $C=C(n,d,\lambda,\sigma,q,\varepsilon) > 0$ such that
\begin{equation}
\label{eq8.1}
\begin{split}
&\Phi_{f}(Q_{1},Q_{2}) \le C\Bigl(\fint\limits_{Q_{1}}\|\nabla F(x)\|^{\sigma}\,dx\Bigr)^{\frac{1}{\sigma}} +C\frac{l(Q_{2})}{l(Q_{1})}\Bigl(\fint\limits_{3cQ_{2}}\|\nabla F(x)\|^{\sigma}\,dx\Bigr)^{\frac{1}{\sigma}}\\
&+\frac{C}{(l(Q_{1}))^{1+\varepsilon}}\Bigl(\sum\limits_{i=1}^{N}(l(Q^{i}))^{q+q\varepsilon}\Bigl(\fint\limits_{Q^{i}}\|\nabla F(x)\|\,dx\Bigr)^{q}\Bigr)^{\frac{1}{q}},
\end{split}
\end{equation}
for any $F \in W_{\sigma}^{1}(\mathbb{R}^{n})$ with $f=\operatorname{Tr}|^{d}_{S}[F]$ and any
cubes $Q_{1},Q_{2} \in \mathcal{DF}$ such that $Q_{2} \in \mathcal{K}_{c}(Q_{1})$.
In \eqref{eq8.1} the family $\{Q^{i}\}_{i=0}^{N}$ is uniquely determined by the following conditions:

{\rm (1)} $\{Q^{i}\}_{i=0}^{N} \subset \mathcal{D}_{+}$;

{\rm (2)} $Q^{0}:=Q_{1} \subset ... \subset Q^{N}$;

{\rm (3)} $l(Q^{i+1})=2l(Q^{i})$ for every $i \in \{0,...,N-1\}$ and $l(Q^{N})=l(Q_{2})$.
\end{Lm}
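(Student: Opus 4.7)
The plan is to telescope $f(x)-f(y)$ through the averages $\fint_{Q_1} F\,dz$, $\fint_{Q^N} F\,dz$, $\fint_{Q_2} F\,dz$, producing the inequality
\[
|f(x)-f(y)| \le \Bigl|f(x)-\fint_{Q_1}F\Bigr| + \sum_{i=0}^{N-1}\Bigl|\fint_{Q^i}F-\fint_{Q^{i+1}}F\Bigr| + \Bigl|\fint_{Q^N}F-\fint_{Q_2}F\Bigr| + \Bigl|\fint_{Q_2}F-f(y)\Bigr|.
\]
After averaging against $\mathfrak{m}_{k_{Q_1}}\otimes\mathfrak{m}_{k_{Q_2}}$ and dividing by $\min\{l(Q_1),l(Q_2)\}=l(Q_1)$, I will estimate the four resulting contributions separately. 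Since $Q_1 \subset cQ_2$, $Q_1 \subset Q^N$ and $l(Q^N)=l(Q_2)$, a direct geometric check shows $Q^N \subset 3cQ_2$, which is precisely what is needed to absorb the ``top-of-chain'' error into the second term of \eqref{eq8.1}.

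The two outermost terms are exactly the type handled by Theorem \ref{Th4.3}, because $Q_1,Q_2\in\mathcal{DF}\subset\mathcal{F}_S(d,\lambda)$; they yield $Cl(Q_1)(\fint_{Q_1}\|\nabla F\|^\sigma)^{1/\sigma}$ and $Cl(Q_2)(\fint_{Q_2}\|\nabla F\|^\sigma)^{1/\sigma}$, which after dividing by $l(Q_1)$ and using the trivial $\fint_{Q_2}\le (3c)^n\fint_{3cQ_2}$ produce the first two terms of \eqref{eq8.1}. The comparison $|\fint_{Q^N}F-\fint_{Q_2}F|$ between two same-size cubes with close centers is handled by Proposition \ref{Lm4.1} followed by H\"older's inequality, which gives another contribution absorbed into the second term. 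For the telescoping middle sum I will apply ordinary Poincar\'e to each dyadic pair $Q^i\subset Q^{i+1}$ to get $|\fint_{Q^i}F-\fint_{Q^{i+1}}F|\le Cl(Q^{i+1})\fint_{Q^{i+1}}\|\nabla F\|$, then split the weight as $l(Q^i)=l(Q^i)^{1+\varepsilon}\cdot l(Q^i)^{-\varepsilon}$ and invoke H\"older with exponents $q$ and $q'=q/(q-1)$; the companion factor $(\sum_{i\ge 1}l(Q^i)^{-\varepsilon q'})^{1/q'}$ is a geometric series in $l(Q^i)=2^i l(Q_1)$ which evaluates to $C(\varepsilon,q)l(Q_1)^{-\varepsilon}$, delivering after the final division by $l(Q_1)$ exactly the weight $l(Q_1)^{-(1+\varepsilon)}$ appearing in the third term.

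The main obstacle, and the structural reason the lemma is stated with a weighted $\ell^q$-sum rather than a single $\sigma$-average, is that the $(d,\lambda,c)$-covering condition (\textbf{C}3) of Definition \ref{cover.cube} forces the intermediate cubes $Q^1,\dots,Q^{N-1}$ to lie outside $\mathcal{DF}$, so the Frostman-type trace estimate of Theorem \ref{Th4.3} cannot be applied on the chain. One is therefore compelled to work with ordinary Lebesgue averages of $\|\nabla F\|$ at the intermediate scales and to pay for this with a Hardy-type summation; the key observation that makes the argument close is that dyadic doubling along the chain renders the H\"older weight geometrically summable, which is precisely why an arbitrarily small $\varepsilon>0$ suffices.
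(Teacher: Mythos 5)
Your proposal is correct and follows essentially the same route as the paper's proof: the same four-term telescoping through $\fint_{Q_1}F$, $\fint_{Q^N}F$, $\fint_{Q_2}F$, with Theorem \ref{Th4.3} handling the two trace terms on $Q_1,Q_2\in\mathcal{DF}$, Proposition \ref{Lm4.1} plus H\"older for the $Q^N$-versus-$Q_2$ comparison, and Poincar\'e along the dyadic chain followed by the $\varepsilon$-weighted H\"older step whose companion geometric series yields $C(\varepsilon,q)\,l(Q_1)^{-\varepsilon}$. Your side remark that condition (\textbf{C}3) forces the intermediate cubes out of $\mathcal{DF}$, which is why the chain must be handled with Lebesgue averages and a weighted $\ell^q$-sum, is also accurate.
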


\begin{proof}
Clearly $Q_{1} \in \mathcal{DF}_{k_{1}}$ and $Q_{2} \in \mathcal{DF}_{k_{2}}$ for some $k_{1},k_{2} \in \mathbb{N}_{0}$ with $k_{2} \le k_{1}$.
Using the triangle inequality several times, we obtain
\begin{equation}
\label{eq8.2}
\begin{split}
&l(Q_{1})\Phi_{f}(Q_{1},Q_{2})
\le \fint\limits_{Q_{1}}
\Bigl|f(z)-\fint\limits_{Q_{1}}F(x)\,dx\Bigr|\,d\mathfrak{m}_{k_{1}}(z)+
\Bigl|\fint\limits_{Q_{1}}F(x)\,dx-\fint\limits_{Q^{N}}F(w)\,dw\Bigr|\\
&
+\Bigl|\fint\limits_{Q^{N}}F(w)\,dw-\fint\limits_{Q_{2}}F(y)\,dy\Bigr|
+\fint\limits_{Q_{2}}
\Bigl|f(t)-\fint\limits_{Q_{2}}F(y)\,dy\Bigr|\,d\mathfrak{m}_{k_{2}}(t)=:\sum_{j=1}^{4}
l(Q_{1})\Phi_{j}.\\
\end{split}
\end{equation}
Since $f=F|^{d}_{S}$, $\sigma \in (\max\{1,n-d\},n]$ and $Q_{1},Q_{2} \in \mathcal{DF}$ by Theorem \ref{Th4.3}
\begin{equation}
\label{eq8.3}
\begin{split}
&\Phi_{1} \le C  \Bigl(\fint\limits_{Q_{1}}\|\nabla F(x)\|^{\sigma}\,dx\Bigr)^{\frac{1}{\sigma}}, \quad \Phi_{4} \le C \frac{l(Q_{2})}{l(Q_{1})}\Bigl(\fint\limits_{Q_{2}}\|\nabla F(x)\|^{\sigma}\,dx\Bigr)^{\frac{1}{\sigma}}.
\end{split}
\end{equation}
Note that $Q_{1} \subset cQ_{2}$ by $(\textbf{C}1)$ of Definition \ref{cover.cube}. Hence, Using Proposition \ref{Lm4.1} with $c'=c$ and then applying H\"older's inequality, we get
\begin{equation}
\label{eq8.4}
\Phi_{3} \le C \frac{l(Q_{2})}{l(Q_{1})}
\fint\limits_{3cQ_{2}}\|\nabla F(x)\|\,dx \le C \frac{l(Q_{2})}{l(Q_{1})}
\Bigl(\fint\limits_{3cQ_{2}}\|\nabla F(x)\|^{\sigma}\,dx\Bigr)^{\frac{1}{\sigma}}.
\end{equation}
To estimate $\Phi_{2}$, we apply Proposition
\ref{Lm4.1} and then use H\"older's inequality for sums. This gives
\begin{equation}
\begin{split}
\label{eq8.5}
&l(Q_{1})\Phi_{2} \le \sum\limits_{i=0}^{N-1}\Bigl|\fint\limits_{Q^{i}}F(x)\,dx-\fint\limits_{Q^{i+1}}F(y)\,dy\Bigr|
\le C \sum\limits_{i=1}^{N}\fint\limits_{Q^{i}}\fint\limits_{Q^{i}}|F(x)-F(y)|\,dx\,dy\\
&\le C \sum\limits_{i=1}^{N}l(Q^{i})\fint\limits_{Q^{i}}\|\nabla F(\tau)\|\,d\tau=
C\Bigl(\sum\limits_{i=1}^{N}\frac{(l(Q^{i}))^{\varepsilon+1}}{(l(Q^{i}))^{\varepsilon}}
\fint\limits_{Q^{i}}\|\nabla F(\tau)\|\,d\tau\Bigr)^{\frac{q}{q}}\\
&\le \frac{C}{(l(Q^{0}))^{\varepsilon}}\Bigl(\sum\limits_{i=1}^{N}(l(Q^{i}))^{q+q\varepsilon}
\Bigl(\fint\limits_{Q^{i}}\|\nabla F(x)\|\,dx\Bigr)^{q}\Bigr)^{\frac{1}{q}}.
\end{split}
\end{equation}
Combining estimates \eqref{eq8.2}--\eqref{eq8.5} we obtain \eqref{eq8.1} and complete the proof.
\end{proof}

Given a function $f \in L_{1}(\{\mathfrak{m}_{k}\})$ and a constant $c>0$, we define for each $t > 0$, \textit{the superlevel
set} of $f^{\natural}_{c}$ by letting
\begin{equation}
\label{eq71}
\mathcal{U}_{c,t}(f):=\{x \in \mathbb{R}^{n}: f^{\natural}_{c}(x) >  t\}.
\end{equation}

\begin{Def}
\label{Def8.1}
Let $f \in L_{1}(\{\mathfrak{m}_{k}\})$ and $c \geq 1$. For each $t > 0$
we define \textit{the good part} $\mathcal{U}^{g}_{c,t}(f)$ of the set $\mathcal{U}_{c,t}(f)$ as the set of all points $x \in \mathbb{R}^{n}$ for each of which there exist
cubes  $\underline{Q}(x), \overline{Q}(x)$ satisfying the following conditions:

{ \rm (1)} $\underline{Q}(x) \in T_{c}(x)$ and $\overline{Q}(x) \in \mathcal{DF}$;

{ \rm (2)} $\overline{Q}(x) \in \mathcal{K}_{c}(\underline{Q}(x))$;

{ \rm (3)} $\Phi_{f}(\underline{Q}(x),\overline{Q}(x)) > t$;

{ \rm (4)} $l(\underline{Q}(x)) \geq 2^{-1}l(\overline{Q}(x))$.

We define \textit{the bad part} $\mathcal{U}^{b}_{c,t}(f)$ of the set $\mathcal{U}_{c,t}(f)$ by letting $\mathcal{U}^{b}_{c,t}(f):=\mathcal{U}_{c,t}(f) \setminus \mathcal{U}^{g}_{c,t}(f).$
Finally, we put $\mathcal{U}^{g}_{c}(f):=\cup_{t > 0}\mathcal{U}^{g}_{c,t}(f)$, $\mathcal{U}^{b}_{c}(f):=\cup_{t > 0}\mathcal{U}^{b}_{c,t}(f)$.
\end{Def}

\begin{Remark}
\label{Rem.81}
Given $f \in L_{1}(\{\mathfrak{m}_{k}\})$ and $c \geq 1$, it is clear that $\{x: f^{\natural}_{c}(x) > 0\}=\mathcal{U}^{g}_{c}(f) \cup \mathcal{U}^{b}_{c}(f).$
\end{Remark}

The following characteristic property is an immediate consequence of Definition \ref{Def8.1}.

\begin{Prop}
\label{Propos.8.1}
Let $f \in L_{1}(\{\mathfrak{m}_{k}\})$ and $c \geq 1$. For each $t > 0$, a point $x \in
\mathcal{U}^{b}_{c,t}(f)$ if and only if $x \in \mathcal{U}_{c,t}(f)$ and
\begin{equation}
\notag
l(\underline{Q}(x)) \le \frac{1}{4}l(\overline{Q}(x))
\end{equation}
for any pair of cubes  $\underline{Q}(x), \overline{Q}(x)$ satisfying the following conditions:

{ \rm (1)} $\underline{Q}(x) \in T_{c}(x)$ and $\overline{Q}(x) \in \mathcal{DF}$;

{ \rm (2)} $\overline{Q}(x) \in \mathcal{K}_{c}(\underline{Q}(x))$;

{ \rm (3)} $\Phi_{f}(\underline{Q}(x),\overline{Q}(x)) > t$.
\end{Prop}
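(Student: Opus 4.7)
The proof will rest on the elementary observation that cubes in $\mathcal{DF} \subset \mathcal{D}_{+}$ are dyadic, so for any pair $\underline{Q}, \overline{Q} \in \mathcal{DF}$ the ratio $l(\overline{Q})/l(\underline{Q})$ is an integer power of $2$. Combined with condition ($\mathbf{C}1$) of Definition \ref{cover.cube}, which forces $l(\overline{Q}) \geq l(\underline{Q})$ whenever $\overline{Q} \in \mathcal{K}_{c}(\underline{Q})$, this ratio lies in $\{1, 2, 4, 8, \dots\}$. Hence the negation of condition (4) in Definition \ref{Def8.1}, namely $l(\underline{Q}) < \tfrac{1}{2} l(\overline{Q})$, is equivalent to $l(\overline{Q})/l(\underline{Q}) \geq 4$, i.e., $l(\underline{Q}) \leq \tfrac{1}{4} l(\overline{Q})$. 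This dichotomy is the entire content of the proposition.

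For the forward implication, assume $x \in \mathcal{U}^{b}_{c,t}(f)$, so by definition $x \in \mathcal{U}_{c,t}(f) \setminus \mathcal{U}^{g}_{c,t}(f)$. Pick any pair $(\underline{Q}(x), \overline{Q}(x))$ satisfying conditions (1)--(3) of the proposition. Then these conditions match (1)--(3) of Definition \ref{Def8.1}, so the failure of membership in $\mathcal{U}^{g}_{c,t}(f)$ forces condition (4) to fail; that is, $l(\underline{Q}(x)) < \tfrac{1}{2} l(\overline{Q}(x))$. By the dyadic observation above, this yields $l(\underline{Q}(x)) \leq \tfrac{1}{4} l(\overline{Q}(x))$, as desired.

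Conversely, suppose $x \in \mathcal{U}_{c,t}(f)$ and every pair $(\underline{Q}(x), \overline{Q}(x))$ satisfying (1)--(3) obeys $l(\underline{Q}(x)) \leq \tfrac{1}{4} l(\overline{Q}(x))$. Since $\tfrac{1}{4} < \tfrac{1}{2}$, no such pair can also satisfy condition (4) of Definition \ref{Def8.1}, so there is no admissible quadruple of conditions witnessing $x \in \mathcal{U}^{g}_{c,t}(f)$. Therefore $x \in \mathcal{U}_{c,t}(f) \setminus \mathcal{U}^{g}_{c,t}(f) = \mathcal{U}^{b}_{c,t}(f)$. There is no real obstacle here; the only thing to be careful about is invoking the dyadic gap to upgrade the strict inequality $l(\underline{Q}) < \tfrac{1}{2} l(\overline{Q})$ to the non-strict $l(\underline{Q}) \leq \tfrac{1}{4} l(\overline{Q})$ that appears in the statement.
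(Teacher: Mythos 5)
Your proof is correct and is exactly the "immediate consequence of Definition \ref{Def8.1}" that the paper asserts without writing out: unwinding $\mathcal{U}^{b}_{c,t}(f)=\mathcal{U}_{c,t}(f)\setminus\mathcal{U}^{g}_{c,t}(f)$, the only non-vacuous step is your dyadic-gap observation that, since $\underline{Q},\overline{Q}\in\mathcal{D}_{+}$ and $l(\overline{Q})\geq l(\underline{Q})$ by (\textbf{C}1), the failure of condition (4), i.e.\ $l(\underline{Q})<\tfrac{1}{2}l(\overline{Q})$, is equivalent to $l(\underline{Q})\leq\tfrac{1}{4}l(\overline{Q})$, and that is handled correctly.
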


Given a parameter $\sigma \in [1,\infty)$ and an element $F \in W_{\sigma}^{1}(\mathbb{R}^{n})$, we define, for each $t > 0$,
\begin{equation}
\notag
\mathcal{V}_{\sigma,t}(F):=\{x \in \mathbb{R}^{n}: \mathcal{M}_{\sigma}[\|\nabla F\|](x) > t\}.
\end{equation}

\begin{Lm}
\label{Lm7.2}
Let $\sigma \in (\max\{1,n-d\},\infty)$ and $c \geq 1$. Let $F \in W^{1}_{\sigma}(\mathbb{R}^{n})$ and $f:=\operatorname{Tr}|_{S}^{d}F$. Then
there exists a constant $C=C(n,d,\lambda,\sigma,c) > 0$ such that
\begin{equation}
\label{eq7.8}
\mathcal{U}^{g}_{c,t}(f) \subset \mathcal{V}_{\sigma,\frac{t}{C}}(F) \quad \hbox{for every} \quad t > 0.
\end{equation}
In particular, for each $p \in (\sigma,\infty)$, there exists a constant $C'=C'(n,d,\lambda,p,\sigma,c) > 0$ such that
\begin{equation}
\label{eq7.9}
\int\limits_{\mathcal{U}^{g}_{c}(f)}\Bigl(f^{\natural}_{c}(x)\Bigr)^{p}\,dx
\le C' \int\limits_{\mathbb{R}^{n}}\|\nabla F(x)\|^{p}\,dx.
\end{equation}
\end{Lm}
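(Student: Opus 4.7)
The plan is to establish the weak-type inclusion \eqref{eq7.8} pointwise: for each $x \in \mathcal{U}^g_{c,t}(f)$, I extract the pair of cubes furnished by Definition \ref{Def8.1}, apply Lemma \ref{Lm8.1} to bound $\Phi_f(\underline{Q},\overline{Q})$ by averages of $\|\nabla F\|$ near $x$, and then dominate each such average by $\mathcal{M}_\sigma[\|\nabla F\|](x)$. The $L_p$-estimate \eqref{eq7.9} will then follow from the layer-cake representation combined with Proposition \ref{Prop.max.funct}.

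For \eqref{eq7.8}, I would fix $x \in \mathcal{U}^g_{c,t}(f)$ and a pair $\underline{Q}, \overline{Q}$ as in Definition \ref{Def8.1}. The decisive input is condition (4): since both cubes are dyadic and $l(\underline{Q}) \geq l(\overline{Q})/2$, either $\underline{Q} = \overline{Q}$ or $l(\overline{Q}) = 2\,l(\underline{Q})$. Consequently, the telescoping family $\{Q^i\}_{i=0}^N$ in Lemma \ref{Lm8.1} has $N \le 1$, so the third (``chain'') term on the right of \eqref{eq8.1} collapses to at most one average, which is absorbed by H\"older's inequality into the term $\fint_{3c\overline{Q}}\|\nabla F\|^\sigma\,dx$. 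Applying Lemma \ref{Lm8.1} with any fixed $q > 1$ and $\varepsilon > 0$ then yields a bound of the form
$$
\Phi_f(\underline{Q},\overline{Q}) \le C_1\Bigl(\fint_{\underline{Q}}\|\nabla F\|^\sigma\,dx\Bigr)^{1/\sigma} + C_1\Bigl(\fint_{3c\overline{Q}}\|\nabla F\|^\sigma\,dx\Bigr)^{1/\sigma}.
$$

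Next, because $\underline{Q} \in T_c(x)$ gives $x \in c\underline{Q}$ (see Remark \ref{cover.y}), because condition (C1) of Definition \ref{cover.cube} gives $\underline{Q} \subset c\overline{Q}$, and because $l(\overline{Q}) \le 2\,l(\underline{Q})$, both cubes $\underline{Q}$ and $3c\overline{Q}$ are contained in a common cube $Q_R(x)$ whose side length $R$ is a fixed multiple of $l(\underline{Q})$ depending only on $c$. The elementary estimate used in the proof of Proposition \ref{Prop2.2} therefore majorises each of the two averages by a constant times $\mathcal{M}_\sigma[\|\nabla F\|](x)$, so $t < \Phi_f(\underline{Q},\overline{Q}) \le C\,\mathcal{M}_\sigma[\|\nabla F\|](x)$, which is exactly \eqref{eq7.8}.

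For \eqref{eq7.9}, passing to the supremum over all admissible good pairs in Definition \ref{Def8.1} promotes the previous argument to the pointwise bound $f^\natural_c(x) \le C\,\mathcal{M}_\sigma[\|\nabla F\|](x)$ on $\mathcal{U}^g_c(f)$; integrating and invoking Proposition \ref{Prop.max.funct} (with $R = +\infty$ and the parameter $\sigma$, available since $p > \sigma \geq 1$) yields \eqref{eq7.9}. I expect the only genuine obstacle to be the bookkeeping in the first step --- verifying that condition (4) really does restrict the chain in Lemma \ref{Lm8.1} to length at most one and that the enlarged cubes $3c\overline{Q}$ all sit inside a controlled neighbourhood of $x$; once this is in place, the remainder is standard maximal-function analysis.
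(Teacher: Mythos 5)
Your treatment of the inclusion \eqref{eq7.8} is correct and is essentially the paper's own argument: take the good pair from Definition \ref{Def8.1}, note that condition (4) forces the chain $\{Q^{i}\}$ in Lemma \ref{Lm8.1} to have length at most one, and then dominate the two surviving averages by $\mathcal{M}_{\sigma}[\|\nabla F\|](x)$, using $x \in c\underline{Q}$, $\underline{Q}\subset c\overline{Q}$ and $l(\overline{Q})\le 2l(\underline{Q})$ to place everything in a cube centered at $x$ of side length comparable to $l(\underline{Q})$. (The slip ``either $\underline{Q}=\overline{Q}$ or $l(\overline{Q})=2l(\underline{Q})$'' should read ``either $l(\underline{Q})=l(\overline{Q})$ or \dots'', but this does not affect anything.)

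The derivation of \eqref{eq7.9}, however, has a genuine gap. You claim that ``passing to the supremum over all admissible good pairs'' yields the pointwise bound $f^{\natural}_{c}(x)\le C\,\mathcal{M}_{\sigma}[\|\nabla F\|](x)$ on $\mathcal{U}^{g}_{c}(f)$. This does not follow: by Definition \ref{Cal.max.function}, $f^{\natural}_{c}(x)$ is the supremum of $\Phi_{f}(\underline{Q},\overline{Q})$ over \emph{all} pairs satisfying (\textbf{f}1)--(\textbf{f}3), including pairs with $l(\underline{Q})\le\frac{1}{4}l(\overline{Q})$, which are precisely the ones excluded by condition (4) of Definition \ref{Def8.1}. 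Membership $x\in\mathcal{U}^{g}_{c}(f)$ only guarantees the existence of \emph{one} good pair at \emph{some} level $t$; the value of $f^{\natural}_{c}(x)$ may be driven by long chains of widely separated scales, for which your step-one argument gives no control (controlling such pairs is exactly the purpose of the bad-part machinery, i.e.\ the Carleson-packing Lemmas later in the section). So the asserted pointwise domination is unjustified, and with it the integration step. The paper avoids any pointwise bound on $f^{\natural}_{c}$: it deduces \eqref{eq7.9} from the set inclusion \eqref{eq7.8} alone, via the distribution-function (layer-cake) representation
\begin{equation}
\notag
\int\limits_{\mathcal{U}^{g}_{c}(f)}\bigl(f^{\natural}_{c}(x)\bigr)^{p}\,dx
= p\int\limits_{0}^{\infty}t^{p-1}\mathcal{L}^{n}\bigl(\mathcal{U}^{g}_{c,t}(f)\bigr)\,dt
\le p\int\limits_{0}^{\infty}t^{p-1}\mathcal{L}^{n}\bigl(\mathcal{V}_{\sigma,\frac{t}{C}}(F)\bigr)\,dt
= C\int\limits_{\mathbb{R}^{n}}\bigl(\mathcal{M}_{\sigma}[\|\nabla F\|](x)\bigr)^{p}\,dx,
\end{equation}
and then Proposition \ref{Prop.max.funct} (legitimate since $p>\sigma$). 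Replace your pointwise claim by this layer-cake argument and the proof closes; everything else in your proposal is sound.
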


\begin{proof}
We fix a number $t > 0$. To prove the first claim we recall Definition \ref{Def8.1} and find cubes
$\underline{Q}(x) \in T_{c}(x)$, $\overline{Q}(x) \in \mathcal{DF}$ such that $\overline{Q}(x) \in \mathcal{K}_{c}(\underline{Q}(x))$, $l(\underline{Q}) \geq \frac{1}{2}l(\overline{Q}(x))$
and
\begin{equation}
\label{eqq.8.19}
\Phi_{f}(\underline{Q}(x),\overline{Q}(x)) > t.
\end{equation}
By Lemma \ref{Lm8.1} and \eqref{eq2.2} it is easy to see that
\begin{equation}
\label{eqq.8.20}
\Phi_{f}(\underline{Q}(x),\overline{Q}(x)) \le C \Bigl(\fint\limits_{3c\overline{Q}(x)}\|\nabla F (y)\|^{\sigma}\,dy\Bigr)^{\frac{1}{\sigma}}
\le C\mathcal{M}_{\sigma}[\|\nabla F\|](x).
\end{equation}
Combining \eqref{eqq.8.19} and \eqref{eqq.8.20} we get \eqref{eq7.8}.

To prove \eqref{eq7.9} we use \eqref{eq7.8} and then apply Proposition \ref{Prop.max.funct}. We have
\begin{equation}
\label{eq7.11}
\begin{split}
&\int\limits_{\mathcal{U}^{g}_{c}(f)}\Bigl(f^{\natural}(x)\Bigr)^{p}\,dx
=p\int\limits_{0}^{\infty}t^{p-1}\mathcal{L}^{n}(\mathcal{U}^{g}_{c,t}(f))\,dt \le p\int\limits_{0}^{\infty}t^{p-1}\mathcal{L}^{n}(\mathcal{V}_{\sigma,\frac{t}{C}}(F))\,dt\\
&\le C \int\limits_{0}^{\infty}t^{p-1}\mathcal{L}^{n}(\mathcal{V}_{\sigma,t}(F))\,dt=C\int\limits_{\mathbb{R}^{n}}
\Bigl(\mathcal{M}_{\sigma}[\|\nabla F\|](x)\Bigr)^{p}\,dx \le C \int\limits_{\mathbb{R}^{n}}\|\nabla F(x)\|^{p}\,dx.
\end{split}
\end{equation}
The proof is complete.
\end{proof}

We introduce some notation which will be useful below. Recall Proposition \ref{Prop.3.1}.
Given a parameter $c \geq 1$ and a cube  $Q \in \mathcal{DF}$, we
set
\begin{equation}
\notag
\mu_{c}(Q):=\inf \{l(Q'): Q' \in \mathcal{SH}_{c}(Q)\},
\quad \operatorname{N}_{c}(Q):=\log_{2}\Bigl(\frac{l(Q)}{\mu_{c}(Q)}\Bigr) \in \mathbb{N}_{0} \cup \{+\infty\}.
\end{equation}
Given a parameter $c \geq 1$ and a cube $Q \in \mathcal{DF}$, for each $j \in \mathbb{N}_{0} \cap [0,\operatorname{N}_{c}(Q))$ we define
\textit{the $j$th layer of the iceberg} $\mathcal{IC}_{c}(Q)$ by
\begin{equation}
\notag
\operatorname{L}^{j}_{c}(Q):=\{Q' \in \mathcal{IC}_{c}(Q):l(Q')=2^{-j}l(Q)\}.
\end{equation}
Furthermore, it will be convenient to put formally $\operatorname{L}^{\operatorname{N}_{c}(Q)}_{c}(Q):=\emptyset$.

\begin{Remark}
\label{Remm82}
It is easy to verify that the following properties of the sets $\operatorname{L}^{j}_{c}(Q)$:

{\rm (1)} $\operatorname{L}^{j}_{c}(Q) \cap \operatorname{L}^{j'}_{c}(Q) = \emptyset$ for $j \neq j'$;

{\rm (2)} $\mathcal{IC}_{c}(Q)=\cup_{j=0}^{\operatorname{N}_{c}(Q)}\operatorname{L}^{j}_{c}(Q)$.
\end{Remark}

\hfill$\Box$

The following assertion is a technical heart of this section.  We recall Remark \ref{Rem.iceberg_intuition}.

\begin{Lm}
\label{KeyLemma}
Suppose we are given a number $c \geq 1$
and a selection $\kappa_{c}$ of $\mathcal{K}_{c}$ with a domain $\mathfrak{D} \subset \mathcal{DF}$ such that:

{\rm (1)} $\mathcal{K}_{c}(Q) \cap \mathcal{P}(c) \neq \emptyset \quad \hbox{for all} \quad Q \in  \mathfrak{D}$;

{\rm (2)}  $\kappa_{c}(Q) \in \mathcal{P}(c)$
and $l(\kappa_{c}(Q)) > l(Q)$ for all $Q \in \mathfrak{D}$.

Then, for each $p > \max\{1,n-d\}$, $q \in (1,n-d)$ and $\tau \in (1,\frac{n-d}{q})$, there
exists a constant $C > 0$ depending only on $n,d,\lambda,p,q,\tau,c$ such that the following inequality
\begin{equation}
\label{eq.8.8}
R_{p,q,\tau}[g]:=\sum\limits_{Q \in \mathfrak{D}}(l(Q))^{n-q\tau}\sum\limits_{\substack{Q' \supset Q \\Q' \in \mathcal{IC}_{c}(\kappa_{c}(Q))}}
(l(Q'))^{q\tau}\Bigl(\fint\limits_{Q'}g(x)\,dx\Bigr)^{q} \le C \Bigl(\int\limits_{\mathbb{R}^{n}}g^{p}(x)\,dx\Bigr)^{\frac{q}{p}}
\end{equation}
holds for any nonnegative function $g \in L_{p}(\mathbb{R}^{n})$.
\end{Lm}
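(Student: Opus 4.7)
The plan is to reduce the triple sum $R_{p,q,\tau}[g]$ to an integral of a maximal function via two combinatorial reductions, and then close the estimate using Hardy--Littlewood-type bounds. First, I will interchange the order of summation by grouping each $Q \in \mathfrak{D}$ under its assigned covering cube $\overline{Q} := \kappa_c(Q)$, rewriting
\[
R_{p,q,\tau}[g] = \sum_{\overline{Q}} \sum_{Q' \in \mathcal{IC}_c(\overline{Q})} (l(Q'))^{q\tau} \Bigl(\fint_{Q'} g\Bigr)^q \sum \bigl\{(l(Q))^{n-q\tau} : Q \in \mathfrak{D},\ \kappa_c(Q)=\overline{Q},\ Q \subset Q'\bigr\}.
\]
Each $Q$ contributing to the innermost sum lies in $\mathcal{SH}_c(\overline{Q})|_{Q'}$, and since $q\tau \in (q, n-d)$ puts $\widetilde{d} := n - q\tau$ in $(d, n)$, Proposition \ref{Lm.shadow} bounds the innermost sum by $C (l(Q'))^{n-q\tau}$. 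Cancellation then gives
\[
R_{p,q,\tau}[g] \leq C \sum_{\overline{Q}} \sum_{Q' \in \mathcal{IC}_c(\overline{Q})} (l(Q'))^n \Bigl(\fint_{Q'} g\Bigr)^q.
\]

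Next, I will invoke cavity estimates to handle the inner sum for fixed $\overline{Q}$. For each iceberg cube $Q'$ with $l(Q') < l(\overline{Q})$, Proposition \ref{Propp.shad}(1) forces $Q' \notin \mathcal{DF}$, so Theorem \ref{Th.spec.cavit.} yields a cavity $\Omega(Q') \subset cQ'$ of measure $\gtrsim (l(Q'))^n$. Bounding $(\fint_{Q'} g)^q \leq C \mathcal{M}_1[g](y)^q$ pointwise for $y \in \Omega(Q')$ turns the $Q'$-sum into an integral; the bounded-overlap conclusion of Proposition \ref{Th.intersect.cavities} (whose proof relies only on $\Omega(Q) \subset cQ$ and a scale restriction, hence carries over to any family of dyadic cubes) controls the resulting sum of integrals. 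The finitely many top-layer iceberg cubes ($l(Q') = l(\overline{Q})$, at most $([c]+2)^n$ by Proposition \ref{Propp.shad}(2)) are handled directly via Jensen's inequality, yielding
\[
\sum_{Q' \in \mathcal{IC}_c(\overline{Q})} (l(Q'))^n \Bigl(\fint_{Q'} g\Bigr)^q \leq C \int_{c'\overline{Q}} \mathcal{M}_1[g]^q \, dy.
\]

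To carry out the outer sum over $\overline{Q} \in \kappa_c(\mathfrak{D}) \subset \mathcal{P}(c)$, the hypothesis that the covering cubes lie in $\mathcal{P}(c)$ becomes decisive: each such $\overline{Q}$ contains a non-Frostman dyadic cube of side $\gtrsim l(\overline{Q})$, yielding a cavity $E(\overline{Q}) \subset c^2 \overline{Q}$ with $|E(\overline{Q})| \gtrsim |c'\overline{Q}|$, and the family $\{E(\overline{Q})\}$ has bounded overlap (by the generalization of Proposition \ref{Th.intersect.cavities}). A further application of Proposition \ref{Prop2.2}, with $\sigma = 1$, $p = 1$, $f = \mathcal{M}_1[g]^q$, $\underline{Q} = c'\overline{Q}$ and $\Omega = E(\overline{Q})$, transfers
\[
\int_{c'\overline{Q}} \mathcal{M}_1[g]^q \leq C \int_{E(\overline{Q})} \mathcal{M}_1\bigl[\mathcal{M}_1[g]^q\bigr] \, dy,
\]
and summing over $\overline{Q}$ with the bounded overlap yields
\[
R_{p,q,\tau}[g] \leq C \int_{c'' Q_{0,0}} \mathcal{M}_1\bigl[\mathcal{M}_1[g]^q\bigr] \, dy
\]
for some dilation constant $c''$. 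Since $p > q \geq 1$, iterating the Hardy--Littlewood maximal inequality (Proposition \ref{Prop.max.funct}) bounds the integrand in $L_{p/q}$ by $C \|g\|_p^q$, and Hölder's inequality on the bounded region $c'' Q_{0,0}$ closes the estimate.

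The main obstacle will be this final passage: the dilates $c'\overline{Q}$ can be deeply nested while the cavities $E(\overline{Q})$ cannot, and the trick circumventing this is the double application of Proposition \ref{Prop2.2} paired with the iterated maximal bound. The essentiality of the hypothesis $\kappa_c(Q) \in \mathcal{P}(c)$ surfaces precisely here, since without a cavity of comparable size around each $\overline{Q}$ the transfer from $c'\overline{Q}$ to $E(\overline{Q})$ is unavailable.
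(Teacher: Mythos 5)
Your first reduction is where the argument breaks down, and the loss there is fatal to everything that follows. After interchanging the sums you apply the packing bound of Proposition \ref{Lm.shadow} separately to every iceberg cube $Q'$ and cancel $(l(Q'))^{q\tau}(l(Q'))^{n-q\tau}=(l(Q'))^{n}$, arriving at $\sum_{Q'\in\mathcal{IC}_{c}(\overline{Q})}(l(Q'))^{n}\bigl(\fint_{Q'}g\bigr)^{q}$. This discards the only available decay across the layers of the iceberg, and the intermediate inequality you then claim, $\sum_{Q'\in\mathcal{IC}_{c}(\overline{Q})}(l(Q'))^{n}\bigl(\fint_{Q'}g\bigr)^{q}\le C\int_{c'\overline{Q}}\bigl(\mathcal{M}_{1}[g]\bigr)^{q}\,dy$, cannot hold with a constant independent of $\overline{Q}$: one can arrange $S$ so that $\overline{Q}$ is $(d,\lambda)$-thick while every dyadic subcube of side $2^{-j}l(\overline{Q})$, $1\le j\le J$, is thin yet contains a much smaller shadow cube (spread roughly $\lambda 2^{J'd}$ thick pieces of scale $2^{-J'}l(\overline{Q})$, $J'\gg J$, uniformly over $\overline{Q}$); then all these cubes belong to $\mathcal{IC}_{c}(\overline{Q})$, and with $g\approx 1$ near $\overline{Q}$ the left-hand side is of order $J\,(l(\overline{Q}))^{n}$ while the right-hand side stays of order $(l(\overline{Q}))^{n}$, with the depth $J=\operatorname{N}_{c}(\overline{Q})$ unbounded. (The lemma itself is safe in this example only because the genuine weight $(l(Q))^{n-q\tau}(l(Q'))^{q\tau}$ retains a factor $2^{-jq\tau}$ per layer, exactly what your cancellation throws away; so no later manipulation can recover \eqref{eq.8.8} from your Step-1 bound.) Your proposed rescue also fails: Proposition \ref{Th.intersect.cavities} does not ``carry over to any family of dyadic cubes''. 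Its proof needs the cubes generating the cavities to lie in $\mathcal{DF}$, because the cavity of a larger cube excises the $c$-dilates only of \emph{smaller $\mathcal{DF}$ cubes}; the sub-top iceberg cubes are precisely not in $\mathcal{DF}$ (Proposition \ref{Propp.shad}(1)), nothing is excised on their account, and a nested tower of such cubes sitting above a single tiny shadow cube can have all its cavities containing a common point, so the multiplicity is comparable to the (unbounded) depth of the iceberg. The same objection applies to the bounded overlap you invoke for the sets $E(\overline{Q})$, which are cavities of non-$\mathcal{DF}$ cubes.

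What the paper does instead, and what any proof along these lines must keep, is the geometric decay in the layer index. It retains the full weight $t_{Q}(Q')=(l(Q'))^{q\tau}\sum\{(l(Q''))^{n-q\tau}:Q''\in\mathcal{SH}_{c}(\overline{Q})|_{Q'}\}$, observes via the whole-shadow estimate \eqref{eq3.15'''} that the sum of $t_{Q}(Q')$ over the $j$-th layer is at most $C\,2^{-jq\tau}(l(\overline{Q}))^{n}$, and only then applies H\"older's inequality for sums with an intermediate exponent $\sigma\in(q,p)$: a fractional power of the layer bound survives as the factor $2^{-j\theta}$, $\theta=q\tau(\sigma-q)/\sigma>0$, which makes the sum over the possibly infinitely many layers converge, while the complementary factor is converted, using Proposition \ref{Prop2.2}, Theorem \ref{Th.spec.cavit.} and $\overline{Q}\in\mathcal{P}(c)$, into $\int_{\Omega_{c,\varkappa}(\overline{Q})}\bigl(\mathcal{M}_{\sigma}[\mathcal{M}[g]]\bigr)^{q}\,dx$. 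Bounded overlap is then used only for cavities of cubes in $\mathcal{P}(c)\subset\mathcal{DF}$, where Proposition \ref{Th.intersect.cavities} genuinely applies, and the conclusion follows by H\"older on $cQ_{0,0}$ and two applications of Proposition \ref{Prop.max.funct}; this last stage is the one part of your plan that is essentially in line with the paper.
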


\begin{proof}
We fix parameters $p,q,\tau$ satisfying the assumptions of the lemma. We also fix a parameter
$\sigma \in (q,p)$
and an arbitrary nonnegative
$g \in L_{p}(\mathbb{R}^{n})$. We split the proof into several steps.

\textit{Step 1.} Since $l(\kappa_{c}(Q)) > l(Q)$ for all $Q \in \mathfrak{D} \subset \mathcal{DF}$ by the assumption (2) of the lemma
it follows from Definitions \ref{Def.shadow}, \ref{Def.iceberg} that if $Q \in \mathfrak{D}$,
$Q' \in \mathcal{IC}_{c}(\kappa_{c}(Q))$ and  $Q' \supset Q$ then $Q \in \mathcal{SH}_{c}(\kappa_{c}(Q))|_{Q'}$.
Hence, we change the order of summation in the definition of $R_{p,q,\tau}[g]$, use Remark \ref{Remm82},
and take into account that $\kappa_{c}(Q) \in \mathcal{P}(c)$ for all $Q \in \mathfrak{D}$
by the assumption (2) of the lemma. This gives
\begin{equation}
\label{eq.8.9}
\begin{split}
&R_{p,q,\tau}[g] \le \sum\limits_{Q \in \mathcal{P}(c)}
\sum\limits_{j=0}^{\operatorname{N}_{c}(Q)}\sum\limits_{Q' \in
\operatorname{L}^{j}_{c}(Q)} t_{Q}(Q') \Bigl(\fint\limits_{Q'}g(x)\,dx\Bigr)^{q},
\end{split}
\end{equation}
where, for each $Q \in \mathcal{P}(c)$, every number $j \in \mathbb{N}_{0} \cap [0,\operatorname{N}_{c}(Q))$
and any cube $Q' \in \operatorname{L}^{j}_{c}(Q)$, we put
\begin{equation}
\label{eqq.8.8}
t_{Q}(Q'):= (l(Q'))^{q\tau}\Bigl(\sum\limits_{\substack{Q'' \in \mathcal{SH}_{c}(Q)|_{Q'}}}(l(Q''))^{n-q\tau}\Bigr),
\end{equation}
and in the case $\operatorname{N}_{c}(Q) < +\infty$ we formally put $t_{Q}(Q')=0$ for all $Q' \in \operatorname{L}^{\operatorname{N}_{c}(Q)}_{c}(Q)$
(recall that $\operatorname{L}^{\operatorname{N}_{c}(Q)}_{c}(Q)=\emptyset$ in this case).

\textit{Step 2.}
Note that $n-q\tau > d$ by the assumptions of the lemma. Hence, using \eqref{eq3.15} with $\widetilde{d}=n-q\tau$ we get
\begin{equation}
\label{eq.8.13}
t_{Q}(Q')  \le C (l(Q'))^{n}.
\end{equation}
Furthermore, using \eqref{eq3.15'''} with $\widetilde{d}=n-q\tau$ we obtain, for each $Q \in \mathcal{P}(c)$ and every $j \in \mathbb{N}_{0} \cap [0,\operatorname{N}_{c}(Q))$,
\begin{equation}
\label{eq.8.12}
\begin{split}
&\sum\limits_{Q' \in \operatorname{L}^{j}_{c}(Q)}t_{Q}(Q') = \sum\limits_{Q' \in \operatorname{L}^{j}_{c}(Q)}(l(Q'))^{q\tau}\Bigl(\sum\limits_{\substack{Q'' \in \mathcal{SH}_{c}(Q)|_{Q'} }}(l(Q''))^{n-q\tau}\Bigr) \\
&\le \Bigl(\frac{l(Q)}{2^{j}}\Bigr)^{q\tau}
\sum\limits_{Q' \in \operatorname{L}^{j}_{c}(Q)} \sum\limits_{\substack{Q'' \in \mathcal{SH}_{c}(Q)|_{Q'}}}(l(Q''))^{n-q\tau}\\
&\le \Bigl(\frac{l(Q)}{2^{j}}\Bigr)^{q\tau}
\sum\limits_{\substack{Q'' \in \mathcal{SH}_{c}(Q)}}(l(Q''))^{n-q\tau} \le C\Bigl(\frac{l(Q)}{2^{j}}\Bigr)^{q\tau}(l(Q))^{n-q\tau} \le \frac{C}{2^{jq\tau}}(l(Q))^{n}.
\end{split}
\end{equation}

\textit{Step 3.}
An application of H\"older's inequality for sums with exponents
$\frac{\sigma}{q}$ and $\bigl(\frac{\sigma}{q}\bigr)'=\frac{\sigma}{\sigma-q}$ gives, for each cube $Q \in \mathcal{P}(c)$ and every $j \in \mathbb{N}_{0} \cap [0,\operatorname{N}_{c}(Q))$,
\begin{equation}
\label{eq.8.11}
\begin{split}
&\sum\limits_{Q' \in \operatorname{L}^{j}_{c}(Q)}\Bigl(t_{Q}(Q')\Bigr)^{\frac{q}{\sigma}+(\frac{q}{\sigma})'}\Bigl(\fint\limits_{Q'}g(x)\,dx\Bigr)^{q}\\
&\le C\Bigl(\sum\limits_{Q' \in \operatorname{L}^{j}_{c}(Q)}t_{Q}(Q')\Bigr)^{\frac{\sigma-q}{\sigma}}\Bigl(\sum\limits_{Q' \in \operatorname{L}^{j}_{c}(Q)}t_{Q}(Q')\Bigl(\fint\limits_{Q'}g(x)\,dx\Bigr)^{\sigma}\Bigr)^{\frac{q}{\sigma}}.\\
\end{split}
\end{equation}

\textit{Step 4.} Using \eqref{eq.8.13} and applying Proposition \ref{Prop2.2} with $\Omega=\underline{Q}=Q'$, we have
\begin{equation}
\label{eq.8.14}
\begin{split}
&\sum\limits_{Q' \in \operatorname{L}^{j}_{c}(Q)} t_{Q}(Q') \Bigl(\fint\limits_{Q'} g(x)\,dx\Bigr)^{\sigma}
\le C \sum\limits_{Q' \in \operatorname{L}^{j}_{c}(Q)}(l(Q'))^{n}\Bigl(\fint\limits_{Q'}g(x)\,dx\Bigr)^{\sigma}\\
&\le C\sum\limits_{Q' \in \operatorname{L}^{j}_{c}(Q)}\int\limits_{Q'} \Bigl(\mathcal{M}[g](x)\Bigr)^{\sigma}\,dx.
\end{split}
\end{equation}

\textit{Step 5.} We set $\theta:=q\tau\frac{\sigma-q}{\sigma} > 0.$
Now we plug \eqref{eq.8.14} and \eqref{eq.8.12} into \eqref{eq.8.11}.
This gives us, for each $Q \in \mathcal{P}(c)$ and any $j \in \mathbb{N}_{0} \cap [0,\operatorname{N}_{c}(Q))$,
\begin{equation}
\label{eqq.720}
\sum\limits_{Q' \in \operatorname{L}^{j}_{c}(Q)}t_{Q}(Q')\Bigl(\fint\limits_{Q'}g(x)\,dx\Bigr)^{q} \le
\frac{C}{2^{j\theta}}(l(Q))^{n}\Bigl(\fint\limits_{cQ}\Bigl(\mathcal{M}[g](x)\Bigr)^{\sigma}\,dx\Bigr)^{\frac{q}{\sigma}}.
\end{equation}

Let $\underline{\varkappa}=\underline{\varkappa}(n,d,\lambda,c)$ be the same as in Theorem \ref{Th.spec.cavit.}. We put $\varkappa:=\underline{\varkappa}+3.$
We recall \eqref{eqq.3.14} and apply Theorem \ref{Th.spec.cavit.}. This gives
\begin{equation}
\label{eqq.721}
\mathcal{L}^{n}(\Omega_{c,\varkappa}(Q)) \geq C (l(Q))^{n} \quad \text{for each cube} \quad Q \in \mathcal{P}(c).
\end{equation}
Finally, for each $Q \in \mathcal{P}(c)$  and every $j \in \mathbb{N}_{0} \cap [0,\operatorname{N}_{c}(Q))$ we combine \eqref{eqq.720}, \eqref{eqq.721} and apply
Proposition \ref{Prop2.2} with $\Omega=\Omega_{c,\varkappa}(Q)$, $\underline{Q}=Q$. We obtain
\begin{equation}
\label{eq.8.15}
\begin{split}
&\sum\limits_{Q' \in \operatorname{L}^{j}_{c}(Q)}t_{Q}(Q')\Bigl(\fint\limits_{Q'}g(x)\,dx\Bigr)^{q} \le  \frac{C}{2^{j\theta}} \mathcal{L}^{n}(\Omega_{c,\varkappa}(Q))\Bigl(\fint\limits_{cQ}\Bigl(\mathcal{M}[g](x)\Bigr)^{\sigma}\,dx\Bigr)^{\frac{q}{\sigma}} \\
& \le
\frac{C}{2^{j\theta}}\int\limits_{\Omega_{c,\varkappa}(Q)}
\Bigl(\mathcal{M}_{\sigma}[\mathcal{M}[g]](x)\Bigr)^{q}\,dx \quad \text{for each cube} \quad Q \in \mathcal{P}(c) \quad \text{and every} \quad j \in \mathbb{N}_{0} \cap [0,\operatorname{N}_{c}(Q)).
\end{split}
\end{equation}

\textit{Step 6.}
We plug \eqref{eq.8.15} into \eqref{eq.8.9} and take into account that $\theta > 0$. This gives
\begin{equation}
\notag
\begin{split}
&R_{p,q,\tau}[g] \le C\sum\limits_{Q \in \mathcal{P}(c)}
\sum\limits_{j=0}^{\operatorname{N}_{c}(Q)}\frac{1}{2^{j\theta}}\int\limits_{\Omega_{c,\varkappa}(Q)}\Bigl(
\mathcal{M}_{\sigma}[\mathcal{M}[g]](x)\Bigr)^{q}\,dx\\
&\le C\sum\limits_{Q \in \mathcal{P}(c)}\int\limits_{\Omega_{c,\varkappa}(Q)}\Bigl(
\mathcal{M}_{\sigma}[\mathcal{M}[g]](x)\Bigr)^{q}\,dx.
\end{split}
\end{equation}
It is clear that $\Omega_{c,\varkappa}(Q) \subset cQ_{0,0}$ for all $Q \in \mathcal{P}(c)$.
Furthermore, by Proposition \ref{Th.intersect.cavities} we have $M(\{\Omega_{c,\varkappa}(Q):Q \in \mathcal{P}(c)\}) \le C$ with a constant $C > 0$
depending only on $n,d,\lambda,c$. Using these
observations and applying Proposition \ref{Prop2.4} we continue the previous estimate and get
\begin{equation}
\label{eq.8.16}
\begin{split}
&R_{p,q,\tau}[g] \le  C \int\limits_{cQ_{0,0}}\Bigl(
\mathcal{M}_{\sigma}[\mathcal{M}[g]](x)\Bigr)^{q}\,dx.
\end{split}
\end{equation}

\textit{Step 7.}
Finally, we use H\"older's inequality for integrals with exponents $\frac{p}{q}$, $\frac{p}{p-q}$. Then we apply Proposition \ref{Prop.max.funct}
twice. This allows us to continue \eqref{eq.8.16} and deduce
\begin{equation}
\label{eq.8.17}
\begin{split}
&R_{p,q,\tau}[g] \le C \Bigl(\int\limits_{cQ_{0,0}}\Bigl(
\mathcal{M}_{\sigma}[\mathcal{M}[g]](x)\Bigr)^{p}\,dx\Bigr)^{\frac{q}{p}}\\
&\le
C \Bigl(\int\limits_{cQ_{0,0}}\Bigl(
\mathcal{M}[g](x)\Bigr)^{p}\,dx\Bigr)^{\frac{q}{p}} \le
C \Bigl(\int\limits_{\mathbb{R}^{n}}g^{p}(x)\,dx\Bigr)^{\frac{q}{p}}.
\end{split}
\end{equation}

The lemma is proved.
\end{proof}

Now we formulate the \textit{key lemma}, which will be the cornerstone in proving the main result of this
section.

\begin{Lm}
\label{KeyLemma2}
Let $p \in (\max\{1,n-d\},n]$, $q \in (1,n-d)$ and $c \geq 1$. Let a selection $\kappa_{c}$ of $\mathcal{K}_{c}$ with a domain $\mathfrak{D} \subset \mathcal{DF}$
be such that:

{\rm (1)} $\mathcal{K}_{c}(Q) \cap \mathcal{P}(c) \neq \emptyset \quad \hbox{for all} \quad Q \in  \mathfrak{D}$;

{\rm (2)} $\kappa_{c}(Q) \in \mathcal{P}(c)$ and $l(\kappa_{c}(Q)) > l(Q)$ for all $Q \in \mathfrak{D}$.

Then there exists a constant $C > 0$ depending only on $p,q,n,d,\lambda,c$ such that
\begin{equation}
\label{eq.8.21}
\sum\limits_{Q \in \mathfrak{D}}(l(Q))^{n}\Bigl(\Phi_{f}(Q,\kappa_{c}(Q))\Bigr)^{q} \le C \Bigl(\sum\limits_{|\gamma|=1}\|D^{\gamma}F|L_{p}(\mathbb{R}^{n})\|\Bigr)^{q}
\end{equation}
for any $F \in W_{p}^{1}(\mathbb{R}^{n})$ with $f = \operatorname{Tr}|_{S}^{d}[F]$.
\end{Lm}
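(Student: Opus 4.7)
Fix $\sigma \in (\max\{1,n-d\}, p)$ and $\tau \in (1, (n-d)/q)$; both intervals are nonempty by hypothesis, and $q < n-d \leq \sigma < p \leq n$. Apply Lemma \ref{Lm8.1} to each pair $(Q,\kappa_c(Q))$, $Q \in \mathfrak{D}$, with $\varepsilon = \tau - 1$, raise to the $q$th power, multiply by $(l(Q))^n$, sum over $Q \in \mathfrak{D}$, and use $(a+b+c)^q \leq 3^{q-1}(a^q + b^q + c^q)$. The left-hand side of \eqref{eq.8.21} is then bounded by $C(I_1+I_2+I_3)$, where $I_j$ collects the contributions from the $j$th term of \eqref{eq8.1}.

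The iceberg sum $I_3$ is handled by Lemma \ref{KeyLemma}. By hypothesis (2) of the present lemma, $Q \in \mathcal{SH}_c(\kappa_c(Q))$, and the nonoverlap of $\mathcal{SH}_c(\kappa_c(Q))$ (Proposition \ref{Prop.3.1}) forces every proper dyadic ancestor $Q^i$ of $Q$ with $l(Q^i) \leq l(\kappa_c(Q))$ to satisfy the conditions of Definition \ref{Def.iceberg}, hence $Q^i \in \mathcal{IC}_c(\kappa_c(Q))$ for $i = 1,\dots,N$. Separating off the $i=0$ contribution (majorized by H\"older in the corresponding $I_1$ summand), the remainder is at most $R_{p,q,\tau}[\|\nabla F\|]$, and Lemma \ref{KeyLemma} supplies $I_3 \leq C\|\nabla F\|_{L_p(\mathbb{R}^n)}^q$.

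For $I_1$ and $I_2$, partition $\mathfrak{D}=\bigsqcup_{\overline{Q} \in \mathcal{P}(c)}\mathfrak{D}(\overline{Q})$ with $\mathfrak{D}(\overline{Q}):=\{Q \in \mathfrak{D}:\kappa_c(Q)=\overline{Q}\}\subset \mathcal{SH}_c(\overline{Q})$; by Proposition \ref{Prop.3.1}, each $\mathfrak{D}(\overline{Q})$ is nonoverlapping. For the inner $I_1$-sum, Proposition \ref{Prop2.2} applied with $\Omega = \underline{Q} = Q$ gives $(l(Q))^n(\fint_Q \|\nabla F\|^\sigma)^{q/\sigma} \leq C\int_Q(\mathcal{M}_\sigma[\|\nabla F\|])^q$; summing over the nonoverlapping family and using $Q \subset c\overline{Q}$ bounds this by $C\int_{c\overline{Q}}(\mathcal{M}_\sigma[\|\nabla F\|])^q$, which is in turn dominated by $C\int_{\Omega_{c,\varkappa}(\overline{Q})}(\mathcal{M}_q[\mathcal{M}_\sigma[\|\nabla F\|]])^q$ via a further application of Proposition \ref{Prop2.2} (now with $\Omega = \Omega_{c,\varkappa}(\overline{Q})$, $\underline{Q} = c\overline{Q}$, and both exponents in that proposition equal to $q$), invoking $\mathcal{L}^n(\Omega_{c,\varkappa}(\overline{Q}))\geq C(l(\overline{Q}))^n$ from Theorem \ref{Th.spec.cavit.}. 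For the inner $I_2$-sum, Proposition \ref{Lm.shadow} with $\widetilde{d} = n - q \in (d,n)$ yields $\sum_{Q \in \mathfrak{D}(\overline{Q})}(l(Q))^{n-q} \leq C(l(\overline{Q}))^{n-q}$, reducing it to $C(l(\overline{Q}))^n(\fint_{3c\overline{Q}}\|\nabla F\|^\sigma)^{q/\sigma} \leq C\int_{\Omega_{c,\varkappa}(\overline{Q})}(\mathcal{M}_\sigma[\|\nabla F\|])^q$ by another application of Proposition \ref{Prop2.2}.

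Summing over $\overline{Q} \in \mathcal{P}(c)$, the bounded overlap of $\{\Omega_{c,\varkappa}(\overline{Q})\}$ (Proposition \ref{Th.intersect.cavities}) collapses each of $I_1, I_2$ to a single integral supported in $cQ_{0,0}$; H\"older on that bounded set combined with Proposition \ref{Prop.max.funct} applied to $\mathcal{M}_q$ and $\mathcal{M}_\sigma$ (both admissible since $q, \sigma < p$) produces $I_1, I_2 \leq C\|\nabla F\|_{L_p(\mathbb{R}^n)}^q$. The main obstacle is the bookkeeping of the third paragraph: one must combine the Carleson packing of $\mathcal{SH}_c(\overline{Q})$ (responsible for the constraint $q < n - d$, to invoke Proposition \ref{Lm.shadow} at $\widetilde{d} = n - q$) with the bounded-overlap cavity construction (available precisely because $\overline{Q} \in \mathcal{P}(c)$), and the nested maximal-function estimates must respect the chain $q < \sigma < p$ so that Proposition \ref{Prop.max.funct} can be applied twice.
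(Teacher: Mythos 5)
Your proposal is correct and takes essentially the same route as the paper's proof: Lemma~\ref{Lm8.1} with $\varepsilon=\tau-1$, Lemma~\ref{KeyLemma} for the iceberg sum (with the same justification that the ancestors $Q^{i}$, $i\geq 1$, lie in $\mathcal{IC}_{c}(\kappa_{c}(Q))$), and, for the remaining two sums, the grouping over $\overline{Q}=\kappa_{c}(Q)\in\mathcal{P}(c)$ combined with Proposition~\ref{Lm.shadow} at $\widetilde{d}=n-q$, the cavity volume bound of Theorem~\ref{Th.spec.cavit.}, Proposition~\ref{Prop2.2}, the bounded overlap of the cavities (Propositions~\ref{Th.intersect.cavities} and~\ref{Prop2.4}), H\"older's inequality and Proposition~\ref{Prop.max.funct}. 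The only (harmless) deviation is in $I_{1}$, where the paper uses H\"older for sums together with $\sum_{\underline{Q}\in\kappa_{c}^{-1}(\overline{Q})}(l(\underline{Q}))^{n}\le C\,\mathcal{L}^{n}(\Omega_{c,\varkappa}(\overline{Q}))$ and a single maximal function $\mathcal{M}_{\sigma}$, while you apply Proposition~\ref{Prop2.2} cube-by-cube and then pass to the cavity via the iterated maximal function $\mathcal{M}_{q}[\mathcal{M}_{\sigma}[\cdot]]$ --- both are valid since $q<\sigma<p$.
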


\begin{proof}
We fix an arbitrary element $F \in W_{p}^{1}(\mathbb{R}^{n})$ and set $f = \operatorname{Tr}|_{S}^{d}[F]$.
Furthermore, we fix a parameter $\sigma \in (\max\{1,n-d\},p).$
By the assumptions of the lemma and Definitions \ref{Def.selector}, \ref{Def.shadow}, it is clear that
\begin{equation}
\label{eqq824}
\kappa_{c}^{-1}(\overline{Q}) \subset \mathcal{SH}_{c}(\overline{Q}) \quad \hbox{for each} \quad
\overline{Q} \in \mathcal{P}(c).
\end{equation}
We split the proof into several steps.

\textit{Step 1.} By Remark \ref{Rem.iceberg_intuition}, we see that if $Q \in \mathfrak{D}$, $Q \subset Q'$ and $l(Q') \in (l(Q),l(\kappa_{c}(Q))]$, then $Q' \in \mathcal{IC}_{c}(\kappa_{c}(Q))$. Hence, an application of Lemma \ref{Lm8.1}
gives, for any sufficiently small $\varepsilon > 0$ to be specified later,
\begin{equation}
\label{eq.8.22}
\begin{split}
&\sum\limits_{Q \in \mathfrak{D}}(l(Q))^{n}(\Phi_{f}(Q,\kappa_{c}(Q))^{q}
\le C \sum\limits_{Q \in \mathfrak{D}}(l(Q))^{n}\Bigl(\fint\limits_{Q}\|\nabla F(x)\|^{\sigma}\,dx\Bigr)^{\frac{q}{\sigma}} \\
&+C\sum\limits_{Q \in \mathfrak{D}}(l(Q))^{n-q} (l(\kappa_{c}(Q)))^{q}\Bigl(\fint\limits_{3c\kappa_{c}(Q)}\|\nabla F(y)\|^{\sigma}\,dy\Bigr)^{\frac{q}{\sigma}}\\
&+C\sum\limits_{Q \in \mathfrak{D}}(l(Q))^{n-q-q\varepsilon}\sum\limits_{\substack{Q' \supset Q\\Q' \in \mathcal{IC}_{c}(\kappa_{c}(Q))}}
(l(Q'))^{q+q\varepsilon}\Bigl(\fint\limits_{Q'}\|\nabla F(z)\|\,dz\Bigr)^{q}=:R^{1}+R^{2}+R^{3}.
\end{split}
\end{equation}

\textit{Step 2.} Let $\underline{\varkappa}=\underline{\varkappa}(n,d,\lambda,c)$ be the same as in Theorem \ref{Th.spec.cavit.}. We put $\varkappa:=\underline{\varkappa}+3.$
An application of Theorem \ref{Th.spec.cavit.} gives
\begin{equation}
\label{eqq.721''''}
\mathcal{L}^{n}(\Omega_{c,\varkappa}(\overline{Q})) \geq C (l(\overline{Q}))^{n} \quad \text{for each cube} \quad \overline{Q} \in \mathcal{P}(c).
\end{equation}
On the other hand, given a cube $\overline{Q} \in \mathcal{P}(c)$ with $\kappa^{-1}_{c}(\overline{Q}) \neq \emptyset$,
we use \eqref{eqq824}, apply Proposition \ref{Prop.3.1}
and then apply Theorem \ref{Th.spec.cavit.}. This gives
\begin{equation}
\label{eqq8.24}
\sum\limits_{\underline{Q} \in \kappa^{-1}_{c}(\overline{Q})} (l(\underline{Q}))^{n} \le  \sum\limits_{\underline{Q} \in \mathcal{SH}_{c}(\overline{Q})} (l(\underline{Q}))^{n} \le(l(c\overline{Q}))^{n}
\le C \mathcal{L}^{n}(\Omega_{c,\varkappa}(\overline{Q})).
\end{equation}

\textit{Step 3.}
Now we use H\"older's inequality for sums with exponents $\frac{\sigma}{q}$ and $\frac{\sigma}{\sigma-q}$. Then we take into account \eqref{eqq824}, \eqref{eqq.721''''} and \eqref{eqq8.24}. We obtain (we also
use the obvious inclusion $\Omega_{c,\varkappa}(\overline{Q}) \subset c\overline{Q}$)
\begin{equation}
\label{eq.8.23}
\begin{split}
&\sum\limits_{\underline{Q} \in \kappa^{-1}_{c}(\overline{Q})}(l(\underline{Q}))^{n}\Bigl(\fint\limits_{\underline{Q}}\|\nabla F(x)\|^{\sigma}\,dx\Bigr)^{\frac{q}{\sigma}}
=\sum\limits_{\underline{Q} \in \kappa^{-1}_{c}(\overline{Q})}(l(\underline{Q}))^{n(1-\frac{q}{\sigma})}\Bigl(\int\limits_{\underline{Q}}\|\nabla F(x)\|^{\sigma}\,dx\Bigr)^{\frac{q}{\sigma}}\\
&\le C \Bigl(\mathcal{L}^{n}(\Omega_{\varkappa,c}(\overline{Q}))\Bigr)^{1-\frac{q}{\sigma}}
\Bigl(\int\limits_{c\overline{Q}}
\|\nabla F(x)\|^{\sigma}\,dx\Bigr)^{\frac{q}{\sigma}} \le C \mathcal{L}^{n}(\Omega_{c,\varkappa}(\overline{Q}))\Bigl(\fint\limits_{c\overline{Q}}
\|\nabla F(x)\|^{\sigma}\,dx\Bigr)^{\frac{q}{\sigma}}.
\end{split}
\end{equation}

\textit{Step 4.} To estimate $R_{1}$ from above, we use \eqref{eq.8.23} and apply Proposition \ref{Prop2.2} with $\Omega=\Omega_{c,\varkappa}(\overline{Q})$. This gives
\begin{equation}
\label{eq.8.24}
\begin{split}
&R^{1} \le \sum\limits_{\overline{Q} \in \mathcal{P}(c)}
\sum\limits_{\underline{Q} \in \kappa^{-1}_{c}(\overline{Q})}(l(\underline{Q}))^{n}\Bigl(\fint\limits_{\underline{Q}}\|\nabla F(x)\|^{\sigma}\,dx\Bigr)^{\frac{q}{\sigma}}\\
&\le C \sum\limits_{\overline{Q} \in \mathcal{P}(c)}\mathcal{L}^{n}(\Omega_{c,\varkappa}(\overline{Q}))\Bigl(\fint\limits_{c\overline{Q}}
\|\nabla F(x)\|^{\sigma}\,dx\Bigr)^{\frac{q}{\sigma}} \le C \sum\limits_{\overline{Q} \in \mathcal{P}(c)}
\int\limits_{\Omega_{c,\varkappa}(\overline{Q})}\Bigl(\mathcal{M}_{\sigma}[\|\nabla F\|](x)\Bigr)^{q}\,dx.\\
\end{split}
\end{equation}
To continue \eqref{eq.8.24}, we combine Proposition \ref{Prop2.4} with Proposition \ref{Th.intersect.cavities}, apply
H\"older's inequality for integrals with exponents $\frac{p}{q}$, $\frac{p}{p-q}$, and finally use Proposition
\ref{Prop.max.funct}. We get
\begin{equation}
\label{eq.8.25}
\begin{split}
&(R^{1})^{\frac{p}{q}} \le C \Bigl(\int\limits_{cQ_{0,0}}\Bigl(\mathcal{M}_{\sigma}[\|\nabla F\|](x)\Bigr)^{q}\,dx\Bigr)^{\frac{p}{q}}\\
&\le C \int\limits_{cQ_{0,0}}\Bigl(\mathcal{M}_{\sigma}[\|\nabla F\|](x)\Bigr)^{p}\,dx
\le C \int\limits_{cQ_{0,0}}\|\nabla F(x)\|^{p}\,dx.
\end{split}
\end{equation}

\textit{Step 5.}
For each $\overline{Q} \in \mathcal{P}(c)$ we apply Proposition \ref{Lm.shadow} with $\widetilde{d}=n-q > d$, then we use
\eqref{eqq.721''''} and apply Proposition \ref{Prop2.2} with $\Omega=\Omega_{c,\varkappa}(\overline{Q})$. This gives
\begin{equation}
\label{eq.8.26}
\begin{split}
&\Bigl(\sum\limits_{\underline{Q} \in \mathcal{SH}_{c}(\overline{Q})}(l(\underline{Q}))^{n-q}\Bigr)(l(\overline{Q}))^{q}\Bigl(\fint\limits_{3c \overline{Q}}
\|\nabla F(y)\|^{\sigma}\,dy\Bigr)^{\frac{q}{\sigma}} \le  C (l(\overline{Q}))^{n}\Bigl(\fint\limits_{3c \overline{Q}}\|\nabla F(y)\|^{\sigma}\,dy\Bigr)^{\frac{q}{\sigma}}\\
&\le C \mathcal{L}^{n}(\Omega_{c,\varkappa}(\overline{Q}))\Bigl(\fint\limits_{3c \overline{Q}}\|\nabla F(y)\|^{\sigma}\,dy\Bigr)^{\frac{q}{\sigma}} \le C \int\limits_{\Omega_{c,\varkappa}(\overline{Q})}\Bigl(\mathcal{M}_{\sigma}[\|\nabla F\|](x)\Bigr)^{q}\,dx.
\end{split}
\end{equation}
By \eqref{eqq824}, \eqref{eq.8.26} and Proposition \ref{Th.intersect.cavities} we obtain
\begin{equation}
\label{eq.8.27}
\begin{split}
&R^{2} \le \sum\limits_{\overline{Q} \in \mathcal{P}(c)}
\Bigl(\sum\limits_{\underline{Q} \in \mathcal{SH}_{c}(\overline{Q})}(l(\underline{Q}))^{n-q}\Bigr)(l(\overline{Q}))^{q}\Bigl(\fint\limits_{3c \overline{Q}}\|\nabla F(y)\|^{\sigma}\,dy\Bigr)^{\frac{q}{\sigma}}\\
&\le C \sum\limits_{\overline{Q} \in \mathcal{P}(c)}
\int\limits_{\Omega_{c,\varkappa}(\overline{Q})}\Bigl(\mathcal{M}_{\sigma}[\|\nabla F\|](x)\Bigr)^{q}\,dx \le \int\limits_{cQ_{0,0}}\Bigl(\mathcal{M}_{\sigma}
[\|\nabla F\|](x)\Bigr)^{q}\,dx.
\end{split}
\end{equation}
The same arguments as in \eqref{eq.8.25} allow us continue \eqref{eq.8.27} and get
\begin{equation}
\label{eq.8.28}
R^{2} \le C \Bigl(\int\limits_{cQ_{0,0}}\|\nabla F(x)\|^{p}\,dx\Bigr)^{\frac{q}{p}}.
\end{equation}

\textit{Step 6.} We fix an arbitrary $\varepsilon > 0$ such that $\tau:=(1+\varepsilon) \in (1,\frac{n-d}{q})$. In this case we can apply Lemma \ref{KeyLemma}
with $g=\|\nabla F\|$ and deduce
\begin{equation}
\label{eq.8.29}
R^{3} \le \Bigl(\int\limits_{\mathbb{R}^{n}}\|\nabla F(x)\|^{p}\,dx \Bigr)^{\frac{q}{p}}.
\end{equation}

\textit{Step 7.} We combine estimates \eqref{eq.8.22}, \eqref{eq.8.25}, \eqref{eq.8.28}, \eqref{eq.8.29} and take into
account that, for some $C > 0$ depending only on $p$ and $n$,
\begin{equation}
\notag
\frac{1}{C}\int\limits_{\mathbb{R}^{n}}\|\nabla F(x)\|^{p}\,dx \le \Bigl(\sum\limits_{|\gamma|=1}\|D^{\gamma}F|L_{p}(\mathbb{R}^{n})\|\Bigr)^{p} \le C \int\limits_{\mathbb{R}^{n}}\|\nabla F(x)\|^{p}\,dx.
\end{equation}
As a result, we get \eqref{eq.8.21}
and complete the proof.
\end{proof}

The following lemma is an easy consequence of the corresponding definitions.

\begin{Lm}
\label{Lm8.4}
Let $f \in L_{1}(\{\mathfrak{m}_{k}\})$, $t > 0$ and $c \geq 1$ be such that $\mathcal{U}^{b}_{c,t}(f) \neq \emptyset$.
Then there exist a family
$\mathfrak{D}_{c,t}(f) \subset \mathcal{DF}$ and a selection $\kappa_{c,t}(f)$ of $\mathcal{K}_{c}$ with the domain $\mathfrak{D}_{c,t}(f)$ such that:

{ \rm (1)} $\mathcal{U}^{b}_{c,t}(f) \subset \cup \{c\underline{Q}: \underline{Q} \in \mathfrak{D}_{c,t}(f)\}$;

{ \rm (2)} $\mathfrak{D}_{c,t}(f)$ is nonoverlapping;

{ \rm (3)} $\kappa_{c,t}(f)(\underline{Q}) \in \mathcal{P}(c)$ for any $\underline{Q} \in \mathfrak{D}_{c,t}(f)$;

{ \rm (4)} $l(\kappa_{c,t}(f)(\underline{Q})) \geq 4 l (\underline{Q})$;

{ \rm (5)} $\Phi_{f} (\underline{Q}, \kappa_{c,t}(f)(\underline{Q})) > t$ for any $\underline{Q} \in \mathfrak{D}_{c,t}(f)$.

\end{Lm}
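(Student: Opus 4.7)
The plan is to build $(\mathfrak{D}_{c,t}(f), \kappa_{c,t}(f))$ by a greedy Vitali-type selection among witnessing pairs for the inequality $f^{\natural}_{c} > t$, and then to argue that the associated large covering cubes automatically belong to the porous family $\mathcal{P}(c)$.

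I would begin as follows. For every $x \in \mathcal{U}^{b}_{c,t}(f)$, Definition \ref{Cal.max.function} together with $f^{\natural}_{c}(x) > t$ yields a witnessing pair $(\underline{Q}(x), \overline{Q}(x))$ satisfying (\textbf{f}1)--(\textbf{f}3) and $\Phi_{f}(\underline{Q}(x), \overline{Q}(x)) > t$. Proposition \ref{Propos.8.1}, applied to the bad part, forces $l(\overline{Q}(x)) \geq 4 l(\underline{Q}(x))$, giving property (4); property (5) is built into the choice of pair. Because $x \in c\underline{Q}(x)$ by (\textbf{f}1), the family $\{c\underline{Q}(x)\}_{x}$ covers $\mathcal{U}^{b}_{c,t}(f)$. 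Since the $\underline{Q}(x)$'s are dyadic with $l(\underline{Q}(x)) \leq 1$, a standard greedy selection of dyadic cubes that are maximal with respect to dyadic inclusion among the $\underline{Q}(x)$'s produces a nonoverlapping subfamily $\mathfrak{D}_{c,t}(f) \subset \mathcal{DF}$ whose $c$-dilations still cover $\mathcal{U}^{b}_{c,t}(f)$, delivering (1) and (2); setting $\kappa_{c,t}(f)(\underline{Q}) := \overline{Q}$ (the witness attached to $\underline{Q}$) then defines the selection.

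The main obstacle is verifying (3), i.e., $\kappa_{c,t}(f)(\underline{Q}) \in \mathcal{P}(c)$. The natural candidate for the non-$\mathcal{DF}$ ``hole'' inside $c\overline{Q}$ required by \eqref{eqq.3.14} is a dyadic ancestor of $\underline{Q}$: condition (\textbf{C}3) of Definition \ref{cover.cube} ensures that every dyadic ancestor $Q^{*}$ of $\underline{Q}$ with $l(\underline{Q}) < l(Q^{*}) < l(\overline{Q})$ lies outside $\mathcal{DF}$, and since $l(\overline{Q}) \geq 4 l(\underline{Q})$ the ancestor at side $l(\overline{Q})/2$ meets the size threshold $l(Q^{*}) \geq l(\overline{Q})/4$. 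The delicate step is to place such a $Q^{*}$ inside $c\overline{Q}$: one combines $\underline{Q} \subset c\overline{Q}$ from (\textbf{C}1) with the dyadic alignment of $\underline{Q}$, $Q^{*}$, and $\overline{Q}$, and (if the side-$l(\overline{Q})/2$ ancestor spills out of $c\overline{Q}$) replaces $Q^{*}$ by a smaller dyadic ancestor of $\underline{Q}$, down to side $l(\overline{Q})/4$, which still lies outside $\mathcal{DF}$ by (\textbf{C}3) whenever $l(\underline{Q}) < l(\overline{Q})/4$ and whose position relative to $c\overline{Q}$ is more favorable. Once (3) is secured, all five properties hold simultaneously and the construction is complete.
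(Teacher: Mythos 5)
Your construction of the family and of the selection is essentially the paper's: witnessing pairs supplied by Definition \ref{Cal.max.function} together with Proposition \ref{Propos.8.1} give properties (4) and (5), and a maximality-based choice yields (1) and (2) (the paper fixes, for each $x$, an admissible $\underline{Q}$ of maximal side length and derives nonoverlapping by contradiction; you instead take inclusion-maximal cubes among the chosen witnesses, which also works, since distinct inclusion-maximal dyadic cubes are nonoverlapping, every chosen cube has only finitely many dyadic ancestors of side at most $1$, and $Q\subset Q'$ implies $cQ\subset cQ'$, so the $c$-dilations of the maximal cubes still cover $\mathcal{U}^{b}_{c,t}(f)$). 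For (3) you also identify the paper's mechanism: the hole inside $c\overline{Q}$ demanded by \eqref{eqq.3.14} should be the dyadic ancestor of $\underline{Q}$ of side $\tfrac12 l(\overline{Q})$, which lies outside $\mathcal{DF}$ by (\textbf{C}3) of Definition \ref{cover.cube} because $l(\overline{Q})\geq 4l(\underline{Q})$.

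The gap is in how you finish (3). Your fallback to the ancestor of side $\tfrac14 l(\overline{Q})$ fails precisely in the borderline case it would be needed for: Proposition \ref{Propos.8.1} only gives $l(\underline{Q})\leq \tfrac14 l(\overline{Q})$, and when equality holds that ancestor is $\underline{Q}$ itself, which belongs to $\mathcal{DF}$ and therefore cannot serve as the cube $Q'$ in \eqref{eqq.3.14}; moreover, even when $l(\underline{Q})<\tfrac14 l(\overline{Q})$, the phrase ``whose position relative to $c\overline{Q}$ is more favorable'' is not an argument --- the smaller ancestor can protrude from $c\overline{Q}$ for exactly the same reason as the larger one. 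The paper's proof takes the ancestor $Q$ with $l(Q)=\tfrac12 l(\overline{Q})$ and asserts $\underline{Q}\subset Q\subset c\overline{Q}$ outright; note that for integer $c$ this containment is automatic and no fallback is required, because the faces of $c\overline{Q}$ lie on the dyadic grid of step $\tfrac12 l(\overline{Q})$, so a dyadic cube of that side whose interior meets the interior of $c\overline{Q}$ (which is guaranteed by $\underline{Q}\subset Q\cap c\overline{Q}$) is contained in $c\overline{Q}$. The protrusion you worry about can genuinely occur for non-integer $c$, but your patch does not repair it; as written, neither the containment of the half-scale ancestor nor that of the quarter-scale one is established, so property (3) is not proved.
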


\begin{proof}
Given a point $x \in \mathcal{U}^{b}_{c,t}(f)$, let $\mathcal{F}(x)$ be the family of all cubes
$\underline{Q} \in \mathcal{DF}$ for each of which there exists a cube $\overline{Q} \in \mathcal{DF}$ such that:

{ \rm (i)} $x \in c \underline{Q}$;

{ \rm (ii)} $l(\overline{Q}) \geq 4l(\underline{Q})$;

{ \rm(iii)} $\overline{Q} \in \mathcal{K}_{c}(\underline{Q})$;

{ \rm(iv)} $\Phi_{f}(\underline{Q},\overline{Q}) > t$.

By Proposition \ref{Propos.8.1}, the family $\mathcal{F}(x)$ is nonempty for each $x \in \mathcal{U}^{b}_{c,t}(f)$.
We set
\begin{equation}
\notag
\operatorname{M}(x):=\max\{l(\underline{Q}):\underline{Q} \in \mathcal{F}(x)\}, \quad x \in \mathcal{U}^{b}_{c,t}(f).
\end{equation}
For each $x \in \mathcal{U}^{b}_{c,t}(f)$ we fix an arbitrary
cube $\underline{Q}(x) \in \mathcal{F}(x)$ with $l(\underline{Q}(x))=\operatorname{M}(x)$.
Now we define
\begin{equation}
\notag
\mathfrak{D}_{c,t}(f):=\{\underline{Q}(x): x \in \mathcal{U}^{b}_{c,t}(f)\}.
\end{equation}
By condition (i) we conclude that property (1) of the lemma holds true.

Since the family $\mathfrak{D}_{c,t}(f)$ is composed of dyadic cubes there are only two possible cases: either different cubes from $\mathfrak{D}_{c,t}(f)$ have disjoint interiors or one of them contains another one.
Hence, in order to establish property (2) we assume on the contrary that there exist cubes $\underline{Q}_{1}, \underline{Q}_{2} \in \mathfrak{D}_{c,t}(f)$
such that $\underline{Q}_{1} \subset \underline{Q}_{2}$ and $l(\underline{Q}_{2}) > l(\underline{Q}_{1})$.
By the construction, there is a point $x \in \mathcal{U}_{c,t}^{b}(f)$ such that $\underline{Q}(x)=\underline{Q}_{1}$. Thus, the inclusion $\underline{Q}_{1} \subset \underline{Q}_{2}$
implies that $x \in c\underline{Q}_{2}$ and thus $\underline{Q}_{2} \in \mathcal{F}(x)$ because $\underline{Q}_{2} \in \mathfrak{D}_{c,t}(f)$. On the other hand, the
inequality $l(\underline{Q}_{2}) > l(\underline{Q}_{1})$
is in contradiction with the maximality of the side length of $\underline{Q}(x)$.

For any $\underline{Q} \in \mathfrak{D}_{c,t}(f)$ by $\kappa_{c,t}(f)(\underline{Q})$ we denote an arbitrary cube $\overline{Q} \in \mathcal{K}_{c}(\underline{Q})$
for which conditions (ii)--(iv) above hold. As a result, we built a family $\mathfrak{D}_{c,t}(f)$ and a selection $\kappa_{c,t}(f)$ of $\mathcal{K}_{c}$
with the domain $\mathfrak{D}_{c,t}(f)$ such that properties (4) and (5) hold.

Fix an arbitrary cube $\underline{Q} \in \mathfrak{D}_{c,t}(f)$. By the construction
$\kappa_{c,t}(f)(\underline{Q}) \in \mathcal{K}_{c}(\underline{Q})$ and $l(\kappa_{c}(\underline{Q})) \geq 4l(\underline{Q})$. This fact together with condition $(\textbf{C}3)$
of Definition \ref{cover.cube}
implies the existence of a cube $Q \in \mathcal{D}_{+} \setminus \mathcal{DF}$ such that $\underline{Q} \subset Q \subset c(\kappa_{c,t}(f)(\underline{Q}))$
and $l(Q)=\frac{1}{2}l(\kappa_{c}(\underline{Q}))$. Coupling this observation with \eqref{eqq.3.14} we see that property (3) of the lemma holds true.

The proof is complete.
\end{proof}

\begin{Lm}
\label{Lm8.6}
Let  $p \in (\max\{1,n-d\},n]$ and $q \in (1,n-d)$. Then, for each $c \geq 1$ there exists a constant $C > 0$ depending only
on parameters $p,q,n,d,\lambda,c$ such that
\begin{equation}
\label{eq7.12}
\sum\limits_{\nu \in \mathbb{Z}}2^{\nu q}\mathcal{L}^{n}(\mathcal{U}^{b}_{c,2^{\nu}}(f) \setminus \mathcal{U}^{b}_{c,2^{\nu+1}}(f)) \le  C \Bigl(\sum\limits_{|\gamma| = 1} \|D^{\gamma}F|L_{p}(\mathbb{R}^{n})\|\Bigr)^{q}
\end{equation}
for any $F \in W_{p}^{1}(\mathbb{R}^{n})$ with $f=\operatorname{Tr}|^{d}_{S}[F]$.
\end{Lm}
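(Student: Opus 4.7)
The plan is to combine the covering from Lemma~\ref{Lm8.4} at each dyadic level $2^{\nu}$ with a single globally defined maximizing selection, so that Lemma~\ref{KeyLemma2} can be invoked \emph{once} across all levels. To this end I first build the selection. For each $\underline{Q}\in\mathcal{DF}$ the set $\mathcal{K}^{*}_{c}(\underline{Q}):=\{\overline{Q}\in\mathcal{K}_{c}(\underline{Q})\cap\mathcal{P}(c):l(\overline{Q})\geq 4 l(\underline{Q})\}$ is finite (only finitely many scales $k\in[0,-\log_{2}l(\underline{Q})]$ are available and Remark~\ref{Remm.3.3} bounds the number of cubes at each scale), so whenever it is nonempty I pick $\kappa^{*}(\underline{Q})\in\mathcal{K}^{*}_{c}(\underline{Q})$ achieving $\Phi^{*}(\underline{Q}):=\max_{\overline{Q}}\Phi_{f}(\underline{Q},\overline{Q})$. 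The pair $(\operatorname{dom}\kappa^{*},\kappa^{*})$ satisfies both hypotheses of Lemma~\ref{KeyLemma2}.

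Next, for each $\nu\in\mathbb{Z}$ with $\mathcal{U}^{b}_{c,2^{\nu}}(f)\neq\emptyset$ I apply Lemma~\ref{Lm8.4} to obtain a nonoverlapping family $\mathfrak{D}_{\nu}\subset\mathcal{DF}$ whose dilations $\{c\underline{Q}\}$ cover $\mathcal{U}^{b}_{c,2^{\nu}}(f)$, together with a selection $\kappa_{\nu}$ such that $\kappa_{\nu}(\underline{Q})\in\mathcal{K}^{*}_{c}(\underline{Q})$ and $\Phi_{f}(\underline{Q},\kappa_{\nu}(\underline{Q}))>2^{\nu}$; by maximality $\Phi^{*}(\underline{Q})>2^{\nu}$ for every $\underline{Q}\in\mathfrak{D}_{\nu}$. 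I split $\mathfrak{D}_{\nu}=\mathfrak{D}^{A}_{\nu}\sqcup\mathfrak{D}^{B}_{\nu}$ with $\mathfrak{D}^{A}_{\nu}:=\{\underline{Q}\in\mathfrak{D}_{\nu}:\Phi^{*}(\underline{Q})\leq 2^{\nu+1}\}$. For $\underline{Q}\in\mathfrak{D}^{B}_{\nu}$ and any $x\in c\underline{Q}$, the pair $(\underline{Q},\kappa^{*}(\underline{Q}))$ is admissible for $f^{\natural}_{c}(x)$ in Definition~\ref{Cal.max.function}, hence $f^{\natural}_{c}(x)\geq\Phi^{*}(\underline{Q})>2^{\nu+1}$, which forces $c\underline{Q}\cap(\mathcal{U}^{b}_{c,2^{\nu}}(f)\setminus\mathcal{U}^{b}_{c,2^{\nu+1}}(f))\subset\mathcal{U}^{g}_{c,2^{\nu+1}}(f)$. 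Taking the union over $\underline{Q}\in\mathfrak{D}_{\nu}$ and estimating the $A$ and $B$ parts separately yields
\begin{equation*}
\mathcal{L}^{n}\bigl(\mathcal{U}^{b}_{c,2^{\nu}}(f)\setminus\mathcal{U}^{b}_{c,2^{\nu+1}}(f)\bigr)\leq c^{n}\sum_{\underline{Q}\in\mathfrak{D}^{A}_{\nu}}(l(\underline{Q}))^{n}+\mathcal{L}^{n}(\mathcal{U}^{g}_{c,2^{\nu+1}}(f)).
\end{equation*}

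The heart of the argument, and the main obstacle, is the treatment of the $A$-part: the two-sided bound $\Phi^{*}(\underline{Q})\in(2^{\nu},2^{\nu+1}]$ for $\underline{Q}\in\mathfrak{D}^{A}_{\nu}$ determines $\nu$ uniquely as $\lfloor\log_{2}\Phi^{*}(\underline{Q})\rfloor$, so the families $\{\mathfrak{D}^{A}_{\nu}\}_{\nu\in\mathbb{Z}}$ are pairwise disjoint in $\nu$. Using $2^{\nu q}\leq(\Phi^{*}(\underline{Q}))^{q}$ on $\mathfrak{D}^{A}_{\nu}$ and applying Lemma~\ref{KeyLemma2} a \emph{single} time to the disjoint union $\bigsqcup_{\nu}\mathfrak{D}^{A}_{\nu}\subset\operatorname{dom}\kappa^{*}$ with the selection $\kappa^{*}$ gives
\begin{equation*}
\sum_{\nu\in\mathbb{Z}}2^{\nu q}\sum_{\underline{Q}\in\mathfrak{D}^{A}_{\nu}}(l(\underline{Q}))^{n}\leq\sum_{\underline{Q}\in\bigsqcup_{\nu}\mathfrak{D}^{A}_{\nu}}(l(\underline{Q}))^{n}(\Phi^{*}(\underline{Q}))^{q}\leq C\Bigl(\sum_{|\gamma|=1}\|D^{\gamma}F|L_{p}(\mathbb{R}^{n})\|\Bigr)^{q}.
\end{equation*}
For the $\mathcal{U}^{g}$-contribution, fix $\sigma\in(\max\{1,n-d\},p)$; Lemma~\ref{Lm7.2} yields $\mathcal{U}^{g}_{c,t}(f)\subset\mathcal{V}_{\sigma,t/C_{0}}(F)$, and since $\{f^{\natural}_{c}>0\}$ lies in a bounded set $K$ (any cube in $\mathcal{DF}$ meets the compact set $S\subset Q_{0,0}$), the layer-cake identity turns $\sum_{\nu}2^{\nu q}\mathcal{L}^{n}(\mathcal{U}^{g}_{c,2^{\nu+1}}(f))$ into a constant multiple of $\int_{K}(\mathcal{M}_{\sigma}[\|\nabla F\|])^{q}\,dx$, and H\"older's inequality (using $q<p$) together with Proposition~\ref{Prop.max.funct} bounds this by $C\|\nabla F|L_{p}(\mathbb{R}^{n})\|^{q}$. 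Summing the two contributions over $\nu$ yields \eqref{eq7.12}. Without the global selection $\kappa^{*}$, applying Lemma~\ref{KeyLemma2} at each level $\nu$ separately would only give a uniform per-level bound $C\|\nabla F\|_{p}^{q}$, which fails to sum over $\nu\in\mathbb{Z}$.
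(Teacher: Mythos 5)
Your argument is correct, but it organizes the cross-level bookkeeping differently from the paper. The paper also applies Lemma~\ref{Lm8.4} at every level $2^{\nu}$ and invokes Lemma~\ref{KeyLemma2} exactly once, but it avoids the good set entirely: it restricts each family $\mathfrak{D}_{c,2^{\nu}}(f)$ to those cubes whose $c$-dilations meet the layer $U_{\nu}=\mathcal{U}^{b}_{c,2^{\nu}}(f)\setminus\mathcal{U}^{b}_{c,2^{\nu+1}}(f)$, proves that these restricted families are pairwise disjoint in $\nu$ (the same observation you use — an admissible pair with oscillation $>2^{\nu+1}$ forces $f^{\natural}_{c}>2^{\nu+1}$ on $c\underline{Q}$ — serves there to expel such a cube from $U_{\nu}$, see \eqref{eq.8.32}), and then glues the level-wise selections of Lemma~\ref{Lm8.4} into a single selection on the disjoint union before applying Lemma~\ref{KeyLemma2}. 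You instead fix one global maximizing selection $\kappa^{*}$ (legitimate, since $\mathcal{K}_{c}(\underline{Q})$ is finite), pigeonhole each cube by the dyadic band containing $\Phi^{*}(\underline{Q})$ to get the disjointness needed for a single application of Lemma~\ref{KeyLemma2}, and absorb the overshooting cubes into $\mathcal{U}^{g}_{c,2^{\nu+1}}(f)$, which you then sum via Lemma~\ref{Lm7.2}, the layer-cake identity and H\"older on the bounded support of $f^{\natural}_{c}$. The trade-off: the paper's route keeps the lemma a pure bad-part statement (the good set is handled only once, in the proof of Theorem~\ref{Th.direct.trace1}), whereas your route dispenses with the disjointness-and-gluing step but imports the good-part machinery into this lemma; you correctly note that \eqref{eq7.9} cannot be used at exponent $q<n-d$ and that boundedness of $\{f^{\natural}_{c}>0\}$ plus $q<p$ is what makes $\sum_{\nu}2^{\nu q}\mathcal{L}^{n}(\mathcal{U}^{g}_{c,2^{\nu+1}}(f))$ summable (your choice $\sigma\in(\max\{1,n-d\},p)$, rather than $\sigma=p$, is indeed essential there, and the application of Lemmas~\ref{Lm8.1} and~\ref{Lm7.2} to $F\in W^{1}_{p}(\mathbb{R}^{n})$ uses only local $\sigma$-integrability, exactly as in the paper's own use of Lemma~\ref{Lm7.2}). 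Both proofs are complete; yours is a genuine, slightly heavier but self-consistent alternative.
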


\begin{proof}
Fix a constant $c \geq 1$.
Given $t \in (0,\infty)$, we fix a family $\mathfrak{D}_{c,t}(f) \subset \mathcal{DF}$ and a selection
$\kappa_{c,t}(f)$ of $\mathcal{K}_{c}$ with the domain $\mathfrak{D}_{c,t}(f)$ such that properties (1)--(5) of Lemma \ref{Lm8.4} hold.

Given $\nu \in \mathbb{Z}$, we put
\begin{equation}
\label{eqq8.33}
U_{\nu}:=\mathcal{U}^{b}_{c,2^{\nu}}(f) \setminus \mathcal{U}^{b}_{c,2^{\nu+1}}(f), \quad \mathfrak{D}_{\nu}:=\{Q \in \mathfrak{D}_{c,2^{\nu}}(f): cQ \cap U_{\nu} \neq \emptyset\}.
\end{equation}
By property (1) in Lemma \ref{Lm8.4} we have $\mathcal{U}^{b}_{c,2^{\nu}}(f) =\bigcup\{cQ \cap \mathcal{U}^{b}_{c,2^{\nu}}(f):Q \in \mathfrak{D}_{c,2^{\nu}}(f)\}$. As a result, using
\eqref{eqq8.33} we obtain
\begin{equation}
\label{eq.8.31}
U_{\nu} = \bigcup\{cQ \cap U_{\nu}:Q \in \mathfrak{D}_{c,2^{\nu}}(f)\} = \bigcup\{cQ \cap U_{\nu}:Q \in \mathfrak{D}_{\nu}\} \subset \bigcup\{cQ:Q \in \mathfrak{D}_{\nu}\} .
\end{equation}

We claim that
\begin{equation}
\label{eq.8.32}
\mathfrak{D}_{\nu} \cap \mathfrak{D}_{\nu'} = \emptyset \quad \hbox{for} \quad \nu \neq \nu'.
\end{equation}
Indeed, assume that there are $\nu,\nu' \in \mathbb{Z}$ such that $\nu < \nu'$ and there is a cube $\underline{Q} \in \mathfrak{D}_{\nu} \cap \mathfrak{D}_{\nu'}$.
Since $\underline{Q} \in \mathfrak{D}_{\nu}$, by \eqref{eqq8.33} we can find a fix a point $\underline{x} \in U_{\nu} \cap c\underline{Q}$.
On the other hand, since $\underline{Q} \in \mathfrak{D}_{\nu'} \subset \mathfrak{D}_{c,2^{\nu'}}(f)$ and $\underline{x} \in c\underline{Q}$ we have (we recall that our selection $\kappa_{c,2^{\nu'}}$ satisfies condition (5) of Lemma \ref{Lm8.4} with $t=2^{\nu'}$)
\begin{equation}
\notag
f^{\natural}_{c}(\underline{x}) \geq \Phi_{f}(\underline{Q},\kappa_{c,2^{\nu'}}(\underline{Q}))  > 2^{\nu'} \geq 2^{\nu+1}.
\end{equation}
This immediately implies that $\underline{x} \in \mathcal{U}^{b}_{c,2^{\nu'}}(f)$ and, consequently, $\underline{x} \notin U_{\nu}$. This contradiction proves the claim.

Now, having at our disposal \eqref{eq.8.32} we put
\begin{equation}
\label{eqq8.36}
\mathfrak{D}:=\bigcup_{\nu \in \mathbb{Z}}\mathfrak{D}_{\nu}
\end{equation}
and obtain a well defined selection $\kappa_{c}$ of the set-valued mapping $\mathcal{K}_{c}$ with the domain $\mathfrak{D}$ by letting
\begin{equation}
\label{eqq.8.38}
\kappa_{c}(Q):=
\kappa_{c,2^{\nu}}(f)(Q) \quad \hbox{if} \quad Q \in \mathfrak{D}_{\nu}.
\end{equation}
By property (3) of Lemma \ref{Lm8.4} and \eqref{eqq.8.38} it follows that
\begin{equation}
\label{eqq8.37}
\kappa_{c}(Q) \in \mathcal{P}(c) \quad \hbox{for every} \quad Q \in \mathfrak{D}.
\end{equation}

Combining \eqref{eq.8.31} and \eqref{eq.8.32} we have
\begin{equation}
\label{eq.8.33}
\begin{split}
&\sum\limits_{\nu \in \mathbb{Z}}2^{\nu q}\mathcal{L}^{n}(U_{\nu})
\le c^{n}\sum\limits_{\nu \in \mathbb{Z}}2^{\nu q}\sum \{(l(\underline{Q}))^{n}: \underline{Q} \in \mathfrak{D}_{\nu}\}\\
&\le c^{n}
\sum\limits_{\nu \in \mathbb{Z}}\sum\limits_{\underline{Q} \in \mathfrak{D}_{\nu}}(l(\underline{Q}))^{n}(\Phi_{f}(\underline{Q},\kappa_{c}(\underline{Q})))^{q}
\le c^{n} \sum\limits_{\underline{Q} \in \mathfrak{D}}(l(\underline{Q}))^{n}(\Phi_{f}(\underline{Q},\kappa_{c}(\underline{Q})))^{q}.
\end{split}
\end{equation}
By \eqref{eqq8.33}, \eqref{eqq8.36}, \eqref{eqq8.37}, the family $\mathfrak{D}$ and the map $\kappa_{c}$ satisfy all the
assumptions of  Lemma \ref{KeyLemma2}. Hence,
we can continue estimate \eqref{eq.8.33} and get
\begin{equation}
\sum\limits_{\nu \in \mathbb{Z}}2^{\nu q}\mathcal{L}^{n}(U_{\nu})
\le C \Bigl(\sum\limits_{|\gamma| = 1} \|D^{\gamma}F|L_{p}(\mathbb{R}^{n})\|\Bigr)^{q}.
\end{equation}

The proof is complete.

\end{proof}

\textit{The main result} of this section reads as follows.

\begin{Th}
\label{Th.direct.trace1}
Let  $p \in (\max\{1,n-d\},n]$, $q \in (1,n-d)$ and $c \geq 1$. Then there exists a constant $C > 0$ depending only on parameters $p,q,n,\lambda,d,c$ such that
\begin{equation}
\label{eq.713}
\widetilde{\mathcal{N}}_{q,\{\mathfrak{m}_{k}\},\lambda,c}(f) \le C \|F|W_{p}^{1}(\mathbb{R}^{n})\|
\end{equation}
for any $F \in W_{p}^{1}(\mathbb{R}^{n})$ with $f=\operatorname{Tr}|^{d}_{S}[F]$.
\end{Th}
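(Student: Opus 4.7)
The plan is to bound each of the two terms appearing in
\[
\widetilde{\mathcal{N}}_{q,\{\mathfrak{m}_{k}\},\lambda,c}(f)=\|f|L_{q}(\mathfrak{m}_{0})\|+\|f^{\natural}_{\{\mathfrak{m}_{k}\},\lambda,c}|L_{q}(\mathbb{R}^{n})\|
\]
separately. For the first summand I would apply Proposition~\ref{Proposition.2.10} to obtain $\|f|L_{p}(\mathfrak{m}_{0})\|\le C\|F|W_{p}^{1}(\mathbb{R}^{n})\|$, and then use the H\"older-type embedding \eqref{eqq.simplest_imbedding}; this is applicable because $q<n-d<p$ and because the compactness of $S$ combined with property~(\textbf{M}2) of Definition~\ref{Def.Frostman} yields $\mathfrak{m}_{0}(S)<\infty$. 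Thus $\|f|L_{q}(\mathfrak{m}_{0})\|\le C\|F|W_{p}^{1}(\mathbb{R}^{n})\|$.

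For the second summand Remark~\ref{Rem.81} gives $\{f^{\natural}_{c}>0\}=\mathcal{U}^{g}_{c}(f)\cup\mathcal{U}^{b}_{c}(f)$, which I would exploit by writing $\int(f^{\natural}_{c})^{q}\,dx$ as a sum of the integrals over $\mathcal{U}^{g}_{c}(f)$ and over $\mathcal{U}^{b}_{c}(f)\setminus\mathcal{U}^{g}_{c}(f)$. On the good set, fix $\sigma\in(\max\{1,n-d\},p)$: Lemma~\ref{Lm7.2} with this $\sigma$ and with the given $p$ delivers $\int_{\mathcal{U}^{g}_{c}(f)}(f^{\natural}_{c})^{p}\,dx\le C\|\nabla F|L_{p}(\mathbb{R}^{n})\|^{p}$. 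Since every $\underline{Q}\in\mathcal{DF}$ meets $S\subset Q_{0,0}$ and has $l(\underline{Q})\le1$, we have $\operatorname{supp}f^{\natural}_{c}\subset(c+3)Q_{0,0}$, and H\"older's inequality with exponents $p/q$ and $p/(p-q)$ converts the $L^{p}$-bound into $\int_{\mathcal{U}^{g}_{c}(f)}(f^{\natural}_{c})^{q}\,dx\le C\|\nabla F|L_{p}(\mathbb{R}^{n})\|^{q}$.

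For the purely bad piece, the key set-theoretic observation is the following. If $x\in\mathcal{U}^{b}_{c}(f)\setminus\mathcal{U}^{g}_{c}(f)$, then $x\notin\mathcal{U}^{g}_{c,t}(f)$ for \emph{every} $t>0$, while $x\in\mathcal{U}_{c,t}(f)=\mathcal{U}^{g}_{c,t}(f)\cup\mathcal{U}^{b}_{c,t}(f)$ (disjoint union by Definition~\ref{Def8.1}) for every $t\in(0,f^{\natural}_{c}(x))$, and hence $x\in\mathcal{U}^{b}_{c,t}(f)$ for every such $t$. Choosing the unique $\nu^{*}\in\mathbb{Z}$ with $f^{\natural}_{c}(x)\in(2^{\nu^{*}},2^{\nu^{*}+1}]$, this forces $x\in\mathcal{U}^{b}_{c,2^{\nu^{*}}}(f)$ together with $x\notin\mathcal{U}^{b}_{c,2^{\nu^{*}+1}}(f)$ (the latter because $f^{\natural}_{c}(x)\le 2^{\nu^{*}+1}$ excludes $x$ from $\mathcal{U}_{c,2^{\nu^{*}+1}}(f)$ altogether); equivalently, $x\in U_{\nu^{*}}:=\mathcal{U}^{b}_{c,2^{\nu^{*}}}(f)\setminus\mathcal{U}^{b}_{c,2^{\nu^{*}+1}}(f)$. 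The map $x\mapsto\nu^{*}$ being single-valued partitions $\mathcal{U}^{b}_{c}(f)\setminus\mathcal{U}^{g}_{c}(f)$ into pairwise disjoint fibres on which $f^{\natural}_{c}\le 2^{\nu^{*}+1}$, so that
\begin{equation*}
\int_{\mathcal{U}^{b}_{c}(f)\setminus\mathcal{U}^{g}_{c}(f)}(f^{\natural}_{c})^{q}\,dx\le 2^{q}\sum_{\nu\in\mathbb{Z}}2^{\nu q}\mathcal{L}^{n}(U_{\nu})\le C\|\nabla F|L_{p}(\mathbb{R}^{n})\|^{q}
\end{equation*}
by Lemma~\ref{Lm8.6}. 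Combining the three estimates then yields \eqref{eq.713}.

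The main obstacle I anticipate lies in the bad-part step above: the sets $\mathcal{U}^{b}_{c,t}(f)$ are \emph{not} monotone in $t$, so one cannot telescope the layer-cake expansion of $\int(f^{\natural}_{c})^{q}\,dx$ directly against Lemma~\ref{Lm8.6}. The structural fact that saves the argument is the upward monotonicity of the bad condition: as $t$ increases, the family of pairs $(\underline{Q},\overline{Q})$ with $\Phi_{f}(\underline{Q},\overline{Q})>t$ appearing in Proposition~\ref{Propos.8.1} shrinks, so "bad for $t_{1}$" implies "bad for every $t_{2}>t_{1}$" (as long as the point still lies in $\mathcal{U}_{c,t_{2}}(f)$). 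It is this asymmetric monotonicity that forces a point never achieving the good condition into a single well-defined dyadic layer $U_{\nu^{*}}$, thereby aligning the decomposition with the Carleson-type estimate of Lemma~\ref{Lm8.6}.
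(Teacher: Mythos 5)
Your proposal is correct and follows essentially the same route as the paper: the norm is split into $\|f|L_{q}(\mathfrak{m}_{0})\|$ (handled via Proposition~\ref{Proposition.2.10} and \eqref{eqq.simplest_imbedding}) and the $L_{q}$-norm of $f^{\natural}_{c}$, the latter decomposed into a good part controlled through Lemma~\ref{Lm7.2} plus H\"older on the bounded support, and a bad part controlled through the dyadic layers of Lemma~\ref{Lm8.6}. In fact your fibration argument for the bad part, integrating over $\mathcal{U}^{b}_{c}(f)\setminus\mathcal{U}^{g}_{c}(f)$ rather than over $\mathcal{U}^{b}_{c}(f)$, is a careful justification of the step the paper dismisses as ``easy to see'' in \eqref{eqq8.42}, and it correctly exploits that a point avoiding $\mathcal{U}^{g}_{c}(f)$ is bad at \emph{every} level below $f^{\natural}_{c}(x)$, which is exactly what makes the containment in a single layer $U_{\nu^{*}}$ valid.
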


\begin{proof}
By Remark \ref{Rem.81} it is clear that
\begin{equation}
\label{eqq8.41}
\int\limits_{\mathbb{R}^{n}}(f^{\natural}_{c}(x))^{q}\,dx =
\int\limits_{\mathcal{U}^{b}_{c}(f)}(f^{\natural}_{c}(x))^{q}\,dx + \int\limits_{\mathcal{U}^{g}_{c}(f)}(f^{\natural}_{c}(x))^{q}\,dx.
\end{equation}
It is easy to see that
\begin{equation}
\label{eqq8.42}
\int\limits_{\mathcal{U}^{b}_{c}(f)}(f^{\natural}_{c}(x))^{q}\,dx \le
2^{q}\sum\limits_{\nu \in \mathbb{Z}}2^{q\nu}\mathcal{L}^{n}(\mathcal{U}^{b}_{c,2^{\nu}}(f) \setminus \mathcal{U}^{b}_{c,2^{\nu+1}}(f)).
\end{equation}
Combining \eqref{eqq8.41}, \eqref{eqq8.42} and Lemmas \ref{Lm7.2}, \ref{Lm8.6} we obtain
\begin{equation}
\label{eqq.748}
\int\limits_{\mathbb{R}^{n}}(f^{\natural}_{c}(x))^{q}\,dx
\le C \Bigl(\sum\limits_{|\gamma| = 1} \|D^{\gamma}F|L_{p}(\mathbb{R}^{n})\|\Bigr)^{q}.
\end{equation}
By \eqref{eq29'} we have $\mathfrak{m}_{0}(S) \le \operatorname{C}_{\{\mathfrak{m}_{k}\},1}$. Hence, by Proposition \ref{Proposition.2.10} and \eqref{eqq.simplest_imbedding} we get
\begin{equation}
\label{eqq.749}
\|f|L_{q}(\mathfrak{m}_{0})\| \le  C \|F|W_{p}^{1}(\mathbb{R}^{n})\|.
\end{equation}
By \eqref{eqq.748} and \eqref{eqq.749} we obtain \eqref{eq.713} and complete the proof.
\end{proof}

\section{The main results}

In this section we prove the main result of the present paper. Namely, we give a complete solution to Problem B.
To this aim we recall Definitions \ref{Def.d.trace}, \ref{def.ext.oper}, \ref{def.x.trace}.

First of all, we show that the operator $\operatorname{Ext}_{S,\{\mathfrak{m}_{k}\},\lambda}$ defined in \eqref{eq.exst.op}
is an extension operator. More precisely,
the following result holds.

\begin{Th}
\label{Th.ext.tr}
Let $q \in (1,n]$, $d^{\ast} \in (n-q,n]$, $d \in (0,n-q)$. Let $S \subset Q_{0,0}$
be a compact set with $\lambda^{\ast}:=\mathcal{H}^{d^{\ast}}_{\infty}(S) > 0$ and $\{\mathfrak{m}_{k}\} \in \mathfrak{M}^{d}(S)$. Let $c \geq 7$ and $\lambda \in (0,\lambda^{\ast})$ be some fixed constants.
Then
\begin{equation}
\label{eqq.9.1}
\operatorname{Tr}|^{d^{\ast}}_{S} \circ \operatorname{Ext}_{S,\{\mathfrak{m}_{k}\},\lambda} = \operatorname{Id}
\quad \hbox{on} \quad  \operatorname{X}^{d^{\ast}}_{q,d,\{\mathfrak{m}_{k}\}}(S).
\end{equation}
\end{Th}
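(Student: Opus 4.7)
The plan is to show that for each $f \in \operatorname{X}^{d^{\ast}}_{q,d,\{\mathfrak{m}_{k}\}}(S)$, some $(1,q)$-good representative $\overline{F}$ of $[\operatorname{Ext}(f)]$ satisfies $\overline{F}(x)=f(x)$ for $\mathcal{H}^{d^{\ast}}$-a.e.\ $x\in S$. First, since $f$ lies in $\operatorname{X}^{d^{\ast}}_{q,d,\{\mathfrak{m}_{k}\}}(S)$ we have $\widetilde{\mathcal{N}}_{q,\{\mathfrak{m}_{k}\},\lambda,c}(f)<+\infty$; because $c\ge 7$, Theorem~\ref{Th.Reverse_trace} delivers $[\operatorname{Ext}(f)]\in W_{q}^{1}(\mathbb{R}^{n})$ together with a subsequence $\{f_{k_{l}}\}$ of the special approximating sequence whose $\mathcal{L}^{n}$-equivalence classes converge weakly in $W_{q}^{1}(\mathbb{R}^{n})$ to $[\operatorname{Ext}(f)]$. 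Since each $f_{k}$ is smooth, it is its own $(1,q)$-good representative.

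Next, mimicking the argument used in the proof of Proposition~\ref{Prop.d_trace_space_is_complete}, I would invoke Proposition~7.3.1 and Theorem~7.4.5 of \cite{Kos} to extract a further subsequence (still written $\{f_{k_{l}}\}$) and a specific $(1,q)$-good representative $\overline{F}$ of $[\operatorname{Ext}(f)]$ with the property that $f_{k_{l}}(x)\to\overline{F}(x)$ for $C_{1,q}$-q.e.\ $x\in\mathbb{R}^{n}$. The hypothesis $d^{\ast}>n-q$ combined with Proposition~\ref{Hausdorf_capacity} ensures that $C_{1,q}$-null sets are $\mathcal{H}^{d^{\ast}}$-null, so $f_{k_{l}}(x)\to\overline{F}(x)$ for $\mathcal{H}^{d^{\ast}}$-a.e.\ $x\in S$. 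On the other hand, Proposition~\ref{Prop41''}(2) with Remark~\ref{Rem2.1} yields $\mathcal{H}^{d}(S\setminus\underline{S}(d,\lambda))=0$, which together with $d^{\ast}>d$ gives $\mathcal{H}^{d^{\ast}}(S\setminus\underline{S}(d,\lambda))=0$; and condition~(1) of Definition~\ref{def.x.trace} gives $\mathcal{H}^{d^{\ast}}(S\setminus S_{f}(d))=0$. Consequently, for $\mathcal{H}^{d^{\ast}}$-a.e.\ $x\in S$ we simultaneously have $x\in\underline{S}(d,\lambda)\cap S_{f}(d)$ and $f_{k_{l}}(x)\to\overline{F}(x)$.

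The decisive step is to verify $\overline{F}(x)=f(x)$ at such $x$. At any $x\in\underline{S}(d,\lambda)\cap S_{f}(d)$, Lemma~\ref{Lm.function2} produces the increasing enumeration $\{k_{s}(x)\}$ of $\underline{K}(x)$ along which $f_{k_{s}(x)}(x)\to f(x)$. The key tool I would use to merge the two limits is Theorem~\ref{Th.convergence}, which yields the telescoping bound $|f_{k}(x)-f_{s}(x)|\le 2^{-\underline{k}(x,k)+3}f^{\natural}_{\{\mathfrak{m}_{k}\},\lambda,7}(x)$ for all $s\ge k$. Since $x\in\underline{S}(d,\lambda)$ forces $\underline{k}(x,k)\to\infty$, the whole sequence $\{f_{k}(x)\}$ is Cauchy whenever $f^{\natural}_{\{\mathfrak{m}_{k}\},\lambda,7}(x)<+\infty$. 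Then full-sequence convergence, together with Lemma~\ref{Lm.function2}, forces $\overline{F}(x)=f(x)$.

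The main obstacle is therefore to show that $f^{\natural}_{\{\mathfrak{m}_{k}\},\lambda,7}(x)<+\infty$ for $\mathcal{H}^{d^{\ast}}$-a.e.\ $x\in S$: the hypothesis $\widetilde{\mathcal{N}}_{q,\{\mathfrak{m}_{k}\},\lambda,c}(f)<+\infty$ only yields $f^{\natural}_{\{\mathfrak{m}_{k}\},\lambda,c}\in L_{q}(\mathbb{R}^{n},\mathcal{L}^{n})$, which gives finiteness merely $\mathcal{L}^{n}$-a.e., and if $d^{\ast}<n$ the set $S$ may well be $\mathcal{L}^{n}$-null. To overcome this I expect to exploit the full strength of condition~(1) of Definition~\ref{def.x.trace} applied at the critical exponent $d'=d^{\ast}$ jointly with the Sawyer-type trace inequality (Theorem~\ref{Th.Sawyer}) applied to an appropriate fractional maximal majorant of $f^{\natural}_{\{\mathfrak{m}_{k}\},\lambda,c}$ against a $d^{\ast}$-Frostman measure on $S$ (which exists by the Frostman-type theorem recalled in Section~2); this transfers $L_{q}(\mathcal{L}^{n})$ control to $L_{q}$ control with respect to a measure that detects $\mathcal{H}^{d^{\ast}}$-null sets, giving the required pointwise finiteness. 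Once this is in hand, the preceding paragraph closes the proof.
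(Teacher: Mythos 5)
Your reduction of the problem to the two convergence statements is sensible, but the decisive step is exactly where the argument breaks. Your whole strategy hinges on showing $f^{\natural}_{\{\mathfrak{m}_{k}\},\lambda,7}(x)<+\infty$ for $\mathcal{H}^{d^{\ast}}$-a.e.\ $x\in S$, and the fix you sketch does not go through. From $f\in\operatorname{X}^{d^{\ast}}_{q,d,\{\mathfrak{m}_{k}\}}(S)$ you only know $f^{\natural}_{\{\mathfrak{m}_{k}\},\lambda,c}\in L_{q}(\mathbb{R}^{n},\mathcal{L}^{n})$, and the only pointwise majorant that the structure of Definition \ref{Cal.max.function} gives you is of the form $f^{\natural}_{c}(x)\le C\,\mathcal{M}_{q}[f^{\natural}_{c}](x)$ (each admissible pair for $x$ is admissible for every $y\in c\underline{Q}$, so $\Phi_{f}(\underline{Q},\overline{Q})$ is dominated by an ordinary average of $f^{\natural}_{c}$ over a cube containing $x$). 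This is a \emph{non-fractional} maximal function of a function that is merely in $L_{q}$ (equivalently $\mathcal{M}_{1}$ of an $L_{1}$ function), and Theorem \ref{Th.Sawyer} cannot be applied to it: the hypothesis $qs\ge n-d^{\ast}$ forces $s>0$ when $d^{\ast}<n$, and there is no way to insert the gain $l^{s}$ into the above domination, since the cubes in the supremum have arbitrarily small side length. So the transfer of $L_{q}(\mathcal{L}^{n})$-control to a $d^{\ast}$-Frostman measure that you postulate is not available, and without it the full-sequence Cauchy property from Theorem \ref{Th.convergence} — which you need in order to merge the limit $f_{k_{s}(x)}(x)\to f(x)$ of Lemma \ref{Lm.function2} with the limit of the subsequence converging to $\overline{F}$ — is not established at $\mathcal{H}^{d^{\ast}}$-a.e.\ point of $S$. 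A secondary (fixable) gap: Theorem \ref{Th.Reverse_trace} only gives \emph{weak} $W^{1}_{q}$-convergence of a subsequence, whereas the quasi-everywhere convergence results from \cite{Kos} that you invoke (as in Proposition \ref{Prop.d_trace_space_is_complete}) require norm convergence; you would have to pass to convex combinations via Mazur's lemma first.

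The paper's proof is organized precisely so as to avoid any pointwise finiteness of $f^{\natural}$ on $S$. It never proves convergence of the full sequence $\{f_{k}(x)\}$ on $S$; instead, for $x$ in the $d^{\ast}$-essential part $\underline{S}(d^{\ast},\lambda)$ it estimates $\fint_{Q^{s}(x)}|F(x)-F(y)|\,dy$ over $(d^{\ast},\lambda)$-thick cubes $Q^{s}(x)\ni x$, splitting it into a term comparing $F$ with $f$ via an auxiliary $d^{\ast}$-Frostman sequence $\{\widetilde{\mathfrak{m}}_{k}\}\in\mathfrak{M}^{d^{\ast}}(S)$ — handled by Mazur's lemma, Theorem \ref{Th4.3} with $\sigma=q>n-d^{\ast}$ applied to the smooth convex combinations and passed to the limit, and Proposition \ref{Prop.2.5} (using $d^{\ast}>n-q$) — and a term $\fint_{Q^{s}(x)}|f(x)-f(z)|\,d\widetilde{\mathfrak{m}}_{k_{s}(x)}(z)$, which tends to zero because $x\in S_{f}(d^{\ast})$, i.e.\ condition (1) of Definition \ref{def.x.trace} at the endpoint $d'=d^{\ast}$; your concrete argument only ever uses $S_{f}(d)$ and thus never exploits this endpoint regularity, which is exactly the ingredient that replaces your missing finiteness claim. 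The conclusion $\overline{F}(x)=f(x)$ $\mathcal{H}^{d^{\ast}}$-a.e.\ then follows from the Lebesgue-point property of a $(1,q)$-good representative together with Proposition \ref{Hausdorf_capacity}. To repair your proposal you would essentially have to switch to this averaged, $d^{\ast}$-level comparison rather than pointwise convergence of $\{f_{k}\}$ on $S$.
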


\begin{proof}
Let $f \in \operatorname{X}^{d^{\ast}}_{q,d,\{\mathfrak{m}_{k}\}}(S)$ and $F=\operatorname{Ext}_{S,\{\mathfrak{m}_{k}\},\lambda}(f)$.
Let $\{\widetilde{\mathfrak{m}}_{k}\}$ be an arbitrary sequence of measures from the class $\mathfrak{M}^{d^{\ast}}(S)$.
We recall \eqref{eq3.32} and, for any given point $x \in \underline{S}(d^{\ast},\lambda)$, we fix a strictly increasing sequence $\{k_{s}(x)\}=\{k_{s}(x)\}_{s=1}^{\infty} \subset \mathbb{N}$ and an arbitrary sequence of closed cubes $\{Q^{s}(x)\}$ such that $Q^{s}(x) \in \mathcal{DF}_{S,k_{s}(x)}(d^{\ast},\lambda)$
and $x \in Q^{s}(x)$ for all $s \in \mathbb{N}$.

\textit{Step 1.}
By \eqref{eq.exst.op} we have $F(x)=f(x)$ for all $x \in S$. Hence, for each number $s \in \mathbb{N}$, using the triangle inequality we obtain
\begin{equation}
\label{eqq.9.2}
\begin{split}
&\fint\limits_{Q^{s}(x)}|F(x)-F(y)|\,dy \le \fint\limits_{Q^{s}(x)}\fint\limits_{Q^{s}(x)}|f(z)-F(y)|\,d\widetilde{\mathfrak{m}}_{k_{s}(x)}(z)\,dy\\
&+\fint\limits_{Q^{s}(x)}|f(x)-f(z)|\,d\widetilde{\mathfrak{m}}_{k_{s}(x)}(z)=:R^{s}_{1}(x)+R^{s}_{2}(x), \quad x \in \underline{S}(d^{\ast},\lambda).
\end{split}
\end{equation}

\textit{Step 2.} Let $\{f_{k}\}=\{f_{k}\}(\{\mathfrak{m}_{k}\})$ be the special approximating sequence constructed in Section 5 for the $\mathfrak{m}_{0}$-equivalence class $[f]_{\mathfrak{m}_{0}}$ of $f$
(see Definition \ref{Def.spec.app.seq} and Remark \ref{Rem.spec_app_seq_representatives}).
Combining Theorem \ref{Th.Reverse_trace} with the Mazur lemma we deduce the existence of a strictly increasing sequence $\{N_{l}\}$ of integer nonnegative numbers
and a sequence of convex combinations
\begin{equation}
\label{eqq.9.3}
\widetilde{f}_{N_{l}}:=\sum\limits_{i=1}^{M_{l}}\lambda_{l}^{i}f_{N_{l}+i} \quad \text{with} \quad \lambda^{i}_{l}  \geq 0 \quad \text{and} \quad \sum_{i=1}^{M_{l}}\lambda^{i}_{l}=1
\end{equation}
such that the $\mathcal{L}^{n}$-equivalence class $[F]$ of $F$ (we will identify $F$ and $[F]$ below) belongs to the space $W_{q}^{1}(\mathbb{R}^{n})$ and, furthermore,
\begin{equation}
\label{eqq.9.4'''}
\|F-\widetilde{f}_{N_{l}}|W_{q}^{1}(\mathbb{R}^{n})\| \to 0, \quad l \to \infty.
\end{equation}
Combining this fact with Proposition \ref{Proposition.2.10} and taking into account that $q > n-d^{\ast}$, we deduce that $\{\widetilde{f}_{N_{l}}\}$ is a Cauchy sequence in $L_{q}(\widetilde{\mathfrak{m}}_{j})$
for each $j \in \mathbb{N}_{0}$. By the completeness of $L_{q}(\widetilde{\mathfrak{m}}_{j})$ there is $g \in L_{q}(\widetilde{\mathfrak{m}}_{j})$ such that
the sequence $\{\widetilde{f}_{N_{l}}\}$ converges to $g$ in $L_{q}(\widetilde{\mathfrak{m}}_{j})$-sense for every $j \in \mathbb{N}_{0}$. Note that, given $j \in \mathbb{N}_{0}$, the measure $\widetilde{\mathfrak{m}}_{j}$
is absolutely continuous with respect to $\mathfrak{m}_{j}$. As a result, combining this observation with Lemma \ref{Lm.function2} we obtain
\begin{equation}
\label{eqq.9.5}
\lim\limits_{l \to \infty}\|F-\widetilde{f}_{N_{l}}|L_{q}(\widetilde{\mathfrak{m}}_{j})\|= 0 \quad \text{for every} \quad j \in \mathbb{N}_{0}.
\end{equation}
Finally, by Corollary \ref{Ca.convergence} we have
\begin{equation}
\label{eqq.9.6}
\lim\limits_{l \to \infty}\|F-\widetilde{f}_{N_{l}}|L_{q}(\mathbb{R}^{n})\|= 0.
\end{equation}

\textit{Step 3.}
Since, for each function $\widetilde{f}_{N_{l}} \in C_{0}^{\infty}(\mathbb{R}^{n})$, the $d^{\ast}$-trace of $\widetilde{f}_{N_{l}}$ to $S$ coincides (up to a set of $\mathcal{H}^{d^{\ast}}$-measure
zero) with its
pointwise restriction. Given $x \in \underline{S}(d^{\ast},\lambda)$, an application of Theorem \ref{Th4.3} gives, for each $l \in \mathbb{N}$,
\begin{equation}
\label{eqq.9.7'}
\fint\limits_{Q^{s}(x)}\fint\limits_{Q^{s}(x)}|\widetilde{f}_{N_{l}}(z)-\widetilde{f}_{N_{l}}(y)|\,d\widetilde{\mathfrak{m}}_{k_{s}(x)}(z)\,dy \le C 2^{-k_{s}(x)} \Bigl(\sum\limits_{|\gamma| = 1}\fint\limits_{Q^{s}(x)}|D^{\gamma}\widetilde{f}_{N_{l}}(t)|^{q}\,dt\Bigr)^{\frac{1}{q}}.
\end{equation}
Using \eqref{eqq.9.4'''}--\eqref{eqq.9.6} we can pass to the limit in \eqref{eqq.9.7'}. Hence, for each $x \in \underline{S}(d^{\ast},\lambda)$,
\begin{equation}
\label{eqq.9.7}
\fint\limits_{Q^{s}(x)}\fint\limits_{Q^{s}(x)}|f(z)-F(y)|\,d\widetilde{\mathfrak{m}}_{k_{s}(x)}(z)\,dy
\le C 2^{-k_{s}(x)} \Bigl(\sum\limits_{|\gamma| = 1}\fint\limits_{Q^{s}(x)}|D^{\gamma}F(t)|^{q}\,dt\Bigr)^{\frac{1}{q}}.
\end{equation}
As a result, by Proposition \ref{Prop.2.5}, we obtain $\lim_{s \to \infty}R^{s}_{1}(x)=0$ for $\mathcal{H}^{n-q}$-a.e. $x \in \underline{S}(d^{\ast},\lambda)$.
Since $d^{\ast} > n-q$, this gives
\begin{equation}
\label{eqq.9.8}
\lim_{s \to \infty}R^{s}_{1}(x)=0 \quad \hbox{for}  \quad  \mathcal{H}^{d^{\ast}}-a.e. \quad x \in \underline{S}(d^{\ast},\lambda).
\end{equation}

\textit{Step 4.} Since $f \in \operatorname{X}^{d^{\ast}}_{q,d,\{\mathfrak{m}_{k}\}}(S)$, by Definition \ref{Deff.5.4}
\begin{equation}
\label{eqq.9.9}
\lim_{s \to \infty}R^{s}_{2}(x)=0  \quad \hbox{for each} \quad x \in S_{f}(d^{\ast}).
\end{equation}

\textit{Step 5.} 
Applying Proposition \ref{Th2.4} we find a $(1,q)$-good representative $\overline{F}$
of $F$. Hence, by Definition \ref{Def.good.rep} and Proposition \ref{Hausdorf_capacity}
\begin{equation}
\notag
\lim\limits_{l \to 0}\fint\limits_{Q_{l}(x)}\overline{F}(y)\,dy=
\lim\limits_{l \to 0}\fint\limits_{Q_{l}(x)}F(y)\,dy=\overline{F}(x) \quad \hbox{for} \quad \mathcal{H}^{d^{\ast}}-a.e. \quad x \in S.
\end{equation}
Combining this fact with Proposition \ref{Prop41''} and \eqref{eqq.9.2}, \eqref{eqq.9.8}, \eqref{eqq.9.9} we get $\overline{F}(x)=f(x)$
for $\mathcal{H}^{d^{\ast}}$-a.e.  $x \in S$.
As a result, taking into account Definition \ref{Def.d.trace} we obtain \eqref{eqq.9.1} and complete the proof.
\end{proof}

The following lemma is very similar in spirit to Lemma 4.3 from \cite{TV}. We present
the detailed proof for the completeness. Recall Definition \ref{Deff.5.4}.

\begin{Lm}
\label{Lebesguepoint1}
Let $p \in (1,n]$, $d^{\ast} \in (n-p,n]$ and let $S \subset Q_{0,0}$ be a compact set with $\mathcal{H}^{d^{\ast}}_{\infty}(S) > 0$.
Then, for each $[f] \in W_{p}^{1}(\mathbb{R}^{n})|^{d^{\ast}}_{S}$, there exists a Borel representative $f$ of $[f]$ such that $\mathcal{H}^{d^{\ast}}(S \setminus S_{f}(d'))=0$ for each $d' \in (n-p,d^{\ast}]$.
\end{Lm}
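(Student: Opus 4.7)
\medskip

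\noindent\textbf{Proof proposal.} The plan is to choose the representative $f$ as the pointwise restriction to $S$ of a $(1,p)$-good representative of any Sobolev extension, and then for each fixed $d' \in (n-p,d^{\ast}]$ to produce a concrete $\mathcal{H}^{d'}$-null (hence $\mathcal{H}^{d^{\ast}}$-null) exceptional set outside of which every point is $d'$-regular for $f$. Concretely, I would pick $F \in W^{1}_{p}(\mathbb{R}^{n})$ with $\operatorname{Tr}|^{d^{\ast}}_{S}[F]=[f]$, apply Proposition~\ref{Th2.4} to obtain a $(1,p)$-good representative $\overline{F}$ of $F$, and set $f:=\overline{F}|_{S}$. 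By Definition~\ref{Def.good.rep}, $\overline{F}$ is Borel, so $f \in \mathfrak{B}(S)$; and by Definition~\ref{Def.d.trace}, $f$ is a representative of $[f]$.

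Next, I would fix $d' \in (n-p,d^{\ast}]$, choose $\sigma \in (\max\{1,n-d'\},\,p]$ (available since $p>1$ and $p>n-d'$), and define two exceptional sets: $E_{1}$, the set of non-Lebesgue points of $\overline{F}$, which satisfies $C_{1,p}(E_{1})=0$ and hence $\mathcal{H}^{d'}(E_{1})=0$ by Proposition~\ref{Hausdorf_capacity}; and $E_{F}$, the set produced by Proposition~\ref{Prop.2.5} applied to the locally integrable function $\|\nabla F\|^{\sigma}$ with the parameter $d=n-\sigma \in [0,n)$. Since $n-\sigma<d'$, a standard comparison gives $\mathcal{H}^{d'}(E_{F})=0$. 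Put $E:=E_{1}\cup E_{F}$; then $\mathcal{H}^{d^{\ast}}(E)=0$ because $d^{\ast}\geq d'$.

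It then remains to verify the inclusion $S\setminus E \subset S_{f}(d')$. Fix $x \in S\setminus E$, a sequence $\{\widetilde{\mathfrak{m}}_{k}\}\in \mathfrak{M}^{d'}(S)$, constants $c\geq 1$, $\lambda \in (0,1)$, and a cube $Q \in T_{d',\lambda,c}(x)\cap \mathcal{D}_{k}$; then $l(Q)=2^{-k}$, $Q$ is $(d',\lambda)$-thick, and $Q \subset Q_{r_{k}}(x)$ with $r_{k}:=(c+1)2^{-k}$. Splitting
\[
\fint_{Q}|f(x)-f(y)|\,d\widetilde{\mathfrak{m}}_{k}(y) \le \Bigl|f(x)-\fint_{Q}F(z)\,dz\Bigr| + \fint_{Q}\Bigl|f(y)-\fint_{Q}F(z)\,dz\Bigr|\,d\widetilde{\mathfrak{m}}_{k}(y),
\]
Theorem~\ref{Th4.3} bounds the last summand by $C\,2^{-k}\bigl(\fint_{Q}\|\nabla F\|^{\sigma}\bigr)^{1/\sigma}$, and a standard Poincaré estimate (Proposition~\ref{Lm4.1}) together with H\"older gives $|\fint_{Q_{r_{k}}(x)}F-\fint_{Q}F| \le Cr_{k}\bigl(\fint_{Q_{r_{k}}(x)}\|\nabla F\|^{\sigma}\bigr)^{1/\sigma}$. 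Combining with $|f(x)-\fint_{Q_{r_{k}}(x)}F|\to 0$ (because $x$ is a Lebesgue point of $\overline{F}$ and $f(x)=\overline{F}(x)$) and with $\lim_{r\to 0}r\bigl(\fint_{Q_{r}(x)}\|\nabla F\|^{\sigma}\bigr)^{1/\sigma}=0$ (from $x\notin E_{F}$, upon taking a $\sigma$-th root of the conclusion of Proposition~\ref{Prop.2.5}), the right-hand side tends to zero.

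The delicate point is to ensure that the bounds are uniform in $\{\widetilde{\mathfrak{m}}_{k}\}$, $c$, $\lambda$, and the specific $Q \in T_{d',\lambda,c}(x)\cap \mathcal{D}_{k}$, so that the resulting exceptional set $E$ depends only on $F$ and $d'$ (through $\sigma$). This is achieved because each admissible $Q$ is dominated by the single envelope cube $Q_{r_{k}}(x)$, so the estimates factor through quantities depending only on $x$, $r_{k}$, and the Frostman data of $\{\widetilde{\mathfrak{m}}_{k}\}$; the latter affect only the multiplicative constant and not the limit. Taking the maximum over $Q$ and letting $k\to\infty$ therefore gives $x \in S_{f}(d')$, completing the argument.
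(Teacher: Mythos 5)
Your proposal is correct and takes essentially the same route as the paper's proof: restrict a $(1,p)$-good representative $\overline{F}$ to $S$, split the averages over tower cubes by the triangle inequality, control the measure-averaged term via Theorem~\ref{Th4.3} and the remaining term via the Lebesgue-point property together with Proposition~\ref{Prop.2.5}, with an exceptional set that is $\mathcal{H}^{d'}$-null (hence $\mathcal{H}^{d^{\ast}}$-null) and independent of $\{\widetilde{\mathfrak{m}}_{k}\}$, $\lambda$, $c$. The only cosmetic deviations are your Poincar\'e detour through the envelope cube $Q_{r_{k}}(x)$, where the paper bounds $|f(x)-\fint_{Q}F|$ directly by enlarging $Q$ to a cube centered at $x$ of comparable size, and your flexibility in choosing $\sigma\in(\max\{1,n-d'\},p]$ where the paper simply takes $\sigma=p$.
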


\begin{proof}
We set $\lambda^{\ast}:=\mathcal{H}^{d^{\ast}}_{\infty}(S)$ and fix arbitrary $d' \in (n-p,d^{\ast}]$, $\lambda \in (0,\lambda^{\ast})$, $c\geq1$.
We also fix
\begin{equation}
[f] \in W_{p}^{1}(\mathbb{R}^{n})|^{d^{\ast}}_{S} \quad \text{and} \quad F \in W_{p}^{1}(\mathbb{R}^{n}) \text{ with } F|^{d^{\ast}}_{S}=[f].
\end{equation}
Finally, we fix $\{\widetilde{\mathfrak{m}}_{k}\} \in \mathfrak{M}^{d'}(S)$.
Using Proposition \ref{Th2.4}, we find and fix a $(1,p)$-good representative $\overline{F}$ of $F$.
Furthermore, we put $f=\overline{F}|_{S}$.

Now let $S' \subset S$ be
the intersection of the following three sets: the set $\underline{S}(d',\lambda)$, the set of all Lebesgue points of the function $\overline{F}$, and
the set
$
\Bigl\{x \in S:\lim_{r \to 0}r^{p-n}\fint_{Q_{r}(x)}\sum_{|\gamma|=1}|D^{\gamma}F|(x)\,dx = 0\Bigr\}.
$
Using Propositions \ref{Prop.2.5}, \ref{Hausdorf_capacity}, \ref{Prop41''}
and Definition \ref{Def.good.rep} we obtain
\begin{equation}
\label{eqq.hausdorff_except_set}
\mathcal{H}^{d^{\ast}}(S \setminus S') = \mathcal{H}^{d'}(S \setminus S') = 0.
\end{equation}
Given a point $x \in S'$ and a cube $Q \in T_{d',\lambda,c}(x) \cap \mathcal{D}_{k}$, by the triangle inequality we have
\begin{equation}
\label{eqq.9.14}
\begin{split}
&\fint\limits_{Q \cap S}|f(x)-f(z)|\,d\widetilde{\mathfrak{m}}_{k}(z) \le \Bigl|\overline{F}(x)-\fint\limits_{Q}F(y)\,dy\Bigr|\\
&+\fint\limits_{Q \cap S}\Bigl|f(z)-\fint\limits_{Q}F(y)\,dy\Bigr|\,d\widetilde{\mathfrak{m}}_{k}(z)=:J^{1}(Q)+J^{2}(Q).
\end{split}
\end{equation}

If $x \in cQ$ for some $Q \in \mathcal{D}_{k}$ with $k \in \mathbb{N}_{0}$ then $Q \subset Q_{\frac{2c}{2^{k}}}(x)$. Hence, we have
\begin{equation}
\notag
\max\limits_{Q \in T_{d',\lambda,c}(x)\cap \mathcal{D}_{k}} J^{1}(Q) \le C \fint\limits_{Q_{\frac{2c}{2^{k}}}(x)}|\overline{F}(x)-\overline{F}(y)|\,dy.
\end{equation}
By the construction of $S'$, we have
\begin{equation}
\label{eqq.9.15}
\lim\limits_{k \to \infty}\max\limits_{Q \in T_{d',\lambda,c}(x)\cap \mathcal{D}_{k}} J^{1}(Q) = 0 \quad \hbox{for every} \quad x \in S'.
\end{equation}

Applying Theorem \ref{Th4.3} with $\sigma=p$ and $d=d'$ for each $x \in S'$, we get
\begin{equation}
\notag
\begin{split}
&\max\limits_{Q \in T_{d',\lambda,c}(x)\cap \mathcal{D}_{k}}(J^{2}(Q))^{p} \le C\max\limits_{Q \in T_{d',\lambda,c}(x)\cap \mathcal{D}_{k}}
(l(Q))^{p-n}\sum\limits_{|\gamma|=1}\int\limits_{Q}|D^{\gamma} F(z)|^{p}\,dz\\
&\le C \sum\limits_{|\gamma|=1} \Bigl(\frac{2^{k}}{2c}\Bigr)^{n-p} \int\limits_{Q_{\frac{2c}{2^{k}}}(x)}|D^{\gamma} F(z)|^{p}\,dz.
\end{split}
\end{equation}
Hence, by Proposition \ref{Prop.2.5} we deduce
\begin{equation}
\label{eqq.9.16}
\lim\limits_{k \to \infty}\max\limits_{Q \in T_{d',\lambda,c}(x)\cap \mathcal{D}_{k}} J^{2}(Q) = 0 \quad \hbox{for every} \quad x \in S'.
\end{equation}

As a result,
combining \eqref{eqq.hausdorff_except_set} with
\eqref{eqq.9.14}--\eqref{eqq.9.16}, by Definition \ref{Def.d.trace}, we obtain $\mathcal{H}^{d^{\ast}}(S \setminus S_{f}(d')) = 0$ and complete the proof.

\end{proof}

Now present \textit{the main result of this paper}. This gives a solution to Problem B.

\begin{Th}
\label{Th.main}
Let $p \in (1,n]$, $d^{\ast} \in (n-p,n]$ and $\varepsilon^{\ast}:=\min\{p-(n-d^{\ast}),p-1\}$. Let $S \subset Q_{0,0}$ be a compact set with $\lambda^{\ast}:=\mathcal{H}^{d^{\ast}}_{\infty}(S) > 0$.  Let $\lambda \in (0,\lambda^{\ast})$ and $c \geq 7$ be some fixed constants.
Then, for each $\varepsilon \in (0,\varepsilon^{\ast})$,
\begin{equation}
\label{eqq.9.17}
W_{p}^{1}(\mathbb{R}^{n})|^{d^{\ast}}_{S} \subset \operatorname{X}^{d^{\ast}}_{p-\varepsilon,d,\{\mathfrak{m}_{k}\}}(S) \subset W_{p-\varepsilon}^{1}(\mathbb{R}^{n})|^{d^{\ast}}_{S}
\end{equation}
for any $d \in (n-p,n-p+\varepsilon)$ and $\{\mathfrak{m}_{k}\} \in \mathfrak{M}^{d}(S)$.

Furthermore, for every $\varepsilon \in (0,\varepsilon^{\ast})$, $d \in (n-p,n-p+\varepsilon)$ and $\{\mathfrak{m}_{k}\} \in \mathfrak{M}^{d}(S)$, there exists a constant $C> 0$ depending only on $p,\varepsilon,n,d,\lambda,c$ and $\operatorname{C}_{\mathfrak{m}_{k},i}$, $i=1,2,3$ such that
\begin{equation}
\label{eqq.9.18}
\frac{1}{C}\|f|W_{p-\varepsilon}^{1}(\mathbb{R}^{n})|^{d^{\ast}}_{S}\| \le
\|f|\operatorname{X}^{d^{\ast}}_{p-\varepsilon,d,\{\mathfrak{m}_{k}\}}\| \le C \|f|W_{p}^{1}(\mathbb{R}^{n})|^{d^{\ast}}_{S}\| \quad \text{for all} \quad f \in W_{p}^{1}(\mathbb{R}^{n})|^{d^{\ast}}_{S}.
\end{equation}

The operator $\operatorname{Ext}_{S,\{\mathfrak{m}_{k}\},\lambda}$ is a bounded linear mapping
from $\operatorname{X}^{d^{\ast}}_{p-\varepsilon,d,\{\mathfrak{m}_{k}\}}(S)$ to $W_{p-\varepsilon}^{1}(\mathbb{R}^{n})$ and
$\operatorname{Tr}|_{S}^{d^{\ast}} \circ \operatorname{Ext}_{S,\{\mathfrak{m}_{k}\},\lambda} = \operatorname{Id}$ on $\operatorname{X}^{d^{\ast}}_{p-\varepsilon,d,\{\mathfrak{m}_{k}\}}(S)$.
In particular, $\operatorname{Ext}_{S,\{\mathfrak{m}_{k}\},\lambda} \in \mathfrak{E}(S,d^{\ast},p,\varepsilon)$.
\end{Th}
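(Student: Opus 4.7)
The plan is to deduce Theorem \ref{Th.main} by assembling the three substantial results already proved in Sections 6--8: the direct trace theorem (Theorem \ref{Th.direct.trace1}), the reverse trace theorem (Theorem \ref{Th.Reverse_trace}), and the identification of $\operatorname{Ext}_{S,\{\mathfrak{m}_{k}\},\lambda}$ as a right inverse of the $d^{\ast}$-trace operator (Theorem \ref{Th.ext.tr}), together with the representative-selection Lemma \ref{Lebesguepoint1}. The bulk of the work is a careful verification that, under the standing hypothesis $\varepsilon \in (0, \varepsilon^{\ast})$ with $\varepsilon^{\ast} = \min\{p-(n-d^{\ast}), p-1\}$ and $d \in (n-p, n-p+\varepsilon)$, the numerical constraints of all four results are simultaneously met. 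Setting $q:=p-\varepsilon$, the inequality $\varepsilon < p-1$ gives $q \in (1, n]$; the inequality $\varepsilon < p-(n-d^{\ast})$ gives $n-d^{\ast} < q$, i.e., $d^{\ast} \in (n-q, n]$; the upper bound $d < n-p+\varepsilon = n-q$ gives $d \in (0,n-q)$; and $n-d \in (q, p)$ shows that $p \in (\max\{1,n-d\}, n]$ with $q \in (1, n-d)$. Thus all hypotheses line up.

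For the right inclusion $W_{p}^{1}(\mathbb{R}^{n})|^{d^{\ast}}_{S} \subset \operatorname{X}^{d^{\ast}}_{p-\varepsilon,d,\{\mathfrak{m}_{k}\}}(S)$ and the upper bound in \eqref{eqq.9.18}, I would fix $f \in W_{p}^{1}(\mathbb{R}^{n})|^{d^{\ast}}_{S}$ with representative $F \in W_{p}^{1}(\mathbb{R}^{n})$ satisfying $F|^{d^{\ast}}_{S} = f$. Applying Lemma \ref{Lebesguepoint1} produces a Borel representative of $f$ such that $\mathcal{H}^{d^{\ast}}(S \setminus S_{f}(d'))=0$ for each $d' \in (n-p, d^{\ast}]$; since $d > n-p$, this certifies condition (1) of Definition \ref{def.x.trace} for every $d' \in [d, d^{\ast}]$. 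Condition (2) is then precisely Theorem \ref{Th.direct.trace1} (applied with $p$ in the role of $p$ and $q = p-\varepsilon$), which yields the estimate $\widetilde{\mathcal{N}}_{p-\varepsilon,\{\mathfrak{m}_{k}\},\lambda,c}(f) \le C\|F|W_{p}^{1}(\mathbb{R}^{n})\|$; the parameter check above guarantees that the hypotheses $p \in (\max\{1,n-d\},n]$ and $q \in (1,n-d)$ hold. Taking the infimum over all admissible $F$ produces the right inequality in \eqref{eqq.9.18}.

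For the left inclusion $\operatorname{X}^{d^{\ast}}_{p-\varepsilon,d,\{\mathfrak{m}_{k}\}}(S) \subset W_{p-\varepsilon}^{1}(\mathbb{R}^{n})|^{d^{\ast}}_{S}$ and the lower bound in \eqref{eqq.9.18}, I would take $f \in \operatorname{X}^{d^{\ast}}_{p-\varepsilon,d,\{\mathfrak{m}_{k}\}}(S)$ and apply Theorem \ref{Th.Reverse_trace} at exponent $p-\varepsilon > 1$ with $c \ge 7$: this provides $[\operatorname{Ext}_{S,\{\mathfrak{m}_{k}\},\lambda}(f)] \in W_{p-\varepsilon}^{1}(\mathbb{R}^{n})$ with norm at most $C \widetilde{\mathcal{N}}_{p-\varepsilon,\{\mathfrak{m}_{k}\},\lambda,c}(f)$. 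Because $q = p-\varepsilon$, $d^{\ast} \in (n-q, n]$ and $d \in (0, n-q)$, Theorem \ref{Th.ext.tr} applies and yields $\operatorname{Tr}|^{d^{\ast}}_{S} \circ \operatorname{Ext}_{S,\{\mathfrak{m}_{k}\},\lambda}(f) = f$, so $f \in W_{p-\varepsilon}^{1}(\mathbb{R}^{n})|^{d^{\ast}}_{S}$ with
\begin{equation}
\notag
\|f|W_{p-\varepsilon}^{1}(\mathbb{R}^{n})|^{d^{\ast}}_{S}\| \le \|[\operatorname{Ext}_{S,\{\mathfrak{m}_{k}\},\lambda}(f)]|W_{p-\varepsilon}^{1}(\mathbb{R}^{n})\| \le C \|f|\operatorname{X}^{d^{\ast}}_{p-\varepsilon,d,\{\mathfrak{m}_{k}\}}(S)\|.
\end{equation}
The final assertion $\operatorname{Ext}_{S,\{\mathfrak{m}_{k}\},\lambda} \in \mathfrak{E}(S,d^{\ast},p,\varepsilon)$ then follows from Definition \ref{def.ext.oper}: linearity is immediate from Proposition \ref{Propp.5.2}, boundedness on the continuously embedded subspace $W_{p}^{1}(\mathbb{R}^{n})|^{d^{\ast}}_{S}$ comes from composing the right inequality of \eqref{eqq.9.18} with Theorem \ref{Th.Reverse_trace}, and the right-inverse property is again Theorem \ref{Th.ext.tr}. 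I do not expect a serious analytic obstacle at this stage; the only delicate point is the bookkeeping that the admissible range $d \in (n-p, n-p+\varepsilon)$ is exactly what makes $p$ large enough for the direct trace theorem to apply at exponent $q = p-\varepsilon$ while keeping $\{\mathfrak{m}_{k}\} \in \mathfrak{M}^{d}(S)$ available as Frostman data for both the extension at exponent $p-\varepsilon$ and the trace machinery at exponent $p$.
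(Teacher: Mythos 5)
Your proposal is correct and follows essentially the same route as the paper: the first inclusion and the upper bound in \eqref{eqq.9.18} come from Lemma \ref{Lebesguepoint1} combined with Theorem \ref{Th.direct.trace1} at $q=p-\varepsilon$, while the second inclusion and the lower bound come from Theorem \ref{Th.Reverse_trace} together with Theorem \ref{Th.ext.tr}, exactly as in the paper. Your parameter bookkeeping ($q=p-\varepsilon\in(1,n]$, $d^{\ast}\in(n-q,n]$, $d\in(0,n-q)$, $p>\max\{1,n-d\}$, $q<n-d$) is the same verification the paper performs implicitly, so nothing is missing.
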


\begin{proof}
We fix arbitrary $\varepsilon \in (0,\varepsilon^{\ast})$, $d \in (n-p,n-p+\varepsilon)$ and $\{\mathfrak{m}_{k}\} \in \mathfrak{M}^{d}(S)$.
Given $f \in W_{p}^{1}(\mathbb{R}^{n})|^{d^{\ast}}_{S}$, we apply Lemma \ref{Lebesguepoint1} and then
use Theorem \ref{Th.direct.trace1} with $q=p-\varepsilon$. We conclude
that $f \in \operatorname{X}^{d^{\ast}}_{p-\varepsilon,d,\{\mathfrak{m}_{k}\}}(S)$ and furthermore, there is a constant $C > 0$ depending only on
the parameters $n,p,\varepsilon,d,\lambda,c$ and the constants $\operatorname{C}_{\{\mathfrak{m}_{k}\},i}$, $i=1,2,3$, such that
\begin{equation}
\notag
\widetilde{\mathcal{N}}_{p-\varepsilon,\{\mathfrak{m}_{k}\},\lambda,c}(f) \le C \|F|W_{p}^{1}(\mathbb{R}^{n})\| \quad \text{for all} \quad F \in W_{p}^{1}(\mathbb{R}^{n}) \text{ with } f=F|^{d^{\ast}}_{S}.
\end{equation}
This observation in combination with \eqref{eq214} proves the first inclusion in \eqref{eqq.9.17} and the second inequality in \eqref{eqq.9.18}.

Conversely, let $f \in \operatorname{X}^{d^{\ast}}_{p-\varepsilon,d,\{\mathfrak{m}_{k}\}}(S)$ and $F=\operatorname{Ext}_{S,\{\mathfrak{m}_{k}\},\lambda}(f)$.
By Theorem \ref{Th.Reverse_trace} we conclude that the $\mathcal{L}^{n}$-equivalence class $[F]$ of $F$ belongs to $W_{p-\varepsilon}^{1}(\mathbb{R}^{n})$,
and furthermore, there exists a constant $C > 0$ depending only on the
parameters $n,p,\varepsilon,d,\lambda,c$ and $\operatorname{C}_{\{\mathfrak{m}_{k}\},i}$, $i=1,2,3$ such that
\begin{equation}
\label{eqq.9.24}
\|[F]|W_{p-\varepsilon}^{1}(\mathbb{R}^{n})\|
\le C \|f|\operatorname{X}^{d^{\ast}}_{p-\varepsilon,d,\{\mathfrak{m}_{k}\}}\|.
\end{equation}
We apply Theorem \ref{Th.ext.tr} with
$q=p-\varepsilon$ and then take into account Definition \ref{Def.d.trace}. As a result, we have $f \in W_{p-\varepsilon}^{1}(\mathbb{R}^{n})|^{d^{\ast}}_{S}$.
This proves the second inclusion in \eqref{eqq.9.17}. Finally, the first inequality in \eqref{eqq.9.18} follows
from \eqref{eq214} and \eqref{eqq.9.24}.

The theorem is proved.
\end{proof}

Finally, having at our disposal the above results we can establish the following result.

\begin{Lm}
\label{Lm.completeness}
Let $d^{\ast} \in (0,n]$ and let $S \subset Q_{0,0}$ be a compact set with $\lambda^{\ast}:=\mathcal{H}^{d^{\ast}}_{\infty}(S) > 0$.
Let $\lambda \in (0,1]$ and $c \geq 1$ be some fixed constants.
If $p \in (1,\infty)$, $d^{\ast} > n-p$, $d \in (n-p,d^{\ast}]$ and $\{\mathfrak{m}_{k}\} \in \mathfrak{M}^{d}(S)$, then
$\operatorname{X}^{d^{\ast}}_{p,d,\{\mathfrak{m}_{k}\}}(S)$ is a Banach space.
\end{Lm}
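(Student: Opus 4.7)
The plan proceeds in three stages: extract limit candidates from the Cauchy hypothesis through the two pieces of $\widetilde{\mathcal{N}}_{p,\{\mathfrak{m}_{k}\},\lambda,c}(\cdot)=\|\cdot|L_{p}(\mathfrak{m}_{0})\|+\|(\cdot)^{\natural}_{\{\mathfrak{m}_{k}\},\lambda,c}|L_{p}(\mathbb{R}^{n})\|$; verify norm convergence by a pair of Fatou-type arguments; and (the main obstacle) exhibit a Borel representative of the limit that fulfils the pointwise regularity condition \textrm{(1)} of Definition \ref{def.x.trace}.

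First I pick Borel representatives $f_{n}$ of a Cauchy sequence $\{[f_{n}]\}$. Writing $f_{n}=(f_{n}-f_{m})+f_{m}$ and applying the subadditivity \eqref{eqq.410} from Remark \ref{Remm.4.3} in both directions produces the pointwise bound $|f_{n}^{\natural}-f_{m}^{\natural}|\le(f_{n}-f_{m})^{\natural}$, so the Cauchy hypothesis makes $\{f_{n}\}$ Cauchy in $L_{p}(\mathfrak{m}_{0})$ and $\{f_{n}^{\natural}\}$ Cauchy in $L_{p}(\mathbb{R}^{n})$. Completeness of these $L_{p}$ spaces yields limits $f\in L_{p}(\mathfrak{m}_{0})$ and $g\in L_{p}(\mathbb{R}^{n})$, and a common subsequence can be chosen so that $f_{n}\to f$ $\mathfrak{m}_{0}$-a.e.\ on $S$ and $f_{n}^{\natural}\to g$ $\mathcal{L}^{n}$-a.e.\ on $\mathbb{R}^{n}$.

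By Remark \ref{Rem.Frostman} together with the bounded relative densities $w_{k}$ of property \textrm{(M4)} of Definition \ref{Def.Frostman}, $f_{n}\to f$ in $L_{p}(\mathfrak{m}_{k})$ for every $k$, and hence in $L_{1}$ on every fixed dyadic cube. Thus for any admissible pair $(\underline{Q},\overline{Q})$ in the sense of Definition \ref{Cal.max.function}, $\Phi_{f_{n}}(\underline{Q},\overline{Q})\to\Phi_{f}(\underline{Q},\overline{Q})$. Since $\Phi_{f_{n}}(\underline{Q},\overline{Q})\le f_{n}^{\natural}(x)\to g(x)$ whenever $x\in c\underline{Q}$, passing to the limit and supremizing over admissible pairs yields $f^{\natural}\le g$ a.e., in particular $f^{\natural}\in L_{p}(\mathbb{R}^{n})$. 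Repeating the argument with $f_{m}-f_{n}$ in place of $f_{m}$ (fixing $n$ and letting $m\to\infty$) gives $(f-f_{n})^{\natural}\le\liminf_{m}(f_{m}-f_{n})^{\natural}$ pointwise, so by Fatou's lemma in $L_{p}$ and the Cauchy hypothesis, $\|(f-f_{n})^{\natural}|L_{p}\|\to 0$; combined with $f_{n}\to f$ in $L_{p}(\mathfrak{m}_{0})$ this delivers $\widetilde{\mathcal{N}}_{p,\{\mathfrak{m}_{k}\},\lambda,c}(f-f_{n})\to 0$.

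The hard part is producing a representative $\widetilde{f}$ of $[f]$ with $\mathcal{H}^{d^{\ast}}(S\setminus S_{\widetilde{f}}(d'))=0$ for every $d'\in[d,d^{\ast}]$. The plan is to route through the Sobolev extension. Assuming $c\ge 7$ (the case $c\in[1,7)$ would require an additional reduction via the monotonicity in Proposition \ref{Rem51}), the finiteness $\widetilde{\mathcal{N}}(f)<\infty$ from the previous stage together with Theorem \ref{Th.Reverse_trace} produces $F:=[\operatorname{Ext}_{S,\{\mathfrak{m}_{k}\},\lambda}(f)]\in W^{1}_{p}(\mathbb{R}^{n})$ when $p\in(1,n]$; the case $p>n$ is easier since $W^{1}_{p}$-elements admit H\"older continuous representatives and the regularity condition is then automatic. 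Since $d^{\ast}>n-p$, Lemma \ref{Lebesguepoint1} applied to $[F|^{d^{\ast}}_{S}]\in W^{1}_{p}(\mathbb{R}^{n})|^{d^{\ast}}_{S}$ furnishes a Borel representative $\widetilde{f}$ of $F|^{d^{\ast}}_{S}$ with $\mathcal{H}^{d^{\ast}}(S\setminus S_{\widetilde{f}}(d'))=0$ for every $d'\in(n-p,d^{\ast}]$, a range containing $[d,d^{\ast}]$. To identify $[\widetilde{f}]$ with $[f]$ in $\operatorname{X}$, apply Theorem \ref{Th.Reverse_trace} to each $f_{n}$: by the $\widetilde{\mathcal{N}}$-convergence of the previous stage, $F_{n}:=[\operatorname{Ext}(f_{n})]\to F$ in $W^{1}_{p}(\mathbb{R}^{n})$; and Proposition \ref{Proposition.2.10} (valid because $p>n-d$) makes the $d^{\ast}$-trace operator continuous $W^{1}_{p}\to L_{p}(\mathfrak{m}_{0})$, so $F_{n}|^{d^{\ast}}_{S}\to F|^{d^{\ast}}_{S}$ in $L_{p}(\mathfrak{m}_{0})$. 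The identification $F_{n}|^{d^{\ast}}_{S}=f_{n}$ as $L_{p}(\mathfrak{m}_{0})$-classes for each $n$ is delivered by Theorem \ref{Th.ext.tr} applied to $f_{n}\in\operatorname{X}$; its proof only needs $p>n-d^{\ast}$ and the existence of a $(1,p)$-good representative, not the apparently stronger restriction $d<n-p$ stated in the hypotheses. Comparing with $f_{n}\to f$ in $L_{p}(\mathfrak{m}_{0})$ forces $\widetilde{f}=f$ $\mathfrak{m}_{0}$-a.e., hence $\Phi_{f-\widetilde{f}}\equiv 0$, $\widetilde{\mathcal{N}}(f-\widetilde{f})=0$, and $[f]=[\widetilde{f}]\in\operatorname{X}$.
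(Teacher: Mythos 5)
Your Stages 1--2 (the pointwise subadditivity giving Cauchyness of $\{f_{n}\}$ in $L_{p}(\mathfrak{m}_{0})$ and of $\{f_{n}^{\natural}\}$ in $L_{p}(\mathbb{R}^{n})$, the passage to the limit in $\Phi$ via Remark \ref{Rem.Frostman}, and the Fatou argument yielding $\widetilde{\mathcal{N}}_{p,\{\mathfrak{m}_{k}\},\lambda,c}(f-f_{n})\to 0$) are correct and coincide with the corresponding part of the paper's proof. The genuine gap is in your identification at the end of Stage 3. Theorem \ref{Th.ext.tr} gives $\operatorname{Tr}|^{d^{\ast}}_{S}[\operatorname{Ext}(f_{n})]=f_{n}$ only as an equality of $d^{\ast}$-traces, i.e. $\overline{F_{n}}|_{S}=f_{n}$ up to an $\mathcal{H}^{d^{\ast}}$-null subset of $S$; you then use it as an equality of $L_{p}(\mathfrak{m}_{0})$-classes. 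But $\mathfrak{m}_{0}$ only satisfies the $d$-growth condition (\textbf{M}2) of Definition \ref{Def.Frostman} with $d\le d^{\ast}$, so it can charge $\mathcal{H}^{d^{\ast}}$-null sets (take the $d$-regular component $S_{1}$ in Remark \ref{Remm.4.2} when $d<d^{\ast}$: $\mathcal{H}^{d^{\ast}}(S_{1})=0$ while $\mathfrak{m}_{0}(S_{1})>0$ is perfectly possible). Hence $\mathcal{H}^{d^{\ast}}$-a.e.\ equality does not yield $\mathfrak{m}_{0}$-a.e.\ equality, you cannot conclude $f_{n}\to\widetilde{f}$ in $L_{p}(\mathfrak{m}_{0})$, and therefore not $\widetilde{f}=f$ $\mathfrak{m}_{0}$-a.e., which is exactly what you need for $\widetilde{\mathcal{N}}(f-\widetilde{f})=0$. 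Relatedly, the phrase ``the $d^{\ast}$-trace operator is continuous $W^{1}_{p}\to L_{p}(\mathfrak{m}_{0})$'' is ill-posed: a $d^{\ast}$-trace class does not determine an $\mathfrak{m}_{0}$-class; what Proposition \ref{Proposition.2.10} gives is continuity of $F\mapsto[\overline{F}|_{S}]_{\mathfrak{m}_{0}}$. Nor can you repair the step by rerunning the proof of Theorem \ref{Th.ext.tr} ``at level $d$'': condition (1) of Definition \ref{def.x.trace} only guarantees $\mathcal{H}^{d^{\ast}}(S\setminus S_{f_{n}}(d))=0$, again not an $\mathfrak{m}_{0}$-null exceptional set, so the same obstruction reappears.

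The paper sidesteps this by identifying the limit the other way around: from the norm estimates (Theorems \ref{Th.Reverse_trace} and \ref{Th.ext.tr}, as in the second half of the proof of Theorem \ref{Th.main}) the sequence $\{[f_{j}]_{d^{\ast}}\}$ is Cauchy in $W^{1}_{p}(\mathbb{R}^{n})|^{d^{\ast}}_{S}$, which is complete by Proposition \ref{Prop.d_trace_space_is_complete}; one then extracts a single subsequence of the $f_{j}$ converging pointwise both $\mathfrak{m}_{0}$-a.e.\ and $\mathcal{H}^{d^{\ast}}$-a.e., so that one and the same Borel function $f$ is simultaneously the $L_{p}(\mathfrak{m}_{0})$-limit and a representative of the limiting trace class, and only then is Lemma \ref{Lebesguepoint1} invoked for the regularity condition; no information is transported through an $\mathcal{H}^{d^{\ast}}$-a.e.\ identity into $L_{p}(\mathfrak{m}_{0})$. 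Two smaller remarks: your parenthetical that the case $c\in[1,7)$ is handled ``via the monotonicity in Proposition \ref{Rem51}'' is backwards, since $f^{\natural}_{c}\le f^{\natural}_{7}$ for $c\le 7$, so finiteness of the $c$-functional does not control the functional with parameter $7$ required by Theorem \ref{Th.Reverse_trace} (the paper's own proof carries the same implicit restriction, so this is not your main problem, but the proposed fix is not a fix); and your use of Theorem \ref{Th.ext.tr} outside its stated range $d\in(0,n-q)$ is defensible on inspection of its proof, and indeed the paper does the same implicitly, but it is a claim you assert rather than verify.
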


\begin{proof}
In view of Remark \ref{Remm.4.3}, it is sufficient to show that the space $\operatorname{X}^{d^{\ast}}_{p,d,\{\mathfrak{m}_{k}\}}(S)$
is complete. We fix an arbitrary Cauchy sequence $\{f_{j}\} \subset \operatorname{X}^{d^{\ast}}_{p,d,\{\mathfrak{m}_{k}\}}(S)$. This implies
that the sequence $\{[f_{j}]_{\mathfrak{m}_{0}}\}$ of $\mathfrak{m}_{0}$-equivalence classes of $f_{j}$, $j \in \mathbb{N}$ is a Cauchy sequence in $L_{p}(\mathfrak{m}_{0})$.
By the arguments from the second part of the proof of Theorem \ref{Th.main} it follows that the sequence $\{[f_{j}]_{d^{\ast}}\}$ of $\mathcal{H}^{d^{\ast}}$-equivalence classes
of $f_{j}$, $j \in \mathbb{N}_{0}$ is a Cauchy sequence
in the space $W_{p}^{1}(\mathbb{R}^{n})|_{S}^{d^{\ast}}$. Using the completeness
of $L_{p}(\mathfrak{m}_{0})$ and $W_{p}^{1}(\mathbb{R}^{n})|_{S}^{d^{\ast}}$ (recall that the later follows from Proposition \ref{Prop.d_trace_space_is_complete})
in combination with Theorem \ref{Th.main} we deduce existence $g_{1} \in L_{p}(\mathfrak{m}_{0})$ and $g_{2} \in W_{p}^{1}(\mathbb{R}^{n})|_{S}^{d^{\ast}}$ such that
$[f_{j}]_{\mathfrak{m}_{0}} \to g_{1}$, $j \to \infty$ in $L_{p}(\mathfrak{m}_{0})$-sense and $[f_{j}]_{d^{\ast}} \to g_{2}$, $j \to \infty$  in $W^{1}_{p}(\mathbb{R}^{n})|_{S}^{d^{\ast}}$-sense.
The crucial observation is that using arguments from the proof of Proposition \ref{Prop.d_trace_space_is_complete} one can deduce that
there exists a subsequence $\{f_{j_{l}}\}$ of $\{f_{j}\}$ such that $f_{j_{l}}(x) \to g_{1}(x)$, $l \to \infty$ for $\mathfrak{m}_{0}$-a.e. $x \in S$
and $f_{j_{l}}(x) \to g_{2}(x)$, $l \to \infty$ for $\mathcal{H}^{d^{\ast}}$-a.e. $x \in S$. As a result, we have $g_{1}=[f]_{\mathfrak{m}_{0}}$ and
$g_{2}=[f]_{d^{\ast}}$ for some $f \in \mathfrak{B}(S)$. Furthermore, by Lemma \ref{Lebesguepoint1} we have $\mathcal{H}^{d^{\ast}}(S \setminus S_{f}(d'))=0$
for all $d' \in [d,d^{\ast}]$.

Taking into account Remark \ref{Rem.Frostman}, given $x \in \mathbb{R}^{n}$ and $\underline{Q},\overline{Q} \in \mathcal{D}_{+}$ satisfying
conditions (\textbf{f}1)--(\textbf{f}3) of Definition \ref{Cal.max.function}, we can pass to the limit and deduce that
$$
\lim_{i \to \infty} \Phi_{f_{i}-f_{j},\{\mathfrak{m}_{k}\}}(\underline{Q},\overline{Q})= \Phi_{f-f_{j},\{\mathfrak{m}_{k}\}}(\underline{Q},\overline{Q}).
$$
Taking the corresponding supremum we find that, for each $j \in \mathbb{N}_{0}$,
\begin{equation}
\notag
(f-f_{j})^{\natural}_{\{\mathfrak{m}_{k}\},\lambda,c}(x)
\le \varliminf\limits_{i \to \infty}(f_{i}-f_{j})^{\natural}_{\{\mathfrak{m}_{k}\},\lambda,c}(x) \quad \hbox{for all} \quad x \in \mathbb{R}^{n}.
\end{equation}
Using Fatou's lemma and the fact that $\{f_{j}\}$ is a Cauchy sequence in
$\operatorname{X}^{d^{\ast}}_{p,d,\{\mathfrak{m}_{k}\}}(S)$, we obtain
\begin{equation}
\label{eqq.9.21}
\|(f-f_{j})^{\natural}_{\{\mathfrak{m}_{k}\},\lambda,c}|L_{p}(\mathbb{R}^{n})\| \le
\varliminf\limits_{i \to \infty}\|(f_{i}-f_{j})^{\natural}_{\{\mathfrak{m}_{k}\},\lambda,c}|L_{p}(\mathbb{R}^{n})\| \to 0, \quad j \to \infty.
\end{equation}
Combining the above observations with Remark \ref{Remm.4.3} we find that $f \in \operatorname{X}^{d^{\ast}}_{p,d,\{\mathfrak{m}_{k}\}}(S)$
and $f_{j} \to f$, $j \to \infty$ in the space $\operatorname{X}^{d^{\ast}}_{p,d,\{\mathfrak{m}_{k}\}}(S)$. This completes the proof.
\end{proof}

\end{document}